\title[The facial weak order and its lattice quotients]{The facial weak order and its lattice quotients}
\thanks{AD was partially supported by the French ANR grant EGOS~(12\,JS02\,002\,01) and a ISM Graduate Scholarship. CH was supported by NSERC Discovery grant {\em Coxeter groups and related structures}. VP~was partially supported by the French ANR grants EGOS~(12\,JS02\,002\,01) and SC3A~(15\,CE40\,0004\,01).}
\author{Aram Dermenjian}
\address[Aram Dermenjian]{LIX, \'Ecole Polytechnique, Palaiseau \& LaCIM, Universit\'e du Qu\'ebec \`A Montr\'eal (UQAM)}
\email{aram.dermenjian@gmail.com}
\author{Christophe Hohlweg}
\address[Christophe Hohlweg]{LaCIM, Universit\'e du Qu\'ebec \`A Montr\'eal (UQAM)}
\email{hohlweg.christophe@uqam.ca}
\urladdr{http://hohlweg.math.uqam.ca/}
\author{Vincent Pilaud}
\address[Vincent Pilaud]{CNRS \& LIX, \'Ecole Polytechnique, Palaiseau}
\email{vincent.pilaud@lix.polytechnique.fr}
\urladdr{http://www.lix.polytechnique.fr/~pilaud/}
\newtheorem{theorem}{Theorem}[section]
\newtheorem{corollary}[theorem]{Corollary}
\newtheorem{proposition}[theorem]{Proposition}
\newtheorem{lemma}[theorem]{Lemma}
\theoremstyle{definition}
\newtheorem{definition}[theorem]{Definition}
\newtheorem{example}[theorem]{Example}
\newtheorem{remark}[theorem]{Remark}
\newcommand{\R}{\mathbb{R}} 
\newcommand{\N}{\mathbb{N}} 
\newcommand{\Z}{\mathbb{Z}} 
\newcommand{\fS}{\mathfrak{S}} 
\newcommand{\cC}{\mathcal{C}} 
\newcommand{\precdot}{{\;<\!\!\!\!\cdot\;\,}} 
\newcommand{\set}[2]{\left\{ #1 \;\middle|\; #2 \right\}} 
\newcommand{\setangle}[2]{\left\langle #1 \;\middle|\; #2 \right\rangle} 
\newcommand{\bigset}[2]{\big\{ #1 \;|\; #2 \big\}} 
\newcommand{\ssm}{\smallsetminus} 
\newcommand{\dotprod}[2]{\langle \, #1 \; | \; #2 \, \rangle} 
\newcommand{\symdif}{\,\triangle\,} 
\newcommand{\eqdef}{\mbox{\,\raisebox{0.2ex}{\scriptsize\ensuremath{\mathrm:}}\ensuremath{=}\,}} 
\newcommand{\polar}{^\diamond} 
\renewcommand{\implies}{\Rightarrow} 
\DeclareMathOperator{\inversionSet}{\mathbf{N}} 
\DeclareMathOperator{\rootSet}{\mathbf{R}} 
\DeclareMathOperator{\weightSet}{\mathbf{W}} 
\DeclareMathOperator{\rootDescentSet}{\mathbf{D}} 
\newcommand{\LdescentSet}{D_L} 
\newcommand{\RdescentSet}{D_R} 
\DeclareMathOperator{\length}{\ell} 
\newcommand{\fundamentalChamber}{\mathcal{C}} 
\newcommand{\wo}[1]{w_{\circ,#1}} 
\newcommand{\woo}{w_{\circ}} 
\newcommand{\tinymath}[1]{\scalebox{.5}{$#1$}} 
\newcommand{\meet}{\wedge} 
\newcommand{\tinymeet}{{\tinymath{\meet}}} 
\newcommand{\zm}{z_\tinymeet} 
\newcommand{\Km}{{K_\tinymeet}} 
\newcommand{\join}{\vee} 
\newcommand{\tinyjoin}{{\tinymath{\join}}} 
\newcommand{\zj}{z_\tinyjoin} 
\newcommand{\Kj}{{K_\tinyjoin}} 
\newcommand{\CoxeterComplex}[1]{\mathcal{P}_{#1}} 
\newcommand{\DavisComplex}[1]{\mathcal{D}_{#1}} 
\newcommand{\projDown}{\pi_{\!\downarrow\!}} 
\newcommand{\projUp}{\pi^{\!\uparrow\!}} 
\newcommand{\ProjDown}{\Pi_{\downarrow\!}} 
\newcommand{\ProjUp}{\Pi^{\uparrow\!}} 
\newcommand{\sUp}{\sigma^{\!\uparrow\!}}
\newcommand{\SDown}{\Sigma_{\downarrow\!}}
\newcommand{\SUp}{\Sigma^{\uparrow\!}}
\newcommand{\plusTop}[1]{\big[ #1 \big]^\oplus} 
\newcommand{\plusBottom}[1]{\big[ #1 \big]_\oplus} 
\newcommand{\minusTop}[1]{\big[ #1 \big]^\ominus} 
\newcommand{\minusBottom}[1]{\big[ #1 \big]_\ominus} 
\newcommand{\Perm}{\mathsf{Perm}} 
\newcommand{\Para}{\mathsf{Para}} 
\DeclareMathOperator{\face}{\mathbf{F}} 
\newcommand{\Fan}{\mathcal{F}} 
\DeclareMathOperator{\conv}{conv} 
\DeclareMathOperator{\vect}{vect} 
\DeclareMathOperator{\cone}{cone} 
\DeclareMathOperator{\inv}{inv} 
\DeclareMathOperator{\des}{des} 
\newcommand{\fref}[1]{Figure~\ref{#1}} 
\newcommand{\ie}{\textit{i.e.,}~} 
\newcommand{\eg}{\textit{e.g.,}~} 
\definecolor{darkblue}{rgb}{0,0,0.7} 
\newcommand{\darkblue}{\color{darkblue}} 
\newcommand{\defn}[1]{\emph{\darkblue #1}} 
\newcommand{\para}[1]{\medskip\noindent\textbf{#1.}} 
\newcommand{\leqequiv}{
 \mathrel{
  \begin{tikzpicture}[x=.55em, y=0.18ex]
   \foreach \i in {-3,-1,1} {
    \draw[cap=round] (0,\i) -- (1,\i);
   }
   \draw[cap=round] (0,1) -- (1,5);
  \end{tikzpicture}
 }
} 
\newcommand{\Equiv}{
 \mathrel{
  \begin{tikzpicture}[x=.65em, y=0.22ex]
   \foreach \i in {-3,-1,1,3} {
    \draw[cap=round] (0,\i) -- (1,\i);
   }
  \end{tikzpicture}
 }
}
\newcommand{\equivdes}{\equiv^{\mathsf{des}}} 
\newcommand{\Equivdes}{\Equiv^{\mathsf{des}}} 
\newcommand{\equivc}{\equiv^{c}} 
\newcommand{\Equivc}{\Equiv^{c}} 
\begin{document}

\begin{abstract}
We investigate a poset structure that extends the weak order on a finite Coxeter group~$W$ to the set of all faces of the permutahedron of~$W$. We call this order the {\em facial weak order}. We first provide two alternative characterizations of this poset: a first one,  geometric, that generalizes the notion of inversion sets of roots, and a second one, combinatorial, that uses comparisons of the minimal and maximal length representatives of the cosets. These characterizations are then used to show that the facial weak order is in fact a lattice, generalizing a well-known result of A.~Bj\"orner for the classical weak order. Finally, we show that any lattice congruence of the classical weak order induces a lattice congruence of the facial weak order, and we give a geometric interpretation of their classes. As application, we describe the {\em facial boolean lattice} on the faces of the cube and the {\em facial Cambrian lattice} on the faces of the corresponding generalized associahedron. 

\medskip
\noindent
\textsc{keywords.} Permutahedra $\cdot$ weak order $\cdot$ Coxeter complex $\cdot$ lattice quotients $\cdot$ associahedra $\cdot$ Cambrian lattices.
\end{abstract}

\vspace*{.3cm}
\maketitle

\vspace*{-.4cm}
\tableofcontents


\section{Introduction}
\label{sec:intro}

\enlargethispage{.5cm}
The (right) Cayley graph of a Coxeter system $(W,S)$ is naturally oriented by the (right) weak order on~$W$: an edge is oriented from $w$ to $ws$ if $s\in S$ is such that ${\length(w) < \length(ws)}$, see~\cite[Chapter~3]{BrentiBjorner} for details. A celebrated result of A.~Bj\"orner~\cite{Bjorner} states that the weak order is a complete meet-semilattice and even a complete ortholattice in the case of a finite Coxeter system. The weak order is a very useful tool to study Coxeter groups as it encodes the combinatorics of reduced words associated to $(W,S)$, and it underlines the connection between the words and the root system via the notion of inversion sets, see for instance~\cite{Dyer-WeakOrder, HohlwegLabbe} and the references therein. 

In the case of a finite Coxeter system, the Cayley graph of~$W$ is isomorphic to the $1$-skeleton of the $W$-permutahedron. Then the weak order is given by an orientation of the $1$-skeleton of the $W$-permu\-tahedron associated to the choice of a linear form of the ambiant Euclidean space. This point of view was very useful in order to build generalized associahedra out of a $W$-permutahedron using N.~Reading's Cambrian lattices, see~\cite{Reading-survey, HohlwegLangeThomas, Hohlweg}.

In this paper, we study a poset structure on all faces of the $W$-permutahedron that we call the (right) \defn{facial weak order}. This order was introduced by D.~Krob, M.~Latapy, J.-C.~Novelli, H.-D.~Phan and S.~Schwer in~\cite{KrobLatapyNovelliPhanSchwer} for the symmetric group then extended by P.~Palacios and M.~Ronco in~\cite{PalaciosRonco} for arbitrary finite Coxeter groups. Recall that the faces of the $W$-permutahedron are naturally parameterized by the Coxeter complex~$\CoxeterComplex{W}$ which consists of all standard parabolic cosets~$W/W_I$ for~$I\subseteq S$. The aims of this article are:
\begin{enumerate}
\item To give two alternative characterizations of the facial weak order (see Theorem~\ref{thm:facialWeakOrderCharacterizations}): one in terms of root inversion sets of parabolic cosets which extend the notion of inversion sets of elements of~$W$, and the other using weak order comparisons between the minimal and maximal representatives of the parabolic cosets. The advantage of these two definitions is that they give immediate global comparison, while the original definition of~\cite{PalaciosRonco} uses cover relations.
\item To show that the facial weak order is a lattice  (see Theorem~\ref{thm:lattice}), whose restriction to the vertices of the permutahedron produces the weak order as a sublattice. This result was motivated by the special case of type~$A$ proved in~\cite{KrobLatapyNovelliPhanSchwer}.
\item To discuss generalizations of these statements to infinite Coxeter groups via the Davis complex (see Theorem~\ref{thm:latticeInfinite}).
\item To show that any lattice congruence~$\equiv$ of the weak order naturally extends to a lattice congruence~$\Equiv$ of the facial weak order (see Theorem~\ref{theo:latticeCongruenceFacialWeakOrder}). This provides a complete description (see Theorem~\ref{theo:allFacesCongruenceFan}) of the simplicial fan~$\Fan_\equiv$ associated to the weak order congruence~$\equiv$ in N.~Reading's work~\cite{Reading-HopfAlgebras}: while the classes of~$\equiv$ correspond to maximal cones in~$\Fan_\equiv$, the classes of~$\Equiv$ correspond to all cones in~$\Fan_\equiv$. Relevant illustrations are given for Cambrian lattices and fans~\cite{Reading-CambrianLattices, ReadingSpeyer-CambrianFans}, which extend to facial Cambrian lattices on the faces of generalized associahedra (see Theorem~\ref{thm:FacialCambrian}).
\end{enumerate}

The results of this paper are based on combinatorial properties of Coxeter groups, parabolic cosets, and reduced words. However, their motivation and intuition come from the geometry of the Coxeter arrangement and of the $W$-permutahedron. So we made a point to introduce enough of the geometrical material to make the geometric intuition clear.


\section{Preliminaries}
\label{sec:preliminaries}

We start by fixing notations and classical definitions on finite Coxeter groups. Details can be found in textbooks by J.~Humphreys~\cite{Humphreys} and A.~Bj\"orner and F.~Brenti~\cite{BrentiBjorner}. The reader familiar with finite Coxeter groups and root systems is invited to proceed directly to Section~\ref{sec:weakOrder}.


\subsection{Finite reflection groups and Coxeter systems}
\label{subsec:CoxeterSystems}

Let~$(V,\dotprod{\cdot}{\cdot})$ be an $n$-dimensional Euclidean vector space. For any vector~${v \in V \ssm \{0\}}$, we denote by~$s_v$ the reflection interchanging $v$ and $-v$ while fixing the orthogonal hyperplane pointwise. Remember that~$ws_v = s_{w(v)}w$ for any vector~$v \in V \ssm \{0\}$ and any orthogonal transformation~$w$ of~$V$.

We consider a \defn{finite reflection group}~$W$ acting on~$V$, that is, a finite group generated by reflections in the orthogonal group $O(V)$. The \defn{Coxeter arrangement} of~$W$ is the collection of all reflecting hyperplanes. Its complement in~$V$ is a union of open polyhedral cones. Their closures are called \defn{chambers}. The \defn{Coxeter fan} is the polyhedral fan formed by the chambers together with all their faces. This fan is \defn{complete} (its cones cover~$V$) and \defn{simplicial} (all cones are simplicial), and we can assume without loss of generality that it is \defn{essential} (the intersection of all chambers is reduced to the origin). We fix an arbitrary chamber $\fundamentalChamber$ which we call the \defn{fundamental chamber}. The $n$ reflections orthogonal to the facet defining hyperplanes of~$\fundamentalChamber$ are called \defn{simple reflections}. The set~$S$ of simple reflections generates~$W$. The pair~$(W,S)$ forms a \defn{Coxeter system}. See \fref{fig:CoxeterArrangements} for an illustration of the Coxeter arrangements of types~$A_3$, $B_3$, and~$H_3$.

\begin{figure}[b]
	\centerline{\includegraphics[width=1.3\textwidth]{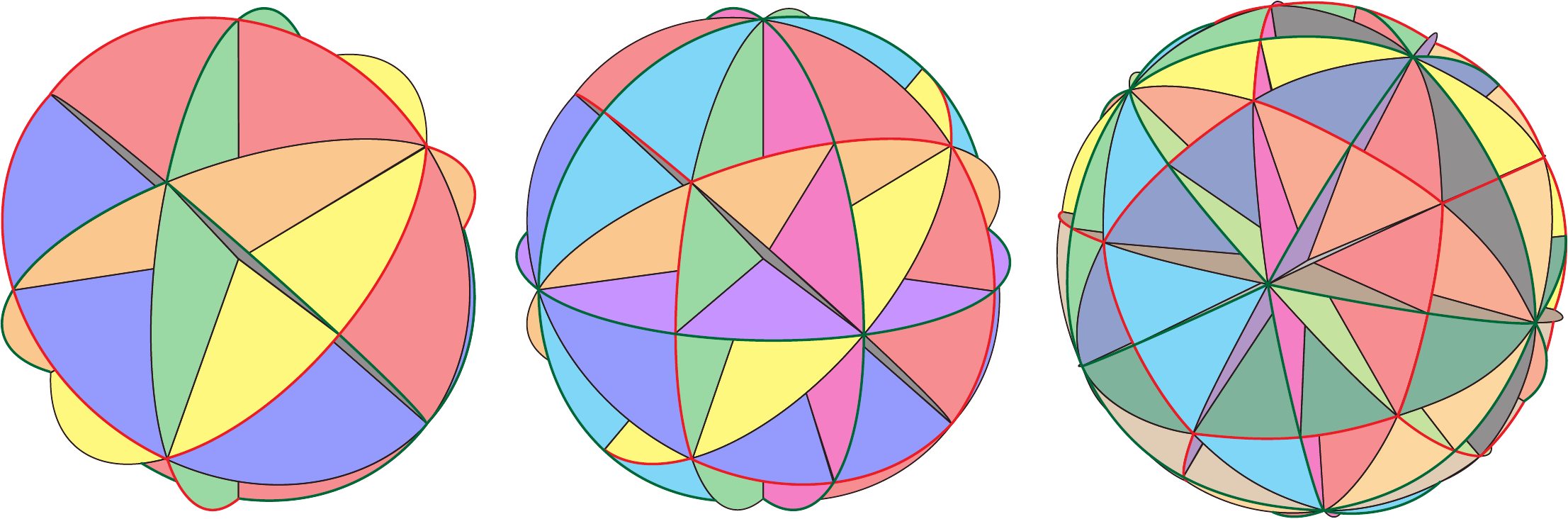}}
	\caption{The type~$A_3$, $B_3$, and~$H_3$ Coxeter arrangements.}
	\label{fig:CoxeterArrangements}
\end{figure}


\subsection{Roots and weights}
\label{subsec:rootsWeights}

We consider a \defn{root system}~$\Phi$ for~$W$, \ie a set of vectors invariant under the action of~$W$ and containing precisely two opposite roots orthogonal to each reflecting hyperplane of~$W$. The \defn{simple roots}~$\Delta$ are the roots orthogonal to the defining hyperplanes of~$\fundamentalChamber$ and pointing towards~$\fundamentalChamber$. They form a linear basis of~$V$. The root system~$\Phi$ splits into the \defn{positive roots}~${\Phi^+ \eqdef \Phi \cap \cone(\Delta)}$ and the \defn{negative roots} $\Phi^- \eqdef \Phi \cap \cone(-\Delta) = -\Phi^+$, where $\cone(X)$ denotes the set of nonnegative linear combinations of vectors in~$X\subseteq V$. In other words, the positive roots are the roots whose scalar product with any vector of the interior of the fundamental chamber~$\fundamentalChamber$ is positive, and the simple roots form the basis of the cone generated by~$\Phi^+$. Each reflection hyperplane is orthogonal to one positive and one negative root. For a reflection $s \in R$, we set $\alpha_s$ to be the unique positive root orthogonal to the reflection hyperplane of~$s$, \ie such that $s = s_{\alpha_s}$.

We denote by~$\alpha_s^\vee \eqdef 2 \alpha_s / \dotprod{\alpha_s}{\alpha_s}$ the \defn{coroot} corresponding to~$\alpha_s \in \Delta$, and by $\Delta^\vee \eqdef \set{\alpha^\vee_s}{s \in S}$ the coroot basis. The vectors of its dual basis~$\nabla \eqdef \set{\omega_s}{s \in S}$ are called \defn{fundamental weights}. In other words, the fundamental weights of~$W$ are defined by~${\dotprod{\alpha_s^\vee}{\omega_t} = \delta_{s=t}}$ for all~$s,t \in S$. Geometrically, the fundamental weight~$\omega_s$ gives the direction of the ray of the fundamental chamber~$\fundamentalChamber$ not contained in the reflecting hyperplane~of~$s$. We let~$\Omega \eqdef W(\nabla) = \set{w(\omega_s)}{w \in W, s \in S}$ denote the set of all weights of~$W$, obtained as the orbit of the fundamental weights under~$W$.


\subsection{Length, reduced words and weak order}
\label{subsec:lengthReducedWordsAndWeakOrder}

The \defn{length}~$\length(w)$ of an element~$w \in W$ is the length of the smallest word for~$w$ as a product of generators in~$S$. A word~${w = s_1 \cdots s_k}$ with~$s_1,\dots,s_k \in S$ is called \defn{reduced} if~$k = \length(w)$. For $u,v\in W$, the product $uv$ is said to be \defn{reduced} if the concatenation of a reduced word for $u$ and of a reduced word for $v$ is a reduced word for $uv$, \ie if $\length(uv)=\length(u)+\length(v)$. We say that $u\in W$ is a \defn{prefix} of $v \in W$ if there is a reduced word for $u$ that is the prefix of a reduced word for $v$, \ie if~$\length(u^{-1}v)=\length(v)-\length(u)$.

The (right) \defn{weak order} is the order on~$W$ defined equivalently by
\[
u \le v \iff \length(u) + \length(u^{-1}v) = \length(v) \iff u \text{ is a prefix of } v.
\]
A.~Bj\"orner shows in~\cite{Bjorner} that the weak order defines a lattice structure on~$W$ (finite Coxeter group), with minimal element~$e$ and maximal element~$\woo$ (which sends all positive roots to negative ones and all positive simple roots to negative simple ones). The conjugation~$w \mapsto \woo w \woo$ defines a weak order automorphism while the left and right multiplications~$w \mapsto \woo w$ and~$w \mapsto w \woo$ define weak order anti-automorphisms. We refer the reader to \cite[Chapter~3]{BrentiBjorner} for more details. 

The weak order encodes the combinatorics of reduced words and enjoys a useful geometric characterization within the root system, which we explain now. The (left) \defn{inversion set} of~$w$ is the set $\inversionSet(w) \eqdef \Phi^+ \, \cap \, w(\Phi^-)$ of positive roots sent to negative ones by~$w^{-1}$. If $w=uv$ is reduced then ${\inversionSet(w)=\inversionSet(u)\sqcup u \big( \inversionSet(v) \big)}$. In particular, we have ${\inversionSet(w) = \big\{\alpha_{s_1}, s_1(\alpha_{s_2}), \dots, s_1s_2 \cdots s_{p-1}(\alpha_{s_k})\big\}}$ for any reduced word~$w = s_1 \cdots s_k$, and therefore~$\length(w) = |\inversionSet(w)|$. Moreover, the weak order is characterized in term of inversion sets by:
\[ 
u \leq v \iff \inversionSet(u) \subseteq \inversionSet(v),
\]
for any $u,v\in W$. We refer the reader to \cite[Section~2]{HohlwegLabbe} and the references therein for more details on inversion sets and the weak order.

We say that~$s \in S$ is a \defn{left ascent} of~$w \in W$ if~${\length(sw) = \length(w) + 1}$ and a \defn{left descent} of~$w$ if~${\length(sw) = \length(w) - 1}$. We denote by~$\LdescentSet(w)$ the set of left descents of~$w$. Note that for~$s \in S$ and~$w \in W$, we have~$s \in \LdescentSet(w) \iff \alpha_s \in \inversionSet(w) \iff s \leq w$. Similarly, $s \in S$ is a \defn{right descent} of~$w \in W$ if~${\length(ws) = \length(w) - 1}$, and we denote by~$\RdescentSet(w)$ the set of right descents of~$w$.


\subsection{Parabolic subgroups and cosets}
\label{subsec:parabolicSubgroupsCosets}

Consider a subset~$I \subseteq S$. The \defn{standard parabolic subgroup}~$W_I$ is the subgroup of~$W$ generated by~$I$. It is also a Coxeter group with simple generators~$I$, simple roots~$\Delta_I \eqdef \set{\alpha_s}{s \in I}$, root system~${\Phi_I = W_I(\Delta_I) = \Phi \cap \vect(\Delta_I)}$, length function~$\length_I = \length|_{W_I}$, longest element~$\wo{I}$, etc. For example, $W_\varnothing = \{e\}$ while~$W_S = W$. 

We denote by~$W^I \eqdef \set{w \in W}{\length(ws) > \length(w) \text{ for all } s \in I}$ the set of elements of~$W$ with no right descents in~$I$. For example, $W^\varnothing = W$ while~$W^S = \{e\}$. Observe that for any~$x \in W^I$, we have~$x(\Delta_I) \subseteq \Phi^+$ and thus $x(\Phi_I^+) \subseteq \Phi^+$. We will use this property repeatedly in this paper.

Any element~$w \in W$ admits a unique factorization~$w = w^I \cdot w_I$ with~$w^I \in W^I$ and~$w_I \in W_I$, and moreover, $\length(w) = \length(w^I) + \length(w_I)$ (see \cite[Proposition~2.4.4]{BrentiBjorner}). Therefore, $W^I$ is the set of \defn{minimal length coset representatives} of the cosets~$W/W_I$. Throughout the paper, we will always implicitly assume that~$x \in W^I$ when writing that~$xW_I$ is a \defn{standard parabolic coset}. Note that any standard parabolic coset~$xW_I = [x, x\wo{I}]$ is an interval in the weak order. The \defn{Coxeter complex}~$\CoxeterComplex{W}$ is the abstract simplicial complex whose faces are all standard parabolic cosets~of~$W$: 
\[
\CoxeterComplex{W} = \bigcup_{I\subseteq S} W/W_I  = \bigset{xW_I}{I \subseteq S, \ x \in W}= \bigset{xW_I}{I \subseteq S, \ x \in W^I}.
\]

We will also need {\em Deodhar's Lemma}: for $s\in S$, $I\subseteq S$ and $x \in W^I$, either $sx \in W^I$ or $sx = xr$ for some $r\in I$. See for instance~\cite[Lemma~2.1.2]{GeckPfeiffer} where it is stated for the cosets $W_I\backslash W$ instead of $W/W_I$.


\subsection{Permutahedron}
\label{subsec:permutahedron}

Remember that a \defn{polytope}~$P$ is the convex hull of finitely many points of~$V$, or equivalently a bounded intersection of finitely many affine halfspaces of~$V$. The \defn{faces} of~$P$ are the intersections of~$P$ with its supporting hyperplanes and the \defn{face lattice} of~$P$ is the lattice of its faces ordered by inclusion. The \defn{inner primal cone} of a face~$F$ of~$P$ is the cone generated by~$\set{u-v}{u \in P, v \in F}$. The \defn{outer normal cone} of a face~$F$ of~$P$ is the cone generated by the outer normal vectors of the facets of~$P$ containing~$F$. Note that these two cones are polar to each other. The \defn{normal fan} is the complete polyhedral fan formed by the outer normal cones of all faces of~$P$. We refer to~\cite{Ziegler} for details on polytopes, cones, and fans.

The \defn{$W$-permutahedron}~$\Perm^p(W)$ is the convex hull of the orbit under~$W$ of a generic point~${p \in V}$ (not located on any reflection hyperplane of~$W$). Its vertex and facet descriptions are given by
\[
\Perm^p(W) = \conv \bigset{w(p)}{w \in W} = \bigcap_{\substack{s \in S \\ w \in W}} \bigset{v \in V}{\dotprod{w(\omega_s)}{v} \le \dotprod{\omega_s}{p}}.
\]
Examples in types~$A_2$ and $B_2$ are represented in Figure \ref{fig:A2B2faces}.
Examples in types~$A_3$, $B_3$, and~$H_3$ are represented in \fref{fig:CoxeterPermutahedra}.

We often write~$\Perm(W)$ instead of~$\Perm^p(W)$ as the combinatorics of the $W$-permutahedron is independent of the choice of the point~$p$ and is encoded by the Coxeter complex $\CoxeterComplex{W}$. More precisely, each standard parabolic coset~$xW_I$ corresponds to a face~$\face(xW_I)$ of~$\Perm^p(W)$ given by
\[
\face(xW_I) = x \big( \Perm^p(W_I) \big) = \Perm^{x(p)} \big( xW_Ix^{-1} \big).
\] 
\begin{figure}
	\centerline{
	\raisebox{.3cm}{
	\begin{tikzpicture}
		[scale=2.5,
		edge/.style={color=blue!95!black},
		face/.style={fill=red!75!blue, fill opacity=0.100000},
		vertex/.style={inner sep=1pt, circle, draw=black, fill=black, thick, color=black}
		]
		%
		\coordinate (e) at (0,0.42) {};
		\coordinate (s) at (-1,1) {};
		\coordinate (t) at (1,1) {};
		\coordinate (st) at (-1,2) {};
		\coordinate (ts) at (1,2) {};
		\coordinate (sts) at (0,2.58) {};
		%
		\draw[edge] (e) -- (s) node (Ws) [midway, below left] {$\face(W_s)$};
		\draw[edge] (e) -- (t) node (Wt) [midway, below right] {$\face(W_t)$};
		\draw[edge] (s) -- (st) node (sWt) [midway, left] {$\face(sW_t)$};
		\draw[edge] (t) -- (ts) node (tWs) [midway, right] {$\face(tW_s)$};
		\draw[edge] (st) -- (sts) node (stWs) [midway, above left] {$\face(stW_s)$};
		\draw[edge] (ts) -- (sts) node (tsWt) [midway, above right] {$\face(tsW_t)$};
		%
		\node[vertex] at (e) {};
		\node[vertex] at (s) {};
		\node[vertex] at (t) {};
		\node[vertex] at (st) {};
		\node[vertex] at (ts) {};
		\node[vertex] at (sts) {};
		%
		\node[below, color=black] at (e) {$\face(e)$};
		\node[left, color=black] at (s) {$\face(s)$};
		\node[right, color=black] at (t) {$\face(t)$};
		\node[left, color=black] at (st) {$\face(st)$};
		\node[right, color=black] at (ts) {$\face(ts)$};
		\node[above, color=black] at (sts) {$\face(sts)$};
		%
		\fill[face] (e) -- (t) -- (ts) -- (sts) -- (st) -- (s) -- cycle {};
		\node[color=red!75!blue] at (0,1.5) {$\face(W)$};		
	\end{tikzpicture}}
	\begin{tikzpicture}
		[scale=1.5,
		edge/.style={color=blue!95!black},
		face/.style={fill=red!75!blue, fill opacity=0.100000},
		vertex/.style={inner sep=1pt, circle, draw=black, fill=black, thick, color=black}
		]
		%
		\coordinate (e) at (0,0);
		\coordinate (s) at (-1.4,0.6);
		\coordinate (t) at (1.4,0.6);
		\coordinate (st) at (-2,2);
		\coordinate (ts) at (2,2);
		\coordinate (sts) at (-1.4,3.4);
		\coordinate (tst) at (1.4,3.4);
		\coordinate (stst) at (0,4);
		%
		\draw[edge] (e) -- (s) node (Ws) [midway, below left] {$\face(W_s)$};
		\draw[edge] (e) -- (t) node (Wt) [midway, below right] {$\face(W_t)$};
		\draw[edge] (s) -- (st) node (sWt) [midway, left] {$\face(sW_t)$};
		\draw[edge] (t) -- (ts) node (tWs) [midway, right] {$\face(tW_s)$};
		\draw[edge] (st) -- (sts) node (stWs) [midway, left] {$\face(stW_s)$};
		\draw[edge] (ts) -- (tst) node (tsWt) [midway, right] {$\face(tsW_t)$};
		\draw[edge] (sts) -- (stst) node (stsWt) [midway, above left] {$\face(stsW_t)$};
		\draw[edge] (tst) -- (stst) node (tstWs) [midway, above right] {$\face(tstW_s)$};
		%
		\node[vertex] at (e) {};
		\node[vertex] at (s) {};
		\node[vertex] at (t) {};
		\node[vertex] at (st) {};
		\node[vertex] at (ts) {};
		\node[vertex] at (tst) {};
		\node[vertex] at (sts) {};
		\node[vertex] at (stst) {};
		%
		\node[below, color=black] at (e) {$\face(e)$};
		\node[left, color=black] at (s) {$\face(s)$};
		\node[right, color=black] at (t) {$\face(t)$};
		\node[left, color=black] at (st) {$\face(st)$};
		\node[right, color=black] at (ts) {$\face(ts)$};
		\node[left, color=black] at (sts) {$\face(sts)$};
		\node[right, color=black] at (tst) {$\face(tst)$};
		\node[above, color=black] at (stst) {$\face(stst)$};
		%
		\fill[face] (e) -- (t) -- (ts) -- (tst) -- (stst) -- (sts) -- (st) -- (s) -- cycle {};
		\node[color=red!75!blue] at (0,2) {$\face(W)$};
	\end{tikzpicture}
}
	\caption{Standard parabolic cosets of the type $A_2$ and $B_2$ Coxeter groups and the corresponding faces on their permutahedra.}
	\label{fig:A2B2faces}
\end{figure}
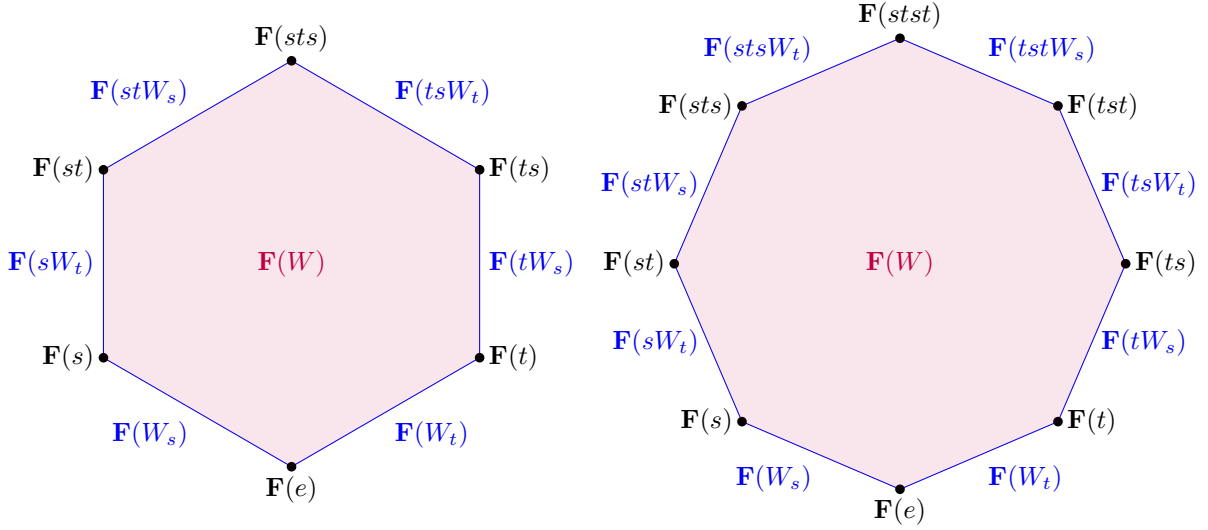
\begin{figure}
	\centerline{\includegraphics[width=1.3\textwidth]{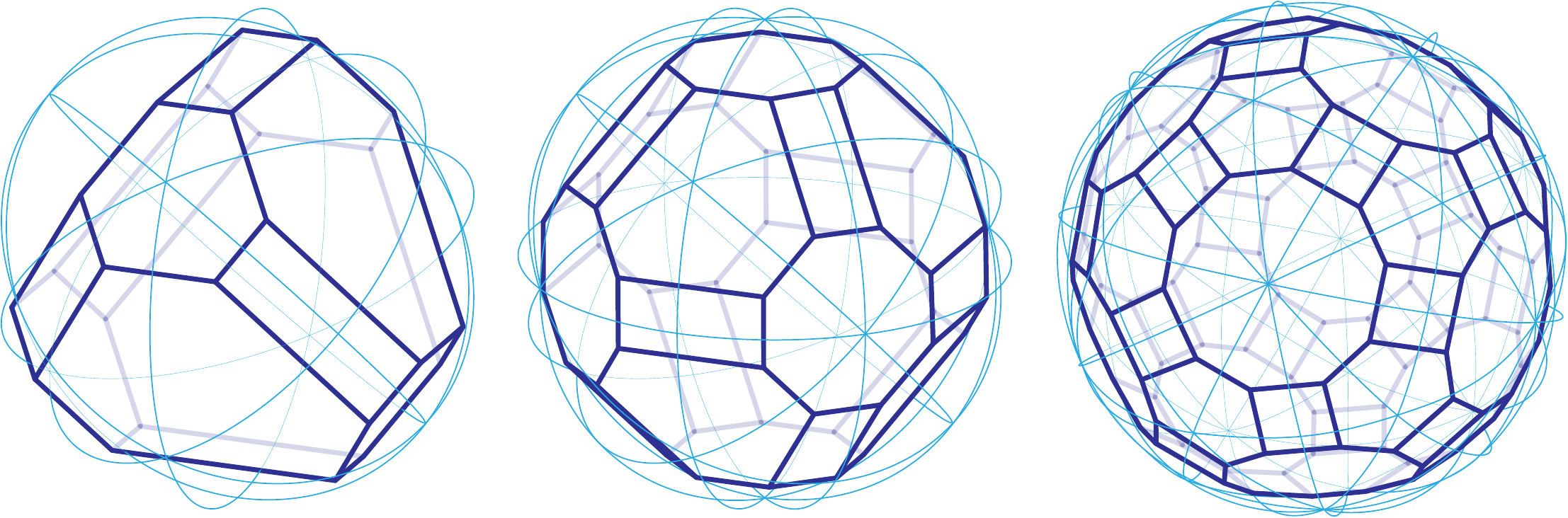}}
	\caption{The type~$A_3$, $B_3$, and~$H_3$ permutahedra.}
	\label{fig:CoxeterPermutahedra}
\end{figure}
Therefore, the $k$-dimensional faces of~$\Perm^p(W)$ correspond to the cosets~$xW_I$ with~$|I| = k$ and the face lattice of~$\Perm^p(W)$ is isomorphic to the inclusion poset $(\CoxeterComplex{W},\subseteq )$. The normal fan of~$\Perm^p(W)$ is the Coxeter fan. The graph of the permutahedron~$\Perm^p(W)$ is isomorphic to the Cayley graph of the Coxeter system~$(W,S)$. Moreover, when oriented in the linear direction~$\woo(p) - p$, it coincides with the Hasse diagram of the (right) weak order on~$W$. We refer the reader to~\cite{Hohlweg} for more details on the $W$-permutahedron.

\begin{example}
\label{exm:typeA}
The Coxeter group of type~$A_{n-1}$ is the symmetric group~$\fS_n$. Its simple generators are the simple transpositions~$\tau_i = (i\;i+1)$ for~$i \in [n-1]$ with relations~$\tau_i^2 = 1$ and~$\tau_i\tau_{i+1}\tau_i = \tau_{i+1}\tau_i\tau_{i+1}$. Its elements are permutations of~$[n]$ and its standard parabolic cosets are ordered partitions of~$[n]$. A root system for $\fS_n$ consists in the set of vectors~$\set{e_i-e_j}{i \ne j \in [n]}$ where $(e_1,\dots,e_n)$ is the canonical basis of~$\R^n$. The type~$A_3$ Coxeter arrangement is represented in \fref{fig:CoxeterArrangements}\,(left), and the type~$A_2$ and~$A_3$ permutahedra are represented in Figures~\ref{fig:A2B2faces}\,(left) and~\ref{fig:CoxeterPermutahedra}\,(left).
\end{example}


\section{Facial weak order on the Coxeter complex}
\label{sec:weakOrder}

In this section we study an analogue of the weak order on standard parabolic cosets, which we call the \defn{facial weak order}. It was defined  for the symmetric group by D.~Krob, M.~Latapy, J.-C.~Novelli, H.-D.~Phan and S.~Schwer in~\cite{KrobLatapyNovelliPhanSchwer}, then extended for arbitrary finite Coxeter groups  by P.~Palacios and M.~Ronco in~\cite{PalaciosRonco}.

\begin{definition}[\cite{KrobLatapyNovelliPhanSchwer, PalaciosRonco}]
\label{def:facialWeakOrder}
The \defn{(right) facial weak order} is the order $\leq$ on the Coxeter complex $\CoxeterComplex{W}$ defined by cover relations of two types: 
\begin{align*}
\text{(1)}\qquad xW_I & \precdot xW_{I \cup \{s\}} \qquad\text{if } s \notin I \text{ and } x \in W^{I \cup \{s\}}, \\
\text{(2)}\qquad xW_I & \precdot x\wo{I}\wo{I \ssm \{s\}}W_{I \ssm \{s\}} \qquad\text{if } s \in I,
\end{align*}
where $I\subseteq S$ and $x\in W^I$.
\end{definition}

We have illustrated the facial weak order on the faces of the permutahedron in types~$A_2$ and $B_2$ in~\fref{fig:A2B2FacialWeakOrder} and in type~$A_3$ in~\fref{fig:A3Words}.

\begin{figure}[h]
	\centerline{
	\raisebox{.3cm}{
	\begin{tikzpicture}
		[scale=2.5,
		aface/.style={color=blue},
		bface/.style={color=red},
		face/.style={color=red!50!blue},
		vertex/.style={inner sep=1pt, circle, draw=black, fill=black, thick, color=black},
		middlearrow/.style={decoration={markings, mark=at position 0.6 with {\arrow{#1}}}, postaction={decorate}}
		]
		%
		\node[vertex] (e) at (0,0.42) {};
		\node[vertex] (s) at (-1,1) {};
		\node[vertex] (t) at (1,1) {};
		\node[vertex] (st) at (-1,2) {};
		\node[vertex] (ts) at (1,2) {};
		\node[vertex] (sts) at (0,2.58) {};
		%
		\node[vertex] (Ws) at ($(e)!0.5!(s)$) {};
		\node[vertex] (Wt) at ($(e)!0.5!(t)$) {};
		\node[vertex] (tWs) at ($(t)!0.5!(ts)$) {};
		\node[vertex] (sWt) at ($(s)!0.5!(st)$) {};
		\node[vertex] (stWs) at ($(st)!0.5!(sts)$) {};
		\node[vertex] (tsWt) at ($(ts)!0.5!(sts)$) {};
		%
		\node[vertex] (W) at ($(e)!0.5!(sts)$) {};
		%
		\node[below] at (e) {$e$};
		\node[below=1mm, left] at (s) {$s$};
		\node[below=1mm, right] at (t) {$t$};
		\node[above=1mm, left] at (st) {$st$};
		\node[above=1mm, right] at (ts) {$ts$};
		\node[above] at (sts) {$sts$};
		\node[below=1.5mm, left] at (Ws) {$W_{s}$};
		\node[below=1.5mm, right] at (Wt) {$W_{t}$};
		\node[right] at (tWs) {$tW_{s}$};
		\node[left] at (sWt) {$sW_{t}$};
		\node[above=1.5mm, left] at (stWs) {$stW_{s}$};
		\node[above=1.5mm, right] at (tsWt) {$tsW_{t}$};
		\node[right=1mm] at (W) {$W$};
		%
		\draw[middlearrow={latex}, thick] (e) -- (Ws) node [midway, above right=-1mm] {\footnotesize(1)};
		\draw[middlearrow={latex}, thick] (e) -- (Wt) node [midway, above left=-1mm] {\footnotesize(1)};
		\draw[middlearrow={latex}, thick] (s) -- (sWt) node [midway, right] {\footnotesize(1)};
		\draw[middlearrow={latex}, thick] (t) -- (tWs) node [midway, left] {\footnotesize(1)};
		\draw[middlearrow={latex}, thick] (st) -- (stWs) node [midway, below right=-1mm] {\footnotesize(1)};
		\draw[middlearrow={latex}, thick] (ts) -- (tsWt) node [midway, below left=-1mm] {\footnotesize(1)};
		\draw[middlearrow={latex}, thick] (Ws) -- (s) node [midway, above right=-1mm] {\footnotesize(2)};
		\draw[middlearrow={latex}, thick] (Wt) -- (t) node [midway, above left=-1mm] {\footnotesize(2)};
		\draw[middlearrow={latex}, thick] (sWt) -- (st) node [midway, right] {\footnotesize(2)};
		\draw[middlearrow={latex}, thick] (tWs) -- (ts) node [midway, left] {\footnotesize(2)};
		\draw[middlearrow={latex}, thick] (stWs) -- (sts) node [midway, below right=-1mm] {\footnotesize(2)};
		\draw[middlearrow={latex}, thick] (tsWt) -- (sts) node [midway, below left=-1mm] {\footnotesize(2)};
		%
		\draw[middlearrow={latex}, thick] (Ws) -- (W) node [midway, above left=-1mm] {\footnotesize(1)};
		\draw[middlearrow={latex}, thick] (Wt) -- (W) node [midway, above right=-1mm] {\footnotesize(1)};
		\draw[middlearrow={latex}, thick] (W) -- (stWs) node [midway, below left=-1mm] {\footnotesize(2)};
		\draw[middlearrow={latex}, thick] (W) -- (tsWt) node [midway, below right=-1mm] {\footnotesize(2)};
	\end{tikzpicture}}
\quad
	\begin{tikzpicture}
		[scale=1.5,
		edge/.style={color=blue!95!black},
		face/.style={fill=red!75!blue, fill opacity=0.100000},
		vertex/.style={inner sep=1pt, circle, draw=black, fill=black, thick, color=black},
		middlearrow/.style={decoration={markings, mark=at position 0.6 with {\arrow{#1}}}, postaction={decorate}}
		]
		%
		\node[vertex] (e) at (0,0) {};
		\node[vertex] (s) at (-1.4,0.6) {};
		\node[vertex] (t) at (1.4,0.6) {};
		\node[vertex] (st) at (-2,2) {};
		\node[vertex] (ts) at (2,2) {};
		\node[vertex] (sts) at (-1.4,3.4) {};
		\node[vertex] (tst) at (1.4,3.4) {};
		\node[vertex] (stst) at (0,4) {};
		%
		\node[vertex] (Ws) at ($(e)!0.5!(s)$) {};
		\node[vertex] (Wt) at ($(e)!0.5!(t)$) {};
		\node[vertex] (tWs) at ($(t)!0.5!(ts)$) {};
		\node[vertex] (sWt) at ($(s)!0.5!(st)$) {};
		\node[vertex] (stWs) at ($(st)!0.5!(sts)$) {};
		\node[vertex] (tsWt) at ($(ts)!0.5!(tst)$) {};
		\node[vertex] (stsWt) at ($(sts)!0.5!(stst)$) {};
		\node[vertex] (tstWs) at ($(tst)!0.5!(stst)$) {};
		%
		\node[vertex] (W) at ($(e)!0.5!(stst)$) {};
		%
		\node[below, color=black] at (e) {$e$};
		\node[left, color=black] at (s) {$s$};
		\node[right, color=black] at (t) {$t$};
		\node[left, color=black] at (st) {$st$};
		\node[right, color=black] at (ts) {$ts$};
		\node[left, color=black] at (sts) {$sts$};
		\node[right, color=black] at (tst) {$tst$};
		\node[above, color=black] at (stst) {$stst$};
		\node[below left] at (Ws) {$W_{s}$};
		\node[below right] at (Wt) {$W_{t}$};
		\node[left] at (sWt) {$sW_{t}$};
		\node[right] at (tWs) {$tW_{s}$};
		\node[left] at (stWs) {$stW_{s}$};
		\node[right] at (tsWt) {$tsW_{t}$};
		\node[above left] at (stsWt) {$stsW_{t}$};
		\node[above right] at (tstWs) {$tstW_{s}$};
		\node[right=1mm] at (W) {$W$};
		\draw[middlearrow={latex}, thick] (e) -- (Ws) node [midway, above right=-1mm] {\footnotesize(1)};
		\draw[middlearrow={latex}, thick] (e) -- (Wt) node [midway, above left=-1mm] {\footnotesize(1)};
		\draw[middlearrow={latex}, thick] (s) -- (sWt) node [midway, right] {\footnotesize(1)};
		\draw[middlearrow={latex}, thick] (t) -- (tWs) node [midway, left] {\footnotesize(1)};
		\draw[middlearrow={latex}, thick] (st) -- (stWs) node [midway, below right=-1mm] {\footnotesize(1)};
		\draw[middlearrow={latex}, thick] (ts) -- (tsWt) node [midway, below left=-1mm] {\footnotesize(1)};
		\draw[middlearrow={latex}, thick] (sts) -- (stsWt) node [midway, below] {\footnotesize(1)};
		\draw[middlearrow={latex}, thick] (tst) -- (tstWs) node [midway, below] {\footnotesize(1)};
		\draw[middlearrow={latex}, thick] (Ws) -- (s) node [midway, above right=-1mm] {\footnotesize(2)};
		\draw[middlearrow={latex}, thick] (Wt) -- (t) node [midway, above left=-1mm] {\footnotesize(2)};
		\draw[middlearrow={latex}, thick] (sWt) -- (st) node [midway, right] {\footnotesize(2)};
		\draw[middlearrow={latex}, thick] (tWs) -- (ts) node [midway, left] {\footnotesize(2)};
		\draw[middlearrow={latex}, thick] (stWs) -- (sts) node [midway, below right=-1mm] {\footnotesize(2)};
		\draw[middlearrow={latex}, thick] (tsWt) -- (tst) node [midway, below left=-1mm] {\footnotesize(2)};
		\draw[middlearrow={latex}, thick] (stsWt) -- (stst) node [midway, below] {\footnotesize(2)};
		\draw[middlearrow={latex}, thick] (tstWs) -- (stst) node [midway, below] {\footnotesize(2)};
		%
		\draw[middlearrow={latex}, thick] (Ws) -- (W) node [midway, left] {\footnotesize(1)};
		\draw[middlearrow={latex}, thick] (Wt) -- (W) node [midway, right] {\footnotesize(1)};
		\draw[middlearrow={latex}, thick] (W) -- (stsWt) node [midway, left] {\footnotesize(2)};
		\draw[middlearrow={latex}, thick] (W) -- (tstWs) node [midway, right] {\footnotesize(2)};
	\end{tikzpicture}
}
	\caption{The facial weak order on the standard parabolic cosets of the Coxeter group of types~$A_2$ and~$B_2$. Edges are labelled with the cover relations of type (1) or (2) as in Definition~\ref{def:facialWeakOrder}.}
	\label{fig:A2B2FacialWeakOrder}
\end{figure}

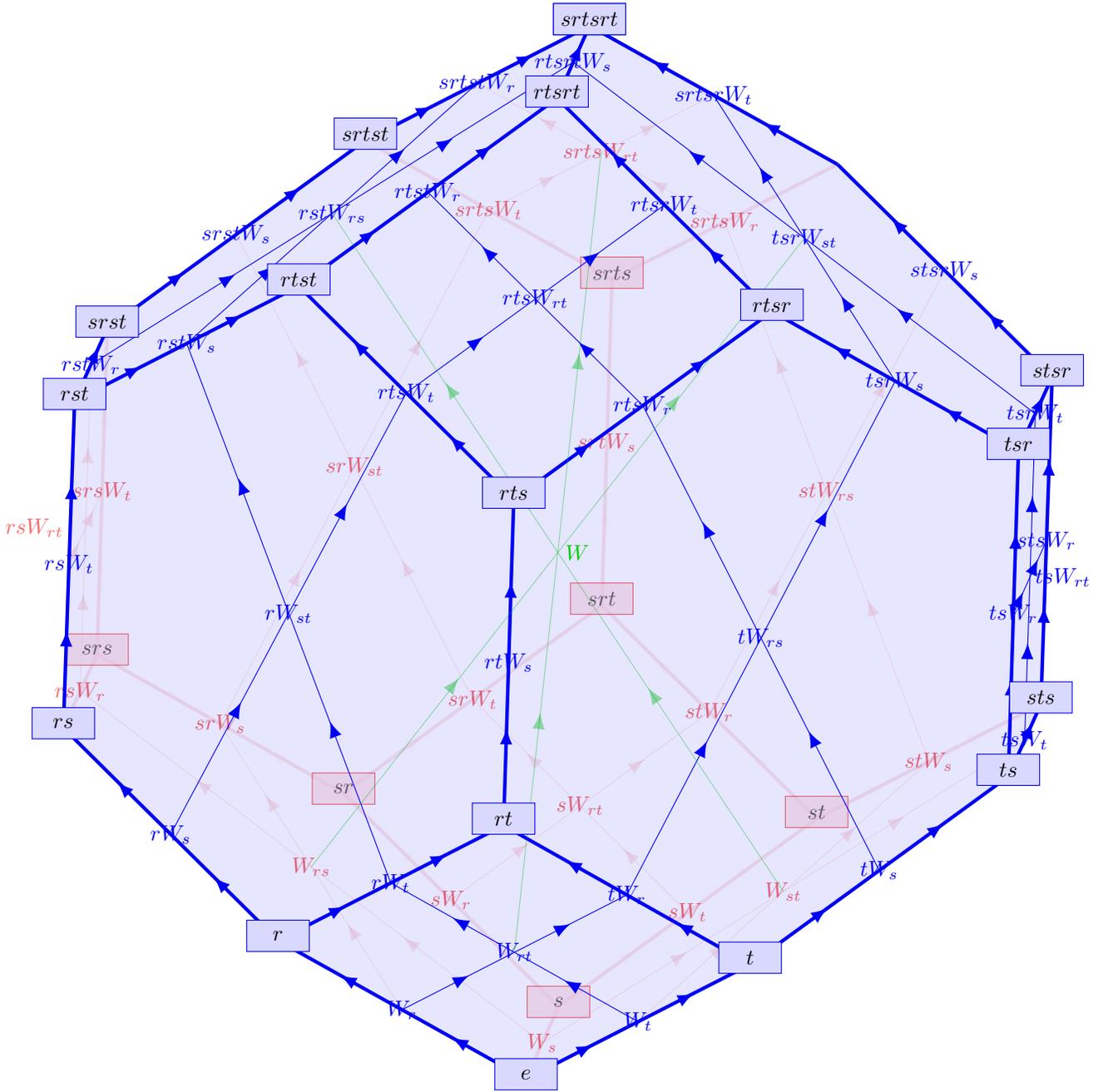
\begin{figure}
	\centerline{
	\begin{tikzpicture}%
		[x={(0.767968cm, 0.559570cm)},
		y={(-0.407418cm, 0.802202cm)},
		z={(0.494203cm, -0.208215cm)},
		scale=3.80000,
		vertex/.style={draw=blue!95!black, anchor=base, rectangle, fill=blue!15!white, minimum width=1cm, minimum height=0.5cm},
		bvertex/.style={draw=red!95!black, anchor=base, rectangle, fill=red!15!white, minimum width=1cm, minimum height=0.5cm, opacity=0.6},
		edge/.style={color=blue!95!black, ultra thick, decoration={markings, mark=at position 0.3 with {\arrow{latex}}, mark=at position 0.75 with {\arrow{latex}}}, postaction={decorate}},
		bedge/.style={color=red!95!black, fill=red!15!white, opacity=0.1, ultra thick, decoration={markings, mark=at position 0.3 with {\arrow{latex}}, mark=at position 0.75 with {\arrow{latex}}}, postaction={decorate}},
		fedge/.style={color=blue!95!black, decoration={markings, mark=at position 0.3 with {\arrow[ultra thick]{latex}}, mark=at position 0.75 with {\arrow[ultra thick]{latex}}}, postaction={decorate}},
		bfedge/.style={color=red!95!black, opacity=0.1, thin, decoration={markings, mark=at position 0.3 with {\arrow[ultra thick]{latex}}, mark=at position 0.75 with {\arrow[ultra thick]{latex}}}, postaction={decorate}},
		cedge/.style={color=green!80!black, opacity=.4, thin, decoration={markings, mark=at position 0.3 with {\arrow[ultra thick]{latex}}, mark=at position 0.75 with {\arrow[ultra thick]{latex}}}, postaction={decorate}},
		facet/.style={fill=blue, fill opacity=0.1}]
		%
		\coordinate (r) at (-2.12, -0.408, 0.577);
		\coordinate (rs) at (-2.12, 0.408, -0.577);
		\coordinate (e) at (-1.41, -1.63, 0.577);
		\coordinate (srs) at (-1.41, 0.0000200, -1.73);
		\coordinate (rt) at (-1.41, 0.0000300, 1.73);
		\coordinate (rst) at (-1.41, 1.63, -0.577);
		\coordinate (s) at (-0.707, -2.04, -0.577);
		\coordinate (sr) at (-0.707, -1.22, -1.73);
		\coordinate (t) at (-0.707, -1.22, 1.73);
		\coordinate (rts) at (-0.707, 1.22, 1.73);
		\coordinate (rtst) at (-0.707, 2.04, 0.577);
		\coordinate (srst) at (-0.707, 1.22, -1.73);
		\coordinate (srt) at (0.707, -1.22, -1.73);
		\coordinate (rtsrt) at (0.707, 2.04, 0.577);
		\coordinate (st) at (0.707, -2.04, -0.577);
		\coordinate (ts) at (0.707, -1.22, 1.73);
		\coordinate (srtst) at (0.707, 1.22, -1.73);
		\coordinate (rtsr) at (0.707, 1.22, 1.73);
		\coordinate (stsr) at (2.12, -0.408, 0.577);
		\coordinate (sts) at (1.41, -1.63, 0.577);
		\coordinate (srts) at (1.41, 0.0000200, -1.73);
		\coordinate (tsr) at (1.41, 0.0000300, 1.73);
		\coordinate (srtsrt) at (1.41, 1.63, -0.577);
		\coordinate (srtsr) at (2.12, 0.408, -0.577);
		\coordinate (W) at (0, 0, 0);
		%
		\draw[bedge] (rs) -- (srs) node (rsWr) [midway, opacity=.6] {$rsW_r$};
		\draw[bedge] (e) -- (s) node (Ws) [midway, opacity=.6] {$W_s$};
		\draw[bedge] (sr) -- (srs) node (srWs) [midway, opacity=.6] {$srW_s$};
		\draw[bedge] (srs) -- (srst) node (srsWt) [midway, opacity=.6] {$srsW_t$};
		\draw[bedge] (s) -- (sr) node (sWr) [midway, opacity=.6] {$sW_r$};
		\draw[bedge] (s) -- (st) node (sWt) [midway, opacity=.6] {$sW_t$};
		\draw[bedge] (sr) -- (srt) node (srWt) [midway, opacity=.6] {$srW_t$};
		\draw[bedge] (st) -- (srt) node (stWr) [midway, opacity=.6] {$stW_r$};
		\draw[bedge] (srt) -- (srts) node (srtWs) [midway, opacity=.6] {$srtW_s$};
		\draw[bedge] (st) -- (sts) node (stWs) [midway, opacity=.6] {$stW_s$};
		\draw[bedge] (srts) -- (srtst) node (srtsWt) [midway, opacity=.6] {$srtsW_t$};
		\draw[bedge] (srts) -- (srtsr) node (srtsWr) [midway, opacity=.6] {$srtsW_r$};
		\draw[bedge] (e) -- (r) node (Wr) [midway] {};
		\draw[bedge] (r) -- (rs) node (rWs) [midway] {};
		\draw[bedge] (rs) -- (rst) node (rsWt) [midway] {};
		\draw[bedge] (rst) -- (srst) node (rstWr) [midway] {};
		\draw[bedge] (srst) -- (srtst) node (srstWs) [midway] {};
		\draw[bedge] (srtst) -- (srtsrt) node (srtstWr) [midway] {};
		\draw[bedge] (e) -- (t) node (Wt) [midway] {};
		\draw[bedge] (t) -- (ts) node (tWs) [midway] {};
		\draw[bedge] (ts) -- (sts) node (tsWt) [midway] {};
		\draw[bedge] (sts) -- (stsr) node (stsWr) [midway] {};
		\draw[bedge] (stsr) -- (srtsr) node (stsrWs) [midway] {};
		\draw[bedge] (srtsr) -- (srtsrt) node (srtsrWt) [midway] {};
		\draw[bfedge] (Wr.center) -- (srWs.center);
		\draw[bfedge] (Ws.center) -- (rsWr.center) node (Wrs) [midway, opacity=.6] {$W_{rs}$};
		\draw[bfedge] (Ws.center) -- (tsWt.center);
		\draw[bfedge] (Wt.center) -- (stWs.center) node (Wst) [midway, opacity=.6] {$W_{st}$};
		\draw[bfedge] (sWr.center) -- (stWr.center);
		\draw[bfedge] (sWt.center) -- (srWt.center) node (sWrt) [midway, opacity=.6] {$sW_{rt}$};
		\draw[bfedge] (stWr.center) -- (stsrWs.center);
		\draw[bfedge] (stWs.center) -- (srtsWr.center) node (stWrs) [midway, opacity=.6] {$stW_{rs}$};
		\draw[bfedge] (srWs.center) -- (srtsWt.center);
		\draw[bfedge] (srWt.center) -- (srstWs.center) node (srWst) [midway, opacity=.6] {$srW_{st}$};
		\draw[bfedge] (rsWr.center) -- (rstWr.center);
		\draw[bfedge] (rsWt.center) -- (srsWt.center) node (rsWrt) [midway, opacity=.6] {$\!\!\!\!\!\!\!\!\!\!\!\!\!\!\!\!\!\!\!\!\!\!\!\!\!\!\!\!rsW_{rt}$};
		\draw[bfedge] (srtsWt.center) -- (srtsrWt.center);
		\draw[bfedge] (srtsWr.center) -- (srtstWr.center) node (srtsWrt) [midway, opacity=.6] {$srtsW_{rt}$};
		\node[bvertex] at (srt) {$srt$};
		\node[bvertex] at (st) {$st$};
		\node[bvertex] at (srts) {$srts$};
		\node[bvertex] at (srs) {$srs$};
		\node[bvertex] at (s) {$s$};
		\node[bvertex] at (sr) {$sr$};
		%
		\draw[edge] (e) -- (r) node (Wr) [midway] {$W_r$};
		\draw[edge] (r) -- (rs) node (rWs) [midway] {$rW_s$};
		\draw[edge] (r) -- (rt) node (rWt) [midway] {$rW_t$};
		\draw[edge] (rs) -- (rst) node (rsWt) [midway] {$rsW_t$};
		\draw[edge] (e) -- (t) node (Wt) [midway] {$W_t$};
		\draw[edge] (t) -- (rt) node (tWr) [midway] {$tW_r$};
		\draw[edge] (rt) -- (rts) node (rtWs) [midway] {$rtW_s$};
		\draw[edge] (rst) -- (rtst) node (rstWs) [midway] {$rstW_s$};
		\draw[edge] (rst) -- (srst) node (rstWr) [midway] {$rstW_r$};
		\draw[edge] (t) -- (ts) node (tWs) [midway] {$tW_s$};
		\draw[edge] (rts) -- (rtst) node (rtsWt) [midway] {$rtsW_t$};
		\draw[edge] (rts) -- (rtsr) node (rtsWr) [midway] {$rtsW_r$};
		\draw[edge] (rtst) -- (rtsrt) node (rtstWr) [midway] {$rtstW_r$};
		\draw[edge] (srst) -- (srtst) node (srstWs) [midway] {$srstW_s$};
		\draw[edge] (rtsr) -- (rtsrt) node (rtsrWt) [midway] {$rtsrW_t$};
		\draw[edge] (rtsrt) -- (srtsrt) node (rtsrtWs) [midway] {$rtsrtW_s$};
		\draw[edge] (ts) -- (sts) node (tsWt) [midway] {$tsW_t$};
		\draw[edge] (ts) -- (tsr) node (tsWr) [midway] {$tsW_r$};
		\draw[edge] (srtst) -- (srtsrt) node (srtstWr) [midway] {$srtstW_r$};
		\draw[edge] (tsr) -- (rtsr) node (tsrWs) [midway] {$tsrW_s$};
		\draw[edge] (sts) -- (stsr) node (stsWr) [midway] {$stsW_r$};
		\draw[edge] (tsr) -- (stsr) node (tsrWt) [midway] {$tsrW_t$};
		\draw[edge] (stsr) -- (srtsr) node (stsrWs) [midway] {$stsrW_s$};
		\draw[edge] (srtsr) -- (srtsrt) node (srtsrWt) [midway] {$srtsrW_t$};
		\draw[fedge] (Wr.center) -- (tWr.center);
		\draw[fedge] (Wt.center) -- (rWt.center) node (Wrt) [midway] {$W_{rt}$};
		\draw[fedge] (rWt.center) -- (rstWs.center);
		\draw[fedge] (rWs.center) -- (rtsWt.center) node (rWst) [midway] {$rW_{st}$};
		\draw[fedge] (tWr.center) -- (tsrWs.center);
		\draw[fedge] (tWs.center) -- (rtsWr.center) node (tWrs) [midway] {$tW_{rs}$};
		\draw[fedge] (rtsWt.center) -- (rtsrWt.center);
		\draw[fedge] (rtsWr.center) -- (rtstWr.center) node (rtsWrt) [midway] {$rtsW_{rt}$};
		\draw[fedge] (rstWr.center) -- (rtsrtWs.center);
		\draw[fedge] (rstWs.center) -- (srtstWr.center) node (rstWrs) [midway] {$rstW_{rs}$};
		\draw[fedge] (tsrWt.center) -- (rtsrtWs.center);
		\draw[fedge] (tsrWs.center) -- (srtsrWt.center) node (tsrWst) [midway] {$tsrW_{st}$};
		\draw[fedge] (tsWt.center) -- (tsrWt.center);
		\draw[fedge] (tsWr.center) -- (stsWr.center) node (tsWrt) [midway] {$\qquad\quad tsW_{rt}$};
		\draw[cedge] (Wrs.center) -- (tsrWst.center);
		\draw[cedge] (Wrt.center) -- (srtsWrt.center);
		\draw[cedge] (Wst.center) -- (rstWrs.center);
		\fill[facet] (srtsr) -- (stsr) -- (tsr) -- (rtsr) -- (rtsrt) -- (srtsrt) -- cycle {};
		\fill[facet] (t) -- (e) -- (r) -- (rt) -- cycle {};
		\fill[facet] (rtst) -- (rst) -- (rs) -- (r) -- (rt) -- (rts) -- cycle {};
		\fill[facet] (stsr) -- (tsr) -- (ts) -- (sts) -- cycle {};
		\fill[facet] (srtsrt) -- (rtsrt) -- (rtst) -- (rst) -- (srst) -- (srtst) -- cycle {};
		\fill[facet] (rtsr) -- (rts) -- (rtst) -- (rtsrt) -- cycle {};
		\fill[facet] (tsr) -- (ts) -- (t) -- (rt) -- (rts) -- (rtsr) -- cycle {};
		\node[vertex] at (r) {$r$};
		\node[vertex] at (rs) {$rs$};
		\node[vertex] at (e) {$e$};
		\node[vertex] at (rt) {$rt$};
		\node[vertex] at (rst) {$rst$};
		\node[vertex] at (t) {$t$};
		\node[vertex] at (rts) {$rts$};
		\node[vertex] at (rtst) {$rtst$};
		\node[vertex] at (srst) {$srst$};
		\node[vertex] at (rtsrt) {$rtsrt$};
		\node[vertex] at (ts) {$ts$};
		\node[vertex] at (srtst) {$srtst$};
		\node[vertex] at (rtsr) {$rtsr$};
		\node[vertex] at (stsr) {$stsr$};
		\node[vertex] at (sts) {$sts$};
		\node[vertex] at (tsr) {$tsr$};
		\node[vertex] at (srtsrt) {$srtsrt$};
		\node[vertex] at (srtsrt) {$srtsrt$};
		\node[color=green!80!black, right] at (W) {$W$};
	\end{tikzpicture}
}
	\caption{The facial weak order on the standard parabolic cosets of the Coxeter group of type~$A_3$.}
	\label{fig:A3Words}
\end{figure}

\begin{remark}
\begin{enumerate}
\item These cover relations translate to the following geometric conditions on faces of the permutahedron~$\Perm(W)$: a face~$F$ is covered by a face~$G$ if and only if either $F$ is a facet of~$G$ with the same weak order minimum, or $G$ is a facet of~$F$ with the same weak order maximum.
\item  Consider the natural inclusion  $x\mapsto xW_\varnothing$ from $W$ to $\CoxeterComplex{W}$. For~$x \precdot xs$ in weak order, we have~$xW_\varnothing \precdot xW_{\{s\}} \precdot xsW_\varnothing$ in facial weak order. By transitivity, all relations in the classical weak order are thus relations in the facial weak order. Although it is not obvious at first sight from Definition~\ref{def:facialWeakOrder}, we will see in Corollary~\ref{cor:WeakRestriction} that the restriction of the facial weak order  to the vertices of $\CoxeterComplex{W}$ precisely coincides with the weak order. 

\item It is known that for $I\subseteq S$ the set of minimal length coset representatives~$W^I$ has a maximal length element $\woo \wo{I}$. The element $\wo{I}\wo{I \ssm \{s\}}$ is therefore the maximal length element of the set $W^{I \ssm \{s\}}_I = W_I\cap W^{I \ssm \{s\}}$, which is the set of minimal coset representatives of the cosets $W_I/W_{I \ssm \{s\}}$, see \cite[Section~2.2]{GeckPfeiffer} for more details.
\end{enumerate}
\end{remark}

\begin{example}
\label{exm:weakOrderOrderedPartitions}
As already mentioned, the facial weak order was first considered by D.~Krob, M.~Latapy, J.-C.~Novelli, H.-D.~Phan and S.~Schwer~\cite{KrobLatapyNovelliPhanSchwer} in type~$A$. The standard parabolic cosets in type~$A_{n-1}$ correspond to ordered partitions of~$[n]$, see Example~\ref{exm:typeA}. The weak order on ordered partitions of~$[n]$ is the transitive closure of the cover relations
\begin{gather*}
\text{(1)}\qquad \lambda_1 | \cdots | \lambda_i | \lambda_{i+1} | \cdots | \lambda_k \; \precdot \; \lambda_1 | \cdots | \lambda_i\lambda_{i+1} | \cdots | \lambda_k \qquad\text{if } \lambda_i \ll \lambda_{i+1}, \\
\text{(2)}\qquad \lambda_1 | \cdots | \lambda_i\lambda_{i+1} | \cdots | \lambda_k \; \precdot \; \lambda_1 | \cdots | \lambda_i | \lambda_{i+1} | \cdots | \lambda_k \qquad\text{if } \lambda_{i+1} \ll \lambda_i,
\end{gather*}
where the notation~$X \ll Y$ is defined for~$X, Y \subseteq \N$ by
\[
X \ll Y \; \iff \; \max(X) < \min(Y) \; \iff \; x < y \text{ for all~$x \in X$ and~$y \in Y$.}
\]
\end{example}

This paper gives two convenient characterizations of the facial weak order (see Section~\ref{subsec:weakOrderCharacterizations}). The first one uses sets of roots (see Section~\ref{subsec:rootWeightSet}) to generalize the geometric characterization of the weak order with inversion sets. The second one uses weak order comparisons on the minimal and maximal representatives of the cosets. The advantage of these definitions is that they give immediate global comparisons, whereas the definition of~\cite{PalaciosRonco} uses cover relations. We use these new characterizations of the facial weak order to prove that this poset is in fact a lattice (see Section~\ref{subsec:weakOrderLattice}) and to study some of its order theoretic properties (see Section~\ref{subsec:propertiesFacialWeakOrder}).


\subsection{Root and weight inversion sets of standard parabolic cosets}
\label{subsec:rootWeightSet}

We now define a collection of roots and a collection of weights associated to each standard parabolic coset. The notion of root inversion sets of standard parabolic cosets generalizes the inversion sets of elements of~$W$ (see Proposition~\ref{prop:rootSetSingleElement}). We will use root inversion sets extensively for our study of the facial weak order.  In contrast, weight inversion sets are not essential for our study of the facial weak order but will be relevant when we study its lattice congruences. We define them here as they are polar to the root inversion sets and appear naturally in our geometric intuition of the $W$-Coxeter arrangement and of the $W$-permutahedron (see Proposition~\ref{prop:rootSetWeightSetConesPerm}).

\begin{definition}
\label{def:rootSetWeightSet}
The \defn{root inversion set}~$\rootSet(xW_I)$ and \defn{weight inversion set}~$\weightSet(xW_I)$ of a standard parabolic coset~$xW_I$~are respectively defined by
\[
\rootSet(xW_I) \eqdef x \big( \Phi^- \cup \Phi^+_I \big) \; \subseteq \; \Phi
\qquad\text{and}\qquad
\weightSet(xW_I) \eqdef x \big( \nabla_{S \ssm I} \big) \; \subseteq \; \Omega.
\]
\end{definition}

\begin{remark}
Root inversion sets are known as ``parabolic subsets of roots'' in the sense of~\cite[Section~1.7]{Bourbaki}. In particular for any~$x \in W$, the stabilizer of~$\rootSet(xW_I)$ for the action of $W$ on the subsets of~$\Phi$ is the parabolic subgroup~$x W_I x^{-1}$.
\end{remark}

\begin{example}
\label{exm:halfInversionTable}
Consider the facial weak order on the Coxeter group of type~$A_{n-1}$, see Examples~\ref{exm:typeA} and~\ref{exm:weakOrderOrderedPartitions}. Following~\cite{KrobLatapyNovelliPhanSchwer}, we define the \defn{inversion table} ${\inv(\lambda) \in \{-1,0,1\}^{\binom{n}{2}}}$ of an ordered partition~$\lambda$ of~$[n]$ by
\[
\inv(\lambda)_{i,j} = \begin{cases}
-1 & \text{if } \lambda^{-1}(i) < \lambda^{-1}(j), \\
0 & \text{if } \lambda^{-1}(i) = \lambda^{-1}(j), \\
1 & \text{if } \lambda^{-1}(i) > \lambda^{-1}(j). \\
\end{cases}
\]
The root inversion set of a parabolic coset~$xW_I$ of~$\fS_n$ is encoded by the inversion table of the corresponding ordered partition~$\lambda$. We have
\[
\inv(\lambda)_{i,j} = \begin{cases}
-1 & \text{if } e_i-e_j \in \rootSet(xW_I) \text{ but } e_j-e_i \notin \rootSet(xW_I), \\
0 & \text{if } e_i-e_j \in \rootSet(xW_I) \text{ and } e_j-e_i \in \rootSet(xW_I), \\
1 & \text{if } e_i-e_j \notin \rootSet(xW_I) \text{ but } e_j-e_i \in \rootSet(xW_I). \\
\end{cases}
\]
\end{example}

The following statement gives the precise connection to the geometry of the $W$-permutahedron and is illustrated on \fref{fig:rootSetWeightSet2} for the Coxeter group of type~$A_2$.

\begin{proposition}
\label{prop:rootSetWeightSetConesPerm}
Let~$xW_I$ be a standard parabolic coset of~$W$. Then
\begin{enumerate}[(i)]
\item
\label{item:innerPrimalCone}
$\cone(\rootSet(xW_I))$ is the inner primal cone of the face~$\face(xW_I)$ of~$\Perm(W)$,

\item
\label{item:outerNormalCone}
$\cone(\weightSet(xW_I))$ is the outer normal cone of the face~$\face(xW_I)$ of~$\Perm(W)$,

\item
\label{item:polarCones}
the cones generated by the root inversion set and by the weight inversion set of~$xW_I$ are polar to each other:
\[
\cone(\rootSet(xW_I))\polar = \cone(\weightSet(xW_I)).
\]
\end{enumerate}
\end{proposition}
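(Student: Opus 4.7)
The plan is to establish (ii) and (iii) directly, then deduce (i) immediately from the general fact that the outer normal cone of a face of a polytope is polar to its inner primal cone. Since $x$ acts as an isometry of $V$ and carries $\face(W_I)$, $\rootSet(W_I)$, and $\weightSet(W_I)$ to $\face(xW_I)$, $\rootSet(xW_I)$, and $\weightSet(xW_I)$ respectively, it suffices to treat the case $x = e$ throughout.

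For (iii), I would compute $\cone(\nabla_{S \ssm I})\polar$ in the basis $\Delta$. Writing $v = \sum_{s \in S} a_s \alpha_s$ and using $\dotprod{\alpha_s}{\omega_t} = \frac{1}{2}\dotprod{\alpha_s}{\alpha_s} \, \delta_{s=t}$, the polarity condition $\dotprod{v}{\omega_t} \le 0$ for every $t \in S \ssm I$ collapses to the simple condition $a_t \le 0$ for every $t \in S \ssm I$. One inclusion is then immediate: any negative root is a nonpositive combination of simple roots, and any root of $\Phi_I^+$ has vanishing coefficient on $\Delta_{S \ssm I}$, so both lie in this halfspace-like cone. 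For the reverse inclusion, I would split $v = v^- + v^I$, where $v^- = \sum_{s \notin I} a_s \alpha_s \in \cone(-\Delta_{S \ssm I}) \subseteq \cone(\Phi^-)$ and $v^I = \sum_{s \in I} a_s \alpha_s \in \vect(\Delta_I) = \cone(\Phi_I) \subseteq \cone(\Phi^- \cup \Phi_I^+)$, using $\Phi_I = \Phi_I^+ \cup (-\Phi_I^+)$.

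For (ii), the key input is that the normal fan of $\Perm(W)$ is the Coxeter fan; by $W$-symmetry, the vertex $w(p)$ has outer normal cone $w(\fundamentalChamber) = w(\cone(\nabla))$. Consequently the outer normal cone of the face $\face(W_I) = \conv\set{w(p)}{w \in W_I}$ equals the intersection $\bigcap_{w \in W_I} w(\fundamentalChamber)$. The main obstacle is identifying this intersection with $\cone(\nabla_{S \ssm I})$. The inclusion $\cone(\nabla_{S \ssm I}) \subseteq \bigcap_{w \in W_I} w(\fundamentalChamber)$ holds because every $w \in W_I$ fixes each $\omega_s$ with $s \in S \ssm I$ pointwise (since $\dotprod{\omega_s}{\alpha_t} = 0$ for $t \in I$ and $s \notin I$). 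Conversely, any $\omega$ in the intersection lies in $\fundamentalChamber \cap s(\fundamentalChamber)$ for every $s \in I$, hence on the wall $\alpha_s^\perp$ separating these adjacent chambers; combined with the chamber condition $\omega = \sum_s c_s \omega_s$ with $c_s \ge 0$, the earlier computation of inner products forces $c_s = 0$ for all $s \in I$, so $\omega \in \cone(\nabla_{S \ssm I})$. Once (ii) and (iii) are in hand, (i) follows instantly by polarity of the inner primal and outer normal cones.
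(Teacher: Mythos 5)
Your argument is correct, but it inverts the logical structure of the paper's proof and supplies more of the underlying geometry. The paper's proof simply asserts \eqref{item:innerPrimalCone} and \eqref{item:outerNormalCone} for $x=e$ as known facts about the permutahedron --- that the inner primal cone of $\face(W_I)$ is generated by $\Phi^-\cup\Phi_I^+$ and its outer normal cone by $\nabla_{S\ssm I}$ --- applies the orthogonal transformation $x$, and then gets \eqref{item:polarCones} for free from the general polarity between inner primal and outer normal cones. You instead prove \eqref{item:outerNormalCone} from the fact that the normal fan of $\Perm(W)$ is the Coxeter fan, by intersecting the chambers $w(\fundamentalChamber)$ for $w\in W_I$ and identifying that intersection with $\cone(\nabla_{S\ssm I})$ via the wall-crossing argument; you prove \eqref{item:polarCones} by an explicit computation of $\cone(\nabla_{S\ssm I})\polar$ in the basis $\Delta$; and only then do you deduce \eqref{item:innerPrimalCone} by polarity. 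What your route buys is that you never have to justify the paper's claim about the inner primal cone directly (which would require identifying the edge directions of $\Perm(W)$ at a face); the price is the polar-cone computation, which is correct --- the decomposition $v=v^-+v^I$ together with $\cone(\Phi_I)=\vect(\Delta_I)$ does the job. Two routine points you should make explicit: your computation yields $\cone(\weightSet(eW_I))\polar=\cone(\rootSet(eW_I))$, so biduality of polyhedral cones is needed to read off the equality as stated in \eqref{item:polarCones}; and the reduction to $x=e$ uses that the orthogonal map $x$ commutes with taking polars and carries inner primal and outer normal cones of $\face(W_I)$ to those of $\face(xW_I)=x\big(\face(W_I)\big)$.
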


\begin{proof}
On the one hand, the inner primal cone of~$\face(W_I)$ is generated by the vectors~${\Phi^- \cup \Phi^+_I = \rootSet(eW_I)}$. On the other hand, the outer normal cone of~$\face(W_I)$ is generated by the normal vectors of~$\face(W_I)$, \ie by~$\nabla_{S \ssm I} = \weightSet(eW_I)$. The first two points then follow by applying the orthogonal transformation~$x$ and the last point is an immediate consequence of the first two.
\end{proof}

\begin{figure}
\DeclareDocumentCommand{\rs}{ O{1.1cm} O{->} m m O{0}} {
	\def \radius {#1}
	\def \inputPoints{#3}
	\def \excludeRoots{#4}
	\def \style {#2}
	\def \initialRotation {#5}

	\pgfmathtruncatemacro{\points}{\inputPoints * 2}
	\pgfmathsetmacro{\degrees}{360 / \points}
	
	\coordinate (0) at (0,0);
	
	\foreach \x in {1,...,\points}{%
		\pgfmathsetmacro{\location}{(\points+(\x-1))*\degrees + \initialRotation}
		
		\coordinate (\x) at (\location:\radius);
	}

	\ifthenelse{\equal{\excludeRoots}{}}{
		\foreach \x in {1,...,\points}{%
			\draw[\style, ultra thick] (0) -- (\x);
		}
	}{
		\foreach \x in {1,...,\points}{%
			\edef \showPoint {1};

			\foreach \y in \excludeRoots {
				\ifthenelse{\equal{\x}{\y}}{
					\xdef \showPoint {0};
				}{}
			}
			
			\ifthenelse{\equal{\showPoint}{1}}{
				\draw[->, ultra thick] (0) -- (\x);
			}{}
		}
	}  
}

\DeclareDocumentCommand{\ws}{ O{1.1cm} O{->} m m O{30}} {\rs[#1][#2]{#3}{#4}[#5]}

\centerline{
	\begin{tikzpicture}
		[scale=2.5,
		aface/.style={color=blue},
		bface/.style={color=red},
		face/.style={color=red!50!blue}
		]
		%
		\coordinate (e) at (0,0.42);
		\coordinate (s) at (-1,1);
		\coordinate (t) at (1,1);
		\coordinate (st) at (-1,2);
		\coordinate (ts) at (1,2);
		\coordinate (sts) at (0,2.58);
		%
		\coordinate (Ws) at (-0.47,0.69);
		\coordinate (Wt) at (0.47,0.69);
		\coordinate (tWs) at (1,1.5);
		\coordinate (sWt) at (-1,1.5);
		\coordinate (stWs) at (-0.47,2.31);
		\coordinate (tsWt) at (0.47,2.31);
		%
		\coordinate (W) at (0,1.5);
		%
		\draw (e) -- (s);
		\draw (e) -- (t);
		\draw (s) -- (st);
		\draw (t) -- (ts);
		\draw (st) -- (sts);
		\draw (ts) -- (sts);
		%
		\draw (Ws) -- (W);
		\draw (Wt) -- (W);
		\draw (W) -- (stWs);
		\draw (W) -- (tsWt);
		%
		\begin{scope}[shift={(e)}, scale=0.25]
			\node[below] {$\rootSet(e)$};
			\begin{scope}[scale=0.8]
				\rs{3}{1,2,3}[210]
			\end{scope}
		\end{scope}
		\begin{scope}[shift={(s)}, scale=0.25]
			\node[left] {$\rootSet(s)$};
			\begin{scope}[scale=0.8]
				\rs{3}{1,2,6}[210]
			\end{scope}
		\end{scope}
		\begin{scope}[shift={(t)}, scale=0.25]
			\node[right] {$\rootSet(t)$};
			\begin{scope}[scale=0.8]
				\rs{3}{2,3,4}[210]
			\end{scope}
		\end{scope}
		\begin{scope}[shift={(st)}, scale=0.25]
			\node[left] {$\rootSet(st)$};
			\begin{scope}[scale=0.8]
				\rs{3}{1,5,6}[210]
			\end{scope}
		\end{scope}
		\begin{scope}[shift={(ts)}, scale=0.25]
			\node[right] {$\rootSet(ts)$};
			\begin{scope}[scale=0.8]
				\rs{3}{3,4,5}[210]
			\end{scope}
		\end{scope}
		\begin{scope}[shift={(sts)}, scale=0.25]
			\node[above] {$\rootSet(sts)$};
			\begin{scope}[scale=0.8]
				\rs{3}{4,5,6}[210]
			\end{scope}
		\end{scope}
		%
		\begin{scope}[shift={(Ws)}, scale=0.25,bface]
			\node[bface, below left] {$\rootSet(W_{s})$};
			\begin{scope}[scale=0.8]
				\rs{3}{1,2}[210]
			\end{scope}
		\end{scope}
		\begin{scope}[shift={(Wt)}, scale=0.25, aface]
			\node[aface,below right] {$\rootSet(W_{t})$};
			\begin{scope}[scale=0.8]
				\rs{3}{2,3}[210]
			\end{scope}
		\end{scope}
		\begin{scope}[shift={(sWt)}, scale=0.25, aface]
			\node[aface, left] {$\rootSet(sW_{t})$};
			\begin{scope}[scale=0.8]
				\rs{3}{1,6}[210]
			\end{scope}
		\end{scope}
		\begin{scope}[shift={(tWs)}, scale=0.25,bface]
			\node[bface, right] {$\rootSet(tW_{s})$};
			\begin{scope}[scale=0.8]
				\rs{3}{3,4}[210]
			\end{scope}
		\end{scope}
		\begin{scope}[shift={(stWs)}, scale=0.25,bface]
			\node[bface,above left] {$\rootSet(stW_{s})$};
			\begin{scope}[scale=0.8]
				\rs{3}{5,6}[210]
			\end{scope}
		\end{scope}
		\begin{scope}[shift={(tsWt)}, scale=0.25, aface]
			\node[aface, above right] {$\rootSet(tsW_{t})$};
			\begin{scope}[scale=0.8]
				\rs{3}{4,5}[210]
			\end{scope}
		\end{scope}
		%
		\begin{scope}[shift={(W)}, scale=0.25,face]
			\node[below,face] at (0, -.7) {$\rootSet(W)$};
			\begin{scope}[scale=0.8]
				\rs{3}{}[210]
			\end{scope}
		\end{scope}
	\end{tikzpicture}
	\begin{tikzpicture}
		[scale=2.5,
		aface/.style={color=blue},
		bface/.style={color=red},
		face/.style={color=red!50!blue}
		]
		%
		\coordinate (e) at (0,0.42);
		\coordinate (s) at (-1,1);
		\coordinate (t) at (1,1);
		\coordinate (st) at (-1,2);
		\coordinate (ts) at (1,2);
		\coordinate (sts) at (0,2.58);
		%
		\coordinate (Ws) at (-0.47,0.69);
		\coordinate (Wt) at (0.47,0.69);
		\coordinate (tWs) at (1,1.5);
		\coordinate (sWt) at (-1,1.5);
		\coordinate (stWs) at (-0.47,2.31);
		\coordinate (tsWt) at (0.47,2.31);
		%
		\coordinate (W) at (0,1.5);
		%
		\draw (e) -- (s);
		\draw (e) -- (t);
		\draw (s) -- (st);
		\draw (t) -- (ts);
		\draw (st) -- (sts);
		\draw (ts) -- (sts);
		%
		\draw (Ws) -- (W);
		\draw (Wt) -- (W);
		\draw (W) -- (stWs);
		\draw (W) -- (tsWt);
		%
		\begin{scope}[shift={(e)}, scale=0.25]
			\node[above] at (0, .2) {$\weightSet(e)$};
			\begin{scope}[scale=0.8]
				\ws{3}{3,4,5,6}[240]
			\end{scope}
		\end{scope}
		\begin{scope}[shift={(s)}, scale=0.25]
			\node[right] at (0, .2) {$\weightSet(s)$};
			\begin{scope}[scale=0.8]
				\ws{3}{2,3,4,5}[240]
			\end{scope}
		\end{scope}
		\begin{scope}[shift={(t)}, scale=0.25]
			\node[left] at (0, .2) {$\weightSet(t)$};
			\begin{scope}[scale=0.8]
				\ws{3}{1,4,5,6}[240]
			\end{scope}
		\end{scope}
		\begin{scope}[shift={(st)}, scale=0.25]
			\node[right] at (0, -.2) {$\weightSet(st)$};
			\begin{scope}[scale=0.8]
				\ws{3}{1,2,3,4}[240]
			\end{scope}
		\end{scope}
		\begin{scope}[shift={(ts)}, scale=0.25]
			\node[left] at (0, -.2) {$\weightSet(ts)$};
			\begin{scope}[scale=0.8]
				\ws{3}{1,2,5,6}[240]
			\end{scope}
		\end{scope}
		\begin{scope}[shift={(sts)}, scale=0.25]
			\node[below] at (0, -.3) {$\weightSet(sts)$};
			\begin{scope}[scale=0.8]
				\ws{3}{1,2,3,6}[240]
			\end{scope}
		\end{scope}
		%
		\begin{scope}[shift={(Ws)}, scale=0.25,bface]
			\node[bface, above right] at (-.6, 0) {$\weightSet(W_{s})$};
			\begin{scope}[scale=0.8]
				\ws{3}{2,3,4,5,6}[240]
			\end{scope}
		\end{scope}
		\begin{scope}[shift={(Wt)}, scale=0.25, aface]
			\node[aface,above left] at (.6, 0) {$\weightSet(W_{t})$};
			\begin{scope}[scale=0.8]
				\ws{3}{1,3,4,5,6}[240]
			\end{scope}
		\end{scope}
		\begin{scope}[shift={(sWt)}, scale=0.25, aface]
			\node[aface, right] {$\weightSet(sW_{t})$};
			\begin{scope}[scale=0.8]
				\ws{3}{1,2,3,4,5}[240]
			\end{scope}
		\end{scope}
		\begin{scope}[shift={(tWs)}, scale=0.25,bface]
			\node[bface, left] {$\weightSet(tW_{s})$};
			\begin{scope}[scale=0.8]
				\ws{3}{1,2,4,5,6}[240]
			\end{scope}
		\end{scope}
		\begin{scope}[shift={(stWs)}, scale=0.25,bface]
			\node[bface,below right] at (-1, 0) {$\weightSet(stW_{s})$};
			\begin{scope}[scale=0.8]
				\ws{3}{1,2,3,4,6}[240]
			\end{scope}
		\end{scope}
		\begin{scope}[shift={(tsWt)}, scale=0.25, aface]
			\node[aface, below left] at (1, 0) {$\weightSet(tsW_{t})$};
			\begin{scope}[scale=0.8]
				\ws{3}{1,2,3,5,6}[240]
			\end{scope}
		\end{scope}
		%
		\begin{scope}[shift={(W)}, scale=0.25,face]
			\node[below,face] {$\weightSet(W)$};
			\begin{scope}[scale=0.8]
				\ws{3}{1,2,3,4,5,6}[240]
				\node at (0) {$\bullet$};
			\end{scope}
		\end{scope}
	\end{tikzpicture}
}
	\caption{The root inversion sets (left) and weight inversion sets (right) of the standard parabolic cosets in type~$A_2$. Note that positive roots point downwards.}
	\label{fig:rootSetWeightSet2}
\end{figure}

It is well-known that the map $\inversionSet$, sending  an element $w \in W$ to its inversion set $\inversionSet(w) = \Phi^+ \, \cap \, w(\Phi^-)$ is injective, see for instance~\cite[Section~2]{HohlwegLabbe}. The following corollary is the analogue for the maps~$\rootSet$ and~$\weightSet$. 

\begin{corollary}
\label{coro:rootSetWeightWetInjective}
The maps~$\rootSet$ and~$\weightSet$ are both injective.
\end{corollary}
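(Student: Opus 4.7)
The plan is to derive the injectivity of both $\rootSet$ and $\weightSet$ directly from Proposition~\ref{prop:rootSetWeightSetConesPerm}, together with the fact recorded in Section~\ref{subsec:permutahedron} that $xW_I \mapsto \face(xW_I)$ is a bijection from $\CoxeterComplex{W}$ onto the face lattice of $\Perm(W)$. The key ingredient from polytope theory is that distinct faces of a polytope have distinct inner primal cones and distinct outer normal cones (the latter because they correspond to distinct cells of the normal fan).

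For injectivity of $\weightSet$, I would assume $\weightSet(xW_I) = \weightSet(yW_J)$, take cones of both sides, and apply Proposition~\ref{prop:rootSetWeightSetConesPerm}\ref{item:outerNormalCone} to conclude that the outer normal cones of $\face(xW_I)$ and $\face(yW_J)$ coincide. Since these faces are then equal, the bijection $xW_I \mapsto \face(xW_I)$ yields $xW_I = yW_J$.

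For injectivity of $\rootSet$, the same argument works verbatim using part \ref{item:innerPrimalCone} of the proposition in place of part \ref{item:outerNormalCone}. Alternatively, part \ref{item:polarCones} gives $\cone(\weightSet(xW_I)) = \cone(\rootSet(xW_I))\polar$, so an equality $\rootSet(xW_I) = \rootSet(yW_J)$ passes through polar duality to an equality of weight cones, reducing the claim to the case already settled.

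There is no genuine obstacle: once Proposition~\ref{prop:rootSetWeightSetConesPerm} is available, the corollary is a one-line consequence of the fact that a face of a polytope is determined by its inner primal cone or its outer normal cone, together with the already-established bijection between standard parabolic cosets and faces of $\Perm(W)$. The proof should fit in two or three sentences.
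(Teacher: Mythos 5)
Your argument is exactly the paper's: the published proof consists of the single sentence that a face of a polytope is characterized by its inner primal cone (resp.\ outer normal cone), which is precisely the observation you combine with Proposition~\ref{prop:rootSetWeightSetConesPerm} and the coset--face bijection. The proposal is correct and takes essentially the same approach, only spelled out in slightly more detail.
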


\begin{proof}
A face of a polytope is characterized by its inner primal cone (resp. outer normal cone).
\end{proof}

In a finite Coxeter group, a subset~$R$ of~$\Phi^+$ is an inversion set if and only if it is separable from its complement by a linear hyperplane, or equivalently if and only if both~$R$ and its complement~$\Phi^+ \ssm R$ are convex (meaning that~$R = \Phi^+ \cap \cone(R)$). The following statement gives an analogue for root inversion sets.

\begin{corollary}
\label{coro:characterizationRootSets}
The following assertions are equivalent for a subset~$R$ of~$\Phi$:
\begin{enumerate}[(i)]
\item
\label{item:characterizationRootSetCoset}
$R = \rootSet(xW_I)$ for some coset~$xW_I \in \CoxeterComplex{W}$,

\item
\label{item:characterizationRootSetLinearFunction}
$R = \set{\alpha \in \Phi}{\psi(\alpha) \ge 0}$ for some linear function~$\psi : V \to \R$,

\item
\label{item:characterizationRootSetConvex}
$R = \Phi \cap \cone(R)$ and $R \cap \{\pm \alpha\} \ne \varnothing$ for all~$\alpha \in \Phi$.
\end{enumerate}
\end{corollary}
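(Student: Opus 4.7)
My plan is to prove the cycle of implications $(i) \Rightarrow (iii) \Rightarrow (ii) \Rightarrow (i)$, leveraging the polarity $\cone(\rootSet(xW_I))^\diamond = \cone(\weightSet(xW_I))$ already established in Proposition~\ref{prop:rootSetWeightSetConesPerm}, together with the elementary identity $\cone(\Phi^- \cup \Phi_I^+) = \cone(-\Delta \cup \Delta_I)$ (which follows since every positive root lies in $\cone(\Delta)$).

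For $(i) \Rightarrow (iii)$, take $R = \rootSet(xW_I) = x(\Phi^- \cup \Phi_I^+)$. Then $R \cup (-R) = x(\Phi^- \cup \Phi_I^+ \cup \Phi^+ \cup \Phi_I^-) = x\Phi = \Phi$, giving the second condition. For the convexity $R = \Phi \cap \cone(R)$, only the inclusion $\supseteq$ requires work: if $\alpha \in \Phi \cap \cone(R)$, then $\beta \eqdef x^{-1}\alpha \in \Phi \cap \cone(-\Delta \cup \Delta_I)$; writing $\beta$ both as a nonnegative combination of simple roots (if $\beta \in \Phi^+$) and as a combination with nonpositive coefficients outside $I$, one forces all those coefficients to vanish, so $\beta \in \vect(\Delta_I) \cap \Phi = \Phi_I$. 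Thus $\beta \in \Phi^- \cup \Phi_I^+$, hence $\alpha \in R$.

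For $(iii) \Rightarrow (ii)$, the key is to locate $\psi$ carefully inside the polar cone $-\cone(R)^\diamond = \set{\varphi \in V^*}{\varphi(\alpha) \ge 0 \text{ for all } \alpha \in R}$. Set $R_0 \eqdef R \cap (-R)$ and $R_+ \eqdef R \ssm R_0$. Since $R_0 = -R_0 \subseteq R$, we have $\vect(R_0) \subseteq \cone(R)\cap(-\cone(R))$, so $\vect(R_0)$ lies in the lineality space $L$ of $\cone(R)$. Conversely, if $\alpha \in R_+ \cap L$ then $-\alpha \in L \subseteq \cone(R)$, so by the convexity hypothesis $-\alpha \in \Phi \cap \cone(R) = R$, contradicting $\alpha \in R_+$; thus $R_+ \cap L = \varnothing$. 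Choosing $\psi$ in the relative interior of $-\cone(R)^\diamond$ yields $\psi(L) = 0$ and $\psi(v) > 0$ for $v \in \cone(R) \ssm L$; in particular $\psi(R_0) = 0$ and $\psi(R_+) > 0$. For $\alpha \in \Phi \ssm R$, the completeness property $R \cap \{\pm\alpha\} \ne \varnothing$ forces $-\alpha \in R_+$ (not $R_0$, else $\alpha \in R$), and hence $\psi(\alpha) = -\psi(-\alpha) < 0$. This gives $R = \set{\alpha \in \Phi}{\psi(\alpha) \ge 0}$.

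For $(ii) \Rightarrow (i)$, represent $\psi$ by a vector $v_\psi \in V$ via $\psi(\cdot) = \dotprod{v_\psi}{\cdot}$. Since the Coxeter fan is the normal fan of $\Perm(W)$, its cones $\cone(\weightSet(xW_I))$ partition $V$ into relative interiors as $xW_I$ ranges over $\CoxeterComplex{W}$; let $xW_I$ be the unique coset with $-v_\psi$ in the relative interior of $\cone(\weightSet(xW_I))$. By the polarity in Proposition~\ref{prop:rootSetWeightSetConesPerm}\eqref{item:polarCones}, $\psi(\alpha) = -\dotprod{-v_\psi}{\alpha} \ge 0$ for every $\alpha \in \cone(\rootSet(xW_I))$, so $\rootSet(xW_I) \subseteq R$. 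Conversely, since $\rootSet(xW_I)$ already satisfies $(iii)$ (by the forward direction just proved), any $\alpha \in R \ssm \rootSet(xW_I)$ would satisfy $-\alpha \in \rootSet(xW_I) \subseteq R$, giving $\psi(\alpha) = 0$; but then $\alpha$ lies in the hyperplane $\{\psi = 0\}$, forcing $\alpha$ into the lineality of $\cone(\rootSet(xW_I))$, so $-\alpha \in \rootSet(xW_I)$ and $\alpha \in \rootSet(xW_I)$ as well (using the identification of the lineality with $\vect(x\Phi_I)$), a contradiction. Therefore $R = \rootSet(xW_I)$.

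The main obstacle I anticipate is the careful tracking of strict versus non-strict inequalities in the $(iii) \Rightarrow (ii)$ step: one must genuinely use both parts of $(iii)$ to guarantee that the supporting functional $\psi$ is strictly positive off the lineality rather than just nonnegative. The polarity in Proposition~\ref{prop:rootSetWeightSetConesPerm} does the heavy lifting for the geometric step $(ii) \Rightarrow (i)$.
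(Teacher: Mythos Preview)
Your proof is correct. The paper's own argument is considerably more compressed: it establishes \eqref{item:characterizationRootSetCoset}$\iff$\eqref{item:characterizationRootSetLinearFunction} in one stroke by invoking Proposition~\ref{prop:rootSetWeightSetConesPerm} together with the standard bijection between faces of~$\Perm(W)$ and linear functionals (every face is the minimizing face of some~$\psi$, and every~$\psi$ selects a face), and then declares \eqref{item:characterizationRootSetLinearFunction}$\iff$\eqref{item:characterizationRootSetConvex} to be ``immediate''.

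Your route differs mainly in that you do not rely on the face--functional bijection. Your \eqref{item:characterizationRootSetCoset}$\implies$\eqref{item:characterizationRootSetConvex} is a direct root-system computation (the identity $\cone(\Phi^-\cup\Phi_I^+)=\cone(-\Delta\cup\Delta_I)$ plus the linear independence of~$\Delta$), and your \eqref{item:characterizationRootSetConvex}$\implies$\eqref{item:characterizationRootSetLinearFunction} spells out exactly the convex-geometric content the paper waves away: choosing~$\psi$ in the \emph{relative interior} of~$-\cone(R)^\diamond$ so that~$\psi$ is strictly positive off the lineality space, and then using the completeness hypothesis $R\cap\{\pm\alpha\}\ne\varnothing$ to get strict negativity on~$\Phi\ssm R$. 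Your \eqref{item:characterizationRootSetLinearFunction}$\implies$\eqref{item:characterizationRootSetCoset} is essentially the same geometric content as the paper's, reformulated via the Coxeter fan (locating~$-v_\psi$ in the relative interior of a normal cone) rather than via faces of the permutahedron; the key point you use---that a relative-interior functional of the polar cone vanishes on~$\cone(\rootSet(xW_I))$ only along its lineality space~$x\cdot\vect(\Delta_I)$---is exactly what makes the identification~$R=\rootSet(xW_I)$ go through. What your approach buys is a genuinely self-contained treatment of \eqref{item:characterizationRootSetConvex}$\implies$\eqref{item:characterizationRootSetLinearFunction}; what the paper's approach buys is brevity, at the cost of leaving that implication to the reader.
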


\begin{proof}
According to Proposition~\ref{prop:rootSetWeightSetConesPerm}, for any coset~$xW_I$, the set~$\rootSet(xW_I)$ is the set of roots in the inner normal cone of the face~$\face(xW_I)$ of~$\Perm(W)$. For any linear function~$\psi : V \to \R$, the set~$\set{\alpha \in \Phi}{\psi(\alpha) \ge 0}$ is the set of roots in the inner normal cone of the face of~$\Perm(W)$ defined by~$\psi$. Since any face is defined by at least one linear function and any linear function defines a face, we get~\eqref{item:characterizationRootSetCoset}$\iff$\eqref{item:characterizationRootSetLinearFunction}. The equivalence~\eqref{item:characterizationRootSetLinearFunction}$\iff$\eqref{item:characterizationRootSetConvex} is immediate.
\end{proof}

Our next three statements concern the root inversion set~$\rootSet(xW_\varnothing)$ for~$x \in W$. For brevity we write~$\rootSet(x)$ instead of~$\rootSet(xW_\varnothing)$. We first connect the root inversion set~$\rootSet(x)$ to the inversion set~$\inversionSet(x)$, to reduced words for~$x$, and to the root inversion sets~$\rootSet(x\woo)$ and~$\rootSet(\woo x)$.

\begin{proposition}
\label{prop:rootSetSingleElement}
For any~$x \in W$,  the root inversion set~$\rootSet(x)$ has the following properties.
\begin{enumerate}[(i)]

\item
\label{item:rootSetSingleElementFromInversionSet}
${\rootSet(x) = \inversionSet(x) \cup - \big( \Phi^+ \ssm \inversionSet(x) \big)}$ where~${\inversionSet(x) = \Phi^+ \cap x(\Phi^-)}$ is the (left) inversion set of~$x$. In other words, 
\[
\rootSet(x) \cap \Phi^+ = \inversionSet(x)
\qquad\text{and}\qquad
\rootSet(x) \cap \Phi^- = -\big( \Phi^+ \ssm \inversionSet(x) \big).
\]

\item
\label{item:rootSetSingleElementFromReducedExpression}
If~$x = s_1 s_2 \cdots s_k$ is reduced, then
\[
\rootSet(x) = \Phi^- \symdif \{\pm \alpha_{s_1}, \pm s_1(\alpha_{s_2}), \dots, \pm s_1 \cdots s_{k-1}(\alpha_{s_k})\}.
\]

\item
\label{item:rootSetSingleElementwo}
$\rootSet(x \woo) = -\rootSet(x)$ and $\rootSet(\woo x) = \woo \big( \rootSet(x) \big)$.
\end{enumerate}
\end{proposition}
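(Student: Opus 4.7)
The plan is to derive all three items directly from the definition $\rootSet(x) = \rootSet(xW_\varnothing) = x(\Phi^-)$ together with basic properties of the inversion set $\inversionSet(x) = \Phi^+ \cap x(\Phi^-)$ recalled in Section~\ref{subsec:lengthReducedWordsAndWeakOrder}.

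For \eqref{item:rootSetSingleElementFromInversionSet}, I would split $\rootSet(x) = x(\Phi^-)$ along the partition $\Phi = \Phi^+ \sqcup \Phi^-$. The positive part $x(\Phi^-) \cap \Phi^+$ equals $\inversionSet(x)$ by definition. For the negative part, I would note that $\alpha \in x(\Phi^-) \cap \Phi^-$ if and only if $-\alpha \in x(\Phi^+) \cap \Phi^+$, and since $\Phi^+ = \big(\Phi^+ \cap x(\Phi^+)\big) \sqcup \big(\Phi^+ \cap x(\Phi^-)\big) = \big(\Phi^+ \cap x(\Phi^+)\big) \sqcup \inversionSet(x)$, we get $x(\Phi^-) \cap \Phi^- = -\big(\Phi^+ \ssm \inversionSet(x)\big)$.

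For \eqref{item:rootSetSingleElementFromReducedExpression}, set $A \eqdef \{\alpha_{s_1}, s_1(\alpha_{s_2}), \ldots, s_1\cdots s_{k-1}(\alpha_{s_k})\}$, which equals $\inversionSet(x)$ by the classical formula quoted in Section~\ref{subsec:lengthReducedWordsAndWeakOrder}. Since $A \subseteq \Phi^+$, the symmetric difference $\Phi^- \symdif (A \cup -A)$ computes as $(\Phi^- \ssm -A) \cup A = -(\Phi^+ \ssm A) \cup A$, which matches the description of $\rootSet(x)$ obtained in~\eqref{item:rootSetSingleElementFromInversionSet}.

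For \eqref{item:rootSetSingleElementwo}, both identities are one-line computations using that $\woo(\Phi^-) = \Phi^+$: indeed, $\rootSet(x\woo) = x\woo(\Phi^-) = x(\Phi^+) = -x(\Phi^-) = -\rootSet(x)$, and $\rootSet(\woo x) = \woo\big(x(\Phi^-)\big) = \woo\big(\rootSet(x)\big)$.

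No serious obstacle is expected; the only subtlety is the sign bookkeeping in part~\eqref{item:rootSetSingleElementFromInversionSet}, which then makes parts~\eqref{item:rootSetSingleElementFromReducedExpression} and~\eqref{item:rootSetSingleElementwo} essentially immediate.
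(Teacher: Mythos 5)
Your proposal is correct and follows essentially the same route as the paper: part (i) by splitting $x(\Phi^-)$ along $\Phi = \Phi^+ \sqcup \Phi^-$ and identifying the negative part with $-\big(\Phi^+ \cap x(\Phi^+)\big) = -\big(\Phi^+ \ssm \inversionSet(x)\big)$, part (ii) from the classical formula for $\inversionSet(x)$ in terms of a reduced word, and part (iii) from $\woo(\Phi^\pm) = \Phi^\mp$. The sign bookkeeping and the symmetric-difference computation are both carried out correctly, so nothing needs to change.
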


\begin{proof}
For~\eqref{item:rootSetSingleElementFromInversionSet} we observe that $\rootSet(x) = x(\Phi^-) = \big( \Phi^+ \cap x(\Phi^-) \big) \cup \big( \Phi^- \cap x(\Phi^-) \big)$. By definition of the inversion set we get
\[
\rootSet(x) = \inversionSet(x) \cup - \big( \Phi^+ \cap x(\Phi^+) \big) = \inversionSet(x) \cup - \big( \Phi^+ \ssm \inversionSet(x) \big).
\]
\eqref{item:rootSetSingleElementFromReducedExpression} then follows from the fact that~$\inversionSet(x) = \{ \alpha_{s_1}, s_1(\alpha_{s_2}), \dots, s_1 \cdots s_{k-1}(\alpha_{s_k})\}$. Finally, \eqref{item:rootSetSingleElementwo} follows from the definition of $\rootSet$ and the fact that $\woo(\Phi^+) = \Phi^-$.
\end{proof}

The next statement gives a characterization of the  (classical)  weak order in terms of root inversion sets, which generalizes the characterization of the weak order in term of inversion sets. We will see later in Theorem~\ref{thm:facialWeakOrderCharacterizations} that the same characterization holds for the facial weak order.

\begin{corollary}
\label{coro:weakOrderFromRootSet}
For~$x,y \in W$, we have
\begin{align*}
x \le y
& \iff \rootSet(x) \ssm \rootSet(y) \subseteq \Phi^- \quad\text{and}\quad \rootSet(y) \ssm \rootSet(x) \subseteq \Phi^+, \\
& \iff \rootSet(x) \cap \Phi^+ \, \subseteq \, \rootSet(y) \cap \Phi^+ \quad\text{and}\quad \rootSet(x) \cap \Phi^- \, \supseteq \, \rootSet(y) \cap \Phi^-.
\end{align*}
\end{corollary}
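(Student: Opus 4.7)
The plan is to reduce both alternative conditions to the classical inversion-set criterion $x \leq y \iff \inversionSet(x) \subseteq \inversionSet(y)$ from Section~\ref{subsec:lengthReducedWordsAndWeakOrder}, using Proposition~\ref{prop:rootSetSingleElement}\,\eqref{item:rootSetSingleElementFromInversionSet} as the dictionary to translate between root inversion sets and inversion sets.

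First I would show that the two right-hand conditions are logically equivalent, without any appeal to Coxeter theory. The statement $\rootSet(x)\ssm\rootSet(y)\subseteq\Phi^-$ just says that no positive root lies in $\rootSet(x)\ssm\rootSet(y)$, which rearranges as $\rootSet(x)\cap\Phi^+\subseteq\rootSet(y)\cap\Phi^+$; symmetrically, $\rootSet(y)\ssm\rootSet(x)\subseteq\Phi^+$ rearranges as $\rootSet(y)\cap\Phi^-\subseteq\rootSet(x)\cap\Phi^-$. This takes one or two lines.

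Next I would translate the second form through Proposition~\ref{prop:rootSetSingleElement}\,\eqref{item:rootSetSingleElementFromInversionSet}. For the positive part, $\rootSet(x)\cap\Phi^+=\inversionSet(x)$, so the first inclusion becomes exactly $\inversionSet(x)\subseteq\inversionSet(y)$. For the negative part, $\rootSet(x)\cap\Phi^-=-(\Phi^+\ssm\inversionSet(x))$, and similarly for $y$; multiplying by $-1$, the inclusion $\rootSet(x)\cap\Phi^-\supseteq\rootSet(y)\cap\Phi^-$ becomes $\Phi^+\ssm\inversionSet(x)\supseteq\Phi^+\ssm\inversionSet(y)$, which (taking complements in the finite set $\Phi^+$) is again $\inversionSet(x)\subseteq\inversionSet(y)$. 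So the two halves of the right-hand condition are each individually equivalent to $\inversionSet(x)\subseteq\inversionSet(y)$, and therefore so is their conjunction.

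Finally I close the chain of equivalences by invoking the classical characterization $x\leq y\iff\inversionSet(x)\subseteq\inversionSet(y)$ recalled in Section~\ref{subsec:lengthReducedWordsAndWeakOrder}. There is no real obstacle here: the whole corollary is a direct bookkeeping consequence of Proposition~\ref{prop:rootSetSingleElement}\,\eqref{item:rootSetSingleElementFromInversionSet}, and the only care needed is to keep track of the sign flips when passing between $\rootSet(x)\cap\Phi^-$ and $\Phi^+\ssm\inversionSet(x)$.
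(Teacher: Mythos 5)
Your proof is correct and takes essentially the same route as the paper: both arguments reduce the statement to the classical criterion $x \le y \iff \inversionSet(x) \subseteq \inversionSet(y)$ via the decomposition of $\rootSet(x)$ from Proposition~\ref{prop:rootSetSingleElement}\,\eqref{item:rootSetSingleElementFromInversionSet}, differing only in bookkeeping (the paper computes $\rootSet(x)\ssm\rootSet(y)$ directly, while you pass through the $\Phi^{\pm}$-intersection form first).
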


\begin{proof}
We observe from Proposition~\ref{prop:rootSetSingleElement}\,\eqref{item:rootSetSingleElementFromInversionSet} that
\[
\rootSet(x) \ssm \rootSet(y) = \big( \inversionSet(x) \ssm \inversionSet(y) \big) \cup -\big( \inversionSet(y) \ssm \inversionSet(x) \big).
\]
The result thus follows immediately from the fact that~$x \le y \iff \inversionSet(x) \subseteq \inversionSet(y)$, see Section~\ref{subsec:lengthReducedWordsAndWeakOrder}.
\end{proof}

Finally, we observe that the root and weight inversion sets of a parabolic coset~$xW_I$ can be computed from that of its minimal and maximal length representatives~$x$ and~$x\wo{I}$.

\begin{proposition}
\label{prop:rootSetWeightSetFromBottomAndTop}
The root and weight inversion sets of~$xW_I$ can be computed from those of~$x$ and~$x\wo{I}$ by
\[
\rootSet(xW_I) = \rootSet(x) \cup \rootSet(x\wo{I})
\qquad\text{and}\qquad
\weightSet(xW_I) = \weightSet(x) \cap \weightSet(x\wo{I}).
\]
\end{proposition}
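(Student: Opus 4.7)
The plan is to exploit the $W$-equivariance built into the definitions of $\rootSet$ and $\weightSet$ to reduce both identities to the special case $x = e$, and then to carry out two short verifications inside the finite sets $\Phi$ and $\nabla$ respectively. Since $x$ acts as an orthogonal bijection commuting with $\cup$ and $\cap$, applying $x^{-1}$ reduces the claims to
\[
\Phi^- \cup \Phi_I^+ = \Phi^- \cup \wo{I}(\Phi^-)
\qquad\text{and}\qquad
\nabla_{S \ssm I} = \nabla \cap \wo{I}(\nabla).
\]

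For the root identity, I would appeal to the classical fact that $\wo{I} \in W_I$ sends $\Phi_I^+$ bijectively onto $\Phi_I^-$ while permuting $\Phi^+ \ssm \Phi_I$ setwise. The latter follows from the observation that, for $w \in W_I$ and $\alpha \in \Phi \ssm \Phi_I$ expanded as $\alpha = \sum_{s \in S} c_s \alpha_s$, the coefficients $c_s$ for $s \notin I$ are preserved by $w$, so $\alpha$ and $w(\alpha)$ share the same sign. Hence $\wo{I}(\Phi^-) = \Phi_I^+ \sqcup (\Phi^- \ssm \Phi_I)$, and unioning with $\Phi^-$ adds back $\Phi_I^-$ to produce $\Phi^- \cup \Phi_I^+$. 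An alternative, perhaps slicker, route combines Proposition~\ref{prop:rootSetSingleElement}\,\eqref{item:rootSetSingleElementFromInversionSet} with the factorization $\inversionSet(x\wo{I}) = \inversionSet(x) \sqcup x(\Phi_I^+)$ (valid since $x \cdot \wo{I}$ is reduced when $x \in W^I$), yielding the identity after a short set-theoretic manipulation.

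For the weight identity, the crux is that $\wo{I}$ fixes $\omega_s$ for $s \notin I$ but sends $\omega_s$ for $s \in I$ outside of $\nabla$. Using the orthogonal decomposition $V = \vect(\Delta_I) \oplus \vect(\Delta_I)^\perp$: for $s \notin I$, the relations $\dotprod{\alpha_t^\vee}{\omega_s} = 0$ for $t \in I$ place $\omega_s$ in $\vect(\Delta_I)^\perp$, which $\wo{I} \in W_I$ fixes pointwise; for $s \in I$, the $\vect(\Delta_I)$-component of $\omega_s$ is the $s$-th fundamental weight $\omega_s^I$ of the parabolic subsystem, lying in $\cone(\Delta_I)$, and $\wo{I}$ acts on it as $\omega_s^I \mapsto -\omega_{s^*}^I$ for some involution $s \mapsto s^*$ on $I$, producing a nonzero component in $-\cone(\Delta_I)$. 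Since every fundamental weight of $W$ has its $\vect(\Delta_I)$-component in $\cone(\Delta_I)$, this rules out $\wo{I}(\omega_s) \in \nabla$ for $s \in I$, so $\nabla \cap \wo{I}(\nabla) = \nabla_{S \ssm I}$. This weight calculation is the most delicate step, as it relies on the combinatorics of the longest element of $W_I$ acting on the weight basis; the rest is essentially bookkeeping.
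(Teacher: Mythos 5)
Your proof is correct and follows essentially the same route as the paper: the root identity is exactly the paper's one-line computation $x(\Phi^-)\cup x\wo{I}(\Phi^-)=x(\Phi^-\symdif\Phi_I)\cup x(\Phi^-)=x(\Phi^-\cup\Phi_I^+)$, resting on the standard fact $\wo{I}(\Phi^-)=\Phi^-\symdif\Phi_I$ that you justify via the coefficient argument. For the weights the paper merely says the argument is ``similar'' (or follows by polarity from Proposition~\ref{prop:rootSetWeightSetConesPerm}); your direct verification of $\nabla\cap\wo{I}(\nabla)=\nabla_{S\ssm I}$ is a valid way to fill in that omitted step, though it can be shortened by pairing: for $s\in I$ one has $\dotprod{\alpha_{s^*}^\vee}{\wo{I}(\omega_s)}=\dotprod{-\alpha_s^\vee}{\omega_s}=-1<0$, which already excludes $\wo{I}(\omega_s)$ from $\nabla$ without invoking the positivity of the inverse Cartan matrix.
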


\begin{proof}
For the root inversion set, we just write
\begin{align*}
\rootSet(x) \cup \rootSet(x\wo{I}) & = x(\Phi^-) \cup x\wo{I}(\Phi^-) = x(\Phi^-) \cup x(\Phi^- \symdif \Phi_I) \\ & = x(\Phi^- \cup \Phi^+_I) = \rootSet(xW_I).
\end{align*}
The proof is similar for the weight inversion set (or can be derived from Proposition~\ref{prop:rootSetWeightSetConesPerm}).
\end{proof}

\begin{corollary}
\label{coro:rootSetWeightSetFromBottomAndTop}
For any coset~$xW_I$, we have
\[
\rootSet(xW_I) \cap \Phi^- = \rootSet(x) \cap \Phi^-
\qquad\text{and}\qquad
\rootSet(xW_I) \cap \Phi^+ = \rootSet(x\wo{I}) \cap \Phi^+
\]
\end{corollary}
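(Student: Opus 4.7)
The plan is to derive this corollary directly from Proposition~\ref{prop:rootSetWeightSetFromBottomAndTop}, which gives the decomposition $\rootSet(xW_I) = \rootSet(x) \cup \rootSet(x\wo{I})$, combined with the weak order characterization of Corollary~\ref{coro:weakOrderFromRootSet}. The main observation is that since $x \in W^I$, the factorization $x \cdot \wo{I}$ is reduced, so $x \le x\wo{I}$ in the (right) weak order.

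Intersecting the union $\rootSet(xW_I) = \rootSet(x) \cup \rootSet(x\wo{I})$ with $\Phi^-$ and $\Phi^+$ respectively gives
\[
\rootSet(xW_I) \cap \Phi^- = \big(\rootSet(x) \cap \Phi^-\big) \cup \big(\rootSet(x\wo{I}) \cap \Phi^-\big),
\]
\[
\rootSet(xW_I) \cap \Phi^+ = \big(\rootSet(x) \cap \Phi^+\big) \cup \big(\rootSet(x\wo{I}) \cap \Phi^+\big).
\]
Now I apply Corollary~\ref{coro:weakOrderFromRootSet} to the pair $x \le x\wo{I}$: this yields both containments $\rootSet(x\wo{I}) \cap \Phi^- \subseteq \rootSet(x) \cap \Phi^-$ and $\rootSet(x) \cap \Phi^+ \subseteq \rootSet(x\wo{I}) \cap \Phi^+$. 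Substituting these into the two displays above collapses each union to a single term, giving exactly the two equalities claimed.

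There is no serious obstacle: the whole argument reduces to noting that $x \in W^I$ forces $x \cdot \wo{I}$ to be reduced (a standard fact recalled in Section~\ref{subsec:parabolicSubgroupsCosets}), which is the only input needed beyond the two previously established results. One could alternatively unfold things more concretely by writing $\rootSet(xW_I) = x(\Phi^- \cup \Phi_I^+)$ and using that $x \in W^I$ implies $x(\Phi_I^+) \subseteq \Phi^+$, so the contribution of $\Phi_I^+$ lies entirely on the positive side; but the weak order route via the two previous results is the cleanest.
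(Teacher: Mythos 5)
Your proof is correct and follows exactly the paper's own argument: intersect the decomposition $\rootSet(xW_I) = \rootSet(x) \cup \rootSet(x\wo{I})$ from Proposition~\ref{prop:rootSetWeightSetFromBottomAndTop} with $\Phi^\pm$ and collapse each union using the containments that Corollary~\ref{coro:weakOrderFromRootSet} gives for $x \le x\wo{I}$. Nothing to add.
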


\begin{proof}
Since~$x \le x\wo{I}$, Corollary~\ref{coro:weakOrderFromRootSet} ensures that~$\rootSet(x) \cap \Phi^+ \subseteq \rootSet(x\wo{I}) \cap \Phi^+$ and~$\rootSet(x) \cap \Phi^- \supseteq \rootSet(x\wo{I}) \cap \Phi^-$. Therefore, we obtain from Proposition~\ref{prop:rootSetWeightSetFromBottomAndTop} that
\[
\rootSet(xW_I) \cap \Phi^- = \big( \rootSet(x) \cup \rootSet(x\wo{I}) \big) \cap \Phi^- = \rootSet(x) \cap \Phi^-,
\]
and similarly
\[
\rootSet(xW_I) \cap \Phi^+ = \big( \rootSet(x) \cup \rootSet(x\wo{I}) \big) \cap \Phi^+ = \rootSet(x\wo{I}) \cap \Phi^+.
\qedhere
\]
\end{proof}


\subsection{Two alternative characterizations of the facial weak order}
\label{subsec:weakOrderCharacterizations}

Using the root inversion sets defined in the previous section, we now give two equivalent characterizations of the facial weak order defined by P.~Palacios and M.~Ronco in~\cite{PalaciosRonco} (see Definition~\ref{def:facialWeakOrder}).  In type~$A$, the equivalence~\mbox{\eqref{item:facialWeakOrderCoverCharacterizationRelations}$\iff$\eqref{item:facialWeakOrderCharacterizationRoots}} below is stated in~\cite[Theorem~5]{KrobLatapyNovelliPhanSchwer} in terms of half-inversion tables (see Examples~\ref{exm:weakOrderOrderedPartitions} and~\ref{exm:halfInversionTable}).

\begin{theorem}
\label{thm:facialWeakOrderCharacterizations}
The following conditions are equivalent for two standard parabolic cosets~$xW_I$ and~$yW_J$ in $\mathcal P_W$:
\begin{enumerate}[(i)]
\item
\label{item:facialWeakOrderCoverCharacterizationRelations}
$xW_I\leq yW_J$ in facial weak order,

\item
\label{item:facialWeakOrderCharacterizationRoots}
$\rootSet(xW_I) \ssm \rootSet(yW_J) \subseteq \Phi^-$ and~$\rootSet(yW_J) \ssm \rootSet(xW_I) \subseteq \Phi^+$,

\item
\label{item:facialWeakOrderCharacterizationCompareMinMax}
$x \le y$ and~$x\wo{I} \le y\wo{J}$ in weak order.
\end{enumerate}
\end{theorem}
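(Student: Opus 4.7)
The plan is to close the cycle (i)$\Rightarrow$(iii)$\Leftrightarrow$(ii)$\Rightarrow$(i). The equivalence (ii)$\Leftrightarrow$(iii) and the implication (i)$\Rightarrow$(iii) are routine consequences of the tools developed in Section~\ref{subsec:rootWeightSet}; the heart of the proof is (iii)$\Rightarrow$(i).

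For (ii)$\Leftrightarrow$(iii), apply Corollary~\ref{coro:rootSetWeightSetFromBottomAndTop} to split condition~(ii) into the two inclusions $\rootSet(x\wo{I})\cap\Phi^+ \subseteq \rootSet(y\wo{J})\cap\Phi^+$ and $\rootSet(y)\cap\Phi^- \subseteq \rootSet(x)\cap\Phi^-$, each of which is equivalent via Proposition~\ref{prop:rootSetSingleElement}\,\eqref{item:rootSetSingleElementFromInversionSet} and Corollary~\ref{coro:weakOrderFromRootSet} to one of the two weak-order inequalities in~(iii). For (i)$\Rightarrow$(iii), by transitivity it suffices to check that each cover relation satisfies~(iii). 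For a type~(1) cover $xW_I \precdot xW_{I \cup \{s\}}$, the minima agree and $x\wo{I} \le x\wo{I \cup \{s\}}$ in weak order because $\wo{I} \le \wo{I \cup \{s\}}$ in $W_{I \cup \{s\}}$ while $x \in W^{I \cup \{s\}}$ makes both products $x\wo{I}$ and $x\wo{I \cup \{s\}}$ reduced. For a type~(2) cover, the maxima coincide (both equal $x\wo{I}$) and $x \le x\wo{I}\wo{I \ssm \{s\}}$ because $x \in W^I$ and $\wo{I}\wo{I \ssm \{s\}} \in W_I$ ensure $\ell(x\wo{I}\wo{I \ssm \{s\}}) = \ell(x) + \ell(\wo{I}\wo{I \ssm \{s\}})$.

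For (iii)$\Rightarrow$(i), I would induct on the nonnegative quantity
\[
f(xW_I,yW_J) \eqdef (\ell(y)-\ell(x)) + (\ell(y\wo{J})-\ell(x\wo{I})).
\]
The base case $f=0$ forces $x=y$ and $x\wo{I}=y\wo{J}$, whence $I=J$ (since a standard parabolic coset is determined by its weak-order minimum and maximum) and thus $xW_I = yW_J$. For the inductive step, the key sublemma is to produce a facial cover $xW_I \precdot x'W_{I'}$ whose top $x'W_{I'}$ still satisfies~(iii) with $yW_J$; each such cover strictly decreases $f$, so induction finishes. If $x=y$, then comparing the reduced factorizations $x \cdot \wo{I}$ and $x \cdot \wo{J}$ translates $x\wo{I} < x\wo{J}$ into $\Phi_I^+ \subsetneq \Phi_J^+$, hence $I \subsetneq J$, and any $s \in J \ssm I$ yields a valid type~(1) cover $xW_I \precdot xW_{I \cup \{s\}}$ with $x\wo{I \cup \{s\}} \le x\wo{J}$. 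If $x<y$, take a weak-order cover $x \precdot xs \le y$: when $s \notin I$, the condition $s \notin \RdescentSet(x)$ combined with $x \in W^I$ yields $x \in W^{I \cup \{s\}}$, giving a type~(1) candidate $xW_I \precdot xW_{I \cup \{s\}}$; when $s \in I$, or when the type~(1) candidate overshoots $y\wo{J}$, one instead passes through a type~(2) cover $xW_I \precdot x\wo{I}\wo{I \ssm \{r\}}W_{I \ssm \{r\}}$ for some $r \in I$, checking $x\wo{I}\wo{I \ssm \{r\}} \le y$ using $\inversionSet(\wo{I}\wo{I \ssm \{r\}}) = \Phi_I^+ \ssm \Phi_{I \ssm \{r\}}^+$ and the bound $x\wo{I} \le y\wo{J}$.

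The main obstacle is precisely this last subcase: one must verify that at least one of the two cover types always lands below $yW_J$ in~(iii), which comes down to a delicate analysis of how simple reflections interact with the parabolic coset structure. I expect the cleanest route to use Deodhar's Lemma (stated at the end of Section~\ref{subsec:parabolicSubgroupsCosets}) to control, for each $s \notin I$, whether right-multiplication by $s$ keeps $x$ inside $W^{I \cup \{s\}}$, together with the fine structure of the inversion sets of the elements $\wo{I}\wo{I \ssm \{r\}}$ to select the correct generator $r$ to peel off in the type~(2) case.
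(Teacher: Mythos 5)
Your reductions (ii)$\iff$(iii) and (i)$\implies$(iii) are correct and consistent with the paper (which closes the cycle as (i)$\implies$(iii)$\implies$(ii)$\implies$(i)), and your induction framework for the hard direction is sound as far as it goes: the quantity $f$ is nonnegative under~(iii), strictly decreases along both types of covers, the base case correctly forces $xW_I=yW_J$, and the case $x=y$ is handled completely. The problem is that the entire difficulty of the theorem lives in the step you defer, namely producing, when $x<y$, a cover of $xW_I$ that still satisfies~(iii) with $yW_J$, and your proposal does not actually produce it. Starting from a weak-order cover $x\precdot xs\le y$ and hoping that either $xW_{I\cup\{s\}}$ works or some type-(2) cover does is a plan, not an argument: you give no criterion for choosing the generator $r$ to peel off and no proof that a valid choice exists. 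The paper resolves exactly this point by working with root inversion sets rather than lengths: it first shows there is a simple reflection $s$ with $-x(\alpha_s)\notin\rootSet(yW_J)$ (using the convexity $\rootSet(yW_J)=\cone(\rootSet(yW_J))\cap\Phi$ from Corollary~\ref{coro:characterizationRootSets}), and this single $s$ then dictates the cover, of type (1) with $K=I\cup\{s\}$ if $s\notin I$, and of type (2) with $K=I\ssm\{s^\star\}$ where $s^\star=\wo{I}s\wo{I}$ if $s\in I$, the verification that the new coset stays below $yW_J$ again riding on cone membership. Some such global argument is needed; nothing in your sketch guarantees that the required cover exists.

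Moreover, the one concrete formula you commit to for the type-(2) case is false: $\inversionSet(\wo{I}\wo{I\ssm\{r\}})$ is not $\Phi_I^+\ssm\Phi_{I\ssm\{r\}}^+$ but $\Phi_I^+\ssm\Phi_{I\ssm\{r^\star\}}^+$ with $r^\star=\wo{I}r\wo{I}$. For instance, in type $A_2$ with $I=\{s,t\}$ and $r=s$ one gets $\wo{I}\wo{\{t\}}=ts$, whose inversion set is $\{\alpha_t,\alpha_s+\alpha_t\}$ and not $\{\alpha_s,\alpha_s+\alpha_t\}$. This conjugation twist is precisely the subtlety that makes the type-(2) case delicate (it is why the paper introduces $s^\star$ in its second case), so the error is not cosmetic: it sits exactly where your ``selection of the correct generator $r$'' would have to take place.
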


\begin{proof}[Proof]
We will prove that~\eqref{item:facialWeakOrderCoverCharacterizationRelations}$\implies$\eqref{item:facialWeakOrderCharacterizationCompareMinMax}$\implies$\eqref{item:facialWeakOrderCharacterizationRoots}$\implies$\eqref{item:facialWeakOrderCoverCharacterizationRelations}, the last implication being the most technical.

\medskip

The implication~\eqref{item:facialWeakOrderCoverCharacterizationRelations}$\implies$\eqref{item:facialWeakOrderCharacterizationCompareMinMax} is immediate. The first cover relation keeps~$x$ and transforms~$x\wo{I}$ to~$x\wo{I \cup \{s\}}$, and the second cover relation transforms~$x$ to~$x\wo{I}\wo{I \ssm \{s\}}$ but keeps~$x\wo{I}$. Since~$x\wo{I} \le x\wo{I \cup \{s\}}$ and~${x \le x\wo{I}\wo{I \ssm \{s\}}}$, we obtain the result by transitivity.

\medskip

For the implication~\eqref{item:facialWeakOrderCharacterizationCompareMinMax}$\implies$\eqref{item:facialWeakOrderCharacterizationRoots}, Corollary~\ref{coro:weakOrderFromRootSet} ensures that~${\rootSet(x) \ssm \rootSet(y) \subseteq \Phi^-}$ and ${\rootSet(y) \ssm \rootSet(x) \subseteq \Phi^+}$ since~${x \le y}$, and similarly that~${\rootSet(x\wo{I}) \ssm \rootSet(y\wo{J}) \subseteq \Phi^-}$ and ${\rootSet(y\wo{J}) \ssm \rootSet(x\wo{I}) \subseteq \Phi^+}$ since~${x\wo{I} \le y\wo{J}}$. From Pro\-position~\ref{prop:rootSetWeightSetFromBottomAndTop}, we therefore obtain
\begin{align*}
\rootSet(xW_I) \ssm \rootSet(yW_J) & = \big( \rootSet(x) \cup \rootSet(x\wo{I}) \big) \ssm \big( \rootSet(y) \cup \rootSet(y\wo{J}) \big) \\
& \subseteq \big( \rootSet(x) \ssm \rootSet(y) \big) \cup \big( \rootSet(x\wo{I}) \ssm \rootSet(y\wo{J}) \big) \\
& \subseteq \Phi^-.
\end{align*}
We prove similarly that~$\rootSet(yW_J) \ssm \rootSet(xW_I) \subseteq \Phi^+$.

\medskip

We now focus on the implication~\eqref{item:facialWeakOrderCharacterizationRoots}$\implies$\eqref{item:facialWeakOrderCoverCharacterizationRelations}. We consider two standard parabolic cosets~$xW_I$ and~$yW_J$ which satisfy Condition~\eqref{item:facialWeakOrderCharacterizationRoots} and construct a path of cover relations as in Definition~\ref{def:facialWeakOrder} between them. We proceed by induction on the cardinality~$|\rootSet(xW_I) \symdif \rootSet(yW_J)|$. 

First, if $|\rootSet(xW_I) \symdif \rootSet(yW_J)| = 0$, then~$\rootSet(xW_I) = \rootSet(yW_J)$, which ensures that~$xW_I = yW_J$ by Corollary~\ref{coro:rootSetWeightWetInjective}. Assume now that $|\rootSet(xW_I) \symdif \rootSet(yW_J)| > 0$. So we either have~$\rootSet(xW_I) \ssm \rootSet(yW_J) \ne \varnothing$ or~$\rootSet(yW_J) \ssm \rootSet(xW_I) \ne \varnothing$. We consider only the case ${\rootSet(xW_I) \ssm \rootSet(yW_J) \ne \varnothing}$, the other case being symmetric.

To proceed by induction, our goal is to find a new coset~$zW_K$ so that
\begin{itemize}
\item $xW_I \precdot zW_K$ is one of the cover relations of Definition~\ref{def:facialWeakOrder},
\item $zW_K$ and~$yW_J$ still satisfy Condition~\eqref{item:facialWeakOrderCharacterizationRoots}, and
\item $\rootSet(zW_K) \symdif \rootSet(yW_J) \; \subsetneq \; \rootSet(xW_I) \symdif \rootSet(yW_J)$.
\end{itemize}
Indeed, by induction hypothesis, there will exist a path from~$zW_K$ to~$yW_J$ consisting of cover relations as in Definition~\ref{def:facialWeakOrder}. Adding the first step~$xW_I \precdot zW_K$, we then obtain a path from~$xW_I$ to~$yW_J$.

To construct this new coset~$zW_K$ and its root inversion set~$\rootSet(zW_K)$, we will add or delete (at least) one root from~$\rootSet(xW_I)$. We first claim that there exists~${s \in S}$ such that~$-x(\alpha_s) \notin \rootSet(yW_J)$. Otherwise, we would have~${x(-\Delta) \subseteq \rootSet(yW_J)}$. Since $\Phi^-=\cone(-\Delta)\cap \Phi$ and~$\rootSet(yW_J) = \cone(\rootSet(yW_J)) \cap \Phi$, this would imply that ${x(\Phi^-) \subseteq \rootSet(yW_J)}$. 
Moreover, ${x(\Phi_I^+) \subseteq \Phi^+}$ since~$x \in W^I$. Thus we would obtain
\begin{align*}
\rootSet(xW_I) \ssm \rootSet(yW_J)
& = \big( x(\Phi^-) \cup x(\Phi_I^+) \big) \ssm \rootSet(yW_J) \\
& \subseteq \big( x(\Phi^-) \ssm \rootSet(yW_J) \big) \cup x(\Phi_I^+) \\
& \subseteq \Phi^+.
\end{align*}
However, $\rootSet(xW_I) \ssm \rootSet(yW_J) \subseteq \Phi^-$ by Condition~\eqref{item:facialWeakOrderCharacterizationRoots}. Hence we would obtain that ${\rootSet(xW_I) \ssm \rootSet(yW_J) \subseteq \Phi^+ \cap \Phi^- = \varnothing}$, contradicting  our assumption.

\medskip

For the remaining of the proof we fix~$s \in S$ such that~$-x(\alpha_s) \notin \rootSet(yW_J)$ and we set $\beta \eqdef x(\alpha_s)$. By definition, we have~${-\beta \in \rootSet(xW_I) \ssm \rootSet(yW_J)}$. Moreover, since~${-\beta \notin \rootSet(yW_J)}$ and ${\rootSet(yW_J) \cup -\rootSet(yW_J) \supseteq y(\Phi^-) \cup -y(\Phi^-) = \Phi}$, we have~${\beta \in \rootSet(yW_J)}$. We now distinguish two cases according to whether or not ${\beta \in \rootSet(xW_I)}$, that is, on whether or not~$s \in I$. In both cases, we will need the following observation: since~${\rootSet(xW_I) \ssm \rootSet(yW_J) \subseteq \Phi^-}$, we have
\begin{equation}
\label{eqn: xphi+ subset rootset ywj}
x(\Phi_I^+) \subseteq \Phi^+ \cap \rootSet(xW_I) \subseteq \rootSet(yW_J). \tag{$\star$}
\end{equation}

\para{First case:~$s \notin I$}
Since~$-x(\alpha_s) = -\beta \in \rootSet(xW_I) \ssm \rootSet(yW_J) \subseteq \Phi^-$, we have that~${x(\alpha_s) \in \Phi^+}$ and thus~$x \in W^{I \cup \{s\}}$. We can therefore consider the standard parabolic coset~$zW_K \eqdef xW_{I \cup \{s\}}$ where~$z \eqdef x$ and~$K \eqdef I \cup \{s\}$. Its root inversion set is given by~${\rootSet(zW_K) = \rootSet(xW_I) \cup x(\Phi_K^+)}$. Note that $xW_I\precdot zW_K$ is a cover relation of type (1) in Definition~\ref{def:facialWeakOrder}. It thus remains to show that~$zW_K$ and~$yW_J$ still satisfy Condition~\eqref{item:facialWeakOrderCharacterizationRoots} and that~$\rootSet(zW_K) \symdif \rootSet(yW_J) \;\subsetneq\; \rootSet(xW_I) \symdif \rootSet(yW_J)$.

Since $\beta=x(\alpha_s) \in \rootSet(yW_J)$ and using Observation~\eqref{eqn: xphi+ subset rootset ywj} above, we thus have
\[
x(\Phi_K^+) = \cone \big( \{\beta\} \cup x(\Phi^+_I) \big) \cap \Phi \subseteq \rootSet(yW_J).
\]
Therefore we obtain
\[
\rootSet(zW_K) \ssm \rootSet(yW_J) = \rootSet(xW_I) \cup x(\Phi_K^+) \ssm \rootSet(yW_J) \subseteq \rootSet(xW_I) \ssm \rootSet(yW_J) \subseteq \Phi^-.
\]
Moreover, since ${\rootSet(xW_I) \subseteq \rootSet(zW_K)}$,
\[
\rootSet(yW_J) \ssm \rootSet(zW_K) \subseteq \rootSet(yW_J) \ssm \rootSet(xW_I) \subseteq \Phi^+.
\]
Therefore, we proved that the cosets~$zW_K$ and $yW_J$ still satisfy Condition~\eqref{item:facialWeakOrderCharacterizationRoots} and that $\rootSet(zW_K) \symdif \rootSet(yW_J) \subseteq \rootSet(xW_I) \symdif \rootSet(yW_J)$. The strict inclusion then follows since $-\beta$ belongs to~$\rootSet(xW_I) \symdif \rootSet(yW_J)$ but not to~$\rootSet(zW_K) \symdif \rootSet(yW_J)$.

\para{Second case:~$s \in I$}
Let~$s^\star \eqdef \wo{I} s \wo{I}$. Consider the standard parabolic coset~$zW_K$ where~$K \eqdef I \ssm \{s^\star\}$ and~$z \eqdef x \wo{I} \wo{K}$. Note that~$xW_I \precdot  zW_K$ is a cover relation of type (2) in Definition~\ref{def:facialWeakOrder}. It thus remains to show that~$zW_K$ and~$yW_J$ still satisfy Condition~\eqref{item:facialWeakOrderCharacterizationRoots} and that~$\rootSet(zW_K) \symdif \rootSet(yW_J) \;\subsetneq\; \rootSet(xW_I) \symdif \rootSet(yW_J)$.

We first prove that
\begin{equation}
\label{eq:proof1}
\rootSet(xW_I) = \rootSet(zW_K) \cup x(\Phi_I^-) \ssm x(\Phi_{I \ssm \{s\}}^-). \tag{$\clubsuit$}
\end{equation}
Observe that~$\wo{I}(\Phi^-) = \Phi^- \symdif \Phi_I$ and that~$\wo{I}(\Phi_K) = \wo{I}(\Phi_{I \ssm \{s^\star\}}) = \Phi_{I \ssm \{s\}}$. Therefore,
\begin{align*}
& \wo{I} \wo{K} (\Phi^-) = \wo{I} (\Phi^- \symdif \Phi_K) = \Phi^- \symdif (\Phi_I \ssm \Phi_{I \ssm \{s\}}) \\
\quad\text{and}\quad
& \wo{I} \wo{K} (\Phi_K^+) = \wo{I} (\Phi_K^-) = \Phi_{I \ssm \{s\}}^+.
\end{align*}
Therefore we obtain the desired equality:
\begin{align*}
\rootSet(xW_I) 
& = x(\Phi^- \cup \Phi_I^+) \\
& = x \big( \Phi^- \symdif (\Phi_I \ssm \Phi_{I \ssm \{s\}}) \big) \cup x(\Phi_{I \ssm \{s\}}^+) \cup x(\Phi_I^-) \ssm x(\Phi_{I \ssm \{s\}}^-)  \\
& = x \wo{I} \wo{K} (\Phi^-) \cup x \wo{I} \wo{K} (\Phi_K^+) \cup x(\Phi_I^-) \ssm x(\Phi_{I \ssm \{s\}}^-) \\
& = \rootSet(zW_K) \cup x(\Phi_I^-) \ssm x(\Phi_{I \ssm \{s\}}^-). 
\end{align*}

We now check that
\begin{equation}
\label{eq:proof2}
\big( x(\Phi_I^-) \ssm x(\Phi_{I \ssm \{s\}}^-) \big) \cap \rootSet(yW_J) = \varnothing. \tag{$\spadesuit$}
\end{equation}
Indeed, assume that this set contains an element~$\delta$. We have~$\delta = a(-\beta) + \gamma$, where $a>0$ and $\gamma \in x(\Phi_{I \ssm \{s\}}^-)$. Therefore, $-\beta = (\delta-\gamma)/a$. Since~${\delta \in \rootSet(yW_J)}$ and~${-\gamma \in x(\Phi_{I \ssm \{s\}}^+) \subseteq x(\Phi_I^+) \subseteq \rootSet(yW_J)}$ by Observation~\eqref{eqn: xphi+ subset rootset ywj} above, we would obtain that ${-\beta \in \rootSet(yW_J)}$, contradicting our definition of~$\beta$.

Now, combining Equations~\eqref{eq:proof1} and~\eqref{eq:proof2}, we obtain
\[
\rootSet(yW_J) \ssm \rootSet(zW_K) \subseteq \rootSet(yW_J) \ssm \rootSet(xW_I) \subseteq \Phi^+.
\]
Moreover, since~$\rootSet(zW_K) \subseteq \rootSet(xW_I)$,
\[
\rootSet(zW_K) \ssm \rootSet(yW_J) \subseteq \rootSet(xW_I) \ssm \rootSet(yW_J) \subseteq \Phi^-.
\]
Therefore, we proved that the cosets~$zW_K$ and $yW_J$ still satisfy Condition~\eqref{item:facialWeakOrderCharacterizationRoots} and that $\rootSet(zW_K) \symdif \rootSet(yW_J) \subseteq \rootSet(xW_I) \symdif \rootSet(yW_J)$. The strict inclusion then follows as~$-\beta$ belongs to~$\rootSet(xW_I) \symdif \rootSet(yW_J)$ but not to~$\rootSet(zW_K) \symdif \rootSet(yW_J)$. This concludes the proof.
\end{proof}

\begin{remark}
\label{rem:equivalentRootSetCharacterization}
Observe that our characterization of the facial weak order in terms of root inversion sets given in Theorem~\ref{thm:facialWeakOrderCharacterizations}\,\eqref{item:facialWeakOrderCharacterizationRoots} is equivalent to the following: ${xW_I \le yW_J}$ if and only if
\begin{enumerate}
\item[(ii')] $\rootSet(xW_I) \cap \Phi^+ \, \subseteq \, \rootSet(yW_J) \cap \Phi^+$ and~$\rootSet(xW_I) \cap \Phi^- \, \supseteq \, \rootSet(yW_J) \cap \Phi^-$.
\end{enumerate}
\end{remark}

\begin{example}
\label{exm:A3RootSets}
We have illustrated the facial weak order by the means of root inversion sets in \fref{fig:rootSet3}. In this figure each face is labelled by its root inversion set. To visualize the roots, we consider the affine plane~$P$ passing through the simple roots~$\{\alpha, \beta, \gamma\}$. A positive (resp.~negative) root~$\rho$ is then seen as a red upward (resp.~blue downward) triangle placed at the intersection of~$\R\rho$ with the plane~$P$. For instance,
\[
\rootSet(cbW_a) = \{\gamma, \beta + \gamma, \alpha + \beta + \gamma\} \cup \{-\alpha, -\beta, -\alpha-\beta-\gamma, -\alpha-\beta\}.
\] 
is labeled in \fref{fig:rootSet3} by	

\DeclareDocumentCommand{\rps}{m m m} {
	\begin{scope}[scale=0.08, x={(1cm,0)}, y={(0,1cm)}, z={(0,0)}]
		\def \ppoints {#1}
		\def \mpoints {#2}
		\def \bpoints {#3}
		
		
		\coordinate (100) at (-1, -1);
		\coordinate (111) at (0,-.5);
		\coordinate (001) at (1,-1);
		\coordinate (110) at (-0.5, 0);
		\coordinate (011) at (0.5, 0);
		\coordinate (010) at (0,1);
		
		\foreach \x in \ppoints{%
			\node[] at  (\x) {\color{red}$\bigtriangleup$};
		}
		
		\foreach \x in \mpoints{%
			\node[] at  (\x) {\color{blue}$\bigtriangledown$};
		}
		
		\foreach \x in \bpoints{%
			\node[] at  (\x) {$\color{blue}\bigtriangledown \!\!\!\!\! \color{red}\bigtriangleup$};
		}
	\end{scope}
}

\begin{center}
	\begin{tikzpicture}%
		[scale=8]
		\rps{001,011}{100,010,110}{111}
		\node[left=1mm] at (100) {\small$-\alpha$};
		\node[right=1mm] at (001) {\small$\gamma$};
		\node[left=1mm] at (110) {\small$-\alpha - \beta$};
		\node[right=1mm] at (011) {\small$\beta + \gamma$};
		\node[above=1mm] at (010) {\small$-\beta$};
	\end{tikzpicture}
\end{center}
	
Note that the star in the middle represents both $\alpha + \beta + \gamma$ and $-\alpha - \beta - \gamma$.

\begin{figure}
	\input{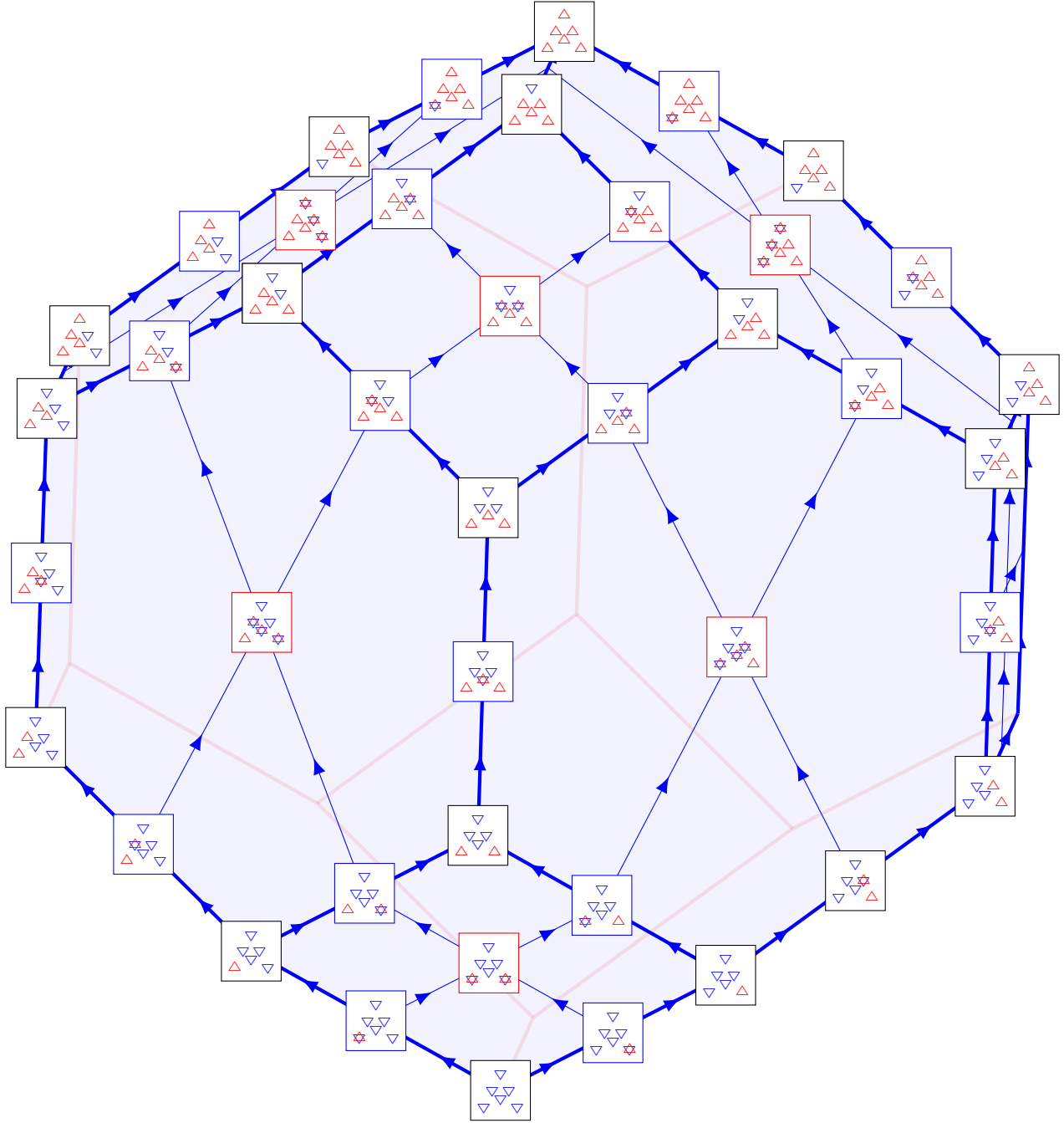}
	\caption{The facial weak order on the standard parabolic cosets of the Coxeter group of type~$A_3$. Each coset~$xW_I$ is replaced by its root inversion set~$\rootSet(xW_I)$, represented as follows: down blue triangles stand for negative roots while up red triangles stand for positive roots, and the position of each triangle is given by the barycentric coordinates of the corresponding root with respect to the three simple roots ($\alpha_1$ on the bottom left, $\alpha_2$ on the top, and $\alpha_3$ on the bottom right). See Example~\ref{exm:A3RootSets} for more details.}
	\label{fig:rootSet3}
\end{figure}
\end{example}

Using our characterization of Theorem~\ref{thm:facialWeakOrderCharacterizations}\,\eqref{item:facialWeakOrderCharacterizationRoots} together with Corollary~\ref{coro:weakOrderFromRootSet}, we obtain that the facial weak order and the weak order coincide on the elements of~$W$. Note that this is not at all obvious with the cover relations from Definition~\ref{def:facialWeakOrder}.

\begin{corollary}
\label{cor:WeakRestriction}
For any~$x,y \in W$, we have $x \le y$ in weak order if and only if~$xW_\varnothing \le yW_\varnothing$ in facial weak order.
\end{corollary}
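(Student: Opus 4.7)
The plan is to derive this corollary as an immediate consequence of the characterization of the facial weak order given in Theorem~\ref{thm:facialWeakOrderCharacterizations}. Since $W_\varnothing = \{e\}$, the longest element $\wo{\varnothing}$ equals $e$, and hence $x\wo{\varnothing} = x$ and $y\wo{\varnothing} = y$ for any $x,y \in W$.

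Applying Theorem~\ref{thm:facialWeakOrderCharacterizations}\,\eqref{item:facialWeakOrderCharacterizationCompareMinMax} with $I = J = \varnothing$, we get
\[
xW_\varnothing \le yW_\varnothing \text{ in facial weak order} \iff x \le y \text{ and } x\wo{\varnothing} \le y\wo{\varnothing} \text{ in weak order}.
\]
The second condition collapses to $x \le y$ in weak order, and the two conditions on the right become a single one, yielding the desired equivalence.

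Alternatively, one could argue via root inversion sets: by Theorem~\ref{thm:facialWeakOrderCharacterizations}\,\eqref{item:facialWeakOrderCharacterizationRoots}, the facial order between $xW_\varnothing$ and $yW_\varnothing$ is characterized by $\rootSet(x) \smallsetminus \rootSet(y) \subseteq \Phi^-$ and $\rootSet(y) \smallsetminus \rootSet(x) \subseteq \Phi^+$, which by Corollary~\ref{coro:weakOrderFromRootSet} is exactly the condition $x \le y$ in classical weak order. Either route gives the result directly, with no further calculation required; no obstacle is anticipated since both directions of the equivalence follow from a theorem already established.
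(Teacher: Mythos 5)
Your proof is correct and matches the paper's, which also derives the corollary immediately from Theorem~\ref{thm:facialWeakOrderCharacterizations} combined with Corollary~\ref{coro:weakOrderFromRootSet}; your second argument is literally the paper's. Your primary argument via characterization~(iii) with $\wo{\varnothing}=e$ is an equally valid (and arguably even more direct) reading of the same theorem.
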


The weak order anti-automorphisms ${x \mapsto x \woo}$ and~$x \mapsto \woo x$ and the automorphism~$x \mapsto \woo x \woo$ carry out on standard parabolic cosets. The following statement gives the precise definitions of the corresponding maps.

\begin{proposition}
\label{prop:antiautomorphisms}
The maps
\[
xW_I \longmapsto \woo x \wo{I} W_I
\qquad\text{and}\qquad
xW_I \longmapsto x \wo{I} \woo W_{\woo I \woo}
\]
are anti-automorphisms of the facial weak order. Consequently, the map
\[
xW_I \longmapsto \woo x \woo W_{\woo I \woo}
\]
is an automorphism of the facial weak order.
\end{proposition}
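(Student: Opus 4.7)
The plan is to use the characterization of the facial weak order given in Theorem~\ref{thm:facialWeakOrderCharacterizations}\,\eqref{item:facialWeakOrderCharacterizationCompareMinMax}, namely $xW_I \le yW_J$ iff $x \le y$ and $x\wo{I} \le y\wo{J}$ in the ordinary weak order. This reduces everything to the known fact that $w \mapsto \woo w$ and $w \mapsto w\woo$ are weak order anti-automorphisms of $W$, while $w \mapsto \woo w \woo$ is an automorphism.

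First I would verify that the two maps are well-defined, that is, that $\woo x\wo{I} \in W^I$ and $x\wo{I}\woo \in W^{\woo I \woo}$. For the first, if $s \in I$ then $\wo{I}(\alpha_s) = -\beta$ for some $\beta \in \Phi_I^+$, so $x\wo{I}(\alpha_s) = -x(\beta) \in -\Phi^+$ since $x(\Phi_I^+)\subseteq \Phi^+$, and hence $\woo x\wo{I}(\alpha_s) \in \Phi^+$. For the second, noting that $\alpha_{\woo t \woo} = -\woo(\alpha_t)$ for $t \in I$, a direct computation gives $x\wo{I}\woo(\alpha_{\woo t\woo}) = -x\wo{I}(\alpha_t) \in x(\Phi_I^+) \subseteq \Phi^+$.

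Next I would compute the minimum and maximum length coset representatives of the image cosets. Using $\wo{\woo I \woo} = \woo\wo{I}\woo$ (since $W_{\woo I \woo} = \woo W_I \woo$), the coset $\woo x\wo{I} W_I$ has minimum $\woo x\wo{I}$ and maximum $\woo x\wo{I}\cdot \wo{I} = \woo x$, while $x\wo{I}\woo W_{\woo I\woo}$ has minimum $x\wo{I}\woo$ and maximum $x\wo{I}\woo\cdot\woo\wo{I}\woo = x\woo$. Applying Theorem~\ref{thm:facialWeakOrderCharacterizations}\,\eqref{item:facialWeakOrderCharacterizationCompareMinMax} and the weak-order anti-automorphism $w\mapsto \woo w$ (resp.\ $w\mapsto w\woo$), we get
\[
\woo x\wo{I}W_I \le \woo y\wo{J}W_J \iff \woo x\wo{I}\le \woo y\wo{J} \text{ and } \woo x \le \woo y \iff y\wo{J}\le x\wo{I} \text{ and } y\le x,
\]
which is equivalent to $yW_J \le xW_I$, and the analogous computation for the second map. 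Involutivity follows from the easy checks $\woo(\woo x\wo{I})\wo{I} = x$ and $x\wo{I}\woo\cdot\wo{\woo I\woo}\woo = x\wo{I}\woo\cdot\woo\wo{I}\woo\cdot\woo = xW_I$ at the level of cosets.

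Finally, the automorphism statement follows for free by composing the two anti-automorphisms: a direct substitution gives $\psi_1\circ\psi_2(xW_I) = \woo\cdot x\wo{I}\woo\cdot\wo{\woo I\woo}\, W_{\woo I\woo} = \woo x\woo\, W_{\woo I\woo}$. I do not anticipate a serious obstacle: the only delicate points are the well-definedness checks and keeping careful track of the identity $\wo{\woo I\woo} = \woo\wo{I}\woo$ when identifying the maximal representatives of the image cosets.
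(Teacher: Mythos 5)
Your proof is correct, but it takes a different route from the paper's. The paper proves the proposition via the root inversion set characterization (Theorem~\ref{thm:facialWeakOrderCharacterizations}\,\eqref{item:facialWeakOrderCharacterizationRoots}): using Propositions~\ref{prop:rootSetWeightSetFromBottomAndTop} and~\ref{prop:rootSetSingleElement}\,\eqref{item:rootSetSingleElementwo} it observes that $\rootSet(\woo x \wo{I} W_I) = \woo\big(\rootSet(xW_I)\big)$ and $\rootSet(x\wo{I}\woo W_{\woo I \woo}) = -\rootSet(xW_I)$, and since both $\woo$ and negation exchange $\Phi^+$ with $\Phi^-$, the two conditions in~\eqref{item:facialWeakOrderCharacterizationRoots} get swapped, which is exactly order reversal; well-definedness of the maps is absorbed into these identities. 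You instead use characterization~\eqref{item:facialWeakOrderCharacterizationCompareMinMax} and reduce directly to the classical weak order anti-automorphisms $w \mapsto \woo w$ and $w \mapsto w\woo$, at the cost of explicitly checking that $\woo x\wo{I} \in W^I$ and $x\wo{I}\woo \in W^{\woo I \woo}$, identifying the minimal and maximal representatives of the image cosets, and tracking the identity $\wo{\woo I \woo} = \woo\wo{I}\woo$. All of these verifications are correct (in particular your root computations for well-definedness and your involutivity checks), so the argument is complete; it is marginally longer than the paper's but more self-contained, in that it does not route through the root inversion set machinery of Section~\ref{subsec:rootWeightSet}.
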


\begin{proof}
Using the characterization of the facial weak order given in Theorem~\ref{thm:facialWeakOrderCharacterizations}\,\eqref{item:facialWeakOrderCharacterizationRoots} we just need to observe that
\[
\rootSet(\woo x \wo{I} W_I) = \woo \big( \rootSet(xW_I) \big)
\qquad\text{and}\qquad
\rootSet(x \wo{I} \woo W_{\woo I \woo}) = -\rootSet(xW_I).
\]
This follows immediately from Propositions~\ref{prop:rootSetWeightSetFromBottomAndTop} and~\ref{prop:rootSetSingleElement}\,\eqref{item:rootSetSingleElementwo}.
\end{proof}


\subsection{The facial weak order is a lattice}
\label{subsec:weakOrderLattice}
In this section, we show that the  facial  weak order on standard parabolic cosets is actually a lattice. It generalizes the result for the symmetric group due to D.~Krob, M.~Latapy, J.-C.~Novelli, H.-D.~Phan and S.~Schwer~\cite{KrobLatapyNovelliPhanSchwer} to the facial weak order  on arbitrary finite Coxeter groups \ introduced by P.~Palacios and M.~Ronco~\cite{PalaciosRonco}.  It also gives a precise description of the meets and joins in this lattice.  The characterizations of the facial weak order given in Theorem~\ref{thm:facialWeakOrderCharacterizations} are key here.

\begin{theorem}
\label{thm:lattice}
The facial weak order~$(\CoxeterComplex{W}, \leq)$ is a lattice. The meet and join of two standard parabolic cosets~$xW_I$ and~$yW_J$ are given by:
\begin{gather*}
xW_I \meet yW_J = \zm W_\Km \quad\text{where}\quad \zm = x \meet y \quad\text{and}\quad \Km = \LdescentSet \big( \zm^{-1} (x\wo{I} \meet y\wo{J}) \big), \\
xW_I \join yW_J = \zj  W_\Kj \quad\text{where}\quad \zj = x\wo{I} \join y\wo{J} \quad\text{and}\quad \Kj = \LdescentSet \big( \zj^{-1} (x \join y) \big).
\end{gather*}
\end{theorem}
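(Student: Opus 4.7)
The plan is to exploit the characterization from Theorem~\ref{thm:facialWeakOrderCharacterizations}\,\eqref{item:facialWeakOrderCharacterizationCompareMinMax}, which reduces the facial weak order to two simultaneous weak order comparisons on $W$. Since the weak order on the finite Coxeter group $W$ is a lattice by A.~Bj\"orner's theorem, the natural candidate for the meet of two cosets is a coset whose minimal and maximal length representatives are the weak-order meets of the corresponding extremal representatives. I will construct the meet directly and then deduce the join by duality.

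Concretely, for the meet, set $a \eqdef x \meet y$ and $b \eqdef x\wo{I} \meet y\wo{J}$ in the weak order. Since $x \leq x\wo{I}$ and $y \leq y\wo{J}$, we have $a \leq b$, so $a$ is a prefix of $b$ and $a^{-1}b$ is reduced of length $\length(b)-\length(a)$. The key technical claim is that $a^{-1}b$ is the longest element of the standard parabolic subgroup $W_K$ with $K \eqdef \LdescentSet(a^{-1}b)$. Granting this, the factorization $b = a \cdot w_{\circ,K}$ is reduced, hence $a \in W^K$, and $aW_K$ is a genuine standard parabolic coset with minimum $a$ and maximum $b$. The verification that $aW_K = xW_I \meet yW_J$ is then immediate from Theorem~\ref{thm:facialWeakOrderCharacterizations}\,\eqref{item:facialWeakOrderCharacterizationCompareMinMax}: the inequalities $a \leq x,y$ and $b \leq x\wo{I}, y\wo{J}$ give $aW_K \leq xW_I, yW_J$; conversely, any common lower bound $uW_L$ satisfies $u \leq x,y$ and $u\wo{L} \leq x\wo{I}, y\wo{J}$, so $u \leq a$ and $u\wo{L} \leq b$ in the weak order, which yields $uW_L \leq aW_K$.

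The main obstacle is therefore the claim that $a^{-1}b$ is the longest element of a standard parabolic subgroup. To prove this, I would analyse the inversion set $\inversionSet(a^{-1}b) = a^{-1}\bigl(\inversionSet(b) \ssm \inversionSet(a)\bigr)$ and show it equals $\Phi_K^+$ for $K$ the set of simple roots it contains. Equivalently, I must show that $\inversionSet(a^{-1}b)$ is closed under taking positive roots in the cone it generates: any positive root lying in $\cone(\inversionSet(a^{-1}b))$ already belongs to $\inversionSet(a^{-1}b)$. This is where the parabolic structure enters decisively, through the identity $\inversionSet(x\wo{I}) = \inversionSet(x) \sqcup x(\Phi_I^+)$ and its analogue for $y$, combined with Bj\"orner's characterization of weak-order meets as the largest biclosed subset contained in the intersection of inversion sets. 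A careful case analysis on whether a candidate root lies in $x(\Phi_I^+)$, $y(\Phi_J^+)$, or neither, should allow one to conclude that $\inversionSet(a^{-1}b)$ is the positive root system of a standard parabolic.

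Finally, the join formula follows from the meet formula by duality: the anti-automorphism $xW_I \mapsto \woo x \wo{I} W_I$ of Proposition~\ref{prop:antiautomorphisms} swaps $\meet$ with $\join$ and exchanges the roles of each coset's minimum and maximum representatives. Transporting the meet formula through this involution and simplifying, using that left multiplication by $\woo$ is a weak-order anti-automorphism of $W$, yields precisely the stated join $\zj W_\Kj$ with $\zj = x\wo{I} \join y\wo{J}$ and $\Kj = \LdescentSet(\zj^{-1}(x \join y))$.
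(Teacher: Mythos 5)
Your reduction to the characterization of Theorem~\ref{thm:facialWeakOrderCharacterizations}\,\eqref{item:facialWeakOrderCharacterizationCompareMinMax} and your treatment of the join via the anti-automorphism of Proposition~\ref{prop:antiautomorphisms} both match the paper, but your central technical claim is false, and the whole construction of the meet rests on it. You assert that, writing $a = x \meet y$ and $b = x\wo{I} \meet y\wo{J}$, the element $a^{-1}b$ is the longest element of the standard parabolic subgroup $W_K$ with $K = \LdescentSet(a^{-1}b)$, so that the interval $[a,b]$ is itself a standard parabolic coset. This fails in general: in the paper's own Example~\ref{exm:computeMeet}\,(i) (type $A_3$, cosets $tsrW_{st}$ and $rtsW_\varnothing$) one gets $a = t$, $b = rts$, and $a^{-1}b = rs$, which is not the longest element of any standard parabolic subgroup; the meet is $tW_r$, whose maximum $tr$ is strictly below $b = rts$. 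In other words, the meet is only the \emph{biggest parabolic coset contained in} $[a,b]$ and containing $a$ (as the paper remarks right after the theorem), not the interval $[a,b]$ itself, so no amount of analysis of $\inversionSet(a^{-1}b)$ via biclosed sets can establish your claim.

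This gap propagates to your verification that $aW_K$ is the \emph{greatest} lower bound. Since the maximum of $\zm W_\Km$ is $\zm\wo{\Km}$, which may be strictly smaller than $b$, knowing that a common lower bound $uW_L$ satisfies $u\wo{L} \le b$ does not yield $u\wo{L} \le \zm\wo{\Km}$, which is what $uW_L \le \zm W_\Km$ requires. This is precisely the hard step of the paper's proof: after factoring $x = zx'$, $y = zy'$, $\zm = z\zm'$ (reduced), one shows $K' \subseteq \LdescentSet(x'\wo{I} \meet y'\wo{J}) \subseteq \LdescentSet(\zm'\wo{\Km})$ for the lower bound $zW_{K'}$, the second inclusion being proved by contradiction using Deodhar's Lemma, and only then concludes $z\wo{K'} \le \zm\wo{\Km}$. (One also needs a separate length argument to see $\zm \in W^{\Km}$, which in your write-up was deduced from the false claim.) Your proposal is missing this argument entirely, so as it stands it does not prove the theorem.
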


In other words, the meet~$xW_I \meet yW_J$ is the biggest parabolic coset in the interval~$[x \meet y, x\wo{I} \meet y\wo{J}]$ containing~$x \meet y$. Similarly, the join~$xW_I \join yW_J$ is the biggest parabolic coset in the interval~$[x \join y, x\wo{I} \join y\wo{J}]$ containing~$x\wo{I} \join y\wo{J}$.

Note that in the second point of Theorem~\ref{thm:lattice}, the minimal representative of the coset $\zj W_\Kj$ is in fact $\zj w_{\circ,\Kj}$, not~$\zj$. Unlike in the rest of the paper, we take the liberty to use another coset representative than the minimal one to underline the symmetry between meet and join in the facial weak order.

\begin{example}
\label{exm:computeMeet}
Before proving the above statement, we give two examples of computations of the meet and the join in the facial weak order.
\begin{enumerate}[(i)]

\item Consider first the Coxeter system
\[
\setangle{r,s,t}{r^2 = s^2 = t^2 = (rs)^3 = (st)^3 = (rt)^2 = 1}
\]
of type~$A_3$. \fref{fig:A3Words} shows the facial weak order of $\CoxeterComplex{W}$ and is a good way to follow along. To find the meet of~$tsrW_{st}$ and~$rtsW_\varnothing$, we compute:
\begin{gather*}
\zm = tsr \meet rts = t, \\
\Km = \LdescentSet \big(\zm^{-1} (tsr\wo{st} \meet rts\wo{\varnothing})\big) = \LdescentSet \big(t(tsrsts \meet rts)\big) = \LdescentSet(t (rts)) = \{r\}.
\end{gather*}
Thus we have that $tsrW_{st} \meet rtsW_\varnothing = \zm W_\Km = tW_r$.

\item For a slightly more complex example, consider the Coxeter system
\[
\setangle{r,s,t}{r^2 = s^2 = t^2 = (rs)^4 = (st)^3 = (rt)^2 = 1}
\]
of type $B_3$. To find the join of~$rstW_{rs}$ and~$tsrsW_\varnothing$, we compute:
\begin{gather*}
\zj = rst\wo{rs} \join tsrs\wo{\varnothing} = rstrsrs \join tsrs = rtsrtsrt \\
\Kj = \LdescentSet \big(\zj^{-1} (rst \join tsrs)\big) = \LdescentSet\big(trstrstr(rtsrtst)\big) = \LdescentSet(r) = \{r\}
\end{gather*}	
Thus we see that 
\[
rstW_{rs} \join tsrs\wo{\varnothing} = \zj \wo{\Kj} W_\Kj = rtsrtsrt(r)W_r = rtsrtstW_r.
\]
\end{enumerate}
\end{example}

\begin{proof}[Proof of Theorem~\ref{thm:lattice}]
Throughout the proof we use the characterization of the facial weak order given in  Theorem~\ref{thm:facialWeakOrderCharacterizations}\,\eqref{item:facialWeakOrderCharacterizationRoots}:
\[
xW_I \le yW_J \iff x \le y \text{ and } x \wo{I} \le y \wo{J}.
\]

We first prove the existence of the meet, then use Proposition~\ref{prop:antiautomorphisms} to deduce the existence and formula for the join.

\para{Existence of meet}
For any~$s \in \Km$, we have
\begin{align*}
\length(x\wo{I} \meet y\wo{J}) - \length(s \zm^{-1})
& \le \length \big( s \zm^{-1} (x\wo{I} \meet y\wo{J}) \big) \\
& = \length \big( \zm^{-1} (x\wo{I} \meet y\wo{J}) \big) - 1 \\
& = \length(x\wo{I} \meet y\wo{J}) - \length(\zm^{-1}) - 1.
\end{align*}
Indeed, the first inequality holds in general (for reduced or non-reduced words). The first equality follows from ${s \in \Km = \LdescentSet(\zm^{-1} (x\wo{I} \meet y\wo{J}))}$. The last equality holds since~$\zm = x \meet y \le x\wo{I} \meet y\wo{J}$. We deduce from this inequality that~${\length(\zm) < \length(\zm s)}$. Therefore, we have~$\zm \in W^{\Km}$.

Since $\Km = \LdescentSet(\zm^{-1} (x\wo{I} \meet y\wo{J}))$, we have~$\wo{\Km} \le \zm^{-1} (x\wo{I} \meet y\wo{J})$. Therefore $\zm \wo{\Km} \le x\wo{I}\meet y\wo{J}$, since $\zm \in W^{\Km}$. We thus have~$\zm = x \meet y \le x$ and~$\zm \wo{\Km} \le x\wo{I} \meet y\wo{J} \le x\wo{I}$, which implies~$\zm W_\Km \le xW_I$, by Theorem~\ref{thm:facialWeakOrderCharacterizations}\,\eqref{item:facialWeakOrderCharacterizationRoots}. By symmetry, $\zm W_\Km \le yW_J$.

It remains to show that~$\zm W_\Km$ is the greatest lower bound. Consider a standard parabolic coset~$zW_K$ such that~$zW_K \le xW_I$ and~$zW_K \le yW_J$. We want to show that~${zW_K \le \zm W_\Km}$, that is, $z \le \zm$ and~$z\wo{K} \le \zm\wo{\Km}$. The first inequality is immediate since~$z \le x$ and~$z \le y$ so that~$z \le x \meet y = \zm$. For the second one, we write the following reduced words: $x = zx'$, $y = zy'$, and~$\zm = z\zm'$ where~${\zm' = x' \meet y'}$. Since~$z\wo{K} \le x\wo{I}$ and~$z\wo{K} \le y\wo{J}$, we have
\[
{z\wo{K} \le x\wo{I} \meet y\wo{J} = zx'\wo{I} \meet zy'\wo{J} = z(x'\wo{I} \meet y'\wo{J})}.
\] 
Thus~$\wo{K} \le x'\wo{I} \meet y'\wo{J}$, since all words are reduced here. Therefore  
\[
K \subseteq \LdescentSet(x'\wo{I} \meet y'\wo{J}).
\]

We now claim that $\LdescentSet(x'\wo{I} \meet y'\wo{J}) \subseteq \LdescentSet(\zm' \wo{\Km})$. To see it, consider ${s \in \LdescentSet(x'\wo{I} \meet y'\wo{J})}$ and assume by contradiction that~$s \notin \LdescentSet(\zm' \wo{\Km})$. Then~$s$ does not belong to~$\LdescentSet(\zm')$, since the expression $\zm' \wo{\Km}$ is reduced. By \mbox{Deodhar's} Lemma (see Section~\ref{subsec:parabolicSubgroupsCosets}) we obtain that either $s \zm' \in W^\Km$ or  $s \zm' = \zm' t$ where 
\[
t \in \LdescentSet \big( \zm'^{-1}(x'\wo{I} \meet y'\wo{J}) \big) = \LdescentSet \big( \zm^{-1}(x\wo{I} \meet y\wo{J}) \big) = \Km.
\] 
In the first case we obtain
\[
1 + \length(\zm') + \length(\wo{\Km}) = \length(s \zm' \wo{\Km}) =   \length(\zm' \wo{\Km})-1=\length(\zm') + \length(\wo{\Km}) - 1
\]
a contradiction. In the second case, we get
\begin{align*}
1 + \length(\zm') + \length(\wo{\Km}) &= \length(s \zm' \wo{\Km}) = \length(\zm' t \wo{\Km}) \\
&= \length(\zm')+\length(t\wo{\Km}) = \length(\zm') + \length(\wo{\Km}) - 1,
\end{align*}
a contradiction again. This proves that~$\LdescentSet(x'\wo{I} \meet y'\wo{J}) \subseteq \LdescentSet(\zm' \wo{\Km})$.

To conclude the proof, we deduce from~$K \subseteq \LdescentSet(x'\wo{I} \meet y'\wo{J}) \subseteq \LdescentSet(\zm' \wo{\Km})$ that~${\wo{K} \le \zm' \wo{\Km}}$, and finally that~$z\wo{K} \le z\zm' \wo{\Km} = \zm \wo{\Km}$ since all expressions are reduced. Since~$z \le \zm$ and ${z\wo{K} \le \zm\wo{\Km}}$, we have~$zW_K \le \zm W_\Km$ so that~$\zm W_\Km$ is indeed the greatest lower bound.

\para{Existence of join}
The existence of the join follows from the existence of meet and the anti-automorphism~${\Psi: xW_I \longmapsto \woo x \wo{I} W_I}$ from Proposition~\ref{prop:antiautomorphisms}. Using the fact that~$\woo (\woo u \meet \woo v) = u \join v$, we get the  formula
\begin{align*}
xW_I \join yW_J
& = \Psi \big( \Psi(xW_I) \meet \Psi(yW_J) \big) \\
& = \Psi ( \woo x \wo{I} W_I \meet \woo y \wo{J} W_J ) \\
& = \Psi \big( (\woo x \wo{I} \meet \woo y \wo{J}) W_{\LdescentSet( (\woo x \wo{I} \meet \woo y \wo{J})^{-1} (\woo x \meet \woo y))} \big) \\
& = \Psi \big( (\woo x \wo{I} \meet \woo y \wo{J}) W_\Kj \big) \\
& = \woo (\woo x \wo{I} \meet \woo y \wo{J}) \wo{\Kj} W_\Kj \\
& = \zj \wo{\Kj} W_\Kj. \qedhere
\end{align*}
\end{proof}

We already observed in Corollary~\ref{cor:WeakRestriction} that the classical weak order is a subposet of the facial weak order. The formulas of Theorem~\ref{thm:lattice} ensure that it is also a sublattice.

\begin{corollary}
The classical weak order is a sublattice of the facial weak order.
\end{corollary}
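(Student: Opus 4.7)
The plan is to apply the explicit meet and join formulas from Theorem~\ref{thm:lattice} to cosets of the form $xW_\varnothing$ and $yW_\varnothing$, and verify that the resulting cosets are again of the form $zW_\varnothing$ with $z$ being the classical weak order meet (respectively join) of $x$ and $y$. Combined with Corollary~\ref{cor:WeakRestriction}, which identifies the classical weak order on $W$ with the induced suborder of the facial weak order on $\set{xW_\varnothing}{x \in W}$, this will show sublattice-ness.

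Concretely, for the meet, I would plug $I = J = \varnothing$ into the formula $xW_I \meet yW_J = \zm W_\Km$. Since $\wo{\varnothing} = e$, the element $\zm$ specializes to $x \meet y$, and the set $\Km = \LdescentSet\big(\zm^{-1}(x\wo{\varnothing} \meet y\wo{\varnothing})\big) = \LdescentSet\big((x\meet y)^{-1}(x \meet y)\big) = \LdescentSet(e) = \varnothing$. Thus $xW_\varnothing \meet yW_\varnothing = (x \meet y)W_\varnothing$, which lies in the image of the embedding $W \hookrightarrow \CoxeterComplex{W}$ and corresponds to the classical weak order meet of $x$ and $y$.

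The join case is entirely analogous: $\zj = x\wo{\varnothing} \join y\wo{\varnothing} = x \join y$, and $\Kj = \LdescentSet(\zj^{-1}(x \join y)) = \LdescentSet(e) = \varnothing$, so $xW_\varnothing \join yW_\varnothing = (x \join y) \wo{\varnothing} W_\varnothing = (x \join y) W_\varnothing$, again matching the classical weak order join.

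There is essentially no obstacle here; the proof is a direct specialization of the formulas in Theorem~\ref{thm:lattice}. The only subtlety worth a brief mention is that one must invoke Corollary~\ref{cor:WeakRestriction} to ensure that the embedding $x \mapsto xW_\varnothing$ is an order embedding, so that matching meets and joins in the two orders suffices to conclude that we have a sublattice rather than merely a subposet closed under the ambient operations.
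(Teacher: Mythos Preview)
Your proof is correct and takes essentially the same approach as the paper: the paper's proof is the one-line observation that ``if $I = J = \varnothing$, then $K = \varnothing$ in the formulas of Theorem~\ref{thm:lattice},'' which is exactly what you spell out in detail. Your explicit mention of Corollary~\ref{cor:WeakRestriction} to justify that $x \mapsto xW_\varnothing$ is an order embedding is a nice addition that the paper leaves implicit.
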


\begin{proof}
If~$I = J = \varnothing$, then~$K = \varnothing$ in the formulas of Theorem~\ref{thm:lattice}.
\end{proof}

\begin{remark}
It is well-known that the map~$x \mapsto x\woo$ is an orthocomplementation of the weak order: it is involutive, order-reversing and satisfies $x\woo \meet x = e$ and $x\woo \join x = \woo$. In other words, it endows the weak order with a structure of ortholattice, see for instance \cite[Corollary~3.2.2]{BrentiBjorner}. This is no longer  the case for the facial weak order: the map~$xW_I \longmapsto \woo x \wo{I} W_I$ is indeed involutive and order-reversing, but is not an orthocomplementation: for a (counter-)example, consider~$x = e$ and~$I = S$.
\end{remark}


\subsection{Further properties of the facial weak order}
\label{subsec:propertiesFacialWeakOrder}

In this section, we study some properties of the facial weak order: we compute its partial M\"obius function, discuss formulas for the root inversion sets of meet and join, and describe its join-irreducible elements.


\subsubsection{M\"obius function}

Recall that the \defn{M\"obius function} of a poset~$P$ is the function~$\mu : P \times P \to \Z$ defined inductively by
\[
\mu(p,q) \eqdef
\begin{cases}
1 & \text{if } p = q, \\
\displaystyle{- \sum_{p \le r < q} \mu(p,r)} & \text{if } p < q, \\
0 & \text{otherwise.}
\end{cases}
\]
We refer the reader to~\cite{Stanley-enumerativeCombinatorics1} for more information on M\"obius functions. The following statement gives the values~$\mu(yW_J) \eqdef  \mu(eW_\emptyset, yW_J)$ of the M\"obius function on the facial weak order.

\begin{proposition}
The M\"obius function of the facial weak order is given by
\[
\mu(yW_J) \eqdef \mu(eW_\varnothing, yW_J) = 
\begin{cases}
(-1)^{|J|},\qquad & \text{if } y = e, \\
0, & \text{otherwise.}
\end{cases}
\]
\end{proposition}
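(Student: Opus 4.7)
The plan is to prove the formula by induction on $yW_J$ in the facial weak order, using the defining recursion
\[
\mu(yW_J) = -\sum_{xW_I < yW_J} \mu(xW_I).
\]
The inductive hypothesis will kill every term in this sum with $x \ne e$, so the problem reduces to evaluating
\[
\mu(yW_J) = -\sum_{I : eW_I < yW_J} (-1)^{|I|}.
\]
To identify the index set, I will use Theorem~\ref{thm:facialWeakOrderCharacterizations}\,\eqref{item:facialWeakOrderCharacterizationCompareMinMax}: the condition $eW_I \le yW_J$ is equivalent to $\wo{I} \le y\wo{J}$ in the classical weak order. Combined with the standard fact that $\wo{I} \le w$ if and only if $I \subseteq \LdescentSet(w)$ (a consequence of $\inversionSet(\wo{I}) = \Phi_I^+$ together with the biclosure of inversion sets, recalled in Section~\ref{subsec:lengthReducedWordsAndWeakOrder}), this rewrites the index set as $\{I \subseteq S : I \subseteq \LdescentSet(y\wo{J})\}$, with the single term $I = J$ removed in the case $y = e$ (since then $eW_J$ equals $yW_J$ and must not be counted in the strict inequality).

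Two cases will then finish the argument. If $y \ne e$, then $y\wo{J} \ne e$, so $\LdescentSet(y\wo{J})$ is nonempty and the Boolean identity
\[
\sum_{I \subseteq T} (-1)^{|I|} = 0 \qquad (T \ne \varnothing)
\]
yields $\mu(yW_J) = 0$ at once (the term $I = J$ need not be excluded here, since $y \ne e$ already ensures $eW_I \ne yW_J$). If $y = e$ and $J \ne \varnothing$, the sum runs over $I \subsetneq J$; the same Boolean identity gives $\sum_{I \subsetneq J}(-1)^{|I|} = -(-1)^{|J|}$, whence $\mu(eW_J) = (-1)^{|J|}$. The base case $y = e$, $J = \varnothing$ is the tautology $\mu(eW_\varnothing, eW_\varnothing) = 1$.

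The main delicate point I expect is the bookkeeping in the case $y = e$, where the target $yW_J = eW_J$ is itself of the form $eW_I$ and so the corresponding term must be excluded from the recursion sum; forgetting it would give $\mu = 0$ instead of $(-1)^{|J|}$. Once this is correctly tracked, everything else is an elementary signed-subset count powered by the two clean reformulations above (the vanishing of $\mu(xW_I)$ for $x \ne e$ coming from the inductive hypothesis, and the reduction $eW_I \le yW_J \Leftrightarrow I \subseteq \LdescentSet(y\wo{J})$ coming from the min/max characterization of the facial weak order).
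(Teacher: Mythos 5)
Your proposal is correct and follows essentially the same route as the paper: reduce the M\"obius recursion to the terms $eW_I < yW_J$ (the others vanish inductively), identify that index set via Theorem~\ref{thm:facialWeakOrderCharacterizations}\,\eqref{item:facialWeakOrderCharacterizationCompareMinMax} as $\set{I}{I \subseteq \LdescentSet(y\wo{J})}$, and conclude with the alternating Boolean sum. The only cosmetic difference is that you run a single induction over the facial weak order treating $y=e$ and $y\neq e$ uniformly, whereas the paper first identifies the interval below $eW_J$ as a Boolean lattice and then uses a double induction on $(\length(y),|J|)$; the content is the same.
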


\begin{proof}
We first show the equality when~$y = e$. From the characterization of Theorem~\ref{thm:facialWeakOrderCharacterizations}\,\eqref{item:facialWeakOrderCharacterizationCompareMinMax}, we know that~$xW_I \leq W_J$ if and only if~$x = e$ and~$I \subseteq J$. That is, the facial weak order below~$W_J$ is isomorphic to the boolean lattice on~$J$ (for example, the reader can observe a $3$-dimensional cube below~$W$ in \fref{fig:A3Words}). The result follows when~$y = e$ since the M\"obius function of the boolean lattice on~$J$ is given by~$\mu(I) = (-1)^{|I|}$ for~$I \subseteq J$ (inclusion-exclusion principle~\cite{Stanley-enumerativeCombinatorics1}).

We now prove by double induction on the length~$\ell(y)$ and the rank~$|J|$ that $\mu(yW_J) = 0$ for any coset~$yW_J$ with~$\length(y) \ge 1$ and~$J \subseteq S$. Indeed, consider~$yW_J$ with~$\length(y) \ge 1$ and assume that we have proved that~$\mu(xW_I) = 0$ for all~$xW_I$ with~$1 \le \length(x) < \length(y)$ or with~$\length(x) = \length(y)$ and~$|I| < |J|$. Since~$xW_I < yW_J$ if and only if~$x < y$, or~$x = y$ and~$I \subsetneq J$ we have
\[
\mu(yW_J) = - \sum_{xW_I < yW_J} \mu(xW_I) = - \sum_{W_I < yW_J} \mu(W_I).
\]
Therefore, since~$W_I < yW_J \iff \wo{I} \le y\wo{J} \iff I \subseteq \LdescentSet(y\wo{J})$, we have the non-empty boolean lattice on~$\LdescentSet(y\wo{J})$ and
\[
\mu(yW_J) = - \sum_{W_I < yW_J} \mu(W_I) = - \!\!\! \sum_{I \subseteq \LdescentSet(y\wo{J})} \!\!\! \mu(W_I) = - \!\!\! \sum_{I \subseteq \LdescentSet(y\wo{J})} \!\!\! (-1)^{|I|} = 0.
\qedhere
\]
\end{proof}


\subsubsection{Formulas for root inversion sets of meet and join}

For~$X \subseteq \Phi$, define the closure operators
\[
\plusTop{X} \eqdef \Phi^+ \cap \cone(X)
\qquad\text{and}\qquad
\minusTop{X} \eqdef \Phi^- \cap \cone(X),
\]
and their counterparts
\[
\plusBottom{X} \eqdef \Phi^+ \ssm \plusTop{\Phi^+ \ssm X}
\qquad\text{and}\qquad
\minusBottom{X} \eqdef \Phi^- \ssm \minusTop{\Phi^- \ssm X}.
\]
Note that~$\plusTop{X \cap \Phi^+} = \plusTop{X}$, $\plusBottom{X \cap \Phi^+} = \plusBottom{X}$, and~$-\plusTop{X} = \minusTop{{-}X}$. Similar formulas hold exchanging~$\ominus$'s with~$\oplus$'s.
Using these notations it is well known that the inversion sets of the meet and join in the (classical) weak order can be computed by
\begin{equation}
\label{eq:inversionSetMeetJoin}
\inversionSet(x \meet y) = \plusBottom{\inversionSet(x) \cap \inversionSet(y)}
\qquad\text{and}\qquad
\inversionSet(x \join y) = \plusTop{\inversionSet(x) \cup \inversionSet(y)}.
\tag{$\heartsuit$}
\end{equation}
For references on this property, see for example~\cite[Theorem~5.5]{BjornerEdelmanZiegler} and the discussion in~\cite{HohlwegLabbe} for its extension to infinite Coxeter groups.
Our next statement extends these formulas to compute the root inversion sets of the meet and join in the classical weak order.

\begin{corollary}
\label{coro:rootSetMeetJoin}
For~$x,y \in W$, the root inversion sets of the meet and join of~$x$ and~$y$ are given by
\begin{align*}
& \rootSet(x \meet y) = \minusTop{\rootSet(x) \cup \rootSet(y)} \cup \plusBottom{\rootSet(x) \cap \rootSet(y)}, \\
\text{and}\quad
& \rootSet(x \join y) = \minusBottom{\rootSet(x) \cap \rootSet(y)} \cup \plusTop{\rootSet(x) \cup \rootSet(y)}.
\end{align*}
\end{corollary}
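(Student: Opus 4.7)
The plan is to prove each formula by splitting the root inversion set of the meet (resp.\ join) into its positive and negative parts and then recovering each part from the known formulas~\eqref{eq:inversionSetMeetJoin} on inversion sets, using Proposition~\ref{prop:rootSetSingleElement}\,\eqref{item:rootSetSingleElementFromInversionSet} to translate between~$\rootSet$ and~$\inversionSet$. The whole argument rests on a small dictionary of identities about the closure operators~$[\cdot]^\oplus$, $[\cdot]_\oplus$, $[\cdot]^\ominus$, $[\cdot]_\ominus$ which I would establish first: namely
\[
\plusTop{X \cap \Phi^+} = \plusTop{X}, \quad \plusBottom{X \cap \Phi^+} = \plusBottom{X}, \quad \minusTop{X \cap \Phi^-} = \minusTop{X}, \quad \minusBottom{X \cap \Phi^-} = \minusBottom{X},
\]
together with the duality~$-\plusTop{X} = \minusTop{-X}$ (already noted in the paper) and its analog~$-\plusBottom{X} = \minusBottom{-X}$ obtained directly from the definitions.

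Second, I would record the immediate identities $\rootSet(x) \cap \Phi^+ = \inversionSet(x)$ and $\rootSet(x) \cap \Phi^- = -(\Phi^+ \ssm \inversionSet(x))$ from Proposition~\ref{prop:rootSetSingleElement}\,\eqref{item:rootSetSingleElementFromInversionSet}, from which
\[
\bigl(\rootSet(x) \cup \rootSet(y)\bigr) \cap \Phi^+ = \inversionSet(x) \cup \inversionSet(y), \qquad \bigl(\rootSet(x) \cap \rootSet(y)\bigr) \cap \Phi^+ = \inversionSet(x) \cap \inversionSet(y),
\]
and dually
\[
\bigl(\rootSet(x) \cap \rootSet(y)\bigr) \cap \Phi^- = -\bigl((\Phi^+ \ssm \inversionSet(x)) \cup (\Phi^+ \ssm \inversionSet(y))\bigr),
\]
\[
\bigl(\rootSet(x) \cup \rootSet(y)\bigr) \cap \Phi^- = -\bigl((\Phi^+ \ssm \inversionSet(x)) \cap (\Phi^+ \ssm \inversionSet(y))\bigr).
\]

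For the meet formula, the positive part is handled directly by~\eqref{eq:inversionSetMeetJoin}:
\[
\rootSet(x \meet y) \cap \Phi^+ = \inversionSet(x \meet y) = \plusBottom{\inversionSet(x) \cap \inversionSet(y)} = \plusBottom{\rootSet(x) \cap \rootSet(y)},
\]
the last equality by the dictionary identity. For the negative part, I would first rewrite $\Phi^+ \ssm \plusBottom{X} = \plusTop{\Phi^+ \ssm X}$ (immediate from the definition of~$\plusBottom{\cdot}$), then apply it to obtain
\[
\Phi^+ \ssm \inversionSet(x \meet y) = \plusTop{(\Phi^+ \ssm \inversionSet(x)) \cup (\Phi^+ \ssm \inversionSet(y))}.
\]
Negating both sides and invoking $-\plusTop{X} = \minusTop{-X}$ gives $\rootSet(x \meet y) \cap \Phi^- = \minusTop{\rootSet(x) \cup \rootSet(y)}$. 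The join formula is proved symmetrically: the positive part comes from~\eqref{eq:inversionSetMeetJoin} directly, while the negative part uses $\Phi^+ \ssm \plusTop{X} = \plusBottom{\Phi^+ \ssm X}$ and the duality~$-\plusBottom{X} = \minusBottom{-X}$.

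The main (minor) obstacle is just the bookkeeping between the $\oplus$ and $\ominus$ operators and the sign flips; nothing nontrivial beyond~\eqref{eq:inversionSetMeetJoin} and Proposition~\ref{prop:rootSetSingleElement} is needed, and the closure identities above make every substitution mechanical.
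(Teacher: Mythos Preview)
Your proposal is correct and follows exactly the same approach as the paper, which simply records that the result is immediate from Equation~\eqref{eq:inversionSetMeetJoin} and Proposition~\ref{prop:rootSetSingleElement}\,\eqref{item:rootSetSingleElementFromInversionSet}. You have merely spelled out in detail the bookkeeping that the paper leaves implicit.
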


\begin{proof}
This is immediate from Equation~\eqref{eq:inversionSetMeetJoin} and Proposition~\ref{prop:rootSetSingleElement}\,\eqref{item:rootSetSingleElementFromInversionSet}.
%
\end{proof}

We would now like to compute the root inversion sets of the meet~$xW_I \meet yW_J$ and join~$xW_I \join yW_J$ in the facial weak order in terms of the root inversion sets of~$xW_I$ and~$yW_J$. However, we only have a partial answer to this question.

\begin{proposition}
\label{prop:rootSetMeetJoinFacialWeakOrder}
For any cosets~$xW_I, yW_J \in \CoxeterComplex{W}$, we have
\begin{align*}
& \rootSet(xW_I \meet yW_J) \cap \Phi^- = \minusTop{\rootSet(xW_I) \cup \rootSet(yW_J)}, \\
\text{and}\quad
& \rootSet(xW_I \meet yW_J) \cap \Phi^+ \subseteq \plusBottom{\rootSet(xw_I) \cap \rootSet(yW_J)},
\end{align*}
while
\begin{align*}
& \rootSet(xW_I \join yW_J) \cap \Phi^- \subseteq \minusBottom{\rootSet(xW_I) \cap \rootSet(yW_J)}, \\
\text{and}\quad
& \rootSet(xW_I \join yW_J) \cap \Phi^+ =  \plusTop{\rootSet(xW_I) \cup \rootSet(yW_J)}.
\end{align*}
\end{proposition}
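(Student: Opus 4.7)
My plan is to reduce the four claims to statements about the classical weak order via Theorem~\ref{thm:lattice}, and to obtain the join statements from the meet statements by applying the order-reversing anti-automorphism $\Psi: xW_I \mapsto \woo x \wo{I} W_I$ of Proposition~\ref{prop:antiautomorphisms}. Since $\rootSet(\Psi(xW_I)) = \woo(\rootSet(xW_I))$ and $\woo$ exchanges $\Phi^+$ with $\Phi^-$, applying $\Psi$ turns the meet identities for $\Psi(xW_I), \Psi(yW_J)$ into the desired join identities for $xW_I, yW_J$. It therefore suffices to establish the two meet claims.

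For both meet statements, the starting point is Theorem~\ref{thm:lattice}: $xW_I \meet yW_J = \zm W_\Km$ with $\zm = x \meet y$, and the proof of that theorem additionally yields $\zm \wo{\Km} \leq x\wo{I} \meet y\wo{J}$ in the classical weak order. Corollary~\ref{coro:rootSetWeightSetFromBottomAndTop} then lets me split $\rootSet(\zm W_\Km) \cap \Phi^- = \rootSet(\zm) \cap \Phi^-$ and $\rootSet(\zm W_\Km) \cap \Phi^+ = \rootSet(\zm \wo{\Km}) \cap \Phi^+$, reducing the problem to studying the root inversion sets of the vertex cosets $\zm$ and $\zm \wo{\Km}$ individually.

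For the $\Phi^+$ inclusion I would argue as follows. Proposition~\ref{prop:rootSetSingleElement}\,\eqref{item:rootSetSingleElementFromInversionSet} identifies $\rootSet(\zm \wo{\Km}) \cap \Phi^+$ with $\inversionSet(\zm \wo{\Km})$, which is contained in $\inversionSet(x\wo{I} \meet y\wo{J})$ since $\zm \wo{\Km} \leq x\wo{I} \meet y\wo{J}$. Applying the classical formula~\eqref{eq:inversionSetMeetJoin}, rewriting $\inversionSet(x\wo{I}) = \rootSet(xW_I) \cap \Phi^+$ (Corollary~\ref{coro:rootSetWeightSetFromBottomAndTop} and Proposition~\ref{prop:rootSetSingleElement}\,\eqref{item:rootSetSingleElementFromInversionSet}) and likewise for $y$, and using that $\plusBottom{\cdot}$ depends only on the positive-root part of its argument, rewrites this upper bound as $\plusBottom{\rootSet(xW_I) \cap \rootSet(yW_J)}$, giving the claim.

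For the $\Phi^-$ equality, Corollary~\ref{coro:rootSetMeetJoin} gives $\rootSet(\zm) \cap \Phi^- = \minusTop{\rootSet(x) \cup \rootSet(y)}$, reducing the problem to establishing the identity $\minusTop{\rootSet(x) \cup \rootSet(y)} = \minusTop{\rootSet(xW_I) \cup \rootSet(yW_J)}$. The inclusion $\subseteq$ is immediate since $\rootSet(x) \subseteq \rootSet(xW_I)$ and $\rootSet(y) \subseteq \rootSet(yW_J)$. The reverse inclusion is where I expect the main difficulty: it amounts to showing that enlarging the generating set by the additional positive roots $x(\Phi^+_I) \cup y(\Phi^+_J)$ does not produce any new negative root in the conic hull. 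My approach would be to invoke the polar duality of Proposition~\ref{prop:rootSetWeightSetConesPerm} to rewrite $\cone(\rootSet(xW_I) \cup \rootSet(yW_J))$ as the polar of the intersection of outer normal cones $x(\cone(\nabla_{S \ssm I})) \cap y(\cone(\nabla_{S \ssm J}))$, so that the desired identity reduces to checking that every negative root which is non-positive on this smaller intersection is already non-positive on the larger $x(\cone(\nabla)) \cap y(\cone(\nabla))$; making this precise requires a careful analysis of how the nested cones sit inside the Coxeter fan, and this is where the heart of the proof lies.
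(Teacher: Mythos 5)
Your skeleton is the paper's: Theorem~\ref{thm:lattice} identifies the meet as $\zm W_\Km$ with $\zm = x \meet y$ and $\zm\wo{\Km} \le x\wo{I} \meet y\wo{J}$, Corollary~\ref{coro:rootSetWeightSetFromBottomAndTop} splits the root inversion set into the negative part of $\zm$ and the positive part of $\zm\wo{\Km}$, Corollary~\ref{coro:rootSetMeetJoin} converts to the closure operators, and the join statements follow from the anti-automorphism of Proposition~\ref{prop:antiautomorphisms}. Your treatment of the $\Phi^+$ inclusion and of the two join statements is complete and matches the paper. The problem is the last step of the $\Phi^-$ equality: you correctly reduce it to the inclusion $\minusTop{\rootSet(xW_I)\cup\rootSet(yW_J)} \subseteq \minusTop{\rootSet(x)\cup\rootSet(y)}$, but then you only announce a strategy (polarity, ``careful analysis of how the nested cones sit inside the Coxeter fan'') without carrying it out. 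As written this is a genuine gap, and the route you sketch is far heavier than what is needed; no geometry of the Coxeter fan enters.

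The missing inclusion is essentially bookkeeping. By Proposition~\ref{prop:rootSetWeightSetFromBottomAndTop} and Corollary~\ref{coro:rootSetWeightSetFromBottomAndTop}, $\rootSet(xW_I) = \rootSet(x) \cup x(\Phi_I^+)$ with $x(\Phi_I^+) \subseteq \Phi^+$, so the two generating sets $\rootSet(xW_I)\cup\rootSet(yW_J)$ and $\rootSet(x)\cup\rootSet(y)$ have the \emph{same} intersection with $\Phi^-$ and differ only by positive roots. Since $\minusTop{X} = \minusTop{X\cap\Phi^-}$ --- the $\ominus$-version of the identity $\plusTop{X\cap\Phi^+} = \plusTop{X}$ recorded just before Corollary~\ref{coro:rootSetMeetJoin}, which is precisely what the paper leans on at this point --- both closures equal $\minusTop{(\rootSet(x)\cup\rootSet(y))\cap\Phi^-}$ and you are done. (If you prefer not to invoke that identity for arbitrary subsets of $\Phi$, write instead $\rootSet(xW_I)\cup\rootSet(yW_J) = \rootSet(x)\cup\rootSet(x\wo{I})\cup\rootSet(y)\cup\rootSet(y\wo{J})$ and apply the meet formula of Corollary~\ref{coro:rootSetMeetJoin}, which holds for any finite family by~\eqref{eq:inversionSetMeetJoin}, to the four elements $x$, $x\wo{I}$, $y$, $y\wo{J}$: their meet is $x\meet y$ because $x\meet y\le x\wo{I}\meet y\wo{J}$.)
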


\begin{proof}
According to Theorem~\ref{thm:lattice}, we have $xW_I \meet yW_J = \zm W_{\Km}$ with~$\zm = x \meet y$ and~${\Km = \RdescentSet \big( \zm^{-1} (x\wo{I} \meet y\wo{J}) \big)}$. Then using Corollaries~\ref{coro:rootSetMeetJoin} and~\ref{coro:rootSetWeightSetFromBottomAndTop}, we have
\begin{align*}
\rootSet(xW_I \meet yW_J) \cap \Phi^- 
& = \rootSet(\zm W_\Km) \cap \Phi^-= \rootSet(\zm) \cap \Phi^- = \rootSet(x \meet y) \cap \Phi^- \\
& = \minusTop{\rootSet(x) \cup \rootSet(y)} = \minusTop{\rootSet(xW_I) \cup \rootSet(yW_J)}.
\end{align*}
Moreover, since~$\zm \wo{\Km} \le x\wo{I} \meet y\wo{J}$, we have
\begin{align*}
\rootSet(xW_I \meet yW_J) \cap \Phi^+
& = \rootSet(\zm W_\Km) \cap \Phi^+ = \rootSet(\zm\wo{\Km}) \cap \Phi^+ \subseteq \rootSet(x\wo{I} \meet y\wo{J}) \cap \Phi^+ \\
& = \plusBottom{\rootSet(x\wo{I}) \cap \rootSet(y\wo{J})} = \plusBottom{\rootSet(xW_I) \cap \rootSet(yW_J)}
\end{align*}
The proof is similar for the join, or can be obtained by the anti-automorphism of Proposition~\ref{prop:antiautomorphisms}.
\end{proof}

\begin{remark}
The inclusions~$\rootSet(xW_I \meet yW_J) \cap \Phi^+ \subseteq \plusBottom{\rootSet(xw_I) \cap \rootSet(yW_J)}$ and~$\rootSet(xW_I \join yW_J) \cap \Phi^- \subseteq \minusBottom{\rootSet(xW_I) \cap \rootSet(yW_J)}$ can be strict. For example, we have by Example~\ref{exm:computeMeet}
\[
\rootSet(tsrW_{st} \meet rtsW_\varnothing) \cap \Phi^+ = \rootSet(tW_r) \cap \Phi^+ = \{\alpha, \gamma\}
\]
which differs from
\[
\plusBottom{\rootSet(tsrW_{st}) \cap \rootSet(rtsW_\varnothing)} = \plusBottom{\{\alpha, \gamma, \alpha+\beta+\gamma, -\beta, -\alpha-\beta\}} = \{\alpha, \gamma, \alpha+\beta+\gamma\}.
\]
Following~\cite[Proposition~9]{KrobLatapyNovelliPhanSchwer}, the set~$\rootSet(xW_I \meet yW_J) \cap \Phi^+$ can be computed~as
\[
\rootSet(xW_I \meet yW_J) \cap \Phi^+ = \bigcap_{R} R \cap \Phi^+
\]
where the intersection runs over all~$R \subseteq \Phi$ satisfying the equivalent conditions of Corollary~\ref{coro:characterizationRootSets} such that
\[
R \cap \Phi^+ \subseteq \plusBottom{\rootSet(xw_I) \cap \rootSet(yW_J)}
\quad\text{while}\quad
R \cap \Phi^- \supseteq \minusTop{\rootSet(xW_I) \cup \rootSet(yW_J)}.
\]
However, this formula does not provide an efficient way to compute the root set of meets and joins in the facial weak order.
\end{remark}


\subsubsection{Join irreducible elements}
\label{subsubsec:joinIrreducible}

An element~$x$ of a finite lattice~$L$ is \defn{join-irreducible} if it cannot be written as~$x = \bigvee Y$ for some~$Y \subseteq L \ssm \{x\}$. Equivalently, $x$ is join-irreducible if it covers exactly one element~$x_\star$ of~$L$. For example, the join-irreducible elements of the classical weak order are the elements of~$W$ with a single descent. Meet-irreducible elements are defined similarly. We now characterize the join-irreducible elements of the facial weak order.

\begin{proposition}
\label{prop:joinIrreducibles}
A coset~$xW_I$ is join-irreducible in the facial weak order if and only if~$I = \varnothing$ and~$x$ is join-irreducible, or~$I = \{s\}$ and~$xs$ is join-irreducible.
\end{proposition}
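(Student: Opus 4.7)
The plan is to directly enumerate the cosets covered by $yW_J$ using the two families of cover relations from Definition~\ref{def:facialWeakOrder}, and to pin down when this count equals exactly one. The three relevant cases are distinguished by $|J|$.

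First I would record that each $s\in J$ gives a type~(1) cover $yW_{J\ssm\{s\}}\precdot yW_J$, producing $|J|$ pairwise distinct cosets, each of which is also distinct from every type~(2) cover of $yW_J$ (since type~(2) covers have index sets of size $|J|+1$). Hence if $|J|\ge 2$, then $yW_J$ covers at least two elements and is not join-irreducible. It remains to handle $|J|=0$ and $|J|=1$.

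For $|J|=0$, there is no type~(1) cover. A type~(2) cover must be of the form $xW_{\{t\}}\precdot yW_\varnothing$ with $t\in S$ and $xt = y$; since $W^{\{t\}}=\{w\in W : \length(wt)>\length(w)\}$, the requirement $x=yt\in W^{\{t\}}$ reduces to $t\in\RdescentSet(y)$. Thus $yW_\varnothing$ covers exactly $|\RdescentSet(y)|$ elements, and is join-irreducible in the facial weak order if and only if $y$ has a unique right descent, that is, if and only if $y$ is join-irreducible in the classical weak order.

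For $J=\{s\}$, there is exactly one type~(1) cover, namely $yW_\varnothing$. A type~(2) cover must have the form $xW_{\{s,t\}}\precdot yW_{\{s\}}$ with $t\in S\ssm\{s\}$, $x=ys\wo{\{s,t\}}$, and $x\in W^{\{s,t\}}$. The condition $x\in W^{\{s,t\}}$ says that $x\wo{\{s,t\}}=ys$ is the maximal-length representative of the coset $(ys)W_{\{s,t\}}$, which, by the standard characterization of the longest element of a parabolic coset, is equivalent to $\{s,t\}\subseteq\RdescentSet(ys)$. Since $y\in W^{\{s\}}$ forces $s\in\RdescentSet(ys)$, each $t\in\RdescentSet(ys)\ssm\{s\}$ produces one type~(2) cover and all such covers arise this way. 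Therefore $yW_{\{s\}}$ has no type~(2) cover if and only if $\RdescentSet(ys)=\{s\}$, i.e.\ $ys$ is join-irreducible in the weak order.

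Combining the three cases gives the proposition. The only subtle step I expect to need care is the type~(2) analysis when $|J|=1$: translating the algebraic condition $x\in W^{\{s,t\}}$ into the right-descent condition $\{s,t\}\subseteq\RdescentSet(ys)$ via the length function should be spelled out explicitly, together with a one-line verification that the map $t\mapsto xW_{\{s,t\}}$ is injective.
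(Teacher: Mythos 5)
Your proof is correct and takes essentially the same route as the paper's: enumerate the cosets covered by $yW_J$ via the two types of defining relations, conclude that $|J|\le 1$ is forced, and reduce the cases $|J|=0$ and $|J|=\{s\}$ to counting the right descents of $y$ and of $ys$ respectively. The only difference is that you make explicit the translation of $x\in W^{\{s,t\}}$ into $\{s,t\}\subseteq\RdescentSet(ys)$, which the paper's proof leaves implicit.
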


\begin{proof}
Since~$xW_I$ covers~$xW_{I \ssm \{s\}}$ for any~$s \in S$, we have~$|I| \le 1$ for any join-irreducible coset~$xW_I$.

Suppose~$I = \varnothing$. The cosets covered by~$xW_\varnothing$ are precisely the cosets~$xsW_{\{s\}}$ with~$xs \precdot x$. Therefore, $xW_\varnothing$ is join-irreducible if and only if~$x$ is join-irreducible. Moreover, $(xW_\varnothing)_\star = \{x_\star,x\} = xsW_{s}$. 

Suppose~$I = \{s\}$. The cosets covered by~$xW_{\{s\}}$ are precisely~$xW_\varnothing$ and the cosets~$xs\wo{\{s,t\}}W_{\{s,t\}}$ for~$xst < xs$. Therefore, $xW_{\{s\}}$ is join-irreducible if and only if~$xs$ only covers~$x$, \ie if~$xs$ is join-irreducible. Moreover, ${(xW_{\{s\}})_\star = \{x\}}$.
\end{proof}

Using the anti-automorphism of Proposition~\ref{prop:antiautomorphisms}, we get the following statement.

\begin{corollary}
A coset~$xW_I$ is meet-irreducible in the facial weak order if and only if~$I = \varnothing$ and~$x$ is meet-irreducible, or~$I = \{s\}$ and~$x$ is meet-irreducible.
\end{corollary}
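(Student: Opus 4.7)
My plan is to deduce the corollary by transporting the characterization of join-irreducibles (Proposition~\ref{prop:joinIrreducibles}) through the facial weak order anti-automorphism $\Psi \colon xW_I \mapsto \woo x \wo{I} W_I$ supplied by Proposition~\ref{prop:antiautomorphisms}. Since any anti-automorphism of a finite lattice exchanges join-irreducible elements with meet-irreducible elements, $xW_I$ is meet-irreducible in the facial weak order if and only if $\Psi(xW_I)$ is join-irreducible.

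Before applying Proposition~\ref{prop:joinIrreducibles} to $\Psi(xW_I)$, I need to identify the minimal and maximal length representatives of this coset. Because $x \in W^I$, the coset $xW_I$ is the interval $[x, x\wo{I}]$ in the weak order; and because $w \mapsto \woo w$ is a weak-order anti-automorphism of $W$, the coset $\woo x W_I = \woo x \wo{I} W_I$ has minimum $\woo x \wo{I}$ (which therefore lies in $W^I$) and maximum $\woo x$. So $\Psi(xW_I)$ is already written with its minimum-length representative.

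Proposition~\ref{prop:joinIrreducibles} then says that $\Psi(xW_I)$ is join-irreducible precisely when either $I = \varnothing$ and the minimal representative $\woo x$ is join-irreducible in the classical weak order, or $I = \{s\}$ and the product (minimum)$\cdot s = (\woo x \wo{\{s\}}) \cdot s = \woo x$ is join-irreducible in the classical weak order. In both cases the condition reduces to: $\woo x$ is join-irreducible. Finally, because the classical map $w \mapsto \woo w$ is itself a weak-order anti-automorphism of $W$, $\woo x$ is join-irreducible if and only if $x$ is meet-irreducible, producing exactly the statement of the corollary.

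There is no real obstacle in this argument; it is a bookkeeping exercise involving two layers of anti-automorphisms. The only subtlety, which must be addressed carefully, is keeping track of which end of the coset $\Psi(xW_I)$ serves as the minimal representative, so that Proposition~\ref{prop:joinIrreducibles} is invoked with the correct element playing the role of ``$x$'' and ``$xs$'' respectively in its two cases.
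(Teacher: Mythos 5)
Your argument is correct and is essentially the paper's own proof: the paper also deduces the corollary from Proposition~\ref{prop:joinIrreducibles} via the anti-automorphism of Proposition~\ref{prop:antiautomorphisms}, and your careful identification of $\woo x \wo{I}$ as the minimal representative of $\Psi(xW_I)$ is exactly the bookkeeping the paper leaves implicit.
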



\subsection{Facial weak order on the Davis complex for infinite Coxeter groups}
\label{subsec:DavisComplex}

A natural question is to wonder if a facial weak order exists for the Coxeter complex of an infinite Coxeter system $(W,S)$. As the definition given by P.~Palacios and M.~Ronco makes extensive use of the longest element of each $W_I$ and of the longest element in the coset $W^I$, we do not know of any way to extend our results to the Coxeter complex.

However, partial results could be obtained with the \defn{Davis complex}, see for instance~\cite{Davis} or~\cite{AbramenkoBrown}, which is the abstract simplicial complex
\[
\DavisComplex{W} = \bigcup_{\substack{I \subseteq S \\ W_I \text{ finite}}} W/W_I.
\] 
\begin{definition} Call  \defn{facial weak order} $(\DavisComplex{W},\leq)$ on the Davis complex  the order defined by ${xW_I \leq yW_J}$ if and only if $x\leq y$ and $x\wo I\leq y\wo J$ in right weak order.
\end{definition}

All the results used to prove the existence and formula for the meet in Theorem~\ref{thm:lattice} in the case of a finite Coxeter system  only  use  the above definition of the facial weak order, as well as standard results valid for any Coxeter system; the finiteness of $W_\Km$ being guaranteed by the fact that a standard parabolic subgroup $W_K$ is finite if and only if there is $w \in W$ such that $\LdescentSet(w) = K$, see for instance~\cite[Proposition~2.3.1]{BrentiBjorner}. We therefore obtain the following result that generalizes A.~Bj\"orner's result for the weak order~\cite{Bjorner} to the facial weak order on the Davis complex. 

\begin{theorem}
\label{thm:latticeInfinite}
The facial weak order on the Davis complex is a meet-semilattice. The meet of two cosets~$xW_I$ and~$yW_J$ in~$\DavisComplex{W}$ is
\[
xW_I \meet yW_J = \zm W_\Km  \quad\text{where}\quad \zm = x \meet y
\quad\text{and}\quad
\Km = \LdescentSet \big( \zm^{-1} (x\wo{I} \meet y\wo{J}) \big).
\]
\end{theorem}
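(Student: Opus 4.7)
The plan is to transport the proof of the meet formula from Theorem~\ref{thm:lattice} to the Davis complex, verifying that each ingredient used there remains valid in the infinite setting. Since the facial weak order on $\DavisComplex{W}$ is defined directly by the condition ``$x \leq y$ and $x\wo{I} \leq y\wo{J}$ in weak order'', I will rely on this characterization throughout rather than on cover relations; this is essential because the cover description of Definition~\ref{def:facialWeakOrder} invokes the longest element of $W^I$, which need not exist for infinite $W$. Note that the expressions $x\wo{I}$, $y\wo{J}$ and $\wo{\Km}$ will all make sense since $W_I$, $W_J$ and $W_\Km$ will be finite standard parabolic subgroups, and the weak order meets $x \meet y$ and $x\wo{I} \meet y\wo{J}$ exist by A.~Bj\"orner's theorem that the right weak order on any Coxeter system is a complete meet-semilattice.

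The first substantive step is to check that the candidate $\zm W_\Km$ genuinely belongs to $\DavisComplex{W}$, \ie that $W_\Km$ is finite. By construction, $\Km = \LdescentSet\bigl(\zm^{-1}(x\wo{I} \meet y\wo{J})\bigr)$ is the left descent set of an element of $W$, and the parabolic subgroup generated by a descent set is always finite (see~\cite[Proposition~2.3.1]{BrentiBjorner}). Hence $\wo{\Km}$ is well-defined and $\zm W_\Km$ is a face of $\DavisComplex{W}$. The arguments showing $\zm \in W^{\Km}$ and $\zm\wo{\Km} \leq x\wo{I} \meet y\wo{J}$ are length computations relying only on standard reduced-word manipulations, so they carry over verbatim; combined with $\zm = x \meet y \leq x, y$, they yield $\zm W_\Km \leq xW_I$ and $\zm W_\Km \leq yW_J$ from the definition of the order on $\DavisComplex{W}$.

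Finally, I would prove $\zm W_\Km$ is the greatest lower bound. Given $zW_K \in \DavisComplex{W}$ below both $xW_I$ and $yW_J$, the inequality $z \leq \zm$ is immediate from the definition of the meet in weak order. For $z\wo{K} \leq \zm\wo{\Km}$, I would follow the finite-case strategy: write reduced factorizations $x = zx'$, $y = zy'$, $\zm = z\zm'$ with $\zm' = x' \meet y'$, deduce $\wo{K} \leq x'\wo{I} \meet y'\wo{J}$, and hence $K \subseteq \LdescentSet(x'\wo{I} \meet y'\wo{J})$. The final inclusion $\LdescentSet(x'\wo{I} \meet y'\wo{J}) \subseteq \LdescentSet(\zm'\wo{\Km})$ is then obtained via Deodhar's Lemma (applied to a potential descent $s \notin \LdescentSet(\zm'\wo{\Km})$ to split into the two contradictions as in the finite proof). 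Since Deodhar's Lemma holds for arbitrary Coxeter systems, this step transfers without modification. The only genuinely new obstacle is the finiteness check for $W_\Km$; everything else amounts to observing that the original proof already uses only Coxeter-theoretic facts that remain valid in the infinite setting, and that the absence of a global longest element $\woo$ does not affect the meet argument (it would obstruct the dual argument for joins, which is why only a meet-semilattice is claimed).
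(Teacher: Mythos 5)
Your proposal is correct and follows essentially the same route as the paper: the authors likewise observe that the meet argument of Theorem~\ref{thm:lattice} uses only the characterization ``$x \le y$ and $x\wo{I} \le y\wo{J}$'' together with facts (Bj\"orner's meet-semilattice theorem, Deodhar's Lemma, reduced-word manipulations) valid for arbitrary Coxeter systems, the single new point being that $W_\Km$ is finite because $\Km$ is a left descent set and a standard parabolic subgroup $W_K$ is finite if and only if $K = \LdescentSet(w)$ for some $w$ (\cite[Proposition~2.3.1]{BrentiBjorner}). Your write-up is in fact somewhat more explicit than the paper's one-paragraph justification, and your closing remark about the absence of $\woo$ obstructing only the join matches the paper's reason for claiming just a meet-semilattice.
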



\section{Lattice congruences and quotients of the facial weak order}
\label{sec:latticeQuotients}

A \defn{lattice congruence} on a lattice~$(L,\le,\meet,\join)$ is an equivalence class~$\equiv$ which respects meets and joins, meaning that~$x \equiv x'$ and~$y \equiv y'$ implies $x \meet y \equiv x' \meet y'$ and~$x \join y \equiv x' \join y'$. In this section we start from any lattice congruence~$\equiv$ of the weak order and naturally define an equivalence relation~$\Equiv$ on the Coxeter complex~$\CoxeterComplex{W}$ by~$xW_I \Equiv yW_J \iff x \equiv y$ and~$x\wo{I} \equiv y\wo{J}$. The goal of this section is to show that~$\Equiv$ always defines a lattice congruence of the facial weak order. This will require some technical results on the weak order congruence~$\equiv$ (see Section~\ref{subsec:congruencesWeakOrder}) and on the projection maps of the congruence~$\Equiv$ (see Theorem~\ref{theo:latticeCongruenceFacialWeakOrder}).

On the geometric side, the congruence~$\Equiv$ of the facial weak order provides a complete description (see Theorem~\ref{theo:allFacesCongruenceFan}) of the simplicial fan~$\Fan_\equiv$ associated to the weak order congruence~$\equiv$ in N.~Reading's work~\cite{Reading-HopfAlgebras}: while the classes of~$\equiv$ correspond to maximal cones in~$\Fan_\equiv$, the classes of~$\Equiv$ correspond to all cones in~$\Fan_\equiv$ (maximal or not). We illustrate this construction in Section~\ref{subsec:facialCambrianLatticesFacialCube} with the facial boolean lattice (faces of a cube) and with the facial Cambrian lattices (faces of generalized associahedra) arising from the Cambrian lattices and fans of~\cite{Reading-CambrianLattices, ReadingSpeyer-CambrianFans}.


\subsection{Lattice congruences and projection maps}
\label{subsec:congruencesProjections}

We first recall the definition of lattice congruences and quotients and refer to~\cite{Reading-LatticeCongruences, Reading-CambrianLattices} for further details.

\begin{definition}
\label{def:latticeCongruences}
An \defn{order congruence} is an equivalence relation~$\equiv$ on a poset~$P$ such that:
\begin{enumerate}[(i)]
\item Every equivalence class under~$\equiv$ is an interval of~$P$.
\item The projection~$\projUp : P \to P$ (resp.~$\projDown : P \to P$), which maps an element of~$P$ to the maximal (resp.~minimal) element of its equivalence class, is order preserving.
\end{enumerate}

The \defn{quotient}~${P/\!\equiv}$ is a poset on the equivalence classes of~$\equiv$, where the order relation is defined by~$X \le Y$ in~${P/\!\equiv}$ if and only if there exist representatives~$x \in X$ and~$y \in Y$ such that~$x \le y$ in~$P$. The quotient~${P/\!\equiv}$ is isomorphic to the subposet of~$P$ induced by~$\projDown(P)$ (or equivalently by~$\projUp(P)$).

If, moreover,~$P$ is a finite lattice, then~$\equiv$ is a lattice congruence, meaning that it is compatible with meets and joins: for any~$x \equiv x'$ and~$y \equiv y'$, we have~$x \meet y \equiv x' \meet y'$ and~$x \join y \equiv x' \join y'$. The poset quotient~${P/\!\equiv}$ then inherits a lattice structure where the meet~$X \meet Y$ (resp.~the join~$X \join Y$) of two congruence classes~$X$ and~$Y$ is the congruence class of~$x \meet y$ (resp.~of~$x \join y$) for arbitrary representatives~$x \in X$ and~$y \in Y$.
\end{definition}

In our constructions we will use the projection maps~$\projUp$ and~$\projDown$ to define congruences. By definition note that~$\projDown(x) \le x \le \projUp(x)$, that~$\projUp \circ \projUp = \projUp \circ \projDown = \projUp$ while $\projDown \circ \projDown = \projDown \circ \projUp = \projDown$, and that $\projUp$ and~$\projDown$ are order preserving. The following lemma shows the reciprocal statement.

\begin{lemma}
\label{lem:conditionsProjectionMaps}
If two maps~$\projUp : P \to P$ and~$\projDown : P \to P$ satisfy
\begin{enumerate}[(i)]
\item
\label{item:conditionsProjectionMapsSandwich}
$\projDown(x) \le x \le \projUp(x)$ for any element~$x \in P$,

\item
\label{item:conditionsProjectionMapsLastWins}
$\projUp \circ \projUp = \projUp \circ \projDown = \projUp$ and $\projDown \circ \projDown = \projDown \circ \projUp = \projDown$,

\item
\label{item:conditionsProjectionMapsOrderPreserving}
$\projUp$ and~$\projDown$ are order preserving,
\end{enumerate}
then the fibers of~$\projUp$ and~$\projDown$ coincide and the relation~$\equiv$ on~$P$ defined by
\[
x \equiv y \iff \projUp(x) = \projUp(y) \iff \projDown(x) = \projDown(y)
\]
is an order congruence on~$P$ with projection maps~$\projUp$ and~$\projDown$.
\end{lemma}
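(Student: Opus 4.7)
The plan is to verify directly all conditions of Definition~\ref{def:latticeCongruences} from the three axioms (i)--(iii). Everything reduces to bookkeeping with the identities in~(ii), together with the sandwich bounds of~(i) and the monotonicity of~(iii).

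First, I would show that the fibers of $\projUp$ and $\projDown$ coincide, so that the equivalence relation $\equiv$ is unambiguously defined. Suppose $\projUp(x) = \projUp(y)$. Applying $\projDown$ to both sides and invoking the identity $\projDown \circ \projUp = \projDown$ from axiom~(ii), we obtain $\projDown(x) = \projDown(y)$. The converse follows symmetrically from $\projUp \circ \projDown = \projUp$. Since $\equiv$ is defined by equality of a function value, it is automatically reflexive, symmetric, and transitive.

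Next, I would show that the equivalence class of $x$ is precisely the interval $[\projDown(x), \projUp(x)]$. One inclusion follows from axiom~(i): any $y \equiv x$ satisfies $\projDown(y) \le y \le \projUp(y)$, which together with $\projDown(y) = \projDown(x)$ and $\projUp(y) = \projUp(x)$ places $y$ in the interval. For the reverse inclusion, take any $z$ with $\projDown(x) \le z \le \projUp(x)$. Applying the order-preserving map $\projDown$ (axiom~(iii)) yields $\projDown(\projDown(x)) \le \projDown(z) \le \projDown(\projUp(x))$, and axiom~(ii) collapses both outer terms to $\projDown(x)$, forcing $\projDown(z) = \projDown(x)$. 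Hence $z \equiv x$, and the class is an interval as required. Moreover, $\projUp(x)$ belongs to this class because $\projUp(\projUp(x)) = \projUp(x)$, so $\projUp(x)$ is indeed the maximum of the class of $x$; symmetrically for $\projDown(x)$.

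The remaining requirement of Definition~\ref{def:latticeCongruences}, namely that the maps sending an element to the maximum (resp.\ minimum) of its class are order preserving, is exactly axiom~(iii) once these maps have been identified with $\projUp$ and $\projDown$ by the previous paragraph.

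I do not anticipate a genuine obstacle: the three axioms are crafted precisely to force $\equiv$ to be an order congruence, and the argument is essentially a direct unpacking of the identities. The only mildly delicate step is the use of both (ii) and (iii) together to promote the inclusion ``class $\subseteq [\projDown(x),\projUp(x)]$'' to an equality.
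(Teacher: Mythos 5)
Your proof is correct and follows essentially the same route as the paper: establish that the fibers coincide via the composition identities in~(ii), show each class equals the interval $[\projDown(x),\projUp(x)]$ by sandwiching with~(i) and then collapsing $\projDown(\projDown(x))$ and $\projDown(\projUp(x))$ using~(ii) and~(iii), and read off order preservation of the projections from~(iii). No gaps.
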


\begin{proof}
First, Condition~\eqref{item:conditionsProjectionMapsLastWins} ensures that~${\projUp(x) = \projUp(y) \iff \projDown(x) = \projDown(y)}$ for any $x,y \in P$, so that the fibers of the maps~$\projUp$ and~$\projDown$ coincide. We now claim that if~$z \in \projDown(P)$, then the fiber~$\projDown^{-1}(z)$ is the interval~$[z, \projUp(z)]$. Indeed, if~${\projDown(x) = z}$, then~${\projUp(x) = \projUp(\projDown(x)) = \projUp(z)}$ by Condition~\eqref{item:conditionsProjectionMapsLastWins}, so that~$z \le x \le \projUp(z)$ by Condition~\eqref{item:conditionsProjectionMapsSandwich}. Reciprocally, for any~$z \le x \le \projUp(z)$, Conditions~\eqref{item:conditionsProjectionMapsLastWins} and~\eqref{item:conditionsProjectionMapsOrderPreserving} ensure that~${z = \projDown(z) \le \projDown(x) \le \projDown(\projUp(z)) = \projDown(z) = z}$, so that~${\projDown(x) = z}$. We conclude that the fibers of~$\projUp$ (or equivalently of~$\projDown$) are intervals of~$P$, and that~$\projUp$ (resp.~$\projDown$) indeed maps an element of~$P$ to the maximal (resp.~minimal) element of its fiber. Since~$\projUp$ and~$\projDown$ are order preserving, this shows that the fibers indeed define an order congruence.
\end{proof}


\subsection{Congruences of the weak order}
\label{subsec:congruencesWeakOrder}

Consider a lattice congruence~$\equiv$ of the weak order whose up and down projections are denoted by~$\projUp$ and~$\projDown$ respectively. We will need the following elementary properties of~$\,\equiv$. In this section, the notation $xW_I$ means that we are considering  $x$ in $W^I$.

\begin{lemma}
\label{lem:congruenceConjugate}
For any coset~$xW_I$ and any~$s \in I$, we have ${x \equiv xs \iff xs\wo{I} \equiv x\wo{I}}$.
\end{lemma}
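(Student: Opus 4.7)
I will prove each implication by applying the congruence $\equiv$ to a well-chosen lattice operation involving $xs\wo{I}$. The whole proof rests on the two identities
\[
xs \join xs\wo{I} = x\wo{I} \qquad\text{and}\qquad xs \meet xs\wo{I} = x,
\]
together with the trivial $x \join xs\wo{I} = xs\wo{I}$ and $xs \meet x\wo{I} = xs$, which follow from $x\le xs\wo{I}$ and $xs\le x\wo{I}$. Once these are in hand, the forward direction will read: if $x\equiv xs$, then by compatibility of $\equiv$ with joins, $xs\wo{I} = x \join xs\wo{I} \equiv xs \join xs\wo{I} = x\wo{I}$. The reverse direction will read dually: if $xs\wo{I}\equiv x\wo{I}$, then by compatibility with meets, $x = xs\meet xs\wo{I} \equiv xs\meet x\wo{I} = xs$.

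The bulk of the plan is therefore to establish the two nontrivial identities, which I will do via inversion-set computations. Since $x\in W^I$ and $s\in I$, both $xs$ and $xs\wo{I}=x\cdot(s\wo{I})$ are reduced products, and $\wo{I}=s\cdot(s\wo{I})$ is a reduced factorization because $s$ is a left descent of $\wo{I}$. Applying the formula $\inversionSet(uv)=\inversionSet(u)\sqcup u(\inversionSet(v))$ from Section~\ref{subsec:lengthReducedWordsAndWeakOrder}, together with $\inversionSet(\wo{I})=\Phi_I^+$, I obtain $\inversionSet(s\wo{I})=\Phi_I^+\ssm\{\alpha_s\}$ and then
\[
\inversionSet(xs)=\inversionSet(x)\sqcup\{x(\alpha_s)\}, \quad \inversionSet(xs\wo{I})=\inversionSet(x)\sqcup x(\Phi_I^+\ssm\{\alpha_s\}), \quad \inversionSet(x\wo{I})=\inversionSet(x)\sqcup x(\Phi_I^+).
\]
Because $x\in W^I$ ensures $x(\Phi_I^+)\subseteq\Phi^+$ while $\inversionSet(x)\subseteq x(\Phi^-)$, the three pieces $\inversionSet(x)$, $\{x(\alpha_s)\}$, $x(\Phi_I^+\ssm\{\alpha_s\})$ are pairwise disjoint.

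Intersecting then gives $\inversionSet(xs)\cap\inversionSet(xs\wo{I})=\inversionSet(x)$, and unioning gives $\inversionSet(xs)\cup\inversionSet(xs\wo{I})=\inversionSet(x\wo{I})$. Both resulting sets are already inversion sets of elements of $W$ and hence convex in $\Phi^+$, so the closure operators $\plusBottom{\,\cdot\,}$ and $\plusTop{\,\cdot\,}$ appearing in formulas~$(\heartsuit)$ for meet and join act trivially, and I conclude $xs\meet xs\wo{I}=x$ and $xs\join xs\wo{I}=x\wo{I}$. The only real obstacle in this plan is this inversion-set bookkeeping; compatibility of $\equiv$ with $\meet$ and $\join$ then closes each direction in one line.
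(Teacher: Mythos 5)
Your proof is correct and follows essentially the same route as the paper: the forward direction is the identical one-line congruence argument $xs\wo{I} = x \join xs\wo{I} \equiv xs \join xs\wo{I} = x\wo{I}$, and your converse via $x = xs \meet xs\wo{I} \equiv xs \meet x\wo{I} = xs$ is exactly the dual argument the paper invokes with ``can be proved similarly.'' The only difference is cosmetic: you verify the key identities $xs \join xs\wo{I} = x\wo{I}$ and $xs \meet xs\wo{I} = x$ by explicit inversion-set bookkeeping, whereas the paper gets the join identity more quickly from $xs \not\le xs\wo{I}$ together with the fact that the coset is an interval of length one above $xs\wo{I}$.
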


\begin{proof}
Assume~$x \equiv xs$. As~$x \in W^I$ and~$s \in I$, we have~${xs \not\le xs\wo{I}}$. Therefore, $xs\wo{I} = x \join xs\wo{I} \equiv xs \join xs\wo{I} = x\wo{I}$. The reverse implication can be proved similarly or applying the anti-automorphism~$x \to x\woo$.
\end{proof}

\enlargethispage{.3cm}
We will need a refined version of the previous lemma for cosets of a rank~$2$ parabolic subgroup. Consider a coset~$xW_{\{s,t\}}$ with~$s,t \in S \ssm \RdescentSet(x)$. It consist of two chains
\[
x \le xs \le \dots \le xt\wo{\{s,t\}} \le x\wo{\{s,t\}} 
\quad \text{and} \quad
x \le xt \le \dots \le xs\wo{\{s,t\}} \le x\wo{\{s,t\}}
\]
from~$x$ to~$x\wo{\{s,t\}}$.
The following two lemmas are of same nature: they state that a single congruence between two elements of~$xW_{\{s,t\}}$ can force almost all elements in~$xW_{\{s,t\}}$ to be congruent. These lemmas are illustrated in \fref{fig:PolygonCongruence}.
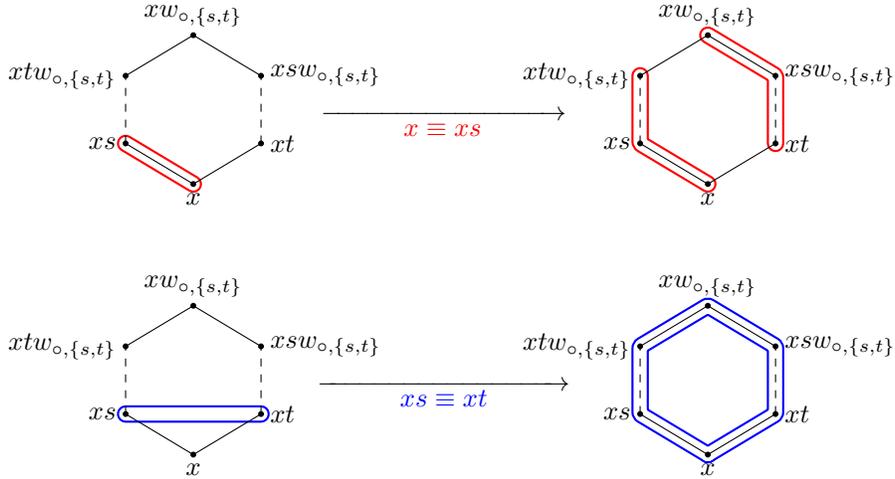
\begin{figure}[b]
	\centerline{
	\begin{tikzpicture}
		[scale=1.8,
		vertex/.style={inner sep=0.5pt,circle,draw=black,fill=black,thick}
		]
		\begin{scope}[shift={(0,0)}]
		%
		\coordinate (e) at (0,0.7);
		\coordinate (s) at (-0.5,1);
		\coordinate (t) at (0.5,1);
		\coordinate (st) at (-0.5,1.5);
		\coordinate (ts) at (0.5,1.5);
		\coordinate (sts) at (0,1.8);
		\draw[red, thick, double distance=5pt, cap=round] (e) -- (s);
		%
		\draw (e) -- (s);
		\draw (e) -- (t);
		\draw[dashed] (s) -- (st);
		\draw[dashed] (t) -- (ts);
		\draw (st) -- (sts);
		\draw (ts) -- (sts);
		%
		\node[vertex] at (e) {};
		\node[vertex] at (s) {};
		\node[vertex] at (t) {};
		\node[vertex] at (st) {};
		\node[vertex] at (ts) {};
		\node[vertex] at (sts) {};
		%
		\node[below] at (e) {$x$};
		\node[left] at (s) {$xs$};
		\node[right] at (t) {$xt$};
		\node[left] at (st) {$xt\wo{\{s,t\}}$};
		\node[right] at (ts) {$xs\wo{\{s,t\}}$};
		\node[above] at (sts) {$x\wo{\{s,t\}}$};
		\end{scope}
		\node at (1.85,1.15) {$\overrightarrow{\color{red} \hspace{1cm} \; x \equiv xs \phantom{t} \hspace{1cm}}$};
		\begin{scope}[shift={(3.8,0)}]
		%
		\coordinate (e) at (0,0.7);
		\coordinate (s) at (-0.5,1);
		\coordinate (t) at (0.5,1);
		\coordinate (st) at (-0.5,1.5);
		\coordinate (ts) at (0.5,1.5);
		\coordinate (sts) at (0,1.8);
		\draw[red, thick, double distance=5pt, cap=round, rounded corners=.3mm] (e) -- (s) -- (st);
		\draw[red, thick, double distance=5pt, cap=round, rounded corners=.3mm] (t) -- (ts) -- (sts);
		%
		\draw (e) -- (s);
		\draw (e) -- (t);
		\draw[dashed] (s) -- (st);
		\draw[dashed] (t) -- (ts);
		\draw (st) -- (sts);
		\draw (ts) -- (sts);
		%
		\node[vertex] at (e) {};
		\node[vertex] at (s) {};
		\node[vertex] at (t) {};
		\node[vertex] at (st) {};
		\node[vertex] at (ts) {};
		\node[vertex] at (sts) {};
		%
		\node[below] at (e) {$x$};
		\node[left] at (s) {$xs$};
		\node[right] at (t) {$xt$};
		\node[left] at (st) {$xt\wo{\{s,t\}}$};
		\node[right] at (ts) {$xs\wo{\{s,t\}}$};
		\node[above] at (sts) {$x\wo{\{s,t\}}$};
		\end{scope}
		\begin{scope}[shift={(0,-2)}]
		%
		\coordinate (e) at (0,0.7);
		\coordinate (s) at (-0.5,1);
		\coordinate (t) at (0.5,1);
		\coordinate (st) at (-0.5,1.5);
		\coordinate (ts) at (0.5,1.5);
		\coordinate (sts) at (0,1.8);
		\draw[blue, thick, double distance=5pt, cap=round] (s) -- (t);
		%
		\draw (e) -- (s);
		\draw (e) -- (t);
		\draw[dashed] (s) -- (st);
		\draw[dashed] (t) -- (ts);
		\draw (st) -- (sts);
		\draw (ts) -- (sts);
		%
		\node[vertex] at (e) {};
		\node[vertex] at (s) {};
		\node[vertex] at (t) {};
		\node[vertex] at (st) {};
		\node[vertex] at (ts) {};
		\node[vertex] at (sts) {};
		%
		\node[below] at (e) {$x$};
		\node[left] at (s) {$xs$};
		\node[right] at (t) {$xt$};
		\node[left] at (st) {$xt\wo{\{s,t\}}$};
		\node[right] at (ts) {$xs\wo{\{s,t\}}$};
		\node[above] at (sts) {$x\wo{\{s,t\}}$};
		\end{scope}
		\node at (1.85,-0.85) {$\overrightarrow{\color{blue} \hspace{1cm} \; xs \equiv xt \; \hspace{1cm}}$};
		\begin{scope}[shift={(3.8,-2)}]
		%
		\coordinate (e) at (0,0.7);
		\coordinate (s) at (-0.5,1);
		\coordinate (t) at (0.5,1);
		\coordinate (st) at (-0.5,1.5);
		\coordinate (ts) at (0.5,1.5);
		\coordinate (sts) at (0,1.8);
		\draw[blue, thick, double distance=5pt, cap=round, rounded corners=.3mm] (e) -- (s) -- (st) -- (sts) -- (ts) -- (t) -- (e);
		%
		\draw (e) -- (s);
		\draw (e) -- (t);
		\draw[dashed] (s) -- (st);
		\draw[dashed] (t) -- (ts);
		\draw (st) -- (sts);
		\draw (ts) -- (sts);
		%
		\node[vertex] at (e) {};
		\node[vertex] at (s) {};
		\node[vertex] at (t) {};
		\node[vertex] at (st) {};
		\node[vertex] at (ts) {};
		\node[vertex] at (sts) {};
		%
		\node[below] at (e) {$x$};
		\node[left] at (s) {$xs$};
		\node[right] at (t) {$xt$};
		\node[left] at (st) {$xt\wo{\{s,t\}}$};
		\node[right] at (ts) {$xs\wo{\{s,t\}}$};
		\node[above] at (sts) {$x\wo{\{s,t\}}$};
		\end{scope}
	\end{tikzpicture}
}
	\caption{In a coset~$xW_{\{s,t\}}$, a single congruence may force many congruences. See Lemma~\ref{lem:forceCongruences1} for the congruence~${x \equiv xs}$ (top) and Lemma~\ref{lem:forceCongruences2} for the congruence~${xs \equiv xt}$ (bottom).}
	\label{fig:PolygonCongruence}
\end{figure}

\begin{lemma}
\label{lem:forceCongruences1}
For any coset~$xW_{\{s,t\}}$, if~$x \equiv xs$ or $xs\wo{\{s,t\}} \equiv x\wo{\{s,t\}}$ then
\[
x \equiv xs \equiv xst \equiv \dots \equiv xt\wo{\{s,t\}}
\quad\text{and}\quad
xt \equiv xts \equiv \dots \equiv xs\wo{\{s,t\}} \equiv x\wo{\{s,t\}}.
\]
\end{lemma}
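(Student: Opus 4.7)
The plan is to deduce both congruence chains from the single congruence $x \equiv xs$ by two applications of the lattice congruence property, exploiting the standard description of the weak order on a dihedral group together with the fact that the coset $xW_{\{s,t\}} = [x, x\wo{\{s,t\}}]$ is an interval (hence a sublattice) of the weak order on~$W$. Set $m = m_{st}$ and abbreviate the two maximal chains as $p_0 = x, p_1 = xs, p_2 = xst, \dots, p_{m-1} = xt\wo{\{s,t\}}, p_m = x\wo{\{s,t\}}$ and $q_0 = x, q_1 = xt, q_2 = xts, \dots, q_{m-1} = xs\wo{\{s,t\}}, q_m = x\wo{\{s,t\}}$. By Lemma~\ref{lem:congruenceConjugate} the two hypotheses of the lemma are equivalent, so I may assume $p_0 \equiv p_1$.

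The key geometric input is the standard description of the weak order on the dihedral group~$W_{\{s,t\}}$: inside the interval $xW_{\{s,t\}}$, any two elements $p_k$ and $q_\ell$ with $1 \le k, \ell \le m-1$ are incomparable, their meet is~$x$, and their join is~$x\wo{\{s,t\}}$. Since $xW_{\{s,t\}}$ is an interval of the weak order on~$W$, these meets and joins agree with the meets and joins taken in the ambient lattice~$W$, where the congruence~$\equiv$ lives.

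First, I would take the join of $p_0 \equiv p_1$ with $q_1$: the dihedral description gives $p_0 \join q_1 = q_1$ and $p_1 \join q_1 = x\wo{\{s,t\}} = q_m$, so the lattice congruence property forces $q_1 \equiv q_m$. Since every congruence class is an interval, the whole weak-order interval $[q_1, q_m]$ lies in a single class. A short prefix computation (prefixes of the unique reduced expression $sts\cdots$ of length $m-1$ of $t\wo{\{s,t\}}$) shows $[q_1, q_m] = \{q_1, q_2, \dots, q_m\}$, which yields the second chain of congruences. Then, assuming $m \ge 3$, meeting $q_2 \equiv q_m$ with $p_{m-1}$ yields $p_0 = p_{m-1} \meet q_2 \equiv p_{m-1} \meet q_m = p_{m-1}$, and the analogous prefix check gives $[p_0, p_{m-1}] = \{p_0, p_1, \dots, p_{m-1}\}$, producing the first chain. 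The corner case $m = 2$ collapses $p_{m-1}$ to $p_1$ and the first chain reduces to the hypothesis itself.

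I do not anticipate a real obstacle beyond routine bookkeeping: the dihedral meet/join identities and the identifications $[q_1, q_m] = \{q_1, \dots, q_m\}$ and $[p_0, p_{m-1}] = \{p_0, \dots, p_{m-1}\}$ are all immediate from standard prefix arguments in $W_{\{s,t\}}$. The only point worth emphasizing is that the meets and joins appearing in the lattice congruence argument must be taken in~$W$, but this causes no trouble because $xW_{\{s,t\}}$ is an interval of the weak order and therefore a sublattice of~$W$.
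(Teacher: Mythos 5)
Your proof is correct and follows essentially the same route as the paper's: propagate the hypothesis up the $t$-chain by joining, then back down the $s$-chain by meeting, using the dihedral meet/join structure of the interval $xW_{\{s,t\}}$ together with the lattice congruence property. The only cosmetic differences are that you invoke Lemma~\ref{lem:congruenceConjugate} to identify the two hypotheses (the paper just says the second case is similar) and that you apply the join/meet once and then use order-convexity of congruence classes, where the paper repeats the join/meet computation for each element of the chain.
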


\begin{proof}
Assume~$x \equiv xs$. For any~$xt \le y \le x\wo{\{s,t\}}$ we have~$xs \join y = x\wo{\{s,t\}}$. Since~$x \equiv xs$ and~$\equiv$ is a lattice congruence, we get~${y = x \join y \equiv xs \join y = x\wo{\{s,t\}}}$. Now for any~$x \le z \le xt\wo{\{s,t\}}$, we have~$y \meet z = x$. Since~$y \equiv x\wo{\{s,t\}}$ and~$\equiv$ is a lattice congruence, we get~$z = x\wo{\{s,t\}} \meet z \equiv y \meet z = x$. The proof is similar if we assume instead~$xs\wo{\{s,t\}} \equiv x\wo{\{s,t\}}$.
\end{proof}

\begin{lemma}
\label{lem:forceCongruences2}
For any coset~$xW_{\{s,t\}}$, if~$xs \equiv xt$ then~${x \equiv y}$ for all~${y \in xW_{\{s,t\}}}$.
\end{lemma}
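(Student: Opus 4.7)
The plan is to exploit the dihedral structure of the rank~$2$ parabolic subgroup~$W_{\{s,t\}}$ together with the lattice congruence property of~$\equiv$ in order to establish $x \equiv x\wo{\{s,t\}}$, and then conclude using the fact that congruence classes are intervals.

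First, I would record the following fact about the dihedral group~$W_{\{s,t\}}$: in its weak order, the two atoms~$s$ and~$t$ satisfy $s \meet t = e$ and $s \join t = \wo{\{s,t\}}$ (they are the only minimal, resp.\ maximal, elements strictly between~$e$ and~$\wo{\{s,t\}}$ on the two chains of the dihedral weak order). Translating on the left by~$x$ and using that $xW_{\{s,t\}} = [x, x\wo{\{s,t\}}]$ is an interval in the weak order (see Section~\ref{subsec:parabolicSubgroupsCosets}), this yields
\[
xs \meet xt = x \qquad \text{and} \qquad xs \join xt = x\wo{\{s,t\}}.
\]

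Next, I would apply the compatibility of~$\equiv$ with meet and join to the hypothesis $xs \equiv xt$. Taking the meet of both sides with~$xs$ gives $xs = xs \meet xs \equiv xs \meet xt = x$, while taking the join of both sides with~$xs$ gives $xs = xs \join xs \equiv xs \join xt = x\wo{\{s,t\}}$. By transitivity, $x \equiv x\wo{\{s,t\}}$.

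Finally, since~$\equiv$ is an order congruence of the weak order, every equivalence class is an interval of~$W$ by Definition~\ref{def:latticeCongruences}\,(i). As $x$ and~$x\wo{\{s,t\}}$ lie in the same class, so does every element of the interval $[x, x\wo{\{s,t\}}] = xW_{\{s,t\}}$, yielding $x \equiv y$ for every~$y \in xW_{\{s,t\}}$. There is no real obstacle here: the argument is a short chain combining the dihedral structure of the rank~$2$ coset with the lattice congruence axioms and the interval property of congruence classes.
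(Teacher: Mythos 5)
Your proof is correct and follows essentially the same route as the paper's: both hinge on the identities $xs \meet xt = x$ and $xs \join xt = x\wo{\{s,t\}}$ in the interval $xW_{\{s,t\}} = [x, x\wo{\{s,t\}}]$, together with the fact that congruence classes are intervals. The only cosmetic difference is that you derive $x \equiv xs \equiv x\wo{\{s,t\}}$ from the compatibility of~$\equiv$ with meet and join, whereas the paper phrases the same deduction via the projection maps~$\projDown$ and~$\projUp$ (namely $\projDown(xs) \le xs \meet xt = x \le x\wo{\{s,t\}} = xs \join xt \le \projUp(xs)$).
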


\begin{proof}
Since~$xs \equiv xt$, we have~$\projDown(xs) = \projDown(xt)$ and~$\projUp(xs) = \projUp(xt)$. Using that $\projDown(xs) \le xs \le \projUp(xs)$ and~$\projDown(xt) \le xt \le \projUp(xs)$, we obtain that
\[
\projDown(xs) \le xs \meet xt = x \le x \wo{\{s,t\}} = xs \join xt \le \projUp(xs).
\]
Since the congruence class of~$xs$ is the interval~$[\projDown(xs), \projUp(xs)]$, it certainly contains all the coset~$xW_{\{s,t\}}$. We conclude that~${x \equiv y}$ for all~${y \in xW_{\{s,t\}}}$.
\end{proof}

Throughout the end of this section, we write~$x \leqequiv y$ when~$x \le y$ and~$x \equiv y$. In other words, $x \leqequiv y \iff x \le y \le \projUp(x) \iff \projDown(y) \le x \le y$. Note that the relation~$\leqequiv$ is transitive (as the intersection of two transitive relations) and stable by meet and join (as~$\le$ is a lattice and~$\equiv$ a lattice congruence).

The goal of the next statements is to show that one can ``translate faces along congruence classes''. We make this statement precise in the next lemmas.

\begin{lemma}
\label{lem:transport}
For any~$x \in W$ and~$t \in S \ssm \RdescentSet(x)$ such that~$x \not\equiv xt$, there exists a unique~$\sUp(x,t) \in S \ssm \RdescentSet \big( \projUp(x) \big)$ such that~$xt \leqequiv \projUp(x)\sUp(x,t)$.
\end{lemma}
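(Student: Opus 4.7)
The plan is to set $y \eqdef \projUp(x) \join xt$ and show that $y$ covers $\projUp(x)$ in the weak order; the unique simple reflection $s$ with $y = \projUp(x) \cdot s$ will then be our $\sUp(x,t)$. First, since $\projUp(x) \equiv x$ and $\equiv$ is a lattice congruence, one has $y = \projUp(x) \join xt \equiv x \join xt = xt$. Moreover $y > \projUp(x)$: otherwise $xt \le \projUp(x)$ would place $xt$ in the interval $[\projDown(x), \projUp(x)]$, hence in the congruence class of $x$, contradicting $x \not\equiv xt$. Assuming $y$ covers $\projUp(x)$, the simple reflection $s$ automatically satisfies $s \notin \RdescentSet(\projUp(x))$ (since $y > \projUp(x)$) and $xt \leqequiv \projUp(x)\,s$ (combining $xt \le y$ with $xt \equiv y$).

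The main work is therefore to prove that $y$ covers $\projUp(x)$. This is not a formal lattice-theoretic fact (joining an arbitrary element with a cover does not generally produce a cover), so the plan is to proceed by induction on $n \eqdef \length(\projUp(x)) - \length(x)$. The base case $n=0$ is immediate since then $\projUp(x)=x$ and $y=xt$ covers $x$. For the inductive step ($n\ge 1$), pick a cover $x \precdot z$ with $z \le \projUp(x)$, so that $z \equiv x$ and $\projUp(z) = \projUp(x)$. Writing $z = xs_1$, we have $s_1 \ne t$ since $z \equiv x \not\equiv xt$. Apply Lemma~\ref{lem:forceCongruences1} to the rank-$2$ coset $xW_{\{s_1,t\}}$: since $x \equiv xs_1$, the coset splits into two congruence classes, the chain through $x$ (class of $x$) and the chain through $xt$ (class of $xt$). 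The strategy is to exhibit a simple reflection $r \in S \ssm \RdescentSet(z)$ with $zr \equiv xt$ and $xt \le zr$; applying the induction hypothesis to $(z,r)$ (noting $\length(\projUp(z)) - \length(z) = n-1$) and using transitivity of $\leqequiv$ then yield $\sUp(x,t) \eqdef \sUp(z,r)$.

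The \emph{main obstacle} lies in producing this translated cover. When the order $m$ of $s_1 t$ equals $2$, taking $r = t$ works directly: $zt = x\wo{\{s_1,t\}}$ sits on the chain through $xt$ by Lemma~\ref{lem:forceCongruences1}, so $zt \equiv xt$ and $xt \le zt$ within the coset. When $m \ge 3$, however, no cover of $z$ inside $xW_{\{s_1,t\}}$ is equivalent to $xt$ (the only such cover is $zt = xs_1 t$, which lies on the chain through $x$), so the translated cover must be found outside this rank-$2$ coset, or a different initial cover $x \precdot z$ must be selected. This reduction hinges on the polygonal structure of the weak order combined with the specific role of $\projUp(x)$ as the maximum of its congruence class, and constitutes the technical core of the argument.

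Uniqueness is then clean: if two distinct $s_1, s_2 \in S \ssm \RdescentSet(\projUp(x))$ both satisfied the conclusion, then $\projUp(x) s_1$ and $\projUp(x) s_2$ would be two distinct covers of $\projUp(x)$ both dominating $xt$. Since the meet of two distinct covers of an element in the weak order equals that element, we would get $xt \le \projUp(x) s_1 \meet \projUp(x) s_2 = \projUp(x)$, contradicting $xt \not\le \projUp(x)$ established above.
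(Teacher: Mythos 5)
Your reduction is sound up to the point you flag, and your uniqueness argument (via the meet of two distinct upper covers of~$\projUp(x)$) is a valid alternative to the paper's, which instead invokes Lemma~\ref{lem:forceCongruences2}. But the existence part has a genuine gap, exactly where you say it does: in the inductive step you try to transport the cover $x \precdot xt$ to a cover of $z = xs_1$, and as you observe, for $m \ge 3$ no cover of $z$ inside $xW_{\{s_1,t\}}$ lies in the congruence class of $xt$. Leaving this as ``the technical core of the argument'' means the proof is not complete, and the difficulty does not disappear by choosing a different initial cover $x \precdot z$ or by searching outside the rank-$2$ coset: the entire content of the lemma is concentrated at this step.

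The paper's resolution is to transport the cover not to $z = xs_1$ but all the way to $x' \eqdef xt\wo{\{s_1,t\}}$, the penultimate element of the chain of $xW_{\{s_1,t\}}$ that starts with $xs_1$. Lemma~\ref{lem:forceCongruences1} then gives both halves of what you need at once: on the one hand $x \equiv xs_1 \equiv \dots \equiv xt\wo{\{s_1,t\}} = x'$, so $\projUp(x') = \projUp(x)$ and $\length(x') > \length(x)$, hence the induction parameter $\length(\projUp(x)) - \length(x)$ strictly drops; on the other hand $xt \equiv \dots \equiv x\wo{\{s_1,t\}}$ together with $xt \le x\wo{\{s_1,t\}}$, i.e.\ $xt \leqequiv x\wo{\{s_1,t\}} = x't'$ where $t' \eqdef \wo{\{s_1,t\}}\, t\, \wo{\{s_1,t\}} \in \{s_1,t\}$. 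Thus $x' \precdot x't'$ is a genuine cover in the weak order with $x' \not\equiv x't'$, the induction hypothesis applies to the pair $(x',t')$, and transitivity of $\leqequiv$ yields $xt \leqequiv x't' \leqequiv \projUp(x)\sUp(x',t')$. Your $m=2$ case is precisely the degenerate instance $x' = xs_1$, $t' = t$ of this construction; for $m \ge 3$ the point you were missing is that the translated cover lives at the top of the polygon, not one step above~$x$.
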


\begin{proof}
To prove the existence of~$\sUp(x,t)$, we work by induction on the length of a minimal path from~$x$ to~$\projUp(x)$ in weak order. If~$x = \projUp(x)$, then~${\sUp(x,t) = t}$ meets our criteria. We now assume that there exists ${s \in S \ssm \RdescentSet(x)}$ such that~$x \leqequiv xs \leqequiv \projUp(x)$. Let~${x' = xt\wo{\{s,t\}}}$ and ${t' = \wo{\{s,t\}} t \wo{\{s,t\}}}$. We get from Lemma~\ref{lem:forceCongruences1} that~$x \equiv x'$, thus~$\projUp(x)=\projUp(x')$ and~${xt \leqequiv x\wo{\{s,t\}} = x't'}$. Since~$x \not\equiv xt$, this also ensures that~${x' \not\equiv x't'}$. Thus the length of a minimal path from between~$x't'$ and~$\projUp(x')$ is strictly smaller than the length of a minimal path between~$x$~and~$\projUp(x)$. Therefore, by induction hypothesis, there exists~$\sUp(x',t') \in S$ such that~${x't' \leqequiv \projUp(x')\sUp(x',t')}$. We therefore obtain that $${xt \leqequiv x't' \leqequiv \projUp(x') \sUp(x',t') = \projUp(x) \sUp(x',t'),}$$ and conclude that~$\sUp(x,t) = \sUp(x',t')$ meets our criteria.

To prove uniqueness, assume that there exist~$r \ne s \in S \ssm \RdescentSet(y)$ which both satisfy~$xt \leqequiv \projUp(x)r$ and~$xt \leqequiv \projUp(x)s$. This implies that~$\projUp(x)r \equiv xt \equiv \projUp(x)s$, so that~$\projUp(x) \equiv \projUp(x)r \equiv \projUp(x)s$ by application of Lemma~\ref{lem:forceCongruences2}. We would therefore obtain that~$x \equiv \projUp(x) \equiv \projUp(x)r \equiv xt$, a contradiction.
\end{proof}

\begin{lemma}
\label{lem:defSUp}
For any coset~$xW_I$, the set~$\SUp(x,I) \eqdef \set{\sUp(x,t)}{t \in I, \; x \not\equiv xt}$ is the unique subset of~$S \ssm \RdescentSet \big( \projUp(x) \big)$ such that ${x\wo{I} \leqequiv \projUp(x)\wo{\SUp(x,I)}}$.
\end{lemma}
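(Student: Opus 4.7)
My plan is to prove Lemma~\ref{lem:defSUp} by induction on $|I|$, relying on two basic facts. The first is the join formula $x\wo{I} = \bigvee_{s \in I} xs$ in the weak order lattice: using Equation~\eqref{eq:inversionSetMeetJoin}, the join's inversion set equals $\plusTop{\inversionSet(x) \cup x(\Delta_I)}$, which coincides with $\inversionSet(x) \cup x(\Phi_I^+) = \inversionSet(x\wo{I})$ since $\Phi_I^+ = \cone(\Delta_I) \cap \Phi$. The second is the injectivity of $t \mapsto \sUp(x, t)$ on $\{t \in I : x \not\equiv xt\}$: if two distinct such $t, t'$ produced the same $s = \sUp(x, t) = \sUp(x, t')$, Lemma~\ref{lem:transport} would give $xt \equiv \projUp(x)s \equiv xt'$, and Lemma~\ref{lem:forceCongruences2} applied to the rank-$2$ coset $xW_{\{t, t'\}}$ would force $x \equiv xt$, a contradiction. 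Hence $|\SUp(x, I)| = |\{t \in I : x \not\equiv xt\}|$ and $\SUp(x, I) \subseteq S \ssm \RdescentSet(\projUp(x))$.

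For existence, the base case $I = \varnothing$ is trivial. For $|I| \ge 1$, pick any $t \in I$ and set $I' = I \ssm \{t\}$. Since $x \in W^I \subseteq W^{I'}$, induction gives $x\wo{I'} \leqequiv \projUp(x)\wo{\SUp(x, I')}$. I then use the decomposition $x\wo{I} = x\wo{I'} \join xt$ supplied by the join formula. If $x \equiv xt$, then $\SUp(x, I) = \SUp(x, I')$ by definition, and $xt \equiv x \le \projUp(x) \le \projUp(x)\wo{\SUp(x, I')}$; combining with the induction hypothesis, stability of $\equiv$ under joins yields $x\wo{I} = x\wo{I'} \join xt \leqequiv \projUp(x)\wo{\SUp(x, I')} = \projUp(x)\wo{\SUp(x, I)}$. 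If $x \not\equiv xt$, then $\SUp(x, I) = \SUp(x, I') \sqcup \{\sUp(x, t)\}$; Lemma~\ref{lem:transport} gives $xt \leqequiv \projUp(x)\sUp(x, t)$, so joining with the induction hypothesis yields $x\wo{I} \leqequiv \projUp(x)\wo{\SUp(x, I')} \join \projUp(x)\sUp(x, t)$, and applying the join formula at $\projUp(x)$ identifies the right-hand side with $\projUp(x)\wo{\SUp(x, I')\cup \{\sUp(x,t)\}} = \projUp(x)\wo{\SUp(x, I)}$.

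For uniqueness, suppose $J \subseteq S \ssm \RdescentSet(\projUp(x))$ also satisfies $x\wo{I} \leqequiv \projUp(x)\wo{J}$. The plan is: for each $s \in J$, combining $\projUp(x)s \le \projUp(x)\wo{J} \equiv x\wo{I}$ with the uniqueness clause of Lemma~\ref{lem:transport} pins $s = \sUp(x, t)$ for a unique $t \in I$ with $x \not\equiv xt$, yielding $J \subseteq \SUp(x, I)$; the reverse inclusion follows from the existence direction combined with injectivity of the map $K \mapsto \projUp(x)\wo{K}$ on subsets of $S \ssm \RdescentSet(\projUp(x))$. I expect the main obstacle to be this uniqueness step: while existence is driven almost mechanically by the join formula, extracting the precise correspondence $s \leftrightarrow t$ from the coarse data $\projUp(x)\wo{J} \leqequiv x\wo{I}$ requires carefully tracking how the congruence class interacts with joins at $\projUp(x)$, and is where the lemma is most delicate.
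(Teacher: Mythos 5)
Your existence argument is sound and is essentially the paper's: the paper proves $x\wo{I} \leqequiv \projUp(x)\wo{\SUp(x,I)}$ in a single step by writing $x\wo{I}$ as the join of the $xt$ over $t\in I$, replacing each $xt$ by $\projUp(x)$ or by $\projUp(x)\sUp(x,t)$ via Lemma~\ref{lem:transport}, and using that $\leqequiv$ is stable under joins; your induction on $|I|$ performs exactly these joins one at a time. (Your injectivity observation for $t\mapsto\sUp(x,t)$ is correct but not needed, since $\SUp(x,I)$ is defined as a set.)

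The uniqueness step, however, has a genuine gap, and neither of your two claimed mechanisms closes it. For the inclusion $J\subseteq\SUp(x,I)$ you invoke ``the uniqueness clause of Lemma~\ref{lem:transport}'', but that clause points the wrong way: given $t$ it produces a \emph{unique} $s$ with $xt\leqequiv\projUp(x)s$, and says nothing about whether a given $s\in J$ arises as $\sUp(x,t)$ for some $t\in I$. Establishing that surjectivity-type statement would require a downward-transport argument of the kind the paper only develops later (compare Lemma~\ref{lem:propertiessUpSUp}), not a citation of Lemma~\ref{lem:transport}. For the reverse inclusion, injectivity of $K\mapsto\projUp(x)\wo{K}$ is not enough: if $J\subsetneq\SUp(x,I)$ were admissible, then $\projUp(x)\wo{J}$ and $\projUp(x)\wo{\SUp(x,I)}$ would indeed be distinct, but nothing you have said prevents both from lying in the congruence class of $x\wo{I}$ and weakly above it. The paper closes uniqueness by a different mechanism: stability of $\leqequiv$ under joins yields a unique maximal admissible subset $\Sigma$, and for any admissible $\Sigma'\subseteq\Sigma$ one computes
\[
\projUp(x)\wo{\Sigma\ssm\Sigma'} = \projUp(x)\wo{\Sigma\ssm\Sigma'}\meet\projUp(x)\wo{\Sigma} \equiv \projUp(x)\wo{\Sigma\ssm\Sigma'}\meet\projUp(x)\wo{\Sigma'} = \projUp(x),
\]
whence the maximality of $\projUp(x)$ in its own congruence class forces $\wo{\Sigma\ssm\Sigma'}=e$ and $\Sigma'=\Sigma$. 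Some appeal to this maximality of $\projUp(x)$ (or an equivalent fact) is indispensable, and it is absent from your sketch.
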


\begin{proof}
Split~$I$ into~${I_\equiv \sqcup I_{\not\equiv}}$ where~$I_\equiv \eqdef \set{t \in I}{x \equiv xt}$ and~$I_{\not\equiv} \eqdef \set{t \in I}{x \not\equiv xt}$. Since~$\leqequiv$ is stable by join, we get
\[
x\wo{I} = \bigg( \bigvee_{t \in I_\equiv} xt \bigg) \join \bigg( \bigvee_{t \in I_{\not\equiv}} xt \bigg) \leqequiv \projUp(x) \join \bigg( \bigvee_{t \in I_{\not\equiv}} \projUp(x)\sUp(x,t) \bigg) = \projUp(x)\wo{\SUp(x,I)}.
\]

To prove unicity, we observe that there already is a unique maximal subset~$\Sigma$ of~${S \ssm \RdescentSet \big( \projUp(x) \big)}$ such that~$x\wo{I} \leqequiv \projUp(x)\wo{\Sigma}$ since $\leqequiv$ is stable by join. Consider now any subset~$\Sigma'$ of~$\Sigma$ with this property. Since~$\projUp(x)\wo{\Sigma'} \equiv x\wo{I} \equiv \projUp(x)\wo{\Sigma}$, we obtain
\[
\projUp(x) \wo{\Sigma \ssm \Sigma'} = \projUp(x) \wo{\Sigma \ssm \Sigma'} \meet \projUp(x) \wo{\Sigma} \equiv \projUp(x) \wo{\Sigma \ssm \Sigma'} \meet \projUp(x) \wo{\Sigma'} = \projUp(x).
\]
Since~$\projUp(x)$ is maximal in its congruence class, this implies that~$\wo{\Sigma \ssm \Sigma'} = e$ so that~$\Sigma' = \Sigma$.
\end{proof}

\enlargethispage{-.6cm}
We will furthermore need the following properties of~$\sUp(x,t)$ and~$\SUp(x,I)$.

\begin{lemma}
\label{lem:propertiessUpSUp}
For any coset~$xW_I$ and any~$t \in I$, we have
\begin{itemize}
\item if~$x \equiv xt$ then~$xt\wo{I} \equiv \projUp(x)\wo{\SUp(x,I)}$,
\item if~$x \not\equiv xt$ then~$xt\wo{I} \equiv \projUp(x)\sUp(x,t)\wo{\SUp(x,I)}$.
\end{itemize}
In other words, either $xt\wo{I} \equiv x\wo{I} \equiv \projUp(x)\wo{\SUp(x,I)}$, or $xt\wo{I} \equiv \projUp(x)\sUp(x,t)\wo{\SUp(x,I)}$.
\end{lemma}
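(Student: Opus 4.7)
The two bullets correspond to the dichotomy $x \equiv xt$ versus $x \not\equiv xt$.

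\textbf{First bullet.} If $x \equiv xt$, Lemma~\ref{lem:congruenceConjugate} applied to $xW_I$ with $t \in I$ yields $xt\wo{I} \equiv x\wo{I}$, and chaining with $x\wo{I} \equiv \projUp(x)\wo{\SUp(x,I)}$ from Lemma~\ref{lem:defSUp} gives the desired congruence.

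\textbf{Second bullet.} Suppose $x \not\equiv xt$, and set $s = \sUp(x,t)$ and $\Sigma = \SUp(x,I)$, so $s \in \Sigma$ and $xt \leqequiv \projUp(x)s$ by Lemma~\ref{lem:transport}. My plan is to establish the stronger assertion $xt\wo{I} \leqequiv \projUp(x)\,s\,\wo{\Sigma}$ by induction on $|I|$, mirroring the proof of Lemma~\ref{lem:defSUp}. The key ingredient is the weak-order identity
\[
xt\wo{I} = \bigvee_{r \in I \setminus \{t\}} xt\wo{\{r,t\}},
\]
obtained by direct computation of inversion sets in $W_I$: the coatom $t\wo{I}$ has inversion set $\Phi_I^+ \setminus \{\alpha_t\}$, which is the convex closure of the inversion sets of the summands $t\wo{\{r,t\}}$. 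Each rank-$2$ summand is then handled separately: when $r \in I_\equiv$, Lemma~\ref{lem:forceCongruences1} applied to the coset $xW_{\{r,t\}}$ gives $xt\wo{\{r,t\}} \equiv x \leqequiv \projUp(x)$; when $r \in I_{\not\equiv}$, the inductive hypothesis yields $xt\wo{\{r,t\}} \leqequiv \projUp(x)\,s\,\wo{\{s,\sUp(x,r)\}}$. Joining these relations over $r \in I \setminus \{t\}$ and applying the analogous decomposition $s\wo{\Sigma} = \bigvee_{\sigma \in \Sigma \setminus \{s\}} s\wo{\{s,\sigma\}}$ within $W_\Sigma$, together with the injectivity of $\sUp(x,\cdot)$ on $I_{\not\equiv}$ (guaranteed by Lemma~\ref{lem:forceCongruences2}, for otherwise $xr \equiv xr'$ would force a collapse contradicting $x \not\equiv xt$), the right-hand side telescopes to $\projUp(x)\,s\,\wo{\Sigma}$, since $\Sigma \setminus \{s\} = \{\sUp(x,r) : r \in I_{\not\equiv} \setminus \{t\}\}$.

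The main obstacle is the base case $|I|=2$ of the induction on $|I|$, where the above decomposition produces only a single summand and therefore no reduction. I plan to treat this case by a secondary induction on $\ell(\projUp(x)) - \ell(x)$, following exactly the pattern of Lemma~\ref{lem:transport}: the base case $x = \projUp(x)$ is tautological, since then $\sUp(x,t) = t$, $\SUp(x,I) = I$, and both sides of the desired congruence coincide literally; the induction step picks $r'' \in S$ with $x \leqequiv xr'' \leqequiv \projUp(x)$ and, via Lemma~\ref{lem:forceCongruences1} applied to the rank-$2$ coset $xW_{\{r'',t\}}$, slides the basepoint along the congruence class to an $x' \equiv x$ with $\ell(\projUp(x')) - \ell(x') < \ell(\projUp(x)) - \ell(x)$, while preserving $t$, $s$, and $\Sigma$. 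The delicate point is verifying that after this sliding, the coset representative $x'$ still gives rise to the same data $\sUp(x',t) = s$ and $\SUp(x',I) = \Sigma$, which follows from the uniqueness clauses in Lemmas~\ref{lem:transport} and~\ref{lem:defSUp} together with $\projUp(x') = \projUp(x)$.
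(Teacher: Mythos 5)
Your first bullet is exactly the paper's argument. For the second bullet you propose a genuinely different route from the paper (which instead transports the coatoms of $[\projUp(x),\projUp(x)\wo{\SUp(x,I)}]$ down to coatoms of $[x,x\wo{I}]$ and then identifies the correct one by a join computation), namely a reduction to rank two via the decomposition $xt\wo{I}=\bigvee_{r\in I\ssm\{t\}}xt\wo{\{r,t\}}$. That identity is true, but your justification for it begs the question: the assertion that $\Phi_I^+\ssm\{\alpha_t\}$ is the convex closure of the sets $\Phi_{\{r,t\}}^+\ssm\{\alpha_t\}$ is precisely the nontrivial content. It is provable (a strictly smaller join would lie below some element covered by $t\wo{I}$, whose inversion set omits a root $t(\alpha_{q^\star})$ that belongs to $\inversionSet(t\wo{\{q^\star,t\}})$, a contradiction), so this part is repairable; the injectivity of $\sUp(x,\cdot)$ on $I_{\not\equiv}$ and the assembly of the joins are also fine.

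The genuine gap is the base case $|I|=2$, which after your reduction carries the entire difficulty of the lemma. The sliding in the proof of Lemma~\ref{lem:transport} works because Lemma~\ref{lem:forceCongruences1} applied to $xW_{\{r'',t\}}$ hands you a congruence $xt\leqequiv x\wo{\{r'',t\}}=x't'$ relating the \emph{old} element $xt$ to a \emph{new} element $x't'$ in a coset based at~$x'$; note also that $t$ is replaced by $t'=\wo{\{r'',t\}}\,t\,\wo{\{r'',t\}}$, so it is not preserved. In your base case the object to be controlled is the fixed coatom $xt\wo{\{r,t\}}$ of $xW_{\{r,t\}}$, and Lemma~\ref{lem:forceCongruences1} applied to $xW_{\{r'',t\}}$ with $r''\notin\{r,t\}$ produces no element of any coset based at $x'$ that is congruent to $xt\wo{\{r,t\}}$; without such an element there is nothing to feed to your secondary inductive hypothesis. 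Producing it is itself a ``transport of coatoms'' statement, that is, an instance of the lemma you are proving, so the step is circular; the uniqueness clauses of Lemmas~\ref{lem:transport} and~\ref{lem:defSUp} only identify the transported data once a congruence is already known, they do not supply it. To close this case you still need the paper's mechanism: show that $\projUp(x)\sUp(x,t)\wo{\SUp(x,I)}\not\equiv\projUp(x)\wo{\SUp(x,I)}$ using Lemma~\ref{lem:congruenceConjugate} and the maximality of $\projUp(x)$ in its class, transport that coatom down to some $xt'\wo{I}$ by the dual of the Lemma~\ref{lem:transport} argument, and pin down $t'=t$ by joining with the elements $xs$ for $s\in I\ssm\{t\}$ with $x\not\equiv xs$.
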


\begin{proof}
If~$x \equiv xt$, then Lemma~\ref{lem:congruenceConjugate} ensures that $xt\wo{I} \equiv x\wo{I} \equiv \projUp(x)\wo{\SUp(x,I)}$. Assume now that~$x \not\equiv xt$. We have~$\projUp(x)\sUp(x,t)\wo{\SUp(x,I)} \not\equiv \projUp(x)\wo{\SUp(x,I)}$. Indeed, Lemma~\ref{lem:congruenceConjugate} would imply that~$\projUp(x) \equiv \projUp(x)\sUp(x,t)$ contradicting the maximality of~$\projUp(x)$ in its congruence class. Using the same argument as in Lemma~\ref{lem:transport}, we obtain that there is~$t'$ such that~$xt'\wo{I} \equiv \projUp(x)\sUp(x,t)\wo{\SUp(x,I)}$. Observe that
\[
xt'\wo{I} \equiv \projUp(x)\sUp(x,t)\wo{\SUp(x,I)} = \projUp(x)\sUp(x,t)\wo{\SUp(x,I)} \join \projUp(x)\sUp(x,s) \equiv xt'\wo{I} \join xs
\]
for all~$s \in I \ssm \{t\}$ such that~$x \not\equiv xs$. Since~$xt'\wo{I} \not\equiv x\wo{I}$, we obtain that~${t' = t}$.
\end{proof}

Using similar arguments as in the previous lemmas, or applying the anti-automor\-phism~$x \mapsto x\woo$, we deduce the following statement, similar to Lemma~\ref{lem:defSUp}.

\begin{lemma}
\label{lem:defSDown}
For any coset~$xW_I$, there is a unique subset~$\SDown(x,I)$ of~$\RdescentSet \big( \projDown(x\wo{I}) \big)$ such that~$\projDown(x\wo{I}) \wo{\SDown(x,I)} \leqequiv x$.
\end{lemma}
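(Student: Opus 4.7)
The plan is to deduce this lemma from Lemma~\ref{lem:defSUp} by exploiting the anti-automorphism $w \mapsto \woo w$ of the weak order, which exchanges up and down projections of congruence classes. First I would introduce the companion equivalence $\equiv^*$ on $W$ defined by $u \equiv^* v \iff \woo u \equiv \woo v$, and check that it is again a lattice congruence of the weak order, whose up-projection sends $u$ to $\woo \projDown(\woo u)$ and whose down-projection sends $u$ to $\woo \projUp(\woo u)$. This verification is routine since $w \mapsto \woo w$ reverses the weak order and swaps meets with joins: the equivalence classes of $\equiv^*$ are the images of $\equiv$-classes (hence intervals), and the two new projection maps inherit their order-preservation from $\projUp$ and $\projDown$.

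Next, given the coset $xW_I$, I would set $x' \eqdef \woo x \wo{I}$. A short length computation based on $\length(\woo w) = \length(\woo) - \length(w)$ shows that $x' \in W^I$ and that $x' \wo{I} = \woo x$, so that $w \mapsto \woo w$ carries the interval $xW_I = [x, x\wo{I}]$ onto the interval $x' W_I = [x', x' \wo{I}]$. Applying Lemma~\ref{lem:defSUp} to the congruence $\equiv^*$ and the coset $x' W_I$ then yields a unique subset $\Sigma$ of $S \ssm \RdescentSet(\woo \projDown(x\wo{I}))$ such that $\woo x$ is below $\woo \projDown(x\wo{I}) \wo{\Sigma}$ in weak order and congruent to it under $\equiv^*$.

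Finally, I would translate this conclusion back through $w \mapsto \woo w$. Using the identity $\RdescentSet(\woo w) = S \ssm \RdescentSet(w)$, the condition $\Sigma \subseteq S \ssm \RdescentSet(\woo \projDown(x\wo{I}))$ rewrites as $\Sigma \subseteq \RdescentSet(\projDown(x\wo{I}))$. Left-multiplying the relation above by $\woo$ reverses the weak order inequality and, by the very definition of $\equiv^*$, turns the $\equiv^*$-congruence into an $\equiv$-congruence, yielding $\projDown(x\wo{I}) \wo{\Sigma} \leqequiv x$. Setting $\SDown(x,I) \eqdef \Sigma$ gives existence, and uniqueness is inherited directly from the uniqueness part of Lemma~\ref{lem:defSUp}. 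I do not expect any serious obstacle beyond careful bookkeeping: tracking how projections, right descents, and the $\leqequiv$ relation transform under $\woo$; once that dictionary is fixed, no new idea beyond Lemma~\ref{lem:defSUp} is needed.
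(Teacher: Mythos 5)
Your argument is correct and is essentially the route the paper indicates: its one-line proof says to deduce the statement from Lemma~\ref{lem:defSUp} via an anti-automorphism of the weak order (it names $x \mapsto x\woo$, you use $x \mapsto \woo x$, which has the small advantage of keeping $W_I$ fixed rather than conjugating $I$ by $\woo$). You also correctly supply the detail the paper leaves implicit — that the congruence must be transported to $\equiv^*$, since $\equiv$ itself need not be invariant under multiplication by $\woo$ — and your bookkeeping of the identities $\RdescentSet(\woo w) = S \ssm \RdescentSet(w)$, the swapped projections, and the order reversal all checks out.
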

\begin{remark}
Consider a coset~$xW_I$. If~$\projUp(x) = x$ then~$\sUp(x,t) = t$ for all~$t \in I$ and thus~$\SUp(x,I) = I$. Similarly, if~$\projDown(x\wo{I}) = x\wo{I}$ then~$\SDown(x,I) = I$.
\end{remark}


\subsection{Congruences of the facial weak order}
\label{subsec:congruencesFacialWeakOrder}

Based on the properties established in the previous section we now show that the lattice congruences of the weak order naturally extend to lattice congruences of the facial weak order. We start from a lattice congruence~$\equiv$ of the weak order whose up and down projections are denoted by~$\projUp$ and~$\projDown$ respectively. We then define two maps~$\ProjUp : \CoxeterComplex{W} \to \CoxeterComplex{W}$ and~${\ProjDown : \CoxeterComplex{W} \to \CoxeterComplex{W}}$~by
\[
\ProjUp(xW_I) = \projUp(x) W_{\SUp(x,I)}
\qquad\text{and}\qquad
\ProjDown(xW_I) = \projDown(x\wo{I}) W_{\SDown(x,I)}
\]
where~$\SUp(x,I)$ and~$\SDown(x,I)$ are the subsets of~$S$ defined by Lemmas~\ref{lem:defSUp} and~\ref{lem:defSDown}.
Note that we again take the liberty here to write~$\ProjDown(xW_I) = \projDown(x\wo{I}) W_{\SDown(x,I)}$ instead of~$\ProjDown(xW_I) = \projDown(x\wo{I}) \,\wo{\SUp(x,I)} W_{\SDown(x,I)}$ to make apparent the symmetry between~$\ProjUp$ and~$\ProjDown$.

It immediately follows from Lemmas~\ref{lem:defSUp} and~\ref{lem:defSDown} that $\ProjUp(xW_I)$ is the biggest parabolic coset in the interval~$[\projUp(x), \projUp(x\wo{I})]$ containing~$\projUp(x)$ and similarly~$\ProjDown(xW_I)$ is the biggest parabolic coset in the interval~$[\projDown(x), \projDown(x\wo{I})]$ containing~$\projDown(x\wo{I})$.

\begin{theorem}
\label{theo:latticeCongruenceFacialWeakOrder}
The maps~$\ProjUp$ and~$\ProjDown$ fulfill the following properties:
\begin{enumerate}[(i)]
\item
\label{item:latticeCongruenceFacialWeakOrderSandwich}
$\ProjDown(xW_I) \le xW_I \le \ProjUp(xW_I)$ for any coset~$xW_I$.

\item
\label{item:latticeCongruenceFacialWeakOrderLastWins}
$\ProjUp \circ \ProjUp = \ProjUp \circ \ProjDown = \ProjUp$ and $\ProjDown \circ \ProjDown = \ProjDown \circ \ProjUp = \ProjDown$.

\item
\label{item:latticeCongruenceFacialWeakOrderOrderPreserving}
$\ProjUp$ and~$\ProjDown$ are order preserving.
\end{enumerate}
Therefore, the fibers of the maps~$\ProjUp$ and~$\ProjDown$ coincide and define a lattice congruence~$\Equiv$ of the facial weak order.
\end{theorem}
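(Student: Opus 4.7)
The plan is to verify the three conditions~\eqref{item:latticeCongruenceFacialWeakOrderSandwich}--\eqref{item:latticeCongruenceFacialWeakOrderOrderPreserving} and then invoke Lemma~\ref{lem:conditionsProjectionMaps}, which will give the order congruence structure of~$\Equiv$. The lattice congruence conclusion will then follow by combining the lattice congruence of~$\equiv$ on~$W$ with the meet and join formulas of Theorem~\ref{thm:lattice}: those formulas express the meet and join in the facial weak order in terms of meets and joins in~$W$ followed by a descent-set adjustment, and both ingredients respect~$\Equiv$.

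Property~\eqref{item:latticeCongruenceFacialWeakOrderSandwich} is the quickest. Using the characterization of Theorem~\ref{thm:facialWeakOrderCharacterizations}\,\eqref{item:facialWeakOrderCharacterizationCompareMinMax}, the inequality $xW_I \le \ProjUp(xW_I)$ reduces to $x \le \projUp(x)$ (immediate) and $x\wo{I} \le \projUp(x)\wo{\SUp(x,I)}$ (the~$\le$-part of the relation~$\leqequiv$ in Lemma~\ref{lem:defSUp}). The opposite inequality $\ProjDown(xW_I) \le xW_I$ is handled symmetrically via Lemma~\ref{lem:defSDown}.

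The heart of the argument will be the identity
\[
\projUp(x\wo{I}) = \projUp(x)\wo{\SUp(x,I)} \qquad \text{and dually} \qquad \projDown(x) = \projDown(x\wo{I})\wo{\SDown(x,I)},
\]
asserting that $\ProjUp(xW_I)$ is the unique parabolic coset whose minimum and maximum representatives are $\projUp(x)$ and $\projUp(x\wo{I})$, with the symmetric statement for~$\ProjDown$. Given this identity, property~\eqref{item:latticeCongruenceFacialWeakOrderOrderPreserving} becomes immediate: the inequality $xW_I \le yW_J$ translates via Theorem~\ref{thm:facialWeakOrderCharacterizations} into $x \le y$ and $x\wo{I} \le y\wo{J}$, and applying the order-preserving map~$\projUp$ yields $\projUp(x) \le \projUp(y)$ and $\projUp(x\wo{I}) \le \projUp(y\wo{J})$, which is exactly the translation of $\ProjUp(xW_I) \le \ProjUp(yW_J)$. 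Property~\eqref{item:latticeCongruenceFacialWeakOrderLastWins} will then follow from the uniqueness clause of Lemma~\ref{lem:defSUp}: for instance, $\ProjUp \circ \ProjUp(xW_I) = \projUp(x)W_{\SUp(\projUp(x), \SUp(x,I))}$, and since $\SUp(x,I)$ is itself a subset of $S \ssm \RdescentSet(\projUp(x))$ witnessing the defining relation of Lemma~\ref{lem:defSUp} for the pair $(\projUp(x), \SUp(x,I))$, uniqueness forces $\SUp(\projUp(x), \SUp(x,I)) = \SUp(x,I)$. The composition $\ProjUp \circ \ProjDown$ is treated the same way, using the key identity to observe that the minimum and maximum representatives of $\ProjDown(xW_I)$ project up to~$\projUp(x)$ and~$\projUp(x\wo{I})$.

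The main obstacle is therefore the identity $\projUp(x\wo{I}) = \projUp(x)\wo{\SUp(x,I)}$. From Lemma~\ref{lem:defSUp} one already knows $\projUp(x)\wo{\SUp(x,I)} \equiv x\wo{I}$ and $\projUp(x)\wo{\SUp(x,I)} \le \projUp(x\wo{I})$, so it suffices to check that $\projUp(x)\wo{\SUp(x,I)}$ is maximal in its own congruence class. Assuming otherwise, some right ascent~$s$ of~$\projUp(x)\wo{\SUp(x,I)}$ satisfies $\projUp(x)\wo{\SUp(x,I)} \cdot s \equiv \projUp(x)\wo{\SUp(x,I)}$. When $s \notin \RdescentSet(\projUp(x))$, adjoining~$s$ to~$\SUp(x,I)$ produces a strictly larger subset of $S \ssm \RdescentSet(\projUp(x))$ still satisfying the defining relation of Lemma~\ref{lem:defSUp}, contradicting the maximality of~$\SUp(x,I)$ established in that lemma's proof. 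The delicate case $s \in \RdescentSet(\projUp(x))$ is where I expect the main technical difficulty: one propagates the new congruence down through a rank-two parabolic coset $\projUp(x)W_{\{s,t\}}$ for an appropriately chosen $t \in \SUp(x,I)$, invoking Lemmas~\ref{lem:forceCongruences1} and~\ref{lem:forceCongruences2} to eventually extract a congruence of the form $\projUp(x) \equiv \projUp(x)s'$ with~$s'$ a right ascent of~$\projUp(x)$, contradicting the maximality of~$\projUp(x)$ in its own class.
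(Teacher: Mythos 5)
Your items~(\ref{item:latticeCongruenceFacialWeakOrderSandwich}) and the identity $\ProjUp\circ\ProjUp=\ProjUp$ are fine and match the paper, but the ``heart of the argument'' you propose is a false identity, and this sinks your proof of~(\ref{item:latticeCongruenceFacialWeakOrderOrderPreserving}) (and the $\ProjUp\circ\ProjDown$ half of~(\ref{item:latticeCongruenceFacialWeakOrderLastWins})). Lemma~\ref{lem:defSUp} only gives $x\wo{I}\leqequiv\projUp(x)\wo{\SUp(x,I)}$, hence $\projUp(x)\wo{\SUp(x,I)}\le\projUp(x\wo{I})$; equality would require $\projUp(x)\wo{\SUp(x,I)}$ to be maximal in its $\equiv$-class, and it need not be. Concretely, take the descent congruence $\equivdes$ in type~$A_2$ and the coset $W_{\{s\}}=\{e,s\}$: here $\projUp(e)=e$, $\SUp(e,\{s\})=\{s\}$, so $\projUp(e)\wo{\SUp(e,\{s\})}=s$, whereas $\projUp(s)=st$ since $s\equivdes st$. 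Your attempted repair (adjoin the offending ascent $s$ to $\SUp(x,I)$ and contradict uniqueness in Lemma~\ref{lem:defSUp}) does not work: in this example $s\cdot t=st\equivdes s$, but the enlarged set $\{s,t\}$ fails the defining condition of Lemma~\ref{lem:defSUp} because $s\not\equivdes sts=\wo{\{s,t\}}$, so no contradiction arises. Consequently ``$\ProjUp(xW_I)\le\ProjUp(yW_J)$'' is \emph{not} equivalent to ``$\projUp(x)\le\projUp(y)$ and $\projUp(x\wo{I})\le\projUp(y\wo{J})$'': the second coordinate one must control is $\projUp(x)\wo{\SUp(x,I)}\le\projUp(y)\wo{\SUp(y,J)}$, and monotonicity of $\projUp$ alone does not deliver it.

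The paper avoids this trap entirely. For~(\ref{item:latticeCongruenceFacialWeakOrderLastWins}) it only uses $\projUp\big(\projUp(x)\wo{\SUp(x,I)}\big)=\projUp(x\wo{I})$ (which does follow from $x\wo{I}\equiv\projUp(x)\wo{\SUp(x,I)}$) together with the characterization of $\ProjUp(xW_I)$ as the biggest parabolic coset in $[\projUp(x),\projUp(x\wo{I})]$ containing $\projUp(x)$. For~(\ref{item:latticeCongruenceFacialWeakOrderOrderPreserving}) it reduces to cover relations of the facial weak order: type~(1) covers are easy, and for a type~(2) cover $xW_I\precdot yW_J$ with $y=x\wo{I}\wo{J}$ it runs a genuinely technical computation --- splitting $J$ according to whether $yt\wo{J}\equiv y\wo{J}$, expressing $y=\bigwedge_{t\in J}yt\wo{J}$, invoking Lemma~\ref{lem:propertiessUpSUp} to identify each $yt\wo{J}$ with $\projUp(x)\sUp(x,t^\star)\wo{\SUp(x,I)}$ or $\projUp(x)\wo{\SUp(x,I)}$, and then applying Lemma~\ref{lem:defSUp} to an auxiliary coset $zW_K$ to conclude $\projUp(x)\wo{\SUp(x,I)}\le\projUp(y)\wo{\SUp(y,J)}$. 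That step is the real content of the theorem and is missing from your proposal; you would need to supply an argument of this kind (or an honest substitute) rather than the false identity.
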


\begin{proof}
Using the characterization of the facial weak order given in Theorem~\ref{thm:facialWeakOrderCharacterizations}\,\eqref{item:facialWeakOrderCharacterizationCompareMinMax}, we obtain that~$xW_I \le \ProjUp(xW_I)$ since~$x \le \projUp(x)$ and~$x\wo{I} \le \projUp(x)\wo{\SUp(x,I)}$. Similarly, $\ProjDown(xW_I) \le xW_I$ since~$\projDown(x\wo{I}) \wo{\SDown(x,I)} \le x$ and~$\projDown(x\wo{I}) \le x\wo{I}$. This shows~\eqref{item:latticeCongruenceFacialWeakOrderSandwich}.

\medskip
For~\eqref{item:latticeCongruenceFacialWeakOrderLastWins}, it follows from the definition that~$\ProjUp \big( \ProjUp(xW_I) \big) = \ProjUp \big( \projUp(x)W_{\SUp(x,I)} \big)$ is the biggest parabolic coset in the interval~$\big[ \projUp \big( \projUp(x) \big), \projUp \big( \projUp(x)\wo{\SUp(x,I)} \big) \big]$ containing~$\projUp \big( \projUp(x) \big)$. However, we have~${\projUp \big( \projUp(x) \big) = \projUp(x)}$ and~${\projUp \big( \projUp(x)\wo{\SUp(x,I)} \big) = \projUp(x\wo{I})}$ since $x\wo{I} \equiv \projUp(x)\wo{\SUp(x,I)}$. We conclude that~${\ProjUp \circ \ProjUp = \ProjUp}$. The proof is similar for the other equalities of~\eqref{item:latticeCongruenceFacialWeakOrderLastWins}.

\medskip
To prove~\eqref{item:latticeCongruenceFacialWeakOrderOrderPreserving}, it is enough to show that~$\ProjUp$ is order-preserving on covering relations of the facial weak order (it is then order preserving on any weak order relation by transitivity, and the result for~$\ProjDown$ can be argued similarly or using the anti-automorphisms of Proposition~\ref{prop:antiautomorphisms}). Therefore, we consider a cover relation~$xW_I \precdot yW_J$ in facial weak order and prove that~$\ProjUp(xW_I) \le \ProjUp(yW_J)$.

It is immediate if the cover relation~$xW_I \precdot yW_J$ is of type~(1), that is, if~$x = y$ and~$J = I \cup \{s\}$. Indeed, it follows from the characterization in terms of biggest parabolic subgroups and from the fact that~$\projUp(x) = \projUp(y)$ and~$\projUp(x\wo{I}) \le \projUp(y\wo{J})$.

Consider now a cover relation~$xW_I \precdot yW_J$ of type~(2), that is, with~${y = x\wo{I}\wo{J}}$ and~$J = I \ssm \{s\}$. Note that in this case $\projUp(x) \leq \projUp(y)$ and $\projUp(x\wo{I}) = \projUp(y\wo{J})$. We therefore need to show that ${\projUp(x)\wo{\SUp(x,I)} \le \projUp(y)\wo{\SUp(y,J)}}$.

For~$t \in S$, define~$t^\star \eqdef \wo{I}\wo{J}t\wo{J}\wo{I}$ so that the equality $x\wo{I} = y\wo{J}$ implies the equality ${yt\wo{J} = xt^\star\wo{I}}$.
Let
\begin{align*}
J_\equiv &\eqdef \set{t \in J}{yt\wo{J} \equiv y\wo{J}} = \set{t \in J}{xt^\star\wo{I} \equiv x\wo{I}}, \\
J_{\not\equiv} &\eqdef \set{t \in J}{yt\wo{J} \not\equiv y\wo{J}} = \set{t \in J}{xt^\star\wo{I} \not\equiv x\wo{I}},
\end{align*}
and consider
\[
K \eqdef \set{\wo{\SUp(x,I)}\sUp(x,t^\star)\wo{\SUp(x,I)}}{t \in J_{\not\equiv}}
\qquad\text{and}\qquad
z \eqdef \projUp(x)\wo{\SUp(x,I)}\wo{K}.
\] 
Lemma~\ref{lem:propertiessUpSUp} ensures that
\[
yt\wo{J} = xt^\star\wo{I} \equiv \begin{cases}
\projUp(x)\wo{\SUp(x,I)} &\text{if } t \in J_\equiv, \\
\projUp(x)\sUp(x,t^\star)\wo{\SUp(x,I)} & \text{if } t \in J_{\not\equiv}.
\end{cases}
\]
Therefore
\begin{align*}
y & = \bigwedge_{t \in J} yt\wo{J} = \bigwedge_{t \in J_\equiv} yt\wo{J} \wedge \bigwedge_{t \in J_{\not\equiv}} yt\wo{J} \\
& \equiv \projUp(x)\wo{\SUp(x,I)} \wedge  \bigwedge_{t \in J_{\not\equiv}} \projUp(x)\sUp(x,t^\star)\wo{\SUp(x,I)} \\
& = \projUp(x) \wo{\SUp(x,I)} \bigwedge_{t \in J_{\not\equiv}} \wo{\SUp(x,I)} \sUp(x,t^\star) \wo{\SUp(x,I)} \\
& = \projUp(x) \wo{\SUp(x,I)} \wo{K} = z.
\end{align*}
By Lemma~\ref{lem:defSUp} applied to the coset~$zW_K$, there exists~$\SUp(z,K)$ such that
\[
\projUp(x)\wo{\SUp(x,I)} = z\wo{K} \leqequiv \projUp(z)\wo{\SUp(z,K)} = \projUp(y)\wo{\SUp(z,K)}.
\]
Since~$\projUp(x)\wo{\SUp(x,I)} \equiv x\wo{I} = y\wo{J}$, it follows that~$\SUp(y,J) = \SUp(z,K)$ by unicity in Lemma~\ref{lem:defSUp} applied to the coset~$yW_J$. We  get that~${\projUp(x)\wo{\SUp(x,I)} \le \projUp(y)\wo{\SUp(y,J)}}$ and thus that~$\ProjUp(xW_I) \le \ProjUp(yW_J)$.

\medskip
We conclude by Lemma~\ref{lem:conditionsProjectionMaps} that the fibers of~$\ProjUp$ and~$\ProjDown$ indeed coincide and define a lattice congruence~$\Equiv$ of the facial weak order.
\end{proof}


\subsection{Properties of facial congruences}
\label{subsec:propertiesFacialCongruences}

In this section, we gather some properties of the facial congruence~$\Equiv$ defined in Theorem~\ref{theo:latticeCongruenceFacialWeakOrder}.


\subsubsection{Basic properties}

We first come back to the natural definition of~$\Equiv$ given in the introduction of Section~\ref{sec:latticeQuotients}.

\begin{proposition}
\label{prop:characterizationEquiv}
For any cosets~$xW_I, yW_J \in \CoxeterComplex{W}$,
\[
xW_I \Equiv yW_J \iff x \equiv y \text{ and } x\wo{I} \equiv y\wo{J}.
\]
\end{proposition}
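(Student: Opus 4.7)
The plan is to exploit the characterization of the facial congruence $\Equiv$ via the projection maps $\ProjUp$ and $\ProjDown$ from Theorem~\ref{theo:latticeCongruenceFacialWeakOrder}. The forward implication will reduce to reading off minimal representatives, while the backward implication will hinge on the uniqueness clause of Lemma~\ref{lem:defSUp}.

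For the forward implication, suppose $xW_I \Equiv yW_J$. By Theorem~\ref{theo:latticeCongruenceFacialWeakOrder}, this means simultaneously $\ProjUp(xW_I) = \ProjUp(yW_J)$ and $\ProjDown(xW_I) = \ProjDown(yW_J)$. Each side of the first equality is already in the form $zW_K$ with $z \in W^K$ its minimal representative (since $\SUp(x,I) \subseteq S \ssm \RdescentSet(\projUp(x))$ by Lemma~\ref{lem:defSUp}), so comparing these representations forces $\projUp(x) = \projUp(y)$, hence $x \equiv y$. The identical argument applied to the equality of $\ProjDown$'s yields $\projDown(x\wo{I}) = \projDown(y\wo{J})$, hence $x\wo{I} \equiv y\wo{J}$.

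For the backward implication, assume $x \equiv y$ and $x\wo{I} \equiv y\wo{J}$, and set $a = \projUp(x) = \projUp(y)$, $A = \SUp(x,I)$ and $B = \SUp(y,J)$. To conclude $xW_I \Equiv yW_J$ it suffices to prove $A = B$, since then $\ProjUp(xW_I) = aW_A = aW_B = \ProjUp(yW_J)$. Applying Lemma~\ref{lem:defSUp} to each coset yields $x\wo{I} \leqequiv a\wo{A}$ and $y\wo{J} \leqequiv a\wo{B}$, and combined with $x\wo{I} \equiv y\wo{J}$ this forces $a\wo{A} \equiv a\wo{B}$.

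The crux, and the main obstacle, is to deduce $A = B$ from $a\wo{A} \equiv a\wo{B}$. The key auxiliary fact I would establish first is that $a\wo{A} \join a\wo{B} = a\wo{A \cup B}$: since $A, B \subseteq S \ssm \RdescentSet(a)$, so is $A \cup B$, so $a \in W^{A \cup B}$ and the parabolic coset $aW_{A \cup B}$ forms a weak order interval isomorphic to $W_{A \cup B}$, on which the classical identity $\wo{A} \join \wo{B} = \wo{A \cup B}$ transports. Since $\equiv$ is a lattice congruence, we then have
\[
a\wo{A \cup B} = a\wo{A} \join a\wo{B} \equiv a\wo{A} \join a\wo{A} = a\wo{A},
\]
so $a\wo{A} \leqequiv a\wo{A \cup B}$. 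The uniqueness statement of Lemma~\ref{lem:defSUp} applied to the coset $aW_A$ asserts that $A$ is the \emph{only} subset $\Sigma \subseteq S \ssm \RdescentSet(a)$ satisfying $a\wo{A} \leqequiv a\wo{\Sigma}$; hence $A \cup B = A$, i.e., $B \subseteq A$. The symmetric argument gives $A \subseteq B$, completing the proof.
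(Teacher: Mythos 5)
Your proof is correct and follows essentially the same route as the paper: read off $\projUp(x)=\projUp(y)$ and $\projDown(x\wo{I})=\projDown(y\wo{J})$ from the equality of the projected cosets for the forward direction, and for the converse show that $\ProjUp(xW_I)$ is determined by the $\equiv$-classes of $x$ and $x\wo{I}$. The only cosmetic difference is that the paper concludes the converse by citing the observation that $\ProjUp(xW_I)$ is the biggest parabolic coset in $[\projUp(x),\projUp(x\wo{I})]$ containing $\projUp(x)$, whereas you re-derive the same fact explicitly via $a\wo{A}\join a\wo{B}=a\wo{A\cup B}$ and the uniqueness clause of Lemma~\ref{lem:defSUp}; both arguments are sound.
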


\begin{proof}
If~$xW_I \Equiv yW_J$, then~$\ProjUp(xW_I) = \ProjUp(yW_J)$ so that~$\projUp(x) = \projUp(y)$ and~${x \equiv y}$. Moreover, $\ProjDown(xW_I) = \ProjDown(yW_J)$ so that~$\projDown(x\wo{I}) = \projDown(y\wo{J})$ and~$x\wo{I} \equiv y\wo{J}$. Therefore, the $\Equiv$-congruence class of~$xW_I$ determines the $\equiv$-congruence classes of~$x$ and of~$x\wo{I}$. Reciprocally, we already observed that~$\ProjUp(xW_I)$ is the biggest parabolic coset in the interval~$[\projUp(x), \projUp(x\wo{I})]$ containing~$\projUp(x)$. If~$x \equiv y$ and ${x\wo{I} \equiv y\wo{J}}$, we obtain that~$\ProjUp(xW_I) = \ProjUp(yW_J)$. Therefore, the $\Equiv$-congruence class of~$xW_I$ only depends on the $\equiv$-congruence classes of~$x$ and of~$x\wo{I}$.
\end{proof}


\begin{corollary}
\label{coro:restrictCongruence}
For any~$x,y \in W$, we have~$x \equiv y \iff xW_\varnothing \Equiv yW_\varnothing$. Therefore, each congruence class~$\gamma$ of~$\equiv$ is the intersection of~$W$ with a congruence class~$\Gamma$ of~$\Equiv$.
\end{corollary}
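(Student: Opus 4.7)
My plan is to derive this directly from Proposition~\ref{prop:characterizationEquiv}, which is the substantive statement; the corollary is essentially just a matter of specialization.

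First I would prove the equivalence $x \equiv y \iff xW_\varnothing \Equiv yW_\varnothing$. By Proposition~\ref{prop:characterizationEquiv} applied to the cosets $xW_\varnothing$ and $yW_\varnothing$, we have
\[
xW_\varnothing \Equiv yW_\varnothing \iff x \equiv y \text{ and } x\wo{\varnothing} \equiv y\wo{\varnothing}.
\]
Since $W_\varnothing = \{e\}$, we have $\wo{\varnothing} = e$, so $x\wo{\varnothing} = x$ and $y\wo{\varnothing} = y$, and the second condition coincides with the first. The equivalence follows.

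Next I would deduce the description of congruence classes. Fix a congruence class $\gamma$ of $\equiv$ on $W$, pick any $x \in \gamma$, and let $\Gamma$ be the $\Equiv$-congruence class of $xW_\varnothing$ in $\CoxeterComplex{W}$. Under the natural identification of $W$ with its image $\set{yW_\varnothing}{y \in W}$ inside $\CoxeterComplex{W}$, the first part of the corollary gives
\[
\Gamma \cap W = \set{yW_\varnothing}{yW_\varnothing \Equiv xW_\varnothing} = \set{yW_\varnothing}{y \equiv x} = \gamma,
\]
which is exactly the claim.

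There is no real obstacle here: the result is a transparent specialization of Proposition~\ref{prop:characterizationEquiv} once one observes that $\wo{\varnothing} = e$ collapses the two conditions appearing in that proposition into one.
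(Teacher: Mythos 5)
Your proof is correct and is exactly the intended argument: the paper states this corollary without proof, treating it as an immediate specialization of Proposition~\ref{prop:characterizationEquiv} with $I = J = \varnothing$, where $\wo{\varnothing} = e$ collapses the two conditions into one. Your second paragraph spelling out the class-level statement is a harmless elaboration of the same observation.
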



This corollary says that the congruence~$\Equiv$ of the facial weak order indeed extends the congruence~$\equiv$ of the weak order. Nevertheless, observe that not all congruences of the facial weak order arise as congruences of the weak order (consider for instance the congruence on~$\CoxeterComplex{A_2}$ that only contracts~$sW_t$ with~$stW_\varnothing$).

%


\subsubsection{Join-irreducible contractions}

Recall that an element~$x$ of a finite lattice~$L$ is \defn{join-irreducible} if it covers exactly one element~$x_\star$ (see Section~\ref{subsubsec:joinIrreducible}). The following statement can be found \eg in~\cite[Lemma~2.32]{FreeseJezekNation}. For a lattice congruence~$\equiv$ on~$L$ and~$y \in L$, let~$D_\equiv(y)$ denote the set of join-irreducible elements~$x \le y$ not contracted by~$\equiv$, that is such that~$x_\star \not\equiv x$. For~$y,z \in L$, we then have~${y \equiv z \iff D_\equiv(y) = D_\equiv(z)}$ and lattice quotient~$L/{\equiv}$ is isomorphic to the inclusion poset on~$\set{D_\equiv(y)}{y \in L}$. In other words, the lattice congruence~$\equiv$ is characterized by the join-irreducible elements of~$L$ that it contracts. Even if this characterization is not always convenient, it is relevant to describe the join-irreducibles of the facial weak order contracted by~$\Equiv$ in terms of those contracted~by~$\equiv$.

\begin{proposition}
The join-irreducible cosets of the facial weak order contracted by~$\Equiv$ are precisely:
\begin{itemize}
\item the cosets~$xW_\varnothing$ where~$x$ is a join-irreducible element of the weak order contracted by~$\equiv$,
\item the cosets~$xW_{\{s\}}$ where~$xs$ is a join-irreducible element of the weak order contracted by~$\equiv$.
\end{itemize}
\end{proposition}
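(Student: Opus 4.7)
My plan is to combine the classification of join-irreducible cosets from Proposition~\ref{prop:joinIrreducibles} with the characterization $xW_I \Equiv yW_J \iff x \equiv y \text{ and } x\wo{I} \equiv y\wo{J}$ provided by Proposition~\ref{prop:characterizationEquiv}. Recall the standard fact (used elsewhere in the paper and referenced to~\cite{FreeseJezekNation}) that a join-irreducible element $J$ of a finite lattice is contracted by a congruence if and only if $J$ is congruent to the unique element $J_\star$ it covers. So it suffices to examine $J \Equiv J_\star$ for each of the two families of join-irreducible cosets listed in Proposition~\ref{prop:joinIrreducibles}, and to rewrite each such congruence using Proposition~\ref{prop:characterizationEquiv}.

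In the first case $J = xW_\varnothing$ with $x$ join-irreducible in weak order, let $s$ denote the unique right descent of $x$, so that $x_\star = xs$. Proposition~\ref{prop:joinIrreducibles} records $J_\star = xsW_{\{s\}}$, whose minimal representative is $xs$ and whose maximal representative is $xs\cdot s = x$. Applying Proposition~\ref{prop:characterizationEquiv}, comparing the minimal representatives gives $x \equiv xs$ while comparing the maximal representatives gives the tautology $x \equiv x$. Hence $J$ is contracted by $\Equiv$ precisely when $x \equiv x_\star$, i.e., precisely when the join-irreducible $x$ is contracted by $\equiv$.

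In the second case $J = xW_{\{s\}}$ with $xs$ join-irreducible in weak order, note that $x \precdot xs$, so $(xs)_\star = x$. Proposition~\ref{prop:joinIrreducibles} gives $J_\star = xW_\varnothing$. This time the minimal representatives of $J$ and $J_\star$ both equal $x$, while the maximal representatives are $xs$ and $x$ respectively. Applying Proposition~\ref{prop:characterizationEquiv} therefore reduces $J \Equiv J_\star$ to $xs \equiv x = (xs)_\star$, i.e., to the join-irreducible $xs$ being contracted by $\equiv$.

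No genuine obstacle is expected: the argument is essentially bookkeeping once the join-irreducible cosets and their unique predecessors are extracted from Proposition~\ref{prop:joinIrreducibles}. The only point requiring attention is the correct identification of minimal and maximal length coset representatives on both sides; in particular one uses in both cases that $xs \cdot s = x$ to pinpoint the maximum of the relevant size-two coset.
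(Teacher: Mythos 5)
Your proof is correct and follows essentially the same route as the paper: classify the join-irreducible cosets and their unique lower covers via Proposition~\ref{prop:joinIrreducibles}, then translate the facial congruence $J \Equiv J_\star$ into weak-order congruences of the minimal and maximal coset representatives via Proposition~\ref{prop:characterizationEquiv}. Your explicit bookkeeping of the representatives (in particular that $xs\cdot s = x$ is the maximum of $xsW_{\{s\}}$) just spells out what the paper leaves implicit.
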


\begin{proof}
The join-irreducible cosets of the facial weak order are described in Proposition~\ref{prop:joinIrreducibles}. Now~$xW_\varnothing$ is contracted by~$\Equiv$ when $xW_\varnothing \Equiv (xW_\varnothing)_\star = \{x_\star,x\}$, that is, when~$x \equiv x_\star$ by Proposition~\ref{prop:characterizationEquiv}. Similarly, $xW_{\{s\}}$ is contracted by~$\Equiv$ when~$xW_{\{s\}} \Equiv (xW_{\{s\}})_\star = \{x\}$, that is, when~$xs \equiv x$ by Proposition~\ref{prop:characterizationEquiv}.
\end{proof}


\subsubsection{Up and bottom cosets of facial congruence classes}

The next statements deal with maximal and minimal cosets in their facial congruence classes.

\begin{proposition}
\label{prop:sortable}
For any coset~$xW_I$, we have
\begin{enumerate}[(i)]
\item
\label{item:antisortable}
$\ProjUp(xW_I) = xW_I \iff \projUp(x) = x$,

\item
\label{item:sortable}
$\ProjDown(xW_I) = xW_I \iff \projDown(x\wo{I}) = x\wo{I}$.
\end{enumerate}
\end{proposition}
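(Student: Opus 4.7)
My plan is to reduce both equivalences to the characterizations of $\ProjUp(xW_I)$ and $\ProjDown(xW_I)$ as distinguished parabolic cosets in certain intervals, which are recorded just before the proposition. Specifically, $\ProjUp(xW_I)$ is the biggest parabolic coset in $[\projUp(x), \projUp(x\wo{I})]$ containing $\projUp(x)$, so its minimum length representative is~$\projUp(x)$, and $\ProjDown(xW_I)$ is the biggest parabolic coset in $[\projDown(x), \projDown(x\wo{I})]$ containing $\projDown(x\wo{I})$, so its maximum length element is $\projDown(x\wo{I})$.

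For~\eqref{item:antisortable}, the forward direction is immediate from these facts: if $\ProjUp(xW_I) = xW_I$ then the two cosets have the same minimum length representative, giving $\projUp(x) = x$. The converse follows from the remark stated right before the proposition: if $\projUp(x) = x$ then $\sUp(x,t) = t$ for all $t \in I$ and thus $\SUp(x,I) = I$, so $\ProjUp(xW_I) = \projUp(x)W_{\SUp(x,I)} = xW_I$.

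For~\eqref{item:sortable}, the forward direction is similar, except that now we compare maximum length elements: $\ProjDown(xW_I) = xW_I$ forces $\projDown(x\wo{I}) = x\wo{I}$. For the converse, I will again invoke the remark before the proposition: when $\projDown(x\wo{I}) = x\wo{I}$, we have $\SDown(x,I) = I$, hence $\ProjDown(xW_I)$ is the parabolic coset with maximum length element $x\wo{I}$ and parabolic part $W_I$, which is precisely $xW_I$.

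There is no real obstacle here; the only subtlety is to match the two descriptions (``biggest coset containing $\projUp(x)$'' versus ``minimum length representative is $\projUp(x)$'', and dually for $\ProjDown$) and to quote the remark on the boundary cases $\projUp(x) = x$ and $\projDown(x\wo{I}) = x\wo{I}$ where the sets $\SUp(x,I)$ and $\SDown(x,I)$ degenerate to $I$. Once these are in hand, the argument is a couple of lines in each direction.
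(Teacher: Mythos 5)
Your proof is correct and follows essentially the same route as the paper's: the forward direction reads off that the minimal (resp.\ maximal) length representative of $\ProjUp(xW_I)=\projUp(x)W_{\SUp(x,I)}$ (resp.\ of $\ProjDown(xW_I)$) is $\projUp(x)$ (resp.\ $\projDown(x\wo{I})$), and the converse uses that $\projUp(x)=x$ forces $\SUp(x,I)=I$ by the uniqueness statement of Lemma~\ref{lem:defSUp}, dually for $\ProjDown$. No gaps.
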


\begin{proof}
We only prove~\eqref{item:antisortable}, the proof of~\eqref{item:sortable} being symmetric. Recall the definition ${\ProjUp(xW_I) = \projUp(x)W_{\SUp(x,I)}}$. Therefore, $\ProjUp(xW_I) = xW_I$ clearly implies that~${\projUp(x) = x}$. Reciprocally, if~$\projUp(x) = x$, then~$\SUp(x,I) = I$ by the uniqueness of $\SUp(x,I)$ in Lemma~\ref{lem:defSUp}. Therefore~$\ProjUp(xW_I) = \projUp(x)W_{\SUp(x,I)} = xW_I$.
\end{proof}

Call an element $x$ in $W$ a \defn{$\equiv$-singleton} if it is alone in its $\equiv$-congruence class, \ie such that~$\projDown(x) = x = \projUp(x)$. Similarly, call a coset~$xW_I$ a \defn{facial $\Equiv$-singleton} if it is alone in its $\Equiv$-congruence class, \ie such that~$\ProjDown(xW_I) = xW_I = \ProjUp(xW_I)$.

\begin{proposition}
\label{prop:singletons}
\begin{enumerate}[(i)]
\item
\label{item:characterizationSingletons}
A coset~$xW_I$ is a facial $\Equiv$-singleton if and only if~${\projUp(x) = x}$ and~${\projDown(x\wo{I}) = x\wo{I}}$.

\item
\label{item:fromSingletonstoFacialSingletons}
If $x$ is a $\equiv$-singleton, then $xW_I$ is a facial $\Equiv$-singleton for any~${I \subset S \ssm \RdescentSet(x)}$. Moreover, $x\wo{J}W_J$ is a facial $\Equiv$-singleton for any~$J \subseteq \RdescentSet(x)$.
\end{enumerate}
\end{proposition}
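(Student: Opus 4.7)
Part (i) is a direct consequence of Proposition~\ref{prop:sortable}: by definition $xW_I$ is a facial $\Equiv$-singleton exactly when it is fixed by both $\ProjUp$ and $\ProjDown$, and the two parts of Proposition~\ref{prop:sortable} translate these fixed-point conditions into $\projUp(x) = x$ and $\projDown(x\wo I) = x\wo I$ respectively.

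For the first assertion of part (ii) I would proceed by induction on $|I|$. Since $x$ is a $\equiv$-singleton, $\projUp(x) = x$ is automatic, so by part~(i) it suffices to prove $\projDown(x\wo I) = x\wo I$ for every $I \subseteq S \ssm \RdescentSet(x)$. Suppose $y \leq x\wo I$ with $y \equiv x\wo I$. Then $y \meet x \equiv x\wo I \meet x = x$, and the singleton hypothesis forces $y \meet x = x$, so $x \leq y$ and $y \in [x, x\wo I] = xW_I$. For each $s \in I$ the inductive hypothesis, applied to the smaller coset $xW_{I \ssm \{s\}}$, gives via part~(i) that $x\wo{I \ssm \{s\}}$ is the minimum of its $\equiv$-class; since $y \meet x\wo{I \ssm \{s\}} \equiv x\wo I \meet x\wo{I \ssm \{s\}} = x\wo{I \ssm \{s\}}$, this forces $x\wo{I \ssm \{s\}} \leq y$. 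Taking the join over $s \in I$ and invoking the weak-order identity $\bigvee_{s \in I} x\wo{I \ssm \{s\}} = x\wo I$ (discussed below) yields $y \geq x\wo I$, so $y = x\wo I$.

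The second assertion of (ii) follows by the dual argument: for $x$ singleton and $J \subseteq \RdescentSet(x)$ the coset $x\wo J W_J$ has maximum $x$, so $\projDown(x) = x$ is automatic, and we prove $\projUp(x\wo J) = x\wo J$ by induction on $|J|$. If $x\wo J \leq y$ with $y \equiv x\wo J$, then $y \join x \equiv x\wo J \join x = x$ forces $y \leq x$, placing $y \in [x\wo J, x] = x\wo J W_J$; joining $y$ with each $x\wo{J \ssm \{s\}}$ (whose class has maximum $x\wo{J \ssm \{s\}}$ by the inductive hypothesis) forces $y \leq x\wo{J \ssm \{s\}}$, and the dual identity $\bigwedge_{s \in J} x\wo{J \ssm \{s\}} = x\wo J$ then delivers $y = x\wo J$.

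The main technical ingredient is the identity $\bigvee_{s \in I} \wo{I \ssm \{s\}} = \wo I$ inside the parabolic subgroup $W_I$, together with its meet counterpart obtained via the anti-automorphism $w \mapsto \wo I w$. This follows from a short inversion-set calculation: $\inversionSet(\wo{I \ssm \{s\}}) = \Phi^+_{I \ssm \{s\}}$, and every simple root $\alpha_t$ with $t \in I$ belongs to $\Phi^+_{I \ssm \{s\}}$ for any $s \in I \ssm \{t\}$, so $\cone\big(\bigcup_{s \in I} \Phi^+_{I \ssm \{s\}}\big) \supseteq \cone(\Delta_I) = \cone(\Phi_I^+)$, whence the join of the $\wo{I \ssm \{s\}}$ has inversion set $\Phi^+_I = \inversionSet(\wo I)$ and therefore equals $\wo I$.
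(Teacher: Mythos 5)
Part (i) is handled exactly as in the paper. For part (ii) you take a genuinely different route: the paper argues by contradiction, picking an element $xt\wo{I}$ covered by $x\wo{I}$ inside its congruence class and using Lemma~\ref{lem:congruenceConjugate} (for $t \in I$) or a direct meet computation (for $t \notin I$) to force $x \equiv xt$, contradicting the singleton hypothesis. You instead induct on $|I|$, squeeze $y$ above each maximum $x\wo{I \ssm \{s\}}$ of the codimension-one subcosets, and invoke the join identity $\bigvee_{s \in I} x\wo{I \ssm \{s\}} = x\wo{I}$. Your main case is sound: the meet arguments are correct, the inversion-set computation via~\eqref{eq:inversionSetMeetJoin} is correct, and the transfer of the identity from $W_I$ to the interval $[x, x\wo{I}]$ by left multiplication by $x \in W^I$ is legitimate since intervals are sublattices. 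Your approach has the mild advantage of avoiding any discussion of which elements are covered by $x\wo{I}$, at the price of the extra induction.

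There is, however, a gap at $|I| = 1$: for $I = \{s\}$ your identity reads $\bigvee_{s' \in I} x\wo{I \ssm \{s'\}} = x\wo{\varnothing} = x$, which is \emph{not} $x\wo{I} = xs$. Indeed your cone argument tacitly assumes that for every $t \in I$ there exists some $s \in I \ssm \{t\}$, i.e.\ $|I| \ge 2$. This is not merely a boundary nuisance: the case $|I| = 2$ uses the rank-one case as its inductive hypothesis, so the induction cannot start without it. Fortunately the missing case follows from the step you already wrote: $y \meet x \equiv xs \meet x = x$ forces $y \in [x, xs] = \{x, xs\}$, and $y = x$ would give $x \equiv xs$, contradicting that $x$ is a $\equiv$-singleton, whence $y = xs$. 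The same patch is needed at $|J| = 1$ in the dual argument. With these base cases supplied, your proof is complete.
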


\begin{proof}
\eqref{item:characterizationSingletons} is an immediate consequence of Proposition~\ref{prop:sortable}. To prove~\eqref{item:fromSingletonstoFacialSingletons}, we just need to show that if~$x$ is a $\equiv$-singleton, then~$\projDown(x\wo{I}) = x\wo{I}$ for any~${I \subseteq S \ssm \RdescentSet(x)}$. If not, there would exist~$t \in S$ such that~$xt\wo{I} \leqequiv x\wo{I}$. If~$t \in I$, then~$x \equiv xt$ by Lemma~\ref{lem:congruenceConjugate}. If~$t \notin I$, then~${xt \le x}$. Since~$\equiv$ is a lattice congruence and~${xt\wo{I} \equiv x\wo{I}}$,
\[
x = x \meet x\wo{I} \equiv x \meet xt\wo{I} = xt.
\]
In both cases, we contradict the assumption that~$x$ is a $\equiv$-singleton. We prove similarly that if~$x$ is a $\equiv$-singleton, then~$\projUp(x\wo{J}) = x\wo{J}$ for any~$J \subseteq \RdescentSet(x)$.
\end{proof}

\begin{remark}
Proposition~\ref{prop:singletons}\,\eqref{item:fromSingletonstoFacialSingletons} can be interpreted as follows: if the weak order minimum or maximum element of a coset~$xW_I$ is a $\equiv$-singleton, then the coset~$xW_I$ is a facial $\Equiv$-singleton. In fact, we conjecture that a coset is a facial $\Equiv$-singleton if it contains a $\equiv$-singleton.
\end{remark}


\subsection{Root and weight inversion sets for facial congruence classes}
\label{subsec:rootWeightSetCongruenceClasses}

As in Section~\ref{subsec:rootWeightSet}, we now introduce and study the root and weight inversion sets of the congruence classes of~$\Equiv$. Root inversion sets are then used to obtain equivalent characterizations of the quotient lattice of the facial weak order by~$\Equiv$. Weight inversion sets are used later in Section~\ref{subsec:congruenceFans} to describe all faces of N.~Reading's simplicial fan~$\Fan_\equiv$ associated to~$\equiv$.

\begin{definition}
The \defn{root inversion set}~$\rootSet(\Gamma)$ and the \defn{weight inversion set}~$\weightSet(\Gamma)$ of a congruence class~$\Gamma$ of~$\Equiv$ are defined by
\[
\rootSet(\Gamma) = \bigcap_{zW_K \in \Gamma} \rootSet(zW_K)
\qquad\text{and}\qquad
\weightSet(\Gamma) = \bigcup_{zW_K \in \Gamma} \weightSet(zW_K).
\]
\end{definition}

\begin{proposition}
\label{prop:propertiesRootSetCongruenceClasses}
Consider a congruence class~$\Gamma = [xW_I, yW_J]$ of~$\Equiv$.
\begin{enumerate}[(i)]
\item
\label{item:polarLatticeCongruence}
The cones generated by the root and weight inversion sets of~$\Gamma$ are polar to each other:
\[
\cone(\rootSet(\Gamma))\polar = \cone(\weightSet(\Gamma)).
\]

\item
\label{item:rootSetCongruenceClass}
The positive and negative parts of the root inversion set of~$\Gamma$ coincide with that of~$xW_I$ and~$yW_J$:
\[
\rootSet(\Gamma) \cap \Phi^+ = \rootSet(xW_I) \cap \Phi^+
\quad\text{and}\quad
\rootSet(\Gamma) \cap \Phi^- = \rootSet(yW_J) \cap \Phi^-.
\]

\item
\label{item:rootSetWeightSetFromBottomAndTopLatticeCongruence}
The root and weight inversion sets of~$\Gamma$ can be computed from that of~$xW_I$ and~$yW_J$ by
\[
\rootSet(\Gamma) = \rootSet(xW_I) \cap \rootSet(yW_J)
\quad\text{and}\quad
\weightSet(\Gamma) = \weightSet(xW_I) \cup \weightSet(yW_J).
\]
\end{enumerate}
\end{proposition}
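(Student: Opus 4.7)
My plan is to prove parts (iii)-root and (ii) first, then part (i), and finally part (iii)-weight, with the key cone identity of part (i) being the main obstacle.

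\textbf{Part (iii)-root and (ii).} The inclusion $\rootSet(\Gamma) \subseteq \rootSet(xW_I) \cap \rootSet(yW_J)$ is immediate, as both cosets lie in $\Gamma$. For the reverse, fix $\alpha \in \rootSet(xW_I) \cap \rootSet(yW_J)$ and $zW_K \in \Gamma$. The facial weak order relations $xW_I \leq zW_K \leq yW_J$ and Theorem~\ref{thm:facialWeakOrderCharacterizations}\,\eqref{item:facialWeakOrderCharacterizationRoots} give $\rootSet(xW_I) \ssm \rootSet(zW_K) \subseteq \Phi^-$ and $\rootSet(yW_J) \ssm \rootSet(zW_K) \subseteq \Phi^+$. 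Hence $\alpha \in \Phi^+$ forces $\alpha \in \rootSet(zW_K)$ via the first containment, and $\alpha \in \Phi^-$ forces it via the second. So $\alpha \in \rootSet(\Gamma)$. Part (ii) then follows by intersecting the root-set identity with $\Phi^\pm$ and applying Remark~\ref{rem:equivalentRootSetCharacterization}, which gives $\rootSet(xW_I) \cap \Phi^+ \subseteq \rootSet(yW_J) \cap \Phi^+$ and $\rootSet(yW_J) \cap \Phi^- \subseteq \rootSet(xW_I) \cap \Phi^-$.

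\textbf{Part (i).} The inclusion $\cone(\rootSet(\Gamma))\polar \supseteq \cone(\weightSet(\Gamma))$ follows by applying Proposition~\ref{prop:rootSetWeightSetConesPerm}\,\eqref{item:polarCones} to each $zW_K \in \Gamma$: $\rootSet(\Gamma) \subseteq \rootSet(zW_K)$ gives $\cone(\rootSet(\Gamma))\polar \supseteq \cone(\rootSet(zW_K))\polar = \cone(\weightSet(zW_K))$, and taking the cone over the union yields the claim. For the reverse inclusion, by polar duality it suffices to establish the key cone identity
\[
\cone(\rootSet(xW_I) \cap \rootSet(yW_J)) = \cone(\rootSet(xW_I)) \cap \cone(\rootSet(yW_J)),
\]
whose non-trivial direction ($\supseteq$) uses the explicit decomposition $\rootSet(\Gamma) = (\rootSet(xW_I) \cap \Phi^+) \cup (\rootSet(yW_J) \cap \Phi^-)$ from part (ii), combined with the containments $\rootSet(xW_I) \cap \Phi^+ \subseteq \rootSet(yW_J)$ and $\rootSet(yW_J) \cap \Phi^- \subseteq \rootSet(xW_I)$ from Remark~\ref{rem:equivalentRootSetCharacterization}. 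These containments allow any element of the intersection cone to be split into a positive piece drawn from $\rootSet(xW_I) \cap \Phi^+$ and a negative piece drawn from $\rootSet(yW_J) \cap \Phi^-$.

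\textbf{Part (iii)-weight.} The inclusion $\weightSet(\Gamma) \supseteq \weightSet(xW_I) \cup \weightSet(yW_J)$ is immediate. Combining part (i) with the key cone identity and the polar formula $\cone(A \cap B)\polar = \cone(A)\polar + \cone(B)\polar$ for closed polyhedral cones yields the cone-level equality $\cone(\weightSet(\Gamma)) = \cone(\weightSet(xW_I) \cup \weightSet(yW_J))$. To promote this to a set equality, I use that every weight $\omega \in \weightSet(\Gamma) \subseteq \Omega = W(\nabla)$ is a ray of some Coxeter-fan cone $\cone(\weightSet(zW_K))$ for $zW_K \in \Gamma$, and that weights lying in an image $x(\cone(\nabla_{S\ssm I}))$ of a fundamental-chamber face are exactly the ray generators $x(\nabla_{S\ssm I}) = \weightSet(xW_I)$, and likewise for $yW_J$. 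Since the sum cone decomposes as the union of the Coxeter-fan cones indexed by cosets in $\Gamma$, every weight in it lies in $\weightSet(xW_I) \cup \weightSet(yW_J)$.

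\textbf{Main obstacle.} The crux is the key cone identity in part (i). Its non-trivial direction demands a concrete combinatorial-geometric decomposition of vectors in $\cone(\rootSet(xW_I)) \cap \cone(\rootSet(yW_J))$ as nonnegative combinations drawn solely from $\rootSet(\Gamma)$, leveraging the precise order-theoretic containments between the root inversion sets of $xW_I$ and $yW_J$ that arise from the facial weak order relation $xW_I \leq yW_J$.
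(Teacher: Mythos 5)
Your arguments for part~\eqref{item:rootSetCongruenceClass} and for the root half of part~\eqref{item:rootSetWeightSetFromBottomAndTopLatticeCongruence} are correct and essentially the paper's own: the paper applies Remark~\ref{rem:equivalentRootSetCharacterization} to $xW_I \le zW_K \le yW_J$ to get \eqref{item:rootSetCongruenceClass} first and then deduces the root identity, while you go the other way with the same ingredients. The genuine gap is in part~\eqref{item:polarLatticeCongruence}. Your reduction of the hard inclusion to the cone identity $\cone(\rootSet(xW_I)) \cap \cone(\rootSet(yW_J)) \subseteq \cone\big(\rootSet(xW_I)\cap\rootSet(yW_J)\big)$ is legitimate (the identity is true), but the splitting argument you offer for it does not go through. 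Expanding an element $v$ of the intersection over $\rootSet(xW_I)$ gives $v = p + n$ with $p \in \cone(\rootSet(xW_I)\cap\Phi^+)$, which is fine, but with $n \in \cone(\rootSet(xW_I)\cap\Phi^-)$; by Remark~\ref{rem:equivalentRootSetCharacterization} this is the \emph{larger} of the two negative parts, so $n$ need not lie in $\cone(\rootSet(yW_J)\cap\Phi^-)$. Expanding $v$ over $\rootSet(yW_J)$ has the symmetric defect on the positive side, and nothing in your sketch reconciles the two expansions into a single decomposition constrained on both sides at once --- that simultaneous decomposition \emph{is} the geometric content of \eqref{item:polarLatticeCongruence} and is not a formal consequence of the two containments. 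The paper takes a route that never meets this issue: it proves \eqref{item:polarLatticeCongruence} \emph{before} \eqref{item:rootSetCongruenceClass} and \eqref{item:rootSetWeightSetFromBottomAndTopLatticeCongruence}, directly from Proposition~\ref{prop:rootSetWeightSetConesPerm}\,\eqref{item:polarCones} applied to every coset of the class together with the fact that the polar of a union is the intersection of the polars, so the intersection of just two root cones never appears. If you want to keep your order of proof you must actually establish the cone identity (for instance by identifying both sides with $\cone\big(\weightSet(xW_I)\cup\weightSet(yW_J)\big)\polar$), not assert the splitting.

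A secondary problem is your upgrade of the weight identity from cones to sets. A weight of $\weightSet(\Gamma)$ need not be an extreme ray of $\cone(\weightSet(xW_I)) + \cone(\weightSet(yW_J))$ --- it can lie in the relative interior of that cone --- and then the fact that it is a ray of some Coxeter cone $\cone(\weightSet(zW_K))$ with $zW_K\in\Gamma$ does not force it into $\weightSet(xW_I)\cup\weightSet(yW_J)$. Indeed the set-level equality cannot be rescued: for the one-class congruence one has $\Gamma = [eW_\varnothing,\woo W_\varnothing]$ and $\weightSet(\Gamma) = \Omega$, whereas $\weightSet(eW_\varnothing)\cup\weightSet(\woo W_\varnothing) = \nabla\cup\woo(\nabla)$ is much smaller. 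Only the cone-level equality $\cone(\weightSet(\Gamma)) = \cone\big(\weightSet(xW_I)\cup\weightSet(yW_J)\big)$ holds in general, and that is what the paper's ``follows by polarity'' delivers; your argument should stop there rather than claim the equality of the underlying sets of weights.
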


\begin{proof}
Since the polar of a union is the intersection of the polars, \eqref{item:polarLatticeCongruence} is a direct consequence of Proposition~\ref{prop:rootSetWeightSetConesPerm}\,\eqref{item:polarCones}.

For~\eqref{item:rootSetCongruenceClass}, consider~$zW_K \in \Gamma$. Since~$xW_I \le zW_K \le yW_J$, we have by Remark~\ref{rem:equivalentRootSetCharacterization}
\[
\rootSet(xW_I) \cap \Phi^+ \subseteq \rootSet(zW_I) \cap \Phi^+
\quad\text{and}\quad
\rootSet(zW_K) \cap \Phi^- \supseteq \rootSet(yW_J) \cap \Phi^-.
\]
Therefore,
\begin{align*}
\rootSet(\Gamma) \cap \Phi^+ & = \bigcap_{zW_K \in \Gamma} \rootSet(zW_I) \cap \Phi^+ = \rootSet(xW_I) \cap \Phi^+, \\
\text{and}\qquad
\rootSet(\Gamma) \cap \Phi^- & = \bigcap_{zW_K \in \Gamma} \rootSet(zW_I) \cap \Phi^- = \rootSet(yW_J) \cap \Phi^-.
\end{align*}

Finally, for~\eqref{item:rootSetWeightSetFromBottomAndTopLatticeCongruence}, we have already~$\rootSet(\Gamma) \subseteq \rootSet(xW_I) \cap \rootSet(yW_J)$. For the other inclusion, we have
\begin{align*}
& \rootSet(xW_I) \cap \rootSet(yW_J) \cap \Phi^+ \subseteq \rootSet(xW_I) \cap \Phi^+ = \rootSet(\Gamma) \cap \Phi^+ \subseteq \rootSet(\Gamma), \\
\text{and}\qquad
& \rootSet(xW_I) \cap \rootSet(yW_J) \cap \Phi^- \subseteq \rootSet(yW_J) \cap \Phi^- = \rootSet(\Gamma) \cap \Phi^- \subseteq \rootSet(\Gamma).
\end{align*}
The equality on weights then follows by polarity.
\end{proof}

The following theorem is an analogue of Theorem~\ref{thm:facialWeakOrderCharacterizations}. It provides characterizations of the quotient lattice of the facial weak order by~$\Equiv$ in terms of root inversion sets of the congruence classes, and of comparisons of the minimal and maximal elements in the congruence classes.

\begin{theorem}
\label{thm:latticeQuotientCharacterizations}
The following assertions are equivalent for two congruence classes $\Gamma = [xW_I,yW_J]$ and~$\Gamma' = [x'W_{I'}, y'W_{J'}]$ of~$\Equiv$:
\begin{enumerate}[(i)]
\item
\label{item:smallerLatticeQuotient}
$\Gamma \le \Gamma'$ in the quotient of the facial weak order by~$\Equiv$,

\item
\label{item:latticeQuotientCharacterizationComparexWIx'WI'}
$xW_I \le x'W_{I'}$,

\item
\label{item:latticeQuotientCharacterizationCompareyWJy'WJ'}
$yW_J \le y'W_{J'}$,

\item
\label{item:latticeQuotientCharacterizationComparexWIy'WJ'}
$xW_I \le y'W_{J'}$,

\item
\label{item:latticeQuotientCharacterizationCompareMinMax}
$x \le y'$ and~$x\wo{I} \le y'\wo{J'}$,

\item
\label{item:latticeQuotientCharacterizationRoots}
$\rootSet(\Gamma) \ssm \rootSet(\Gamma') \subseteq \Phi^-$ and~$\rootSet(\Gamma') \ssm \rootSet(\Gamma) \subseteq \Phi^+$,

\item
\label{item:latticeQuotientCharacterizationRootsExtend}
$\rootSet(\Gamma) \cap \Phi^+ \, \subseteq \, \rootSet(\Gamma') \cap \Phi^+$ and~$\rootSet(\Gamma) \cap \Phi^- \, \supseteq \, \rootSet(\Gamma') \cap \Phi^-$.

\end{enumerate}
\end{theorem}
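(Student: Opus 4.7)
The plan is to establish a cycle of implications exploiting the fact that many of these conditions are already known to be equivalent when applied to a single parabolic coset (Theorem~\ref{thm:facialWeakOrderCharacterizations}), together with the new input that the root inversion set of a class~$\Gamma=[xW_I,yW_J]$ splits cleanly into the positive part of~$xW_I$ and the negative part of~$yW_J$ (Proposition~\ref{prop:propertiesRootSetCongruenceClasses}\,\eqref{item:rootSetCongruenceClass}).

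First I would dispense with the easy equivalences. The equivalence \eqref{item:latticeQuotientCharacterizationRoots}$\iff$\eqref{item:latticeQuotientCharacterizationRootsExtend} is purely set-theoretic, exactly as in Remark~\ref{rem:equivalentRootSetCharacterization}, using that $\Phi=\Phi^+\sqcup\Phi^-$. The equivalence \eqref{item:latticeQuotientCharacterizationComparexWIy'WJ'}$\iff$\eqref{item:latticeQuotientCharacterizationCompareMinMax} is a direct application of Theorem~\ref{thm:facialWeakOrderCharacterizations}\,\eqref{item:facialWeakOrderCharacterizationCompareMinMax} to the cosets~$xW_I$ and~$y'W_{J'}$.

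Next I would handle \eqref{item:smallerLatticeQuotient}, \eqref{item:latticeQuotientCharacterizationComparexWIx'WI'}, \eqref{item:latticeQuotientCharacterizationCompareyWJy'WJ'}, and \eqref{item:latticeQuotientCharacterizationComparexWIy'WJ'} simultaneously. By Definition~\ref{def:latticeCongruences}, $\Gamma \le \Gamma'$ in the quotient means that some representative of~$\Gamma$ is below some representative of~$\Gamma'$; since $xW_I$ is the minimum of~$\Gamma$ and $y'W_{J'}$ is the maximum of~$\Gamma'$, such representatives exist iff $xW_I \le y'W_{J'}$, giving \eqref{item:smallerLatticeQuotient}$\iff$\eqref{item:latticeQuotientCharacterizationComparexWIy'WJ'}. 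The implications \eqref{item:latticeQuotientCharacterizationComparexWIx'WI'}$\implies$\eqref{item:latticeQuotientCharacterizationComparexWIy'WJ'} and \eqref{item:latticeQuotientCharacterizationCompareyWJy'WJ'}$\implies$\eqref{item:latticeQuotientCharacterizationComparexWIy'WJ'} are immediate by transitivity through $x'W_{I'}\le y'W_{J'}$ or $xW_I\le yW_J$. Conversely, applying the order-preserving maps $\ProjDown$ and~$\ProjUp$ from Theorem~\ref{theo:latticeCongruenceFacialWeakOrder} to~\eqref{item:latticeQuotientCharacterizationComparexWIy'WJ'} yields $\ProjDown(xW_I)\le\ProjDown(y'W_{J'})$ and $\ProjUp(xW_I)\le\ProjUp(y'W_{J'})$, which read exactly as \eqref{item:latticeQuotientCharacterizationComparexWIx'WI'} and~\eqref{item:latticeQuotientCharacterizationCompareyWJy'WJ'}.

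Finally I would close the loop by showing \eqref{item:latticeQuotientCharacterizationComparexWIy'WJ'}$\iff$\eqref{item:latticeQuotientCharacterizationRootsExtend}. For \eqref{item:latticeQuotientCharacterizationComparexWIy'WJ'}$\implies$\eqref{item:latticeQuotientCharacterizationRootsExtend}, combine \eqref{item:latticeQuotientCharacterizationComparexWIx'WI'} and~\eqref{item:latticeQuotientCharacterizationCompareyWJy'WJ'} (already equivalent to \eqref{item:latticeQuotientCharacterizationComparexWIy'WJ'}) with Theorem~\ref{thm:facialWeakOrderCharacterizations} in the form of Remark~\ref{rem:equivalentRootSetCharacterization}: this gives $\rootSet(xW_I)\cap\Phi^+\subseteq\rootSet(x'W_{I'})\cap\Phi^+$ and $\rootSet(yW_J)\cap\Phi^-\supseteq\rootSet(y'W_{J'})\cap\Phi^-$, and these translate into \eqref{item:latticeQuotientCharacterizationRootsExtend} via Proposition~\ref{prop:propertiesRootSetCongruenceClasses}\,\eqref{item:rootSetCongruenceClass}. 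For the converse, the same proposition lets me rewrite \eqref{item:latticeQuotientCharacterizationRootsExtend} as $\rootSet(xW_I)\cap\Phi^+\subseteq\rootSet(x'W_{I'})\cap\Phi^+$ and $\rootSet(yW_J)\cap\Phi^-\supseteq\rootSet(y'W_{J'})\cap\Phi^-$; then chaining through the intraclass inequalities $\rootSet(x'W_{I'})\cap\Phi^+\subseteq\rootSet(y'W_{J'})\cap\Phi^+$ and $\rootSet(xW_I)\cap\Phi^-\supseteq\rootSet(yW_J)\cap\Phi^-$ (consequences of $xW_I\le yW_J$ and $x'W_{I'}\le y'W_{J'}$ inside each class) yields exactly the pair of inclusions needed to invoke Remark~\ref{rem:equivalentRootSetCharacterization} and conclude $xW_I\le y'W_{J'}$.

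The main subtlety, and the step I expect to require the most care, is the last one: condition \eqref{item:latticeQuotientCharacterizationRootsExtend} only controls $\rootSet(\Gamma)\cap\Phi^+$ versus $\rootSet(\Gamma')\cap\Phi^+$ and the negative parts, but by Proposition~\ref{prop:propertiesRootSetCongruenceClasses}\,\eqref{item:rootSetCongruenceClass} these correspond to different endpoints of each class ($xW_I$ and $y'W_{J'}$ on one side, $yW_J$ and $x'W_{I'}$ on the other), so the argument must carefully use the internal inequalities $xW_I\le yW_J$ and $x'W_{I'}\le y'W_{J'}$ to bridge between them before invoking the root-inversion characterization of the facial weak order.
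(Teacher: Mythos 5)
Your proposal is correct and follows essentially the same route as the paper's proof: the cycle among (i)--(v) via transitivity inside the interval classes and the order-preserving projections $\ProjDown$, $\ProjUp$, and the equivalence with (vi)--(vii) via Proposition~\ref{prop:propertiesRootSetCongruenceClasses}\,\eqref{item:rootSetCongruenceClass}, Remark~\ref{rem:equivalentRootSetCharacterization}, and the intraclass inclusions coming from $xW_I\le yW_J$ and $x'W_{I'}\le y'W_{J'}$. The only cosmetic difference is that you derive (ii) and (iii) by applying the projections to $xW_I\le y'W_{J'}$ rather than to arbitrary representatives witnessing (i), which is an equally valid bookkeeping choice.
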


\begin{proof}
By definition, we have~$\Gamma \le \Gamma'$ in the quotient lattice if and only if there exists~${zW_K \in \Gamma}$ and~$z'W_{K'} \in \Gamma'$ such that~$zW_K \le z'W_{K'}$. Therefore, any of Conditions~\eqref{item:latticeQuotientCharacterizationComparexWIx'WI'}, \eqref{item:latticeQuotientCharacterizationCompareyWJy'WJ'}, and \eqref{item:latticeQuotientCharacterizationComparexWIy'WJ'} implies \eqref{item:smallerLatticeQuotient}. Reciprocally, since~$\ProjDown(zW_K) = xW_I$ and $\ProjDown(z'W_{K'}) = x'W_{I'}$, and~$\ProjDown$ is order preserving, we get that~\eqref{item:smallerLatticeQuotient} implies~\eqref{item:latticeQuotientCharacterizationComparexWIx'WI'}. Similarly, since~${\ProjUp(zW_K) = yW_J}$, ${\ProjUp(z'W_{K'}) = y'W_{J'}}$, and~$\ProjUp$ is order preserving, we get that~\eqref{item:smallerLatticeQuotient} implies~\eqref{item:latticeQuotientCharacterizationCompareyWJy'WJ'}. Since~$xW_I \le yW_J$ and~$x'W_{I'} \le y'W_{J'}$, either of~\eqref{item:latticeQuotientCharacterizationComparexWIx'WI'} and \eqref{item:latticeQuotientCharacterizationCompareyWJy'WJ'} implies~\eqref{item:latticeQuotientCharacterizationComparexWIy'WJ'}. Moreover~\eqref{item:latticeQuotientCharacterizationComparexWIy'WJ'}$\iff$\eqref{item:latticeQuotientCharacterizationCompareMinMax} by Theorem~\ref{thm:facialWeakOrderCharacterizations}. We thus already obtained that~\eqref{item:smallerLatticeQuotient}$\iff$\eqref{item:latticeQuotientCharacterizationComparexWIx'WI'}$\iff$\eqref{item:latticeQuotientCharacterizationCompareyWJy'WJ'}$\iff$\eqref{item:latticeQuotientCharacterizationComparexWIy'WJ'}$\iff$\eqref{item:latticeQuotientCharacterizationCompareMinMax}.

We now prove that~\eqref{item:smallerLatticeQuotient}$\iff$\eqref{item:latticeQuotientCharacterizationRootsExtend}. Assume first that~$\Gamma \le \Gamma'$. Since~\eqref{item:smallerLatticeQuotient} implies~\eqref{item:latticeQuotientCharacterizationComparexWIx'WI'} and~\eqref{item:latticeQuotientCharacterizationCompareyWJy'WJ'}, we have~${xW_I \le x'W_{I'}}$ and ${yW_J \le y'W_{J'}}$. By Remark~\ref{rem:equivalentRootSetCharacterization} and Proposition~\ref{prop:propertiesRootSetCongruenceClasses}\,\eqref{item:rootSetCongruenceClass}, we obtain
\begin{align*}
& \rootSet(\Gamma) \cap \Phi^+ = \rootSet(xW_I) \cap \Phi^+ \subseteq \rootSet(x'W_{I'}) \cap \Phi^+ = \rootSet(\Gamma') \cap \Phi^+, \\
\text{and}\qquad
& \rootSet(\Gamma) \cap \Phi^- = \rootSet(yW_J) \cap \Phi^- \supseteq \rootSet(y'W_{J'}) \cap \Phi^- = \rootSet(\Gamma') \cap \Phi^-.
\end{align*}
Reciprocally, assume that~\eqref{item:latticeQuotientCharacterizationRootsExtend} holds. By Proposition~\ref{prop:propertiesRootSetCongruenceClasses}\,\eqref{item:rootSetCongruenceClass}, we have
\[
\rootSet(xW_I) \cap \Phi^+ \, \subseteq \, \rootSet(x'W_{I'}) \cap \Phi^+
\quad\text{and}\quad
\rootSet(yW_J) \cap \Phi^- \, \supseteq \, \rootSet(y'W_{J'}) \cap \Phi^-.
\]
Since~$xW_I \le yW_J$ and~$x'W_{I'} \le y'W_{J'}$, we obtain by Remark~\ref{rem:equivalentRootSetCharacterization} that
\begin{align*}
& \rootSet(xW_I) \cap \Phi^+ \, \subseteq \, \rootSet(x'W_{I'}) \cap \Phi^+ \, \subseteq \, \rootSet(y'W_{J'}) \cap \Phi^+ \\
\text{and}\qquad
& \rootSet(xW_I) \cap \Phi^- \, \subseteq \, \rootSet(yW_J) \cap \Phi^- \, \supseteq \, \rootSet(y'W_{J'}) \cap \Phi^-.
\end{align*}
Again by Remark~\ref{rem:equivalentRootSetCharacterization}, we obtain that~$xW_I \le y'W_{J'}$, and thus that~$\Gamma \le \Gamma'$ since~\eqref{item:latticeQuotientCharacterizationComparexWIy'WJ'} implies~\eqref{item:smallerLatticeQuotient}. This proves that~\eqref{item:smallerLatticeQuotient}$\iff$\eqref{item:latticeQuotientCharacterizationRootsExtend}.

This concludes the proof as the equivalence \eqref{item:latticeQuotientCharacterizationRootsExtend}$\iff$\eqref{item:latticeQuotientCharacterizationRoots} is immediate.
\end{proof}


\subsection{Congruences and fans}
\label{subsec:congruenceFans}

Consider a lattice congruence~$\equiv$ of the weak order and the corresponding congruence~$\Equiv$ of the facial weak order. N.~Reading proved in~\cite{Reading-HopfAlgebras} that~$\equiv$ naturally defines a complete simplicial fan which coarsens the Coxeter fan. Namely, for each congruence class~$\gamma$ of~$\equiv$, consider the cone~$C_\gamma$ obtained by glueing the maximal chambers~$\cone(x(\nabla))$ of the Coxeter fan corresponding to the elements~$x$ in~$\gamma$. It turns out that each of these cones~$C_\gamma$ is convex and that the collection of cones~$\set{C_\gamma}{\gamma \in W/{\equiv}}$, together with all their faces, form a complete simplicial fan which we denote by~$\Fan_\equiv$.

We now use the congruence~$\Equiv$ of the facial weak order to describe all cones of~$\Fan_\equiv$ (not only the maximal ones). This shows that the lattice structure on the maximal faces of~$\Fan_\equiv$ extends to a lattice structure on all faces of the fan~$\Fan_\equiv$. Our description relies on the weight inversion sets defined in the previous section.

\begin{proposition}
\label{prop:maximalConesCongruence}
For a congruence class~$\gamma$ of~$\equiv$ and the corresponding congruence class~$\Gamma$ of~$\Equiv$ such that~$\gamma = W \cap \Gamma$ (see Corollary~\ref{coro:restrictCongruence}), we have
\[
C_\gamma = \bigcup_{x \in \gamma} \cone(x(\nabla)) = \cone(\weightSet(\Gamma)).
\]
\end{proposition}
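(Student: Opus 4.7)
The plan is to prove the second equality by double inclusion, with the key ingredient being a correspondence between elements of a coset in $\Gamma$ and elements of $\gamma$. The first equality is the very definition of $C_\gamma$ from N.~Reading's construction in \cite{Reading-HopfAlgebras}.

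The crucial preliminary observation is that every parabolic coset $zW_K \in \Gamma$ is entirely contained in $\gamma$ when viewed as a subset of $W$. Indeed, by Proposition~\ref{prop:characterizationEquiv} applied to the trivially true equivalence $zW_K \Equiv zW_K$, we get $z \equiv z\wo{K}$; since $\equiv$ is an order congruence its classes are intervals, and so any $y \in zW_K$ (which satisfies $z \le y \le z\wo{K}$) verifies $y \equiv z$ and thus belongs to $\gamma$.

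For the inclusion $\bigcup_{x \in \gamma} \cone(x(\nabla)) \subseteq \cone(\weightSet(\Gamma))$, I would note that for any $x \in \gamma$, the coset $xW_\varnothing$ lies in $\Gamma$ (again by Proposition~\ref{prop:characterizationEquiv}), so $x(\nabla) = \weightSet(xW_\varnothing) \subseteq \weightSet(\Gamma)$ and hence $\cone(x(\nabla)) \subseteq \cone(\weightSet(\Gamma))$; taking the union over~$x \in \gamma$ yields the desired inclusion.

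For the reverse inclusion $\cone(\weightSet(\Gamma)) \subseteq C_\gamma$, I would expand $\weightSet(\Gamma) = \bigcup_{zW_K \in \Gamma} z(\nabla_{S \ssm K})$ and check that each generator $z(\omega_s)$ (for some $zW_K \in \Gamma$ and $s \in S \ssm K$) already lies in $C_\gamma$: by the preliminary observation $z \in \gamma$, hence $z(\omega_s) \in z(\nabla) \subseteq \cone(z(\nabla)) \subseteq C_\gamma$. Thus every generator of $\cone(\weightSet(\Gamma))$ sits in $C_\gamma$, and the inclusion then follows provided $C_\gamma$ is convex. This is precisely the main nontrivial ingredient from \cite{Reading-HopfAlgebras}, where it is shown that for any lattice congruence of the weak order the glued cones $C_\gamma$ are convex (and in fact constitute the maximal cones of a complete simplicial fan $\Fan_\equiv$); once convexity of $C_\gamma$ is invoked, $\cone(\weightSet(\Gamma)) \subseteq C_\gamma$ is immediate. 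The main obstacle is really this convexity input: without it one only gets that the generators lie inside $C_\gamma$, not that the cone they span does.
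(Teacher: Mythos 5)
Your overall route is the same as the paper's: the paper proves this proposition as a one-line chain of equalities $C_\gamma = \bigcup_{x \in \gamma} \cone(x(\nabla)) = \bigcup_{x \in W \cap \Gamma} \cone(\weightSet(x)) = \bigcup_{xW_I \in \Gamma} \cone(\weightSet(xW_I)) = \cone(\weightSet(\Gamma))$, which implicitly uses both the fact that every coset of $\Gamma$ is contained in $\gamma$ and the convexity of $C_\gamma$ from~\cite{Reading-HopfAlgebras}; you make both ingredients explicit, which is fine. However, the justification of your preliminary observation is broken, and that observation is exactly what carries the reverse inclusion. Proposition~\ref{prop:characterizationEquiv} applied to the tautology $zW_K \Equiv zW_K$ yields only $z \equiv z$ and $z\wo{K} \equiv z\wo{K}$; it does not yield $z \equiv z\wo{K}$. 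Indeed, the unqualified claim that $z \equiv z\wo{K}$ for every coset $zW_K$ is false: for the descent congruence in type~$A_2$, the coset $W_s = \{e,s\}$ has $\LdescentSet(e) = \varnothing \ne \{s\} = \LdescentSet(s)$, so its two elements are not congruent. (Consistently with the proposition's hypothesis, the $\Equivdes$-class of $W_s$ is the singleton $\{W_s\}$, whose intersection with $W$ is empty, so it does not arise as a $\Gamma$ with $\gamma = W \cap \Gamma$ for a class $\gamma$ of $\equivdes$.)

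The observation you need is true precisely because $\Gamma$ is assumed to correspond to a class $\gamma$ of $\equiv$, that is, $\gamma = W \cap \Gamma \ne \varnothing$. Fix any $x \in \gamma$, so that $xW_\varnothing \in \Gamma$. For any $zW_K \in \Gamma$, Proposition~\ref{prop:characterizationEquiv} applied to $zW_K \Equiv xW_\varnothing$ gives $z \equiv x$ and $z\wo{K} \equiv x\wo{\varnothing} = x$, hence $z \equiv z\wo{K}$ with both in $\gamma$; since the classes of $\equiv$ are intervals of the weak order and $z \le w \le z\wo{K}$ for every $w \in zW_K$, the whole coset lies in $\gamma$. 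With this repair the rest of your argument goes through: the forward inclusion is immediate as you wrote it, and the reverse inclusion follows from the convexity of $C_\gamma$ established in~\cite{Reading-HopfAlgebras}, which is indeed the one nontrivial external input (and is equally needed, if silently, in the paper's last equality).
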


\begin{proof}
We have
\[
C_\gamma = \bigcup_{x \in \gamma} \cone(x(\nabla)) = \bigcup_{x \in W \cap \Gamma} \cone(\weightSet(x)) = \bigcup_{xW_I \in \Gamma} \cone(\weightSet(xW_I)) = \cone(\weightSet(\Gamma)).
\]
\end{proof}

\begin{theorem}
\label{theo:allFacesCongruenceFan}
The collection of cones~$\set{\cone(\weightSet(\Gamma))}{\Gamma \in \CoxeterComplex{W}/{\Equiv}}$ forms the complete simplicial fan~$\Fan_\equiv$.
\end{theorem}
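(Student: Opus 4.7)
The plan is to show that the collection in the statement coincides with the face set of Reading's fan $\Fan_\equiv$ from~\cite{Reading-HopfAlgebras}. Since Proposition~\ref{prop:maximalConesCongruence} already establishes the equality for maximal cones, what remains is to handle faces of all dimensions and to verify injectivity of the correspondence.

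The first step would be to establish the geometric reformulation
\[
\cone(\weightSet(\Gamma)) = \bigcup_{zW_K \in \Gamma} \cone(\weightSet(zW_K)),
\]
so that the cone attached to a class $\Gamma$ is exactly the union of the outer normal cones of its cosets. The inclusion $\supseteq$ is immediate from the definition of $\weightSet(\Gamma)$; for $\subseteq$, I would use Proposition~\ref{prop:propertiesRootSetCongruenceClasses}(iii) to reduce to the interval endpoints $\Gamma = [xW_I, yW_J]$, and then argue by induction on the length of a maximal chain in $\Gamma$, using the fact that the cover relations of Definition~\ref{def:facialWeakOrder} produce adjacent cosets whose normal cones are geometrically adjacent inside $\cone(\weightSet(\Gamma))$.

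Next I would verify the three axioms of a fan for the collection. \emph{Completeness} is immediate since the maximal cones already cover $V$ by Proposition~\ref{prop:maximalConesCongruence}. \emph{Face-closure} requires that each face of a cone $\cone(\weightSet(\Gamma))$ is again of the form $\cone(\weightSet(\Gamma'))$ for some class $\Gamma' \le \Gamma$ in the quotient lattice; I would obtain this by observing that the cosets in $\Gamma$ whose outer normal cone lies in a given face of $\cone(\weightSet(\Gamma))$ themselves form an interval that is an $\Equiv$-class, using the characterizations of the facial weak order and its quotient in Theorems~\ref{thm:facialWeakOrderCharacterizations} and~\ref{thm:latticeQuotientCharacterizations} together with the compatibility of $\Equiv$ with both types of cover relations in Definition~\ref{def:facialWeakOrder}. \emph{Intersection} then follows from the lattice structure on $\CoxeterComplex{W}/\Equiv$: the intersection $\cone(\weightSet(\Gamma)) \cap \cone(\weightSet(\Gamma'))$ equals $\cone(\weightSet(\Gamma \meet \Gamma'))$, which is already in the collection. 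Injectivity of $\Gamma \mapsto \cone(\weightSet(\Gamma))$ follows from Proposition~\ref{prop:propertiesRootSetCongruenceClasses}(i) combined with Theorem~\ref{thm:latticeQuotientCharacterizations}(vi): distinct classes have distinct root inversion sets, hence by polarity distinct weight cones.

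The main obstacle will be the face-closure axiom, where the combinatorial lattice structure of $\CoxeterComplex{W}/\Equiv$ must be matched with the polyhedral face structure of the cones $\cone(\weightSet(\Gamma))$. The key leverage is the simpliciality of $\Fan_\equiv$ from~\cite{Reading-HopfAlgebras}, which forces each maximal cone $C_\gamma$ to be simplicial so that its faces are in bijection with subsets of its rays; I expect to match these subsets with $\Equiv$-classes $\Gamma' \le \Gamma$ by combining the interval description of classes with the constructions of $\SUp$ and $\SDown$ from Section~\ref{subsec:congruencesWeakOrder}, which precisely control how normal cones of cosets are ``translated along'' congruence classes.
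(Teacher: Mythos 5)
Your plan diverges from the paper's proof in a way that introduces real errors. The paper does not verify the fan axioms for the collection $\cC=\set{\cone(\weightSet(\Gamma))}{\Gamma \in \CoxeterComplex{W}/{\Equiv}}$ at all: it observes that the relative interiors of the cones of $\cC$ partition $V$ (because $\Equiv$ partitions $\CoxeterComplex{W}$ and the Coxeter fan is complete), that the relative interiors of the cones of $\Fan_\equiv$ also partition $V$ (because $\Fan_\equiv$ is already known to be a complete simplicial fan), and that the maximal cones agree by Proposition~\ref{prop:maximalConesCongruence}; it then only has to show that each lower-dimensional cone $F$ of $\Fan_\equiv$ lies in $\cC$. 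This is done by taking the minimal and maximal chambers $C=\cone(\weightSet(\Gamma))$ and $C'=\cone(\weightSet(\Gamma'))$ of $\Fan_\equiv$ containing $F$, noting that the Coxeter cones in the relative interior of $F$ are exactly the $\cone(\weightSet(xW_I))$ with $x\in\Gamma$ and $x\wo{I}\in\Gamma'$, and invoking Proposition~\ref{prop:characterizationEquiv} to see that these cosets form a single class $\Omega$ with $F=\cone(\weightSet(\Omega))$. No face-closure or intersection axiom is ever checked directly.

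Two of your steps would fail as stated. First, the intersection claim $\cone(\weightSet(\Gamma)) \cap \cone(\weightSet(\Gamma')) = \cone(\weightSet(\Gamma \meet \Gamma'))$ is false: the intersection of two cones of a fan is their common face, which is unrelated to the lattice meet of the classes. Already for the trivial congruence in type $A_2$, the chambers $\cone(\weightSet(s))$ and $\cone(\weightSet(t))$ intersect only in the origin, while $sW_\varnothing \meet tW_\varnothing = eW_\varnothing$ has the full-dimensional fundamental chamber as its weight cone. Second, your face-closure argument looks at ``the cosets in $\Gamma$ whose outer normal cone lies in a given face of $\cone(\weightSet(\Gamma))$'', but the relative interiors of the normal cones of the cosets of $\Gamma$ all lie in the relative interior of $\cone(\weightSet(\Gamma))$, so no coset of $\Gamma$ has its normal cone inside a proper face; the cosets covering a proper face belong to \emph{other} congruence classes (e.g., for the class $\{t,tW_s,ts\}$ of the $st$-Cambrian congruence in type $A_2$, the ray $\cone(\weightSet(tW_s))$ is interior to the union of the two chambers, and the two boundary rays are the cones of the distinct classes $\{W_t\}$ and $\{tsW_t\}$). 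Identifying which classes give the faces of $\cone(\weightSet(\Gamma))$ is exactly the content that the paper extracts from Proposition~\ref{prop:characterizationEquiv} together with the known completeness of $\Fan_\equiv$; without that identification your verification of the fan axioms does not go through.
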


\begin{proof}
Denote by~$\cC$ the collection of cones~$\set{\cone(\weightSet(\Gamma))}{\Gamma \in \CoxeterComplex{W}/{\Equiv}}$. The relative interiors of the cones of~$\cC$ form a partition of the ambiant space~$V$, since~$\Equiv$ is a congruence of the Coxeter complex~$\CoxeterComplex{W}$. Similarly, the relative interiors of the cones of~$\Fan_\equiv$ form a partition of the ambiant space~$V$ since we already know that~$\Fan_\equiv$ is a complete simplicial fan~\cite{Reading-HopfAlgebras}. Therefore, we only have to prove that each cone of~$\Fan_\equiv$ is a cone of~$\cC$. First, Proposition~\ref{prop:maximalConesCongruence} ensures that the full-dimensional cones of~$\cC$ are precisely the full-dimensional cones of~$\Fan_\equiv$. Consider now another cone~$F$ of~$\Fan_\equiv$, and let~$C$ and~$C'$ be the minimal and maximal full-dimensional cones of~$\Fan_\equiv$ containing~$F$ (in the order given by~$\le/{\equiv}$). Since~$C$ and~$C'$ are full-dimensional cones of~$\Fan_\equiv$, there exist congruence classes~$\Gamma$ and~$\Gamma'$ of~$\Equiv$ such that~$C = \cone(\weightSet(\Gamma))$ and~$C' = \cone(\weightSet(\Gamma'))$. One easily checks that the Coxeter cones contained in the relative interior of~$F$ are precisely the cones~$\cone(\weightSet(xW_I))$ for the cosets~$xW_I$ such that~$x \in \Gamma$ while $x\wo{I} \in \Gamma'$. By Proposition~\ref{prop:characterizationEquiv}, these cosets form a congruence class~$\Omega$ of~$\Equiv$. It follows that~$F = \cone(\weightSet(\Omega)) \in \cC$, thus concluding the proof.
\end{proof}

\begin{corollary}
A coset~$xW_I$ is a facial $\Equiv$-singleton if an only if~$\cone(\weightSet(xW_I))$ is a cone of~$\Fan_\equiv$.
\end{corollary}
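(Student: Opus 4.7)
My plan is to treat the two directions separately, with the forward direction being essentially immediate from the definitions and the backward direction requiring a short dimension/partition argument combined with the injectivity of $\weightSet$ (Corollary~\ref{coro:rootSetWeightWetInjective}).

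For the forward direction, suppose $xW_I$ is a facial $\Equiv$-singleton, so its $\Equiv$-class is $\Gamma = \{xW_I\}$. Then $\weightSet(\Gamma) = \weightSet(xW_I)$ by definition, so $\cone(\weightSet(xW_I)) = \cone(\weightSet(\Gamma))$ is a cone of $\Fan_\equiv$ by Theorem~\ref{theo:allFacesCongruenceFan}. No additional work should be needed here.

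For the converse, suppose $\cone(\weightSet(xW_I))$ is a cone of $\Fan_\equiv$. Let $\Gamma$ be the $\Equiv$-class of $xW_I$; Theorem~\ref{theo:allFacesCongruenceFan} gives that $\cone(\weightSet(\Gamma))$ is also a cone of $\Fan_\equiv$. I would first argue that these two cones coincide: $\weightSet(xW_I) \subseteq \weightSet(\Gamma)$ gives $\cone(\weightSet(xW_I)) \subseteq \cone(\weightSet(\Gamma))$, and since the relative interiors of the cones of the fan~$\Fan_\equiv$ partition the ambient space~$V$, two cones of $\Fan_\equiv$ that share an interior point (equivalently, such that one is contained in the other while having the same dimension) must be equal. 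To conclude that $\Gamma = \{xW_I\}$, I would pick any $yW_J \in \Gamma$ and use Proposition~\ref{prop:rootSetWeightSetConesPerm}\,\eqref{item:outerNormalCone} together with the description in the proof of Theorem~\ref{theo:allFacesCongruenceFan}: the Coxeter cone $\cone(\weightSet(yW_J))$ meets the relative interior of $\cone(\weightSet(\Gamma)) = \cone(\weightSet(xW_I))$, so it must coincide with $\cone(\weightSet(xW_I))$. Injectivity of $\weightSet$ (Corollary~\ref{coro:rootSetWeightWetInjective}) then yields $yW_J = xW_I$, so $\Gamma = \{xW_I\}$ as desired.

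The only potentially delicate point is the dimension-matching step in the converse, that is, making precise the claim that a Coxeter cone contained in $\cone(\weightSet(\Gamma))$ and meeting its relative interior must be full-dimensional inside it and therefore equal to it. This is really a property of the refinement of the fan $\Fan_\equiv$ by the Coxeter fan already used in the proof of Theorem~\ref{theo:allFacesCongruenceFan}, so I expect the whole proof to be a short one-paragraph argument rather than a computation.
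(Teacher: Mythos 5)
Your proof is correct; the paper states this corollary without proof, as an immediate consequence of Theorem~\ref{theo:allFacesCongruenceFan}, and your argument supplies exactly the details that are left implicit (the forward direction being trivial, the converse needing the partition-of-relative-interiors argument plus injectivity of~$\weightSet$). The one point worth making fully explicit in the converse is that the containment $\cone(\weightSet(xW_I)) \subseteq \cone(\weightSet(\Gamma))$ alone would still be compatible with the first cone being a proper face of the second; what rules this out is the fact, established inside the proof of Theorem~\ref{theo:allFacesCongruenceFan}, that the relative interior of the Coxeter cone $\cone(\weightSet(xW_I))$ is contained in the relative interior of $\cone(\weightSet(\Gamma))$ for~$\Gamma$ the $\Equiv$-class of~$xW_I$, so the two cones of~$\Fan_\equiv$ genuinely share a relative-interior point and must coincide. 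You flag this as the delicate step and attribute it to the right place, so the argument is complete.
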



\subsection{Two examples: Facial boolean and Cambrian lattices}
\label{subsec:facialCambrianLatticesFacialCube}

To illustrate the results in this section, we revisit two relevant families of lattice congruences of the weak order, namely the descent congruence and the Cambrian congruences~\cite{Reading-CambrianLattices}.


\subsubsection{Facial boolean lattices}

\enlargethispage{-.5cm}
The \defn{descent congruence} is the congruence of the weak order defined by~$x \equivdes y$ if and only if~$\LdescentSet(x) = \LdescentSet(y)$. The corresponding up and down projections are given by~$\projDown(x) = \wo{\LdescentSet(x)}$ and~$\projUp(x) = \woo\wo{S \ssm \LdescentSet(x)}$. The quotient of the weak order by~$\equivdes$ is isomorphic to the boolean lattice on~$S$. The fan~$\Fan_\equivdes$ is given by the arrangement of the hyperplanes orthogonal to the simple roots of~$\Delta$. It is the normal fan of the parallelepiped~$\Para(W)$ generated by the simple roots~of~$\Delta$.

Denote by~$\Equivdes$ the facial weak order congruence induced by~$\equivdes$ as defined in Section~\ref{subsec:congruencesFacialWeakOrder}. According to Theorem~\ref{theo:allFacesCongruenceFan}, the $\Equivdes$ congruence classes correspond to all faces of the parallelepiped~$\Para(W)$.

\begin{figure}
\DeclareDocumentCommand{\rds}{ O{1.1cm} O{->} m m O{0}} {
	\def \radius {#1}
	\def \inputPoints{#3}
	\def \excludeRoots{#4}
	\def \style {#2}
	\def \initialRotation {#5}

	\pgfmathtruncatemacro{\points}{\inputPoints * 2}
	\pgfmathtruncatemacro{\inputPointsp}{\inputPoints + 1}
	\pgfmathsetmacro{\degrees}{360 / \points}
	
	\coordinate (0) at (0,0);
	
	\foreach \x in {1,...,\points}{%
		\pgfmathsetmacro{\location}{(\points+(\x-1))*\degrees + \initialRotation}
		\coordinate (\x) at (\location:\radius);
	}

	\ifthenelse{\equal{\excludeRoots}{}}{
		\foreach \x in {1, \inputPoints, \inputPointsp, \points}{%
			\draw[\style, ultra thick] (0) -- (\x);
		}
	}{
		\foreach \x in {1, \inputPoints, \inputPointsp, \points}{%
			\edef \showPoint {1};

			\foreach \y in \excludeRoots {
				\ifthenelse{\equal{\x}{\y}}{
					\xdef \showPoint {0};
				}{}
			}
			
			\ifthenelse{\equal{\showPoint}{1}}{
				\draw[->, ultra thick] (0) -- (\x);
			}{}
		}
	}  
}

\centerline{
	\begin{tikzpicture}
		[scale=2,
		aface/.style={color=blue},
		bface/.style={color=red},
		face/.style={color=red!50!blue}
		]
		\begin{scope}[shift={(-1.8,0)}]
    		\coordinate (e) at (0,0.42);
    		\coordinate (s) at (-1,1);
    		\coordinate (t) at (1,1);
    		\coordinate (st) at (-1,2);
    		\coordinate (ts) at (1,2);
    		\coordinate (sts) at (0,2.58);
    		%
    		\coordinate (Ws) at (-0.47,0.69);
    		\coordinate (Wt) at (0.47,0.69);
    		\coordinate (tWs) at (1,1.5);
    		\coordinate (sWt) at (-1,1.5);
    		\coordinate (stWs) at (-0.47,2.31);
    		\coordinate (tsWt) at (0.47,2.31);
    		%
    		\coordinate (W) at (0,1.5);
    		%
    		\draw[red, thick, double distance=5pt, cap=round] (e) -- (e);
    		\draw[red, thick, double distance=5pt, cap=round] (Ws) -- (Ws);
    		\draw[red, thick, double distance=5pt, cap=round] (Wt) -- (Wt);
    		\draw[red, thick, double distance=5pt, cap=round] (s) -- (st);
    		\draw[red, thick, double distance=5pt, cap=round] (t) -- (ts);
    		\draw[red, thick, double distance=5pt, cap=round] (stWs) -- (stWs);
    		\draw[red, thick, double distance=5pt, cap=round] (tsWt) -- (tsWt);		
    		\draw[red, thick, double distance=5pt, cap=round] (sts) -- (sts);
    		\draw[red, thick, double distance=5pt, cap=round] (W) -- (W);		
    		\draw (e) -- (s);
    		\draw (e) -- (t);
    		\draw (s) -- (st);
    		\draw (t) -- (ts);
    		\draw (st) -- (sts);
    		\draw (ts) -- (sts);
    		%
    		\draw (Ws) -- (W);
    		\draw (Wt) -- (W);
    		\draw (W) -- (stWs);
    		\draw (W) -- (tsWt);
    		%
    		\node[below] at (e) {$e$};
    		\node[below=1mm, left] at (s) {$s$};
    		\node[below=1mm, right] at (t) {$t$};
    		\node[above=1mm, left] at (st) {$st$};
    		\node[above=1mm, right] at (ts) {$ts$};
    		\node[above] at (sts) {$sts$};
    		\node[below=1.5mm, left] at (Ws) {$W_{s}$};
    		\node[below=1.5mm, right] at (Wt) {$W_{t}$};
    		\node[right] at (tWs) {$tW_{s}$};
    		\node[left] at (sWt) {$sW_{t}$};
    		\node[above=1.5mm, left] at (stWs) {$stW_{s}$};
    		\node[above=1.5mm, right] at (tsWt) {$tsW_{t}$};
    		\node[right=1mm] at (W) {$W$};
		\end{scope}
		\begin{scope}[shift={(1.8,0)}]	
    		\coordinate (st) at (0,0.42);
    		\coordinate (St) at (-1.9,1.5);
    		\coordinate (sT) at (1.9,1.5);
    		\coordinate (ST) at (0,2.58);
    		%
    		\coordinate (sSt) at (-.95,.96);
    		\coordinate (stT) at (.95,.96);
    		\coordinate (StT) at (-.95,2.04);
    		\coordinate (sST) at (.95,2.04);
    		%
    		\coordinate (sStT) at (0,1.5);
    		%
    		\draw (st) -- (St);
    		\draw (st) -- (sT);
    		\draw (St) -- (ST);
    		\draw (sT) -- (ST);
    		%
    		\draw (sSt) -- (sST);
    		\draw (stT) -- (StT);
    		%
    		\node[below] at (st) {$\{e\}$};
    		\node[right=2mm] at (St) {$\{s, sW_{t}, st\}$};
    		\node[left=2mm] at (sT) {$\{t, tW_{s}, ts\}$};
    		\node[above] at (ST) {$\{sts\}$};
    		\node[below=1.5mm, left] at (sSt) {$\{W_{s}\}$};
    		\node[below=1.5mm, right] at (stT) {$\{W_{t}\}$};
    		\node[above=1.5mm, left] at (StT) {$\{stW_{s}\}$};
    		\node[above=1.5mm, right] at (sST) {$\{tsW_{t}\}$};
    		\node[above=1.5mm] at (sStT) {$\{W\}$};
		\end{scope}
	\end{tikzpicture}
}
	\caption{The descent congruence classes of the standard parabolic cosets in type~$A_2$ (left) and the resulting quotient (right).}
	\label{fig:A2Cube}
\end{figure}

In the next few statements, we provide a direct criterion to test whether two cosets are $\Equivdes$-congruent. For this, we need to extend to all cosets the notion of descent sets.

\begin{definition}
Let the (left) \defn{root descent set} of a coset~$xW_I$ be the set of roots
\[
\rootDescentSet(xW_I) \eqdef \rootSet(xW_I) \cap \pm\Delta \; \subseteq \; \Phi.
\]
\end{definition}

\fref{fig:rootDescentSet2} illustrates the root descent sets in type~$A_2$ (left) and~$A_3$ (right). For the latter, we have just discarded the interior triangles in each root inversion set in \fref{fig:rootSet3}.

\begin{figure}
	\input{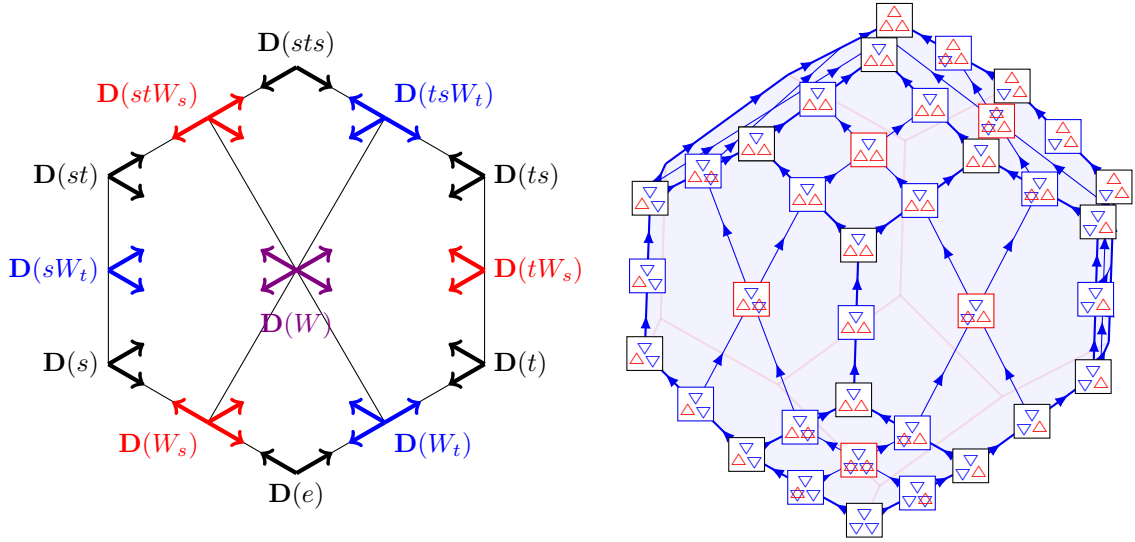}
	\caption{The root descent sets of the standard parabolic cosets in type~$A_2$ (left) and $A_3$ (right).}
	\label{fig:rootDescentSet2}
\end{figure}

Notice that the simple roots in the inversion set~$\inversionSet(x)$ precisely correspond to the descent set~$\LdescentSet(x)$:
\[
\Delta \cap \inversionSet(x) = \set{\alpha_s}{s \in \LdescentSet(x)} = \set{\alpha_s}{s \in S, \, \length(sx) < \length(x)}.
\]
Similar to Proposition~\ref{prop:rootSetSingleElement}, the next statement concerns the root descent set~$\rootDescentSet(xW_\varnothing)$ for~$x \in W$. For brevity we write~$\rootDescentSet(x)$ instead of~$\rootDescentSet(xW_\varnothing)$.

\begin{proposition}
\label{prop:rootDescentSetSingleElement}
For any~$x \in W$, the root descent set~$\rootDescentSet(x)$ has the following properties.
\begin{enumerate}[(i)]
\item
\label{item:rootDescentSetSingleElementFromInversionSet}
$\rootDescentSet(x) = \big( \Delta \cap \inversionSet(x) \big) \cup - (\Delta \ssm \inversionSet(x))$. In other words,
\[
\rootDescentSet(x) \cap \Phi^+ = \big( \Delta \cap \inversionSet(x) \big)
\qquad\text{and}\qquad
\rootDescentSet(x) \cap \Phi^- = - \big( \Delta \ssm \inversionSet(x) \big).
\]

\item
\label{item:rootDescentSetSingleElementwo}
$\rootDescentSet(x \woo) = -\rootDescentSet(x)$ and $\rootDescentSet(\woo x) = \woo \big( \rootDescentSet(x) \big)$.
\end{enumerate}
\end{proposition}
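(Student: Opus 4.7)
The plan is to deduce both parts directly from Proposition~\ref{prop:rootSetSingleElement} by intersecting with $\pm\Delta$; since $\rootDescentSet(x) = \rootSet(x) \cap \pm\Delta$ by definition, everything reduces to elementary set manipulations. No new geometric input is needed.

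For part~\eqref{item:rootDescentSetSingleElementFromInversionSet}, I would start from the identity ${\rootSet(x) = \inversionSet(x) \cup -(\Phi^+ \ssm \inversionSet(x))}$ of Proposition~\ref{prop:rootSetSingleElement}\,\eqref{item:rootSetSingleElementFromInversionSet} and intersect both sides with $\pm\Delta = \Delta \sqcup -\Delta$. Since $\inversionSet(x) \subseteq \Phi^+$ and $\Phi^+ \ssm \inversionSet(x) \subseteq \Phi^+$, the first summand contributes only to $\Delta$ and the second only to $-\Delta$, which immediately yields
\[
\rootDescentSet(x) \cap \Phi^+ = \Delta \cap \inversionSet(x)
\qquad \text{and} \qquad
\rootDescentSet(x) \cap \Phi^- = -\bigl(\Delta \ssm \inversionSet(x)\bigr),
\]
and the union of these two is the claimed formula.

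For part~\eqref{item:rootDescentSetSingleElementwo}, both equalities are obtained by combining the definition of $\rootDescentSet$ with Proposition~\ref{prop:rootSetSingleElement}\,\eqref{item:rootSetSingleElementwo}, using only that the set $\pm\Delta$ is stable under $-\mathrm{id}$ and under $\woo$ (the latter because $\woo(\Delta) = -\Delta$, so $\woo(\pm\Delta) = \pm\Delta$). Explicitly,
\[
\rootDescentSet(x\woo) = \rootSet(x\woo) \cap \pm\Delta = -\rootSet(x) \cap \pm\Delta = -\bigl(\rootSet(x) \cap \pm\Delta\bigr) = -\rootDescentSet(x),
\]
and similarly
\[
\rootDescentSet(\woo x) = \woo\bigl(\rootSet(x)\bigr) \cap \pm\Delta = \woo\bigl(\rootSet(x) \cap \woo(\pm\Delta)\bigr) = \woo\bigl(\rootDescentSet(x)\bigr).
\]

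There is no real obstacle here; the only point requiring a moment of care is to verify the stability $\woo(\pm\Delta) = \pm\Delta$, which is immediate from $\woo(\Phi^+) = \Phi^-$ and the fact that $\woo$ sends simple roots to negatives of simple roots. Everything else is a direct bookkeeping of the intersection with $\pm\Delta$.
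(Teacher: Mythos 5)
Your proof is correct and follows exactly the paper's approach: the paper's own proof is the one-line remark that the results follow from Proposition~\ref{prop:rootSetSingleElement} by intersecting with~$\pm\Delta$, and you have simply carried out that intersection explicitly (including the stability check $\woo(\pm\Delta)=\pm\Delta$, which is indeed the only point needing a word of justification).
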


\begin{proof}
The results follow immediately from Proposition~\ref{prop:rootSetSingleElement} by intersecting with~$\pm\Delta$ appropriately.
\end{proof}

As in Proposition~\ref{prop:rootSetWeightSetFromBottomAndTop} and Corollary~\ref{coro:rootSetWeightSetFromBottomAndTop}, the root descent set of a coset~$xW_I$ can be computed from that of its minimal and maximal length representatives~$x$ and~$x\wo{I}$.

\begin{proposition} 
\label{prop:rootDescentSetFromBottomAndTop}
The root and weight inversion sets of~$xW_I$ can be computed from those of~$x$ and~$x\wo{I}$ by
$\rootDescentSet(xW_I) = \rootDescentSet(x) \cup \rootDescentSet(x\wo{I})$. In other words, 
\[
\rootDescentSet(xW_I) \cap \Phi^- = \rootDescentSet(x) \cap \Phi^-
\qquad\text{and}\qquad
\rootDescentSet(xW_I) \cap \Phi^+ = \rootDescentSet(x\wo{I}) \cap \Phi^+.
\]
\end{proposition}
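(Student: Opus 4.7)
The plan is to derive this proposition directly from the analogous statements for the full root inversion set $\rootSet$, by intersecting with $\pm\Delta$. Specifically, the definition $\rootDescentSet(xW_I) \eqdef \rootSet(xW_I) \cap \pm\Delta$ means that any identity of the form $\rootSet(xW_I) = A \cup B$ automatically yields $\rootDescentSet(xW_I) = (A \cap \pm\Delta) \cup (B \cap \pm\Delta)$, and similarly for identities on the $\Phi^\pm$ parts.

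First I would apply Proposition~\ref{prop:rootSetWeightSetFromBottomAndTop}, which states that $\rootSet(xW_I) = \rootSet(x) \cup \rootSet(x\wo{I})$. Intersecting both sides with $\pm\Delta$ gives
\[
\rootDescentSet(xW_I) = \bigl(\rootSet(x) \cap \pm\Delta\bigr) \cup \bigl(\rootSet(x\wo{I}) \cap \pm\Delta\bigr) = \rootDescentSet(x) \cup \rootDescentSet(x\wo{I}),
\]
which is the first formula.

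Next I would derive the refined $\Phi^\pm$-statements from Corollary~\ref{coro:rootSetWeightSetFromBottomAndTop}. That corollary states $\rootSet(xW_I) \cap \Phi^- = \rootSet(x) \cap \Phi^-$ and $\rootSet(xW_I) \cap \Phi^+ = \rootSet(x\wo{I}) \cap \Phi^+$. Intersecting each of these equalities with $\pm\Delta$ yields exactly
\[
\rootDescentSet(xW_I) \cap \Phi^- = \rootDescentSet(x) \cap \Phi^- \qquad\text{and}\qquad \rootDescentSet(xW_I) \cap \Phi^+ = \rootDescentSet(x\wo{I}) \cap \Phi^+,
\]
since $\pm\Delta \cap \Phi^\pm = \pm\Delta$. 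There is no real obstacle here: the entire content of the proposition is contained in the analogous facts for $\rootSet$, which were already established; the only thing to check is that the set-theoretic operations of union and intersection commute with intersection against $\pm\Delta$, which is automatic.
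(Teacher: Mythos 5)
Your proposal is correct and is essentially the paper's own proof, which likewise deduces the statement from Proposition~\ref{prop:rootSetWeightSetFromBottomAndTop} and Corollary~\ref{coro:rootSetWeightSetFromBottomAndTop} by intersecting with~$\pm\Delta$. (The only nitpick is the line ``$\pm\Delta \cap \Phi^{\pm} = \pm\Delta$'', which should read $\pm\Delta\cap\Phi^+=\Delta$ and $\pm\Delta\cap\Phi^-=-\Delta$; the argument is unaffected.)
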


\begin{proof}
Follow immediately from Proposition~\ref{prop:rootSetWeightSetFromBottomAndTop} and Corollary~\ref{coro:rootSetWeightSetFromBottomAndTop} by intersecting with~$\pm\Delta$ appropriately.
\end{proof}

From the previous propositions, we obtain that the $\Equivdes$-equivalence class of~$xW_I$ is determined by the root descent set~$\rootDescentSet(xW_I)$.

\begin{proposition}
For any cosets~$xW_I, yW_J$, we have~$xW_I \Equivdes yW_J$ if and only if~$\rootDescentSet(xW_I) = \rootDescentSet(yW_J)$.
\end{proposition}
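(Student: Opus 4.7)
The plan is to chain together three facts already available in the excerpt. First, by Proposition~\ref{prop:characterizationEquiv}, the relation $xW_I \Equivdes yW_J$ is equivalent to the conjunction $x \equivdes y$ and $x\wo{I} \equivdes y\wo{J}$. Second, by definition of the descent congruence, $x \equivdes y$ means exactly $\LdescentSet(x) = \LdescentSet(y)$. Third, for a single element $w \in W$, the descent set $\LdescentSet(w)$ can be read off from either half of the root descent set via Proposition~\ref{prop:rootDescentSetSingleElement}, since
\[
\rootDescentSet(w) \cap \Phi^+ = \{\alpha_s \mid s \in \LdescentSet(w)\}
\qquad\text{and}\qquad
\rootDescentSet(w) \cap \Phi^- = -\{\alpha_s \mid s \notin \LdescentSet(w)\}.
\]
So the map $w \mapsto \rootDescentSet(w)$ is injective, and in fact $\rootDescentSet(w) \cap \Phi^-$ alone (or $\rootDescentSet(w) \cap \Phi^+$ alone) determines $\LdescentSet(w)$.

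For the forward implication, assume $xW_I \Equivdes yW_J$. Then $x \equivdes y$ and $x\wo{I} \equivdes y\wo{J}$, hence $\LdescentSet(x) = \LdescentSet(y)$ and $\LdescentSet(x\wo{I}) = \LdescentSet(y\wo{J})$, so $\rootDescentSet(x) = \rootDescentSet(y)$ and $\rootDescentSet(x\wo{I}) = \rootDescentSet(y\wo{J})$. Applying Proposition~\ref{prop:rootDescentSetFromBottomAndTop}, which expresses $\rootDescentSet(xW_I)$ as $\rootDescentSet(x) \cup \rootDescentSet(x\wo{I})$ (with $\rootDescentSet(xW_I) \cap \Phi^- = \rootDescentSet(x) \cap \Phi^-$ and $\rootDescentSet(xW_I) \cap \Phi^+ = \rootDescentSet(x\wo{I}) \cap \Phi^+$), we immediately conclude $\rootDescentSet(xW_I) = \rootDescentSet(yW_J)$.

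For the reverse implication, assume $\rootDescentSet(xW_I) = \rootDescentSet(yW_J)$. Intersecting with $\Phi^-$ and using Proposition~\ref{prop:rootDescentSetFromBottomAndTop} gives $\rootDescentSet(x) \cap \Phi^- = \rootDescentSet(y) \cap \Phi^-$, which by Proposition~\ref{prop:rootDescentSetSingleElement}\,\eqref{item:rootDescentSetSingleElementFromInversionSet} forces $\LdescentSet(x) = \LdescentSet(y)$, i.e.\ $x \equivdes y$. Symmetrically, intersecting with $\Phi^+$ yields $\rootDescentSet(x\wo{I}) \cap \Phi^+ = \rootDescentSet(y\wo{J}) \cap \Phi^+$ and thus $x\wo{I} \equivdes y\wo{J}$. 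One last appeal to Proposition~\ref{prop:characterizationEquiv} delivers $xW_I \Equivdes yW_J$.

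There is no real obstacle here: the result is a bookkeeping consequence of the two propositions on root descent sets combined with the characterization of~$\Equivdes$ via minimal and maximal representatives. The only point that deserves a brief remark is that both halves of $\rootDescentSet(w)$ separately encode $\LdescentSet(w)$ (one as the positive simple roots it contains, the other as the negatives of the positive simple roots it omits), which is exactly what makes the argument symmetric for the minimal representative $x$ and the maximal representative $x\wo{I}$.
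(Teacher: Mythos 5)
Your proof is correct and follows essentially the same route as the paper: it combines Proposition~\ref{prop:characterizationEquiv} (reducing the facial congruence to the $\equivdes$-classes of the minimal and maximal coset representatives) with Propositions~\ref{prop:rootDescentSetSingleElement}\,\eqref{item:rootDescentSetSingleElementFromInversionSet} and~\ref{prop:rootDescentSetFromBottomAndTop} to show that $\rootDescentSet(xW_I)$ and the pair $\bigl(\LdescentSet(x),\LdescentSet(x\wo{I})\bigr)$ determine each other. The paper states this more tersely, but your observation that each of the two halves $\rootDescentSet(xW_I)\cap\Phi^{\pm}$ separately recovers one of the two descent sets is exactly the content behind the paper's phrase ``determine each other.''
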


\begin{proof}
As observed in Proposition~\ref{prop:characterizationEquiv}, the $\Equivdes$-congruence class of~$xW_I$ only depends on the $\equivdes$-congruence class of~$x$ and~$x\wo{I}$, and thus on the descent sets~$\LdescentSet(x)$ and~$\LdescentSet(x\wo{I})$. By Propositions~\ref{prop:rootDescentSetSingleElement}\,\eqref{item:rootDescentSetSingleElementFromInversionSet} and~\ref{prop:rootDescentSetFromBottomAndTop}, the root descent set~$\rootDescentSet(xW_I)$ and the descent sets~$\LdescentSet(x)$ and~$\LdescentSet(x\wo{I})$ determine each other. We conclude that the $\Equivdes$-equivalence class of~$xW_I$ is determined by the root descent set~$\rootDescentSet(xW_I)$.
\end{proof}

Finally, we observe that the facial $\Equivdes$-singletons correspond to the bottom and top faces of the $W$-permutahedron.

\begin{proposition}
A coset~$xW_I$ is a facial $\Equivdes$-singleton if and only if~$x = e$ or~$x\wo{I} = \woo$.
\end{proposition}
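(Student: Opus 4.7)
The plan is to apply Proposition~\ref{prop:singletons}\,\eqref{item:characterizationSingletons}, which reduces the claim to showing that the two conditions $\projUp(x) = x$ and $\projDown(x\wo{I}) = x\wo{I}$ hold simultaneously if and only if $x = e$ or $x\wo{I} = \woo$. For the descent congruence, the formula $\projDown(y) = \wo{\LdescentSet(y)}$ shows that $\projDown(y) = y$ exactly when $y = \wo{J}$ is the longest element of the standard parabolic subgroup $W_J$ for $J = \LdescentSet(y)$; dually, $\projUp(x) = x$ holds precisely when $x$ is the maximum of its $\equivdes$-class.

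For the sufficient direction, if $x = e$ then $\projUp(e) = e$ because $e$ is the unique element of $W$ with empty left descent set, and $\projDown(\wo{I}) = \wo{I}$ since $\LdescentSet(\wo{I}) = I$. The case $x\wo{I} = \woo$ then follows from the $x = e$ case by invoking the anti-automorphism $\Psi : yW_K \mapsto \woo y \wo{K} W_K$ of Proposition~\ref{prop:antiautomorphisms}: using $\LdescentSet(\woo z) = S \ssm (\LdescentSet(z))^*$ (where $*$ denotes $\woo$-conjugation), the pair $(\LdescentSet(y), \LdescentSet(y\wo{K}))$ is sent by $\Psi$ to $(S \ssm (\LdescentSet(y\wo{K}))^*, S \ssm (\LdescentSet(y))^*)$, so $\Psi$ preserves $\Equivdes$; since $\Psi(eW_I) = \woo\wo{I} W_I$ satisfies the condition $x\wo{I} = \woo$, the image is a facial $\Equivdes$-singleton iff the preimage is.

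For the necessary direction, suppose $xW_I$ is a facial $\Equivdes$-singleton. The condition $\projDown(x\wo{I}) = x\wo{I}$ gives $x\wo{I} = \wo{J}$ for $J := \LdescentSet(x\wo{I})$; since $I \subseteq \RdescentSet(x\wo{I}) = J$, we have $I \subseteq J$, and $x = \wo{J}\wo{I}$ lies in $W_J$. Assume for contradiction that $I \subsetneq J \subsetneq S$. The main obstacle is then to exhibit a simple reflection $s \in S \ssm J$ with $x(\alpha_s) \neq \alpha_s$: for such $s$ one has $\ell(xs) = \ell(x) + 1$ (because any $w \in W_J$ preserves the $\alpha_s$-coefficient of $\alpha_s$, so $\RdescentSet(x) \subseteq J$), while $x(\alpha_s)$ has $\alpha_s$-coefficient equal to $1$ together with a nonzero component in $\Delta_J$, and so is not a simple root; this forces $\LdescentSet(xs) = \LdescentSet(x)$, contradicting the maximality of $x$ in its $\equivdes$-class. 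The existence of such an $s$ is established by analyzing the explicit action of $x = \wo{J}\wo{I}$ on $V_J = \vect(\Delta_J)$ together with the fact that some $\alpha_s$ with $s \in S \ssm J$ has a nonzero projection onto $V_J$ that is moved by $x$.
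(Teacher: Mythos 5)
Your reduction via Proposition~\ref{prop:singletons}\,\eqref{item:characterizationSingletons} is exactly the paper's starting point, and your sufficiency direction is complete: the direct check for $x=e$ is fine, and transporting it to the case $x\wo{I}=\woo$ through the anti-automorphism $\Psi$ (which does preserve $\Equivdes$, by the identity $\LdescentSet(\woo z)=S\ssm\LdescentSet(z)^*$) is a clean way to avoid recomputing $\projUp$.

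The necessity direction, however, has a genuine gap at its crux. Steps (a)--(c) are sound: if some $s\in S\ssm J$ satisfies $x(\alpha_s)\ne\alpha_s$, then $xs>x$, $x(\alpha_s)$ is a non-simple positive root, hence $\LdescentSet(xs)=\LdescentSet(x)$ and the maximality of $x$ in its $\equivdes$-class is violated. But the existence of such an $s$ is precisely the content of the proposition, and you only assert it (``established by analyzing the explicit action\dots together with the fact that some $\alpha_s$ has a nonzero projection onto $V_J$ that is moved by $x$''). That fact is not automatic: if $W$ is reducible it can simply fail (for $W=A_1\times A_1$ every $\equivdes$-class is a singleton, so $sW_\varnothing$ is a facial singleton even though $s\ne e$ and $s\ne\woo$ --- the proposition itself implicitly assumes $W$ irreducible), and even for irreducible $W$ one must rule out the scenario where the support of $x=\wo{J}\wo{I}$ is ``shielded'' from $S\ssm J$, which requires an argument using connectedness of the Coxeter graph (or of root supports). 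A cleaner way to close the argument, and essentially what the paper's ``the result follows'' amounts to, is to work with inversion sets: $\projDown(x\wo{I})=x\wo{I}$ gives $\inversionSet(x)\subseteq\inversionSet(\wo{J})=\Phi_J^+$, while $\projUp(x)=x$ forces $\inversionSet(x)=\Phi^+\ssm\Phi_K^+$ for some $K\subseteq S$ (the maximal biconvex set whose simple roots are $\Delta\cap\inversionSet(x)$); together these yield $\Phi^+=\Phi_J^+\cup\Phi_K^+$, and a positive root of full support then forces $J=S$ (i.e.\ $x\wo{I}=\woo$) or $K=S$ (i.e.\ $\inversionSet(x)=\varnothing$, $x=e$). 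As it stands, your proof is missing the key existence argument.
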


\begin{proof}
As already mentioned, the up and down projection maps of the descent congruence are given by~$\projDown(x) = \wo{\LdescentSet(x)}$ and~$\projUp(x) = \woo\wo{S \ssm \LdescentSet(x)}$. From Proposition~\ref{prop:singletons}, we therefore obtain that a coset~$xW_I$ is a singleton if and only if~$\woo\wo{S \ssm \LdescentSet(x)} = x$ and~$\wo{\LdescentSet(x\wo{I})} = x\wo{I}$. The result follows.
\end{proof}


\begin{example}
In type~$A$, the \defn{descent vector} of an ordered partition~$\lambda$ of~$[n]$ is the vector~$\des(\lambda) \in \{-1,0,1\}^{n-1}$ given by
\[
\des(\lambda)_i =
\begin{cases}
-1 & \text{if } \lambda^{-1}(i) < \lambda^{-1}(i+1), \\
0 & \text{if } \lambda^{-1}(i) = \lambda^{-1}(i+1), \\
1 & \text{if } \lambda^{-1}(i) > \lambda^{-1}(i+1). \\
\end{cases}
\]
These descent vectors where used by J.-C.~Novelli and J.-Y.~Thibon in~\cite{NovelliThibon-trialgebras} to see that the facial weak order on the cube is a lattice. See also~\cite{ChatelPilaud}.
\end{example}


\subsubsection{Facial Cambrian lattices}
Fix a \defn{Coxeter element}~$c$, \ie the product of all simple reflections in~$S$ in an arbitrary order. A simple reflection~$s \in S$ is \defn{initial} in~$c$ if~$\length(sc) < \length(c)$. For~$s$ initial in~$c$, note that $scs$ is another Coxeter element for~$W$ while $sc$ is a Coxeter element for~$W_{S \ssm \{s\}}$.

In~\cite{Reading-CambrianLattices, Reading-sortableElements}, N.~Reading defines the $c$-Cambrian lattice as a lattice quotient of the weak order (by the $c$-Cambrian congruence) or as a sublattice of the weak order (induced by $c$-sortable elements). There are several ways to present his constructions, we choose to start from the projection maps of the $c$-Cambrian congruence (as we did in the previous sections). These maps are defined by an induction both on the length of the elements and on the rank of the underlying Coxeter group. Namely, define the projection~$\projDown^c : W \to W$ inductively by~$\projDown^c(e) = e$ and for any~$s$ initial in~$c$,
\[
\projDown^{c}(w) =
\begin{cases}
s \cdot \projDown^{scs}(sw) & \text{if } \length(sw) < \length(w) \\
\projDown^{sc}(w_{\left<s\right>}) & \text{if } \length(sw) > \length(w),
\end{cases}
\]
where~$w = w_{\left<s\right>} \cdot {^{\left<s\right>}w}$ is the unique factorization of~$w$ such that~$w_{\left<s\right>} \in W_{S \ssm \{s\}}$ and ${\length(t{^{\left<s\right>}w}) > \length(^{\left<s\right>}w)}$ for all~$t \in S \ssm \{s\}$. The projection~$\projUp_c : W \to W$ can then be defined similarly, or by
\[
\projUp_{c}(w) = \big( \projDown^{(c^{-1})}(w\woo) \big) \woo.
\]
N.~Reading proves in~\cite{Reading-sortableElements} that these projection maps~$\projUp_c$ and~$\projDown^c$ satisfy the properties of Lemma~\ref{lem:conditionsProjectionMaps} and therefore define a congruence~$\equivc$ of the weak order called \defn{$c$-Cambrian congruence}. The quotient of the weak order by the $c$-Cambrian congruence is called the \defn{$c$-Cambrian lattice}. It was also defined as the smallest congruence contracting certain edges, see~\cite{Reading-CambrianLattices}.

Cambrian congruences are relevant in the context of finite type cluster algebras, generalized associahedra, and $W$-Catalan combinatorics. Without details, let us point out the following facts:
\begin{enumerate}[(i)]
\item The fan~$\Fan_{\equivc}$ associated to the $c$-Cambrian congruence~$\equivc$ is the \defn{Cambrian fan} studied by N.~Reading and D.~Speyer~\cite{ReadingSpeyer}. Is was proved to be the normal fan of a polytope by C.~Hohlweg, C.~Lange and H.~Thomas~\cite{HohlwegLangeThomas}. See also~\cite{Stella, PilaudStump-brickPolytope} for further geometric properties. The resulting polytopes are called \defn{generalized associahedra}.

\item These polytopes realize the \defn{$c$-cluster complexes} of type~$W$. When~$W$ is crystallographic, these complexes were defined from the theory of finite type \defn{cluster algebras} of S.~Fomin and A.~Zelevinsky~\cite{FominZelevinsky-ClusterAlgebrasI, FominZelevinsky-ClusterAlgebrasII}.

\item The minimal elements in the $c$-Cambrian congruence classes are precisely the \defn{$c$-sortable elements}, defined as the elements~$w \in W$ such that there exists nested subsets~$K_1 \supseteq K_2 \supseteq \dots \supseteq K_r$ of~$S$ such that~$w = c_{K_1} c_{K_2} \dots c_{K_r}$ where~$c_K$ is the product of the elements in~$K$ in the order given by~$c$. The maximal elements of the $c$-Cambrian congruence classes are the \defn{$c$-antisortable elements}, defined as the elements~$w \in W$ such that~$w\woo$ is $c^{-1}$-sortable. N.~Reading proved in~\cite{Reading-sortableElements} that the Cambrian lattice is in fact isomorphic to the sublattice of the weak order induced by $c$-sortable elements (or by $c$-antisortable elements). The $c$-sortable elements are connected to various $W$-Catalan families: $c$-clusters, vertices of the $c$-associahedron, $W$-non-crossing partitions. See~\cite{Reading-coxeterSortable} for precise definitions.
\end{enumerate}

The results presented in this paper translate to the following statement.

\begin{theorem}\label{thm:FacialCambrian}
For any Coxeter element~$c$ of~$W$, the \defn{facial $c$-Cambrian congruence}~$\Equivc$ on the Coxeter complex~$\CoxeterComplex{W}$, defined by
\[
xW_I \Equivc yW_J \iff x \equivc y \text{ and } x\wo{I} \equivc y\wo{J},
\]
has the following properties:
\begin{enumerate}[(i)]
\item
\label{item:restrictFacialCambrianCongruence}
The $c$-Cambrian congruence~$\equivc$ is the restriction of the facial $c$-Cambrian congruence~$\Equivc$ to~$W$.

\item
\label{item:geometryFacialCambrianCongruence}
The quotient of the facial weak order by the facial $c$-Cambrian congruence~$\Equivc$ defines a lattice structure on the cones of the $c$-Cambrian fan of~\cite{ReadingSpeyer}, or equivalently on the faces of the $c$-associahedron of~\cite{HohlwegLangeThomas}.

\item
\label{item:sortableSingletonsFacialCambrianCongruence}
A coset~$xW_I$ is minimal (resp.~maximal) in its facial $c$-congruence class if and only if~$x\wo{I}$ is $c$-sortable (resp.~$x$ is $c$-antisortable). In particular, a Coxeter cone~$\cone(\weightSet(xW_I))$ is a cone of the $c$-Cambrian fan if and only if~$x$ is $c$-antisortable and $x\wo{I}$ is $c$-sortable.
\end{enumerate}
\end{theorem}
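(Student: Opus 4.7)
The plan is to derive all three parts of Theorem~\ref{thm:FacialCambrian} directly from the general machinery developed in Section~\ref{sec:latticeQuotients}, specialized to the $c$-Cambrian congruence $\equivc$ with projection maps $\projUp_c$ and $\projDown^c$. Since $\Equivc$ is by construction exactly the facial congruence induced by $\equivc$ as in Section~\ref{subsec:congruencesFacialWeakOrder}, each claim will reduce to invoking a previously established general result and then translating it into the vocabulary of sortable elements and Cambrian fans.

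For part~\eqref{item:restrictFacialCambrianCongruence}, I will simply apply Corollary~\ref{coro:restrictCongruence}, which asserts that for any weak order congruence $\equiv$ and its induced facial congruence $\Equiv$, one has $x \equiv y \iff xW_\varnothing \Equiv yW_\varnothing$ for every $x,y \in W$. Specializing to $\equivc$ yields exactly the claim.

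For part~\eqref{item:geometryFacialCambrianCongruence}, I will combine two ingredients. First, Theorem~\ref{theo:latticeCongruenceFacialWeakOrder} guarantees that $\Equivc$ is a lattice congruence of the facial weak order, so the quotient carries a canonical lattice structure. Second, Theorem~\ref{theo:allFacesCongruenceFan} identifies the $\Equivc$-classes with the cones of the simplicial fan $\Fan_{\equivc}$ via $\Gamma \mapsto \cone(\weightSet(\Gamma))$. The only external input needed is the identification of $\Fan_{\equivc}$ with the $c$-Cambrian fan of~\cite{ReadingSpeyer-CambrianFans} (which in turn is the normal fan of the $c$-associahedron of~\cite{HohlwegLangeThomas}); this is the content of N.~Reading and D.~Speyer's work and is what makes the abstract quotient a lattice on the faces of the generalized associahedron.

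For part~\eqref{item:sortableSingletonsFacialCambrianCongruence}, I will apply Proposition~\ref{prop:sortable}: a coset $xW_I$ satisfies $\ProjDown^c(xW_I) = xW_I$ (resp.\ $\ProjUp_c(xW_I) = xW_I$) if and only if $\projDown^c(x\wo{I}) = x\wo{I}$ (resp.\ $\projUp_c(x) = x$), which by definition of the Cambrian projections means exactly that $x\wo{I}$ is $c$-sortable (resp.\ $x$ is $c$-antisortable). The final sentence about Coxeter cones being cones of $\Fan_{\equivc}$ then follows from the corollary after Theorem~\ref{theo:allFacesCongruenceFan}: the singleton $\Equivc$-classes are precisely those cosets that are simultaneously minimal and maximal in their class, i.e.,\ those with $x$ antisortable and $x\wo{I}$ sortable. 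The main obstacle is not in the proof itself, which is essentially assembly, but in the reliance on Reading--Speyer's identification of $\Fan_{\equivc}$ with the Cambrian fan, which is external and supplies the bridge from lattice theory to polytopal geometry.
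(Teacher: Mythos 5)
Your proposal is correct and follows essentially the same route as the paper: part (i) via Corollary~\ref{coro:restrictCongruence}, part (ii) via Theorems~\ref{theo:latticeCongruenceFacialWeakOrder} and~\ref{theo:allFacesCongruenceFan} together with the Reading--Speyer identification of $\Fan_{\equivc}$ with the Cambrian fan, and part (iii) as a direct translation of Propositions~\ref{prop:sortable} and~\ref{prop:singletons}. Nothing is missing.
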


\begin{proof}
\eqref{item:restrictFacialCambrianCongruence} is an application of Corollary~\ref{coro:restrictCongruence}. \eqref{item:sortableSingletonsFacialCambrianCongruence} follows from Theorem~\ref{theo:allFacesCongruenceFan} and the fact that the $c$-Cambrian fan of~\cite{ReadingSpeyer} is the normal fan of the $c$-associahedron of~\cite{HohlwegLangeThomas}. Finally, \eqref{item:sortableSingletonsFacialCambrianCongruence} is a direct translation of Propositions~\ref{prop:sortable} and~\ref{prop:singletons}.
\end{proof}

\begin{example}
Examples of facial Cambrian congruences in type~$A_2$, $A_3$, and~$B_3$ are represented in Figures~\ref{fig:A2Cambrian}, \ref{fig:A3Cambrian}, and~\ref{fig:B3Cambrian} respectively.

\begin{figure}
\DeclareDocumentCommand{\rds}{ O{1.1cm} O{->} m m O{0}} {
	\def \radius {#1}
	\def \inputPoints{#3}
	\def \excludeRoots{#4}
	\def \style {#2}
	\def \initialRotation {#5}

	\pgfmathtruncatemacro{\points}{\inputPoints * 2}
	\pgfmathtruncatemacro{\inputPointsp}{\inputPoints + 1}
	\pgfmathsetmacro{\degrees}{360 / \points}
	
	\coordinate (0) at (0,0);
	
	\foreach \x in {1,...,\points}{%
		\pgfmathsetmacro{\location}{(\points+(\x-1))*\degrees + \initialRotation}
		\coordinate (\x) at (\location:\radius);
	}

	\ifthenelse{\equal{\excludeRoots}{}}{
		\foreach \x in {1, \inputPoints, \inputPointsp, \points}{%
			\draw[\style, ultra thick] (0) -- (\x);
		}
	}{
		\foreach \x in {1, \inputPoints, \inputPointsp, \points}{%
			\edef \showPoint {1};

			\foreach \y in \excludeRoots {
				\ifthenelse{\equal{\x}{\y}}{
					\xdef \showPoint {0};
				}{}
			}
			
			\ifthenelse{\equal{\showPoint}{1}}{
				\draw[->, ultra thick] (0) -- (\x);
			}{}
		}
	}  
}

\centerline{
	\begin{tikzpicture}
		[scale=2,
		aface/.style={color=blue},
		bface/.style={color=red},
		face/.style={color=red!50!blue}
		]
		\begin{scope}[shift={(-1.8,0)}]
    		\coordinate (e) at (0,0.42);
    		\coordinate (s) at (-1,1);
    		\coordinate (t) at (1,1);
    		\coordinate (st) at (-1,2);
    		\coordinate (ts) at (1,2);
    		\coordinate (sts) at (0,2.58);
    		%
    		\coordinate (Ws) at (-0.47,0.69);
    		\coordinate (Wt) at (0.47,0.69);
    		\coordinate (tWs) at (1,1.5);
    		\coordinate (sWt) at (-1,1.5);
    		\coordinate (stWs) at (-0.47,2.31);
    		\coordinate (tsWt) at (0.47,2.31);
    		%
    		\coordinate (W) at (0,1.5);
    		%
    		\draw[red, thick, double distance=5pt, cap=round] (e) -- (e);
    		\draw[red, thick, double distance=5pt, cap=round] (Ws) -- (Ws);
    		\draw[red, thick, double distance=5pt, cap=round] (Wt) -- (Wt);
    		\draw[red, thick, double distance=5pt, cap=round] (s) -- (s);
    		\draw[red, thick, double distance=5pt, cap=round] (sWt) -- (sWt);
    		\draw[red, thick, double distance=5pt, cap=round] (st) -- (st);
    		\draw[red, thick, double distance=5pt, cap=round] (t) -- (ts);
    		\draw[red, thick, double distance=5pt, cap=round] (stWs) -- (stWs);
    		\draw[red, thick, double distance=5pt, cap=round] (tsWt) -- (tsWt);		
    		\draw[red, thick, double distance=5pt, cap=round] (sts) -- (sts);
    		\draw[red, thick, double distance=5pt, cap=round] (W) -- (W);		
    		\draw (e) -- (s);
    		\draw (e) -- (t);
    		\draw (s) -- (st);
    		\draw (t) -- (ts);
    		\draw (st) -- (sts);
    		\draw (ts) -- (sts);
    		%
    		\draw (Ws) -- (W);
    		\draw (Wt) -- (W);
    		\draw (W) -- (stWs);
    		\draw (W) -- (tsWt);
    		%
    		\node[below] at (e) {$e$};
    		\node[below=1mm, left] at (s) {$s$};
    		\node[below=1mm, right] at (t) {$t$};
    		\node[above=1mm, left] at (st) {$st$};
    		\node[above=1mm, right] at (ts) {$ts$};
    		\node[above] at (sts) {$sts$};
    		\node[below=1.5mm, left] at (Ws) {$W_{s}$};
    		\node[below=1.5mm, right] at (Wt) {$W_{t}$};
    		\node[right] at (tWs) {$tW_{s}$};
    		\node[left] at (sWt) {$sW_{t}$};
    		\node[above=1.5mm, left] at (stWs) {$stW_{s}$};
    		\node[above=1.5mm, right] at (tsWt) {$tsW_{t}$};
    		\node[right=1mm] at (W) {$W$};
		\end{scope}
		\begin{scope}[shift={(1.4,0)}]
    		\coordinate (e) at (0,0.42);
    		\coordinate (s) at (-1,1);
    		\coordinate (st) at (-1,2);
    		\coordinate (sts) at (0,2.58);
    		%
    		\coordinate (Ws) at (-0.47,0.69);
    		\coordinate (Wt) at (.95,.96);
    		\coordinate (tWs) at (1.9,1.5);
    		\coordinate (sWt) at (-1,1.5);
    		\coordinate (stWs) at (-0.47,2.31);
    		\coordinate (tsWt) at (.95,2.04);
    		%
    		\coordinate (W) at (0,1.5);
    		%
    		\draw (e) -- (s);
    		\draw (e) -- (tWs);
    		\draw (s) -- (st);
    		\draw (st) -- (sts);
    		\draw (tWs) -- (sts);
    		%
    		\draw (Ws) -- (W);
    		\draw (Wt) -- (W);
    		\draw (W) -- (stWs);
    		\draw (W) -- (tsWt);
    		%
    		\node[below] at (e) {$\{e\}$};
    		\node[below=1mm, left] at (s) {$\{s\}$};
    		\node[above=1mm, left] at (st) {$\{st\}$};
    		\node[above] at (sts) {$\{sts\}$};
    		\node[below=1.5mm, left] at (Ws) {$\{W_{s}\}$};
    		\node[below=1.5mm, right] at (Wt) {$\{W_{t}\}$};
    		\node[left=2mm] at (tWs) {$\{t, tW_{s}, ts\}$};
    		\node[left] at (sWt) {$\{sW_{t}\}$};
    		\node[above=1.5mm, left] at (stWs) {$\{stW_{s}\}$};
    		\node[above=1.5mm, right] at (tsWt) {$\{tsW_{t}\}$};
    		\node[left] at (W) {$\{W\}$};
		\end{scope}
	\end{tikzpicture}
}
	\caption{The $st$-Cambrian congruence classes of the standard parabolic cosets in type~$A_2$ (left) and the resulting quotient (right).}
	\label{fig:A2Cambrian}
\end{figure}

\hvFloat[floatPos=p, capWidth=h, capPos=r, capAngle=90, objectAngle=90, capVPos=c, objectPos=c]{figure}
{	\begin{tikzpicture}%
	[x={(0.767968cm, 0.559570cm)},
	y={(-0.407418cm, 0.802202cm)},
	z={(0.494203cm, -0.208215cm)},
	scale=3.30000,
    edge/.style={color=blue!95!black}, 
    bedge/.style={color=red!95!black, opacity=.3}, 
	cedge/.style={color=green!80!black, opacity=.4, thin}, 
    edgeEq/.style={color=green!55!black, line width=1mm, cap=round}, 
    bedgeEq/.style={color=green!55!black, line width=1mm, cap=round, opacity=.5}, 
    facet/.style={fill=blue,fill opacity=0.100000}, 
    face/.style={color=blue!85!blue}, 
    bface/.style={color=red!85!red, opacity=.3}, 
    vertex/.style={inner sep=1pt, circle, anchor=base, color=blue!75!black}, 
    bvertex/.style={inner sep=1pt, circle, anchor=base, color=red!75!black, opacity=.3}] 
	%
	\coordinate (r) at (-2.12, -0.408, 0.577);
	\coordinate (rs) at (-2.12, 0.408, -0.577);
	\coordinate (e) at (-1.41, -1.63, 0.577);
	\coordinate (srs) at (-1.41, 0.0000200, -1.73);
	\coordinate (rt) at (-1.41, 0.0000300, 1.73);
	\coordinate (rst) at (-1.41, 1.63, -0.577);
	\coordinate (s) at (-0.707, -2.04, -0.577);
	\coordinate (sr) at (-0.707, -1.22, -1.73);
	\coordinate (t) at (-0.707, -1.22, 1.73);
	\coordinate (rts) at (-0.707, 1.22, 1.73);
	\coordinate (rtst) at (-0.707, 2.04, 0.577);
	\coordinate (srst) at (-0.707, 1.22, -1.73);
	\coordinate (srt) at (0.707, -1.22, -1.73);
	\coordinate (rtsrt) at (0.707, 2.04, 0.577);
	\coordinate (st) at (0.707, -2.04, -0.577);
	\coordinate (ts) at (0.707, -1.22, 1.73);
	\coordinate (srtst) at (0.707, 1.22, -1.73);
	\coordinate (rtsr) at (0.707, 1.22, 1.73);
	\coordinate (stsr) at (2.12, -0.408, 0.577);
	\coordinate (sts) at (1.41, -1.63, 0.577);
	\coordinate (srts) at (1.41, 0.0000200, -1.73);
	\coordinate (tsr) at (1.41, 0.0000300, 1.73);
	\coordinate (srtsrt) at (1.41, 1.63, -0.577);
	\coordinate (srtsr) at (2.12, 0.408, -0.577);
	\coordinate (W) at (0, 0, 0);
	%
	\coordinate (rt-A) at (-1.41, 3.27, 4.039);
	\coordinate (r-A) at (-3.54,2.036,0.577);
	\coordinate (t-A) at (0.699, -0.4, 4.039);
	\coordinate (srs-A) at (-2.121, 1.22, -1.73);
	\coordinate (sts-A) at (2.12,-1.224,1.731);
	%
	\draw[bedge] (rs) -- (srs) node (rsWr) [midway]{}; 
	\draw[bedge] (e) -- (s) node (Ws) [midway]{}; 
	\draw[bedge] (sr) -- (srs) node (srWs) [midway]{}; 
	\draw[bedgeEq] (srs) -- (srst) node (srsWt) [midway]{}; 
	\draw[bedge] (s) -- (sr) node (sWr) [midway]{}; 
	\draw[bedge] (s) -- (st) node (sWt) [midway]{}; 
	\draw[bedge] (sr) -- (srt) node (srWt) [midway]{}; 
	\draw[bedge] (st) -- (srt) node (stWr) [midway]{}; 
	\draw[bedge] (srt) -- (srts) node (srtWs) [midway]{}; 
	\draw[bedge] (st) -- (sts) node (stWs) [midway]{}; 
	\draw[bedge] (srts) -- (srtst) node (srtsWt) [midway]{}; 
	\draw[bedge] (srts) -- (srtsr) node (srtsWr) [midway]{}; 
	\draw[bedge] (e) -- (r) node (Wr) [midway] {};
	\draw[bedge] (r) -- (rs) node (rWs) [midway] {};
	\draw[bedge] (rs) -- (rst) node (rsWt) [midway] {};
	\draw[bedge] (rst) -- (srst) node (rstWr) [midway] {};
	\draw[bedge] (srst) -- (srtst) node (srstWs) [midway] {};
	\draw[bedge] (srtst) -- (srtsrt) node (srtstWr) [midway] {};
	\draw[bedge] (e) -- (t) node (Wt) [midway] {};
	\draw[bedge] (t) -- (ts) node (tWs) [midway] {};
	\draw[bedge] (ts) -- (sts) node (tsWt) [midway] {};
	\draw[bedge] (sts) -- (stsr) node (stsWr) [midway] {};
	\draw[bedge] (stsr) -- (srtsr) node (stsrWs) [midway] {};
	\draw[bedge] (srtsr) -- (srtsrt) node (srtsrWt) [midway] {};
	\draw[bedge] (Wr.center) -- (srWs.center);
	\draw[bedge] (Ws.center) -- (rsWr.center) node (Wrs) [midway]{}; 
	\draw[bedge] (Ws.center) -- (tsWt.center);
	\draw[bedge] (Wt.center) -- (stWs.center) node (Wst) [midway]{}; 
	\draw[bedge] (sWr.center) -- (stWr.center);
	\draw[bedge] (sWt.center) -- (srWt.center) node (sWrt) [midway]{}; 
	\draw[bedge] (stWr.center) -- (stsrWs.center);
	\draw[bedge] (stWs.center) -- (srtsWr.center) node (stWrs) [midway]{}; 
	\draw[bedge] (srWs.center) -- (srtsWt.center);
	\draw[bedge] (srWt.center) -- (srstWs.center) node (srWst) [midway]{}; 
	\draw[bedgeEq] (rsWr.center) -- (rstWr.center);
	\draw[bedge] (rsWt.center) -- (srsWt.center) node (rsWrt) [midway]{}; 
	\draw[bedge] (srtsWt.center) -- (srtsrWt.center);
	\draw[bedge] (srtsWr.center) -- (srtstWr.center) node (srtsWrt) [midway]{}; 
	\node[bvertex] at (srt){}; 
	\node[bvertex] at (st){}; 
	\node[bvertex] at (srts){}; 
	\node[bvertex] at (srs){}; 
	\node[bvertex, above] at (s) {$s$}; 
	\node[bvertex] at (sr){}; 
	%
	\draw[edge] (e) -- (r) node (Wr) [midway]{}; 
	\draw[edgeEq] (r) -- (rs) node (rWs) [midway]{}; 
	\draw[edge] (r) -- (rt) node (rWt) [midway]{}; 
	\draw[edgeEq] (rs) -- (rst) node (rsWt) [midway]{}; 
	\draw[edge] (e) -- (t) node (Wt) [midway]{}; 
	\draw[edge] (t) -- (rt) node (tWr) [midway]{}; 
	\draw[edgeEq] (rt) -- (rts) node (rtWs) [midway]{}; 
	\draw[edge] (rst) -- (rtst) node (rstWs) [midway]{}; 
	\draw[edge] (rst) -- (srst) node (rstWr) [midway]{}; 
	\draw[edgeEq] (t) -- (ts) node (tWs) [midway]{}; 
	\draw[edgeEq] (rts) -- (rtst) node (rtsWt) [midway]{}; 
	\draw[edgeEq] (rts) -- (rtsr) node (rtsWr) [midway]{}; 
	\draw[edgeEq] (rtst) -- (rtsrt) node (rtstWr) [midway]{}; 
	\draw[edge] (srst) -- (srtst) node (srstWs) [midway]{}; 
	\draw[edgeEq] (rtsr) -- (rtsrt) node (rtsrWt) [midway]{}; 
	\draw[edge] (rtsrt) -- (srtsrt) node (rtsrtWs) [midway]{}; 
	\draw[edge] (ts) -- (sts) node (tsWt) [midway]{}; 
	\draw[edgeEq] (ts) -- (tsr) node (tsWr) [midway]{}; 
	\draw[edge] (srtst) -- (srtsrt) node (srtstWr) [midway]{}; 
	\draw[edge] (tsr) -- (rtsr) node (tsrWs) [midway]{}; 
	\draw[edgeEq] (sts) -- (stsr) node (stsWr) [midway]{}; 
	\draw[edge] (tsr) -- (stsr) node (tsrWt) [midway]{}; 
	\draw[edge] (stsr) -- (srtsr) node (stsrWs) [midway]{}; 
	\draw[edge] (srtsr) -- (srtsrt) node (srtsrWt) [midway]{}; 
	\draw[edge] (Wr.center) -- (tWr.center);
	\draw[edge] (Wt.center) -- (rWt.center) node (Wrt) [midway]{}; 
	\draw[edgeEq] (rWt.center) -- (rstWs.center);
	\draw[edge] (rWs.center) -- (rtsWt.center) node (rWst) [midway]{}; 
	\draw[edgeEq] (tWr.center) -- (tsrWs.center);
	\draw[edge] (tWs.center) -- (rtsWr.center) node (tWrs) [midway]{}; 
	\draw[edgeEq] (rtsWt.center) -- (rtsrWt.center);
	\draw[edgeEq] (rtsWr.center) -- (rtstWr.center) node (rtsWrt) [midway]{}; 
	\draw[edge] (rstWr.center) -- (rtsrtWs.center);
	\draw[edge] (rstWs.center) -- (srtstWr.center) node (rstWrs) [midway]{}; 
	\draw[edge] (tsrWt.center) -- (rtsrtWs.center);
	\draw[edge] (tsrWs.center) -- (srtsrWt.center) node (tsrWst) [midway]{}; 
	\draw[edgeEq] (tsWt.center) -- (tsrWt.center);
	\draw[edge] (tsWr.center) -- (stsWr.center) node (tsWrt) [midway]{}; 
	\draw[cedge] (Wrs.center) -- (tsrWst.center);
	\draw[cedge] (Wrt.center) -- (srtsWrt.center);
	\draw[cedge] (Wst.center) -- (rstWrs.center);
	\fill[facet] (srtsr) -- (stsr) -- (tsr) -- (rtsr) -- (rtsrt) -- (srtsrt) -- cycle {};
	\fill[facet] (t) -- (e) -- (r) -- (rt) -- cycle {};
	\fill[facet] (rtst) -- (rst) -- (rs) -- (r) -- (rt) -- (rts) -- cycle {};
	\fill[facet] (stsr) -- (tsr) -- (ts) -- (sts) -- cycle {};
	\fill[facet] (srtsrt) -- (rtsrt) -- (rtst) -- (rst) -- (srst) -- (srtst) -- cycle {};
	\fill[facet] (rtsr) -- (rts) -- (rtst) -- (rtsrt) -- cycle {};
	\fill[facet] (tsr) -- (ts) -- (t) -- (rt) -- (rts) -- (rtsr) -- cycle {};
	\node[vertex, below=.5mm] at (r) {$r$}; 
	\node[vertex] at (rs){}; 
	\node[vertex, below=.5mm] at (e) {$e$}; 
	\node[vertex] at (rt){}; 
	\node[vertex] at (rst){}; 
	\node[vertex, below=.5mm] at (t) {$t$}; 
	\node[vertex] at (rts){}; 
	\node[vertex] at (rtst){}; 
	\node[vertex] at (srst){}; 
	\node[vertex] at (rtsrt){}; 
	\node[vertex] at (ts){}; 
	\node[vertex] at (srtst){}; 
	\node[vertex] at (rtsr){}; 
	\node[vertex] at (stsr){}; 
	\node[vertex] at (sts){}; 
	\node[vertex] at (tsr){}; 
	\node[vertex] at (srtsrt){}; 
	\node[vertex] at (srtsrt){}; 
	\node[color=green!80!black, right] at (W){}; 
	%
    \draw[thick] (rtsrt) -- (rt-A);
	\draw[thick] (r) -- (r-A);
	\draw[thick] (t) -- (t-A);
	\draw[thick] (r-A) -- (rt-A);
	\draw[thick] (t-A) -- (rt-A);
	\draw[thick] (srs-A) -- (srst);
	\draw[thick] (r-A) -- (srs-A);
	\draw[thick, opacity=.3] (srs-A) -- (srs);
	\draw[thick] (sts-A) -- (stsr);
	\draw[thick, opacity=.3] (sts-A) -- (sts);
	\draw[thick] (sts-A) -- (t-A);

\end{tikzpicture}}
{The $srt$-Cambrian congruence on the standard parabolic cosets in type~$A_3$ and the resulting quotient. Bold green edges are contracted edges. The quotient is given the geometry of the type~$A_3$ $srt$-associahedron of~\cite{HohlwegLangeThomas}.}
{fig:A3Cambrian}

\hvFloat[floatPos=p, capWidth=h, capPos=r, capAngle=90, objectAngle=90, capVPos=c, objectPos=c]{figure}
{\input{figures/B3Cambrian}}
{The $srt$-Cambrian congruence on the standard parabolic cosets in type~$B_3$ and the resulting quotient. Bold green edges are contracted edges. The quotient is given the geometry of the type~$B_3$ $srt$-associahedron of~\cite{HohlwegLangeThomas}.}
{fig:B3Cambrian}
\end{example}

\begin{example}
In type~$A$, the Tamari congruence classes correspond to binary trees, while the facial Tamari congruence classes correspond to Schr\"oder trees. The quotient of the facial weak order by the facial Tamari congruence was already described in~\cite{PalaciosRonco, NovelliThibon-trialgebras}. In~\cite{ChatelPilaud}, G.~Chatel and V.~Pilaud describe the Cambrian counterparts of binary trees and Schr\"oder trees, and use them to introduce the facial type~$A$ Cambrian lattices.
\end{example}

\begin{remark}
If~$\equiv$ is an order congruence on a poset~$(P,\le)$ with up and down projections~$\projUp$ and~$\projDown$, the suborder of~$\le$ induced by~$\projDown(P)$ is isomorphic to the quotient order~$P/{\equiv}$ (see Definition~\ref{def:latticeCongruences}). When~$P$ is a lattice, $P/{\equiv}$ is also a lattice, so that~$(\projDown(P), \le)$ is a lattice. Although~$(\projDown(P),\le)$ is always a meet subsemilattice of~$P$, it is not necessarily a sublattice of~$P$. In~\cite{Reading-sortableElements}, N.~Reading proved moreover that the weak order induced on~$\projDown(W)$ is actually a sublattice of the weak order on~$W$. In contrast, the facial weak order induced on~$\ProjDown(\CoxeterComplex{W})$ is not a sublattice of the facial weak order on~$\CoxeterComplex{W}$. An example already appears in~$A_3$ for~$c = srt$. Consider~${xW_I = tsrW_{st}}$ and~${yW_J = stsrW_s}$, so that~${xW_I \meet yW_J = \zm W_{\Km} = tsrW_t}$. We observe that~$x\wo{I} = srt | srt = \woo$ and~$y\wo{J} = srt | sr$ are $srt$-sortable while $\zm\wo{\Km} = st | sr$ is not.
\end{remark}


\bibliographystyle{alpha}
\bibliography{facialWeakOrder}
\label{sec:biblio}

\end{document}